\theoremstyle{plain}
\newtheorem{theorem}{Theorem}[section]
\newtheorem{definition}[theorem]{Definition}
\newtheorem{lemma}[theorem]{Lemma}
\newtheorem{corollary}[theorem]{Corollary}
\newtheorem{proposition}[theorem]{Proposition}
\theoremstyle{remark}
\newtheorem{remark}[theorem]{Remark}
\numberwithin{equation}{section}
\newcommand{\C}{\mathbb{C}}
\newcommand{\R}{\mathbb{R}}
\newcommand{\Z}{\mathbb{Z}}
\newcommand{\F}{\mathcal{F}}
\renewcommand{\Re}{\operatorname{Re}}
\newcommand{\I}{\infty}
\newcommand{\abs}[1]{\left\lvert #1\right\rvert}
\newcommand{\norm}[1]{\left\lVert #1\right\rVert}
\newcommand{\Jbr}[1]{\left\langle #1 \right\rangle}
\newcommand{\IN}{\quad\text{in }}
\newcommand{\swIN}{\quad \text{weakly-}*\text{ in }}
\def\({\left(}
\def\){\right)}
\def\<{\left\langle}
\def\>{\right\rangle}
\def\le{\leqslant}
\def\ge{\geqslant}
\def\d{{\partial}}
\newcommand{\eps}{\varepsilon}
\DeclareMathOperator{\supp}{supp}
\begin{document}
\title[Two minimization problems for mass-subcritical NLS]
{Two minimization problems on non-scattering solutions to mass-subcrticial
nonlinear Schr\"odinger equation}
\author%[S. Masaki]
{Satoshi Masaki}
\address{Department of Systems Innovation\\
Graduate School of Engineering Science\\
Osaka university\\
Toyonaka, Osaka 560-8531, Japan}
\email{masaki@sigmath.es.osaka-u.ac.jp}
\date{}
\maketitle
\vskip-5mm
\begin{abstract}
In this paper, we introduce two minimization problems on
non-scattering solutions to %mass-subcritical
 nonlinear Schr\"odinger equation. 
One gives us a sharp scattering criterion, the other is
concerned with minimal size of blowup profiles.
We first reformulate several previous results in terms of these two minimizations.
Then, the main result of the paper is existence of minimizers to the both minimization
problems for mass-subcritical nonlinear Schr\"odinger equations.
To consider the latter minimization, we consider the equation in a Fourier transform of generalized Morrey space.
It turns out that the minimizer to the latter problem possesses a compactness property, which
is so-called almost periodicity modulo symmetry.
\end{abstract}

\section{Introduction}\label{sec:introduction}

In this paper, we consider time global behavior of 
solutions to the following focusing nonlinear Schr\"odinger equation
\begin{equation}\label{eq:NLS}
\left\{
\begin{aligned}
& i\d_t u + \Delta u = - |u|^{2\alpha} u, \quad (t,x) \in I\times \R^{d} \\
& u(t_0,x) = u_0(x),
\end{aligned}
\right.
\end{equation}
where $d\ge1$, $I\subset \R$ is an interval, $t_0 \in I$, and 
$u(t,x):I\times \R^d \to \C$ is an unknown.

We introduce two minimization problems
associated with time global behavior of solutions to \eqref{eq:NLS}.
First, we briefly recall several previous results in terms of the two problems.
Then, we consider some mass-subcritical cases $\alpha < 2/d$
and establish existence of minimizers to the both problems, which is the main result.

\subsection{Two minimization problems}

To be more precise, let us make some notation.
Let $X$ be a Banach space which corresponds to a \emph{state space}.
Suppose that $X$ is so that $e^{it\Delta}$ becomes a linear bounded operator on $X$
and that $e^{it\Delta}$ converges strongly to the identity on $X$ as $|t|\to 0$.
We also suppose 
that \eqref{eq:NLS} is locally well-posed in $X$ in the following sense:
For any given $u_0 \in X$ and $t_0\in \R$,
there exist an interval $I \ni t_0$ and a unique function $u(t,x) \in I \times \R^d
\to \C$ such that $ u(t) \in C(I, X)$ and the equality
\begin{equation}\label{eq:iNLS}
	 u(t) = e^{i(t-t_0)\Delta} u_0 +i \int_{t_0}^t
	e^{i(t-s)\Delta} (|u|^{2\alpha} u)(s) ds \IN X
\end{equation}
holds for all $t\in I$.
Moreover, continuous dependence on the data holds:
If $u_{0,n} \in X$ converges to $u_0$ in $X$ as $n\to\I$ then
a sequence of corresponding solutions $u_n(t)$ with data $u_n(t_0)=u_{0,n}$
converges to $u(t)$ in $L^\I(J,X)$ as $n\to\I$ 
for any $J \subset\subset I$.
We say that $u$ is a maximal-lifespan solution if it cannot be extended to any strictly larger interval.
Let $I_{\mathrm{max}}=I_{\mathrm{max}}(u):= 
( T_{\mathrm{min}}(u), T_{\mathrm{max}}(u)) $ be the maximal interval of a $u$.

We say a solution $u(t)$ \emph{scatters} in $X$ for positive time direction 
(resp.~negative time direction) if $T_{\mathrm{max}}=+\I$
(resp.~$T_{\mathrm{min}}=-\I$) and
if $e^{-it\Delta} u(t)$ has a strong limit in $X$ as $t\to\I$
(resp.~$t\to-\I$).

\subsection*{First minimization problem}
We now introduce the first problem.
Let $\ell: X \to \R_{\ge 0}$ be a \emph{size function}.
Suppose that $\ell$ is continuous.
The first minimization is the following;
\begin{equation*}
	\begin{aligned}
	E_1&{}=E_1 (\alpha, X, \ell) \\
	&{}:= \inf \left\{ \inf_{t \in I_{\mathrm{max}}(u) }  \ell(u(t)) \ \middle|\ 
	\begin{aligned}
	& u(t):  \text{solution to \eqref{eq:NLS} that does not}\\
	& \text{scatter for positive time direction.}
	\end{aligned}
	\right\}
	\end{aligned}
\end{equation*}
When $\ell(\cdot) = \norm{\cdot}_X$, we simply denote $E_1(\alpha ,X)$.
If a size function $\ell(\cdot)$ is invariant under complex conjugation,
that is, if 
\begin{equation}\label{eq:conj}
	\ell(\overline{f}) = \ell(f), \quad \forall f\in X
\end{equation}
is satisfied then it is obvious from
time symmetry of \eqref{eq:NLS}  that
\begin{align*}
		E_1 %(\alpha, X, \ell) \\
	&{}= \inf
	\left\{ \inf_{t \in I_{\mathrm{max}}(u) }  \ell(u(t)) \ \middle|\ 
	\begin{aligned}
	& u(t):  \text{solution to \eqref{eq:NLS} that does not}\\
	& \text{scatter for negative time direction.}
	\end{aligned}
	\right\}\\
	&{}= \inf
	\left\{ \inf_{t \in I_{\mathrm{max}}(u) }  \ell(u(t)) \ \middle|\ 
	\begin{aligned}
	& u(t):  \text{solution to \eqref{eq:NLS} that does not}\\
	& \text{scatter for at least one time direction.}
	\end{aligned}
	\right\}.
\end{align*}
Notice that the validity of small data scattering result in $X$ is expressed as
$E_1(\alpha, X,\ell) > 0$.
In other words, the positivity of $E_1$ suggests that $(X,\ell)$ is a suitable framework
to consider time global behavior of solutions. 
The value $E_1$ gives us a sharp scattering criterion;
\[
	\ell(u_0) < E_1 \Longrightarrow
	u(t) \text{ scatters for positive time direction}.
\]
Further, $u(t)$ scatters for both time direction if $\ell$ satisfies \eqref{eq:conj}.

Also remark that $E_1(\alpha, X)<\I$ is equivalent to existence of a nonscattering solution in $X$.
\medskip

\subsection*{Second minimization problem}
The next problem is 
\begin{align*}
	E_2&{}=E_2(\alpha, X, \ell) \\
	&{}:= \inf \left\{  
	\limsup_{t \uparrow T_{\max}(u)} \ell(u(t)) \ \middle|\ 
	\begin{aligned}
	& u(t):  \text{solution to \eqref{eq:NLS} that does not}\\
	& \text{scatter for positive time direction.}
	\end{aligned}
	\right\}.
\end{align*}
As in $E_1$, a similar infimum value for negative time direction
has the same value under the assumption \eqref{eq:conj}.
Intuitively, $E_2$ is a minimum size of possible ``blowup profiles.''\footnote{
It is known that, in some cases, a solution that does not scatter for positive time direction
tends to an orbit of a static profile, a blowup profile, by a group action, say $G$, as time approaches to the end of maximal time interval
(e.g., a standing wave solution $u(t,x)= e^{it\omega} \phi_\omega (x)$).
If a size function $\ell$ is chosen so that it is invariant under the group action $G$,
 then the size of such a solution tends to that of a corresponding profile.
Of course, another kind of behavior may take place, in general, and so it may not be a definition of $E_2$.
}

% It is known that, in some case, a solution that does not scatter for positive time direction
% tends to 
% an orbit of a static profile by a group action, say $G$, as time approaches to the end of maximal time interval
% (e.g., a standing wave solution $u(t,x)= e^{it\omega} \phi_\omega (x)$).
% If a size function $\ell$ is chosen so that it is invariant under the group action $G$,
% then the size of such a solution tends to that of a corresponding profile.
% An intuitive meaning of $E_2$ is a minimum size of such profiles.
% Of course, this ``definition'' is not correct in general because it may happen that a solution
%neither scatters nor stays close to a profile modulo $G$.

It is obvious by definition that $$0\le E_1 \le E_2 \le \I.$$
Further, if $\ell(\cdot)$ is a time independent quantity, 
such as $L^2$-norm of the solution, then two infimum values coincide.
On the other hand, it may happen that $E_1 < E_2$ (see Theorems \ref{thm:previous2} and \ref{thm:previous3}).
A good point on these minimizations is that $\ell(\cdot)$ needs not to be a time independent quantity.
Hence, it enables us to consider the problem under a setting that no conserved quantity is available.

%
% \bigskip
Besides $E_1=\I$ implies non-existence of a nonscattering solution,
$E_2=\I$ gives us a weaker statement that any bounded (in the sense of a corresponding size function) solution scatters.
This kind of scattering result is extensively studied in the defocusing case (see \cite{KM2,KV2,MMZ,Mu1,Mu2,Mu3}, for instance). 
Although we need some modification on $E_1$ and $E_2$, there is an example of the setting that yields $E_1<E_2=\I$
(see Theorems \ref{thm:previous3} and \ref{thm:previous4}).

In this way, we can obtain somewhat detailed information on dynamics
from a combination of these two values.

\subsection{Evaluation of $E_1$ and $E_2$ for mass-critical and -supercritical cases}
The main interest here is
to find explicit values of $E_1$ and $E_2$.
Further, we seek minimizers.
Besides its own interest, 
to know a minimizer would be a key step of finding the explicit infimum values.
In some mass-critical and -supercritical settings,
a \emph{ground state solution} $Q_\alpha(t,x)$ attains $E_1$ and/or $E_2$,
where  $Q_\alpha(t,x)$ is given by
\[
	Q_\alpha(t,x):=
	\begin{cases}
	e^{it} Q(x), & 0< \alpha < \frac{2}{d-2},\\
	W(x) = \(1+\frac{|x|^2}{d(d-2)}\)^{-\frac{d-2}2}, & \alpha = \frac{2}{d-2}
	\end{cases}
\]
Here, $Q(x)$ is
 a positive radial solution of $-\Delta Q + Q = Q^{2\alpha+1}$
and $W$ solves $-\Delta W = W^{\frac{d+2}{d-2}}$.

Let us now collect several settings that
explicit value of $E_1$ and $E_2$ can be determined.
They are reformulations of previous results.
Let us begin with the mass-critical case.
\begin{theorem}[mass-critical case]\label{thm:previous1}
Let $d\ge1$ and $\alpha = 2/d$. Let $X=L^2(\R^d)$ and $\ell(\cdot)=
\norm{\cdot}_{L^2}$.
Then,
$E_1 = E_2 = \norm{Q}_{L^2(\R^d)}$.
\end{theorem}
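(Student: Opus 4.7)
The plan is to prove the three equalities $E_1 = E_2 = \|Q\|_{L^2}$ by combining a trivial observation about conservation of mass, the explicit nonscattering standing wave solution, and a citation of the sharp focusing mass-critical scattering result.

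First, I would observe that $\ell(\cdot)=\|\cdot\|_{L^2}$ is a \emph{conserved quantity} under the flow of \eqref{eq:NLS}. Consequently, for every solution $u$ and every $t \in I_{\max}(u)$ one has $\ell(u(t)) = \|u_0\|_{L^2}$, so both
\[
\inf_{t \in I_{\max}(u)} \ell(u(t)) = \limsup_{t \uparrow T_{\max}(u)} \ell(u(t)) = \|u(t_0)\|_{L^2}.
\]
This immediately yields $E_1 = E_2$, and reduces the remaining task to identifying the common value with $\|Q\|_{L^2}$.

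Next, for the upper bound $E_2 \le \|Q\|_{L^2}$, I would exhibit the standing wave $Q_\alpha(t,x) = e^{it} Q(x)$ as an explicit solution of \eqref{eq:NLS} with $\alpha = 2/d$: it lies in $C(\R, L^2)$ with constant $L^2$-norm equal to $\|Q\|_{L^2}$, and it does not scatter, since if $e^{-it\Delta} Q_\alpha(t) = e^{-it\Delta}(e^{it}Q)$ had a strong $L^2$ limit as $t \to \infty$, the linear dispersion $e^{-it\Delta}$ would force the limit to be $0$, contradicting $\|Q\|_{L^2}>0$. Hence $Q_\alpha$ is admissible in the definition of $E_2$, giving $E_2 \le \|Q\|_{L^2}$.

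For the lower bound $E_1 \ge \|Q\|_{L^2}$, I would invoke the sharp scattering theorem for the focusing mass-critical NLS (Dodson's theorem, building on the Kenig--Merle concentration-compactness/rigidity scheme and Keraani-type profile decompositions): every $L^2$ solution with $\|u_0\|_{L^2} < \|Q\|_{L^2}$ scatters in $L^2$ in both time directions. Equivalently, any nonscattering solution must satisfy $\|u_0\|_{L^2} \ge \|Q\|_{L^2}$, and by mass conservation $\inf_{t\in I_{\max}(u)}\|u(t)\|_{L^2} = \|u_0\|_{L^2} \ge \|Q\|_{L^2}$, proving $E_1 \ge \|Q\|_{L^2}$. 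Chaining the inequalities $\|Q\|_{L^2} \le E_1 = E_2 \le \|Q\|_{L^2}$ closes the argument. The only nontrivial ingredient is the lower bound, which is a deep theorem quoted from the literature; all remaining steps are immediate from mass conservation and the explicit form of the ground state.
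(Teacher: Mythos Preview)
Your proposal is correct and matches the paper's approach: the paper simply states that this theorem is ``just a rephrase of Dodson \cite{Do1},'' and your argument unpacks exactly that---mass conservation gives $E_1=E_2$, the standing wave gives the upper bound, and Dodson's scattering theorem gives the lower bound. One minor imprecision: your reason for why $e^{it}Q$ does not scatter (``linear dispersion $e^{-it\Delta}$ would force the limit to be $0$'') is off since $e^{-it\Delta}$ is an $L^2$-isometry; the cleanest justification is that $|Q_\alpha(t,x)|=|Q(x)|$ forces the space-time Strichartz norm to be infinite, which by Proposition~\ref{prop:criterion} precludes scattering.
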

This result is just a rephrase of Dodson \cite{Do1} (see also \cite{KTV,KVZ}).
The next case is energy-critical case.
\begin{theorem}[energy-critical case]\label{thm:previous2}
Let $d\ge4$ and  $\alpha = 2/(d-2)$. 
Let $X=\dot{H}^1(\R^d)$ and $\ell(\cdot)=
\norm{\cdot}_{\dot{H}^1}$. Then,
\[
	E_1 = \sqrt{\frac2d} \norm{W}_{\dot{H}^1(\R^d)},\quad
	E_2=\norm{W}_{\dot{H}^1(\R^d)}
\]
% \[
% 	E_2 (2/(d-2), \dot{H}^1(\R^d)) = \norm{W}_{\dot{H}^1(\R^d)},
% \]
% Then, $E_2=\norm{W}_{\dot{H}^1(\R^d)}$.
Moreover, there exists a global radial solution $W_-(t)$ such that
\begin{enumerate}
	\item $W_-(t)$ scatters for negative time direction and
	$\lim_{t\to-\I} \norm{W_-(t)}_{\dot{H}^1(\R^d)}=E_1$;
	\item $W_-(t)$ converges to $W$ exponentially as $t\to\I$, that is,
	there exists positive constants $c$ and $C$ such that
	\[
		\norm{W_-(t) - W}_{\dot{H}^1(\R^d)} \le C e^{-ct}
	\]
	for all $t\ge0$. In particular, $\lim_{t\to\I} \norm{W_-(t)}_{\dot{H}^1(\R^d)}=E_2$.
\end{enumerate}
Furthermore, there is no solution $u(t)$ which does not scatter for positive time direction and
which attains $E_1$ at some finite time. That is,
if $\norm{u(t_0)}_{\dot{H}^1}=E_1$ for some $t_0 \in I_{\max}(u)$ then $u(t)$ scatters for both time directions.
\end{theorem}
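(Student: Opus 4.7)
\medskip
\noindent
\emph{Plan.} Write $M := \hSobn{W}{1}$ and $E(u) := \tfrac12 \hSobn{u}{1}^2 - \tfrac{d-2}{2d}\Lebn{u}{2^*}^{2^*}$ with $2^* := \tfrac{2d}{d-2}$. Testing $-\Delta W = W^{2^*-1}$ against $W$ gives $\hSobn{W}{1}^2 = \Lebn{W}{2^*}^{2^*}$, hence $E(W) = \tfrac{1}{d}M^2$. The plan assembles three ingredients: (a) the sharp Aubin--Talenti Sobolev inequality, with extremal $W$; (b) the Kenig--Merle scattering/blow-up dichotomy in the sub-threshold regime $E(u)<E(W)$; and (c) the Duyckaerts--Merle threshold analysis at $E(u)=E(W)$, which both constructs $W_-$ and classifies all non-scattering solutions at that energy level.

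\medskip
\noindent
\emph{Upper bounds via $W_-$.} The Duyckaerts--Merle spectral analysis of the linearization $-\Delta-(2^*-1)W^{2^*-2}$ around $W$ (which has a unique negative eigenvalue on radial functions, with an exponentially decaying eigenfunction) produces a maximal-lifespan radial solution $W_-$ satisfying $\hSobn{W_-(t)-W}{1}\le Ce^{-ct}$ for $t\ge 0$, and which scatters as $t\to-\infty$. Convergence to $W$ already gives $\lim_{t\to\infty}\hSobn{W_-(t)}{1}=M$, hence $E_2\le M$. Let $v_-\in\dot{H}^1$ be the backward scattering datum, $\hSobn{W_-(t)-e^{it\Delta}v_-}{1}\to 0$ as $t\to-\infty$. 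Dispersive decay of $e^{it\Delta}v_-$ in $L^{2^*}$ together with energy conservation forces $\tfrac12\hSobn{v_-}{1}^2=E(W_-)=E(W)=\tfrac{1}{d}M^2$, so $\lim_{t\to-\infty}\hSobn{W_-(t)}{1}=\sqrt{2/d}\,M$ and therefore $E_1\le\sqrt{2/d}\,M$.

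\medskip
\noindent
\emph{Lower bounds.} Let $u$ be any non-scattering solution (forward direction). If $E(u)<E(W)$, the Kenig--Merle dichotomy places $u$ on the super-threshold side of the $\dot{H}^1$-barrier, so $\hSobn{u(t)}{1}>M$ throughout $I_{\max}(u)$, which already yields both lower bounds at this energy level. If $E(u)\ge E(W)$, the elementary estimate $\tfrac12\hSobn{u(t)}{1}^2\ge E(u)\ge\tfrac{1}{d}M^2$ gives $\hSobn{u(t)}{1}\ge\sqrt{2/d}\,M$; combined with the upper bound from $W_-$ this completes $E_1=\sqrt{2/d}\,M$. For $E_2\ge M$, the threshold case $E(u)=E(W)$ is handled by the Duyckaerts--Merle classification: up to the symmetries of \eqref{eq:NLS}, $u$ coincides with $W$ or with one of the special solutions $W^\pm$, each of which has $\limsup\hSobn{u(t)}{1}=M$. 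The remaining case $E(u)>E(W)$ with $\limsup_{t\to T_{\max}}\hSobn{u(t)}{1}<M$ is what I expect to be the \emph{main obstacle}; I would rule it out by a Kenig--Merle-style concentration-compactness / rigidity argument, extracting a linear profile decomposition along a sequence $t_n\to T_{\max}(u)$ and using minimality (with the size function capped strictly below $M$) to produce a non-trivial critical element of strictly smaller size, which the classification forbids.

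\medskip
\noindent
\emph{No finite-time attainment of $E_1$.} If $\hSobn{u(t_0)}{1}=\sqrt{2/d}\,M$ with $u$ non-scattering, the case analysis above excludes $E(u)<E(W)$ (which would require $\hSobn{u(t_0)}{1}>M$). Hence $E(u)\ge E(W)$, and the chain $\tfrac{1}{d}M^2\le E(W)\le E(u)\le\tfrac12\hSobn{u(t_0)}{1}^2=\tfrac{1}{d}M^2$ is saturated throughout. This forces $\Lebn{u(t_0)}{2^*}=0$, so $u(t_0)\equiv 0$, and by uniqueness $u\equiv 0$, which scatters trivially --- a contradiction.
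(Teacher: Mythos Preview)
Your overall structure is sound, and the construction of $W_-$ together with the computation of $\lim_{t\to-\infty}\hSobn{W_-(t)}{1}$ via energy conservation and dispersive decay is exactly what the paper does. However, there are two points where your argument is either incomplete or more circuitous than necessary.

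\textbf{The gap for $E_2\ge M$.} You correctly identify the case $E(u)>E(W)$ with $\limsup_{t\to T_{\max}}\hSobn{u(t)}{1}<M$ as the main obstacle, but you only sketch a concentration-compactness/rigidity argument without carrying it out. The paper avoids this entirely: it cites the Killip--Visan result (Theorem~1.7 of \cite{KV} for $d\ge 5$, supplemented by Dodson \cite{Do2} for $d=4$), which states directly that any solution with $\sup_{t\in I}\hSobn{u(t)}{1}<\hSobn{W}{1}$ is global and scatters. This is exactly the statement $E_2\ge M$, with no energy hypothesis at all --- so your case split by the sign of $E(u)-E(W)$ is unnecessary, and the Duyckaerts--Merle threshold classification is not needed for this part. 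The concentration-compactness/rigidity machinery you propose is precisely what \cite{KV} already contains.

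\textbf{A simpler route to $E_1\ge\sqrt{2/d}\,M$ and no finite-time attainment.} The paper observes that if $u_0\not\equiv 0$ and $\hSobn{u_0}{1}\le\sqrt{2/d}\,M$, then
\[
  E[u_0] = \tfrac12\hSobn{u_0}{1}^2 - \tfrac{d-2}{2d}\Lebn{u_0}{2^*}^{2^*}
  < \tfrac12\hSobn{u_0}{1}^2 \le \tfrac{1}{d}M^2 = E[W],
\]
the first inequality being strict since $u_0\not\equiv 0$. Thus $E[u_0]<E[W]$ and $\hSobn{u_0}{1}<M$, and the sub-threshold scattering result of \cite{KV,Do2} applies directly. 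This single step simultaneously yields $E_1\ge\sqrt{2/d}\,M$ and the non-attainment at finite time, without your case analysis or the contrapositive through $u(t_0)\equiv 0$. Your argument for non-attainment is correct, but it passes through the (unnecessary) exclusion of $E(u)<E(W)$ via the blow-up branch of the dichotomy, whereas the paper never needs to invoke that branch.
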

% where $W(x) = (1/(1+\frac{2|x|^2}{d-2}))^{1-\frac{d}2}$ is a ground state
% for this case. $W$ solves $-\Delta W = W^{\frac{d+2}{d-2}}$.
% The above equality is essentially due to Killip and Visan \cite{KV}
% and Dodson \cite{D}.
The above theorem follows by summarizing several previous results
\cite{Do2,DM,KM,KV,LZ}.
We give a proof in Appendix \ref{sec:energycrit}.
\begin{remark}
In the 3d cubic case ($d=3$ and $\alpha=1$, $\dot{H}^{1/2}$-critical),
characterizations for $E_1(1,H^1(\R^3),\ell)$ and $E_2(1,H^1(\R^3),\ell)$ 
similar to Theorem \ref{thm:previous2} can be obtained from results in \cite{AN,DHR,DR,HR},
where $\ell(f):= \norm{f}_{L^2}^{1/2} \norm{\nabla f}^{1/2}_{L^2}$.
It would be possible to extend the result to whole inter-critical cases, i.e., between mass-critical 
and energy-critical case.
However, we do not pursue it any more.
\end{remark}

\subsection{Mass-subcritical case and weighted $L^2$ spaces}
Now, we turn to the mass-subcritical case $\alpha < 2/d$, which is the main interest of this paper.
It is well-known that global well-posedness holds in $L^2$ or $H^1$.
These are natural spaces in which the conserved quantities
make sense.
However, %in the mass-subcritical case $\alpha < 2/d$,
these spaces are not suitable to consider time global behavior because
a scaling argument shows that
$E_2(\alpha , L^2) = E_2(\alpha, H^1) = 0$.
Then, we need some other space $X$ to have $E_1>0$.
The main purpose of the paper is 
to see that a hat-Morrey space %$X=\hat{M}^{\alpha}_{q,r}$
is a good candidate for this kind of analysis.

Before this, we briefly recall some previous results in weighted $L^2$ spaces.
Weighted $L^2$ spaces are
frequently used for the analysis of the mass-subcritical case
and are spaces in which small data scattering holds.
In this paper, we consider the following two weighted spaces
\[
	 \F H^1:= \{ f \in \mathcal{S}' \ |\ \F f \in H^1 \}, \quad
	\F \dot{H}^{s}:= \{ f \in \mathcal{S}' \ |\ \F f \in L^{\frac{2d}{d-2s}} \cap \dot{H}^{s} \},
\]
where $0<s<d/2$ and $\F$ stands for the Fourier transform with respect to the 
space variable.

To consider \eqref{eq:NLS} in a weighted $L^2$ space, we generalize the concept of solution
by introducing an \emph{interaction variable} $v(t):=e^{-it\Delta} u(t)$.
Let $X=\F H^1$ or $X=\F \dot{H}^{s}$.
Notice that $e^{it\Delta}$ ($t\neq0$) is not a bounded operator on $X$ any more.
For given $u_0$ such that $v_0 = e^{-it_0 \Delta} u_0 \in X$, we say $u(t,x) \in I \times \R^d
\to \C$ is a solution to \eqref{eq:NLS} on an interval $I\ni t_0$ if
 $v(t)$ belongs to  $C(I, X)$ and satisfies
\begin{equation*}
	v(t) = v_0 +i \int^t_{t_0}
	e^{-is\Delta} [ |e^{is\Delta} v(s)|^{2\alpha} (e^{is\Delta} v(s))] ds \IN X
\end{equation*}
for all $t\in I$. The continuous dependence is also defined as a continuity 
of $v_0 \mapsto v(t)$ as in the previous case.
% The modification is necessary because
It is worth mentioning that 
 it may happen that $u(0) \in X$ but $u(t) \not\in X$ for all $t\neq0$.
In particular, this implies that a time translation symmetry breaks down.

\begin{remark}\label{rmk:modifiedsol}
Remark that if $X$ is so that $e^{it\Delta}$ becomes a linear bounded operator on $X$
and that $e^{it\Delta}$ converges strongly to identity on $X$ as $|t|\to 0$,
then modified notion of well-posedness coincides with the original one.
Indeed, $v(t)=e^{-it\Delta} u(t) \in C(I,X)$ is equivalent to $u(t) \in C(I,X)$ and 
the above modified integral formula is equivalent to \eqref{eq:iNLS}.
In this sense, this modified formulation is a generalization.
\end{remark}

% As mentioned above, small data scattering holds in the weighted spaces.
% This relates with the positivity of the first , where the definition of $E_1$ is slightly different;
We then modify the definition of $E_1$ slightly; fix $t_0=0$ and
\[
	\tilde{E}_1(\alpha,X,\ell):= 
	\inf
	\left\{  \ell(u(0)) \ \middle|\ 
	\begin{aligned}
	& u(t):  \text{solution to \eqref{eq:NLS} that does not}\\
	& \text{scatter for positive time direction.}
	\end{aligned}
	\right\}.
\]
Namely, we measure size of solutions at a specific time because time translation symmetry is broken.
Similarly, the infimum $E_2$ associated with the weighted $L^2$ spaces 
are defined in  a modified form
\[
	\tilde{E}_2= \inf \left\{  
	\limsup_{t \uparrow T_{\max}(u)} \ell(e^{-it\Delta}u(t)) \ \middle|\ 
	\begin{aligned}
	& u(t):  \text{solution to \eqref{eq:NLS} that does not}\\
	& \text{scatter for positive time direction.}
	\end{aligned}
	\right\}.
\]
\begin{remark}
By Remark \ref{rmk:modifiedsol}, we can introduce $\tilde{E}_1$ and $\tilde{E}_2$ 
under the original definition of well-posedness.
In that case, we have $\tilde{E}_1=E_1$ thanks to the time translation symmetry.
Further, if $\ell$ is invariant under $e^{it\Delta}$ then $\tilde{E}_2=E_2$.
\end{remark}

It is known that the weighted spaces are the spaces in which 
 small data scattering holds, i.e., $\tilde{E}_1(\alpha,X)>0$ (see \cite{GOV,NO,M2} and references therein).
Further, in \cite{M1,M2}, existence of a minimizer
to $\tilde{E}_1$ is shown for suitable size functions.
However, it will turn out soon that $\tilde{E}_2$ is not finite.
It would suggest that the weighted $L^2$ spaces are not so good frameworks for
the second minimization.

% the weighted $L^2$ spaces are not suitable for 
% the second minimization problems.
%\begin{remark}\label{rmk:modifiedsol}
% The above notion well-posedness is slightly modified.
% can be replaced by a usual Duhamel formula.
% The reason for modification is that it allows us
% to treat the case where $X$ is a weighted $L^2$ space, which 
% is often used in the mass-subcrtical case $\alpha<2/d$,
% in a unified way

Let us now introduce precise results in the weighted $L^2$ framework.
Let $s_c:= d/2 - 1/\alpha$. Remark that $s_c<0$ if and only if $\alpha < 2/d$.
\begin{theorem}[mass-subcritical case I, \cite{M1,M2}]\label{thm:previous3}
Let $d\ge1$ and $\max(1/d, 2/(d+2)) < \alpha < 2/d$.
Let $X= \F H^1$
and
$$\ell(f) = \ell_{\F H^1}(f) := \norm{|x| f}_{L^2}^{-s_c} \norm{f}_{L^2}^{1+s_c}.$$
Then, $0<\tilde{E}_1 < \ell_{\F H^1} (Q) < \tilde{E}_2=\I$.
Further, there exists a solution $u_c(t)$ that attains $\tilde{E}_1$, i.e., $\ell_{\F H^1}(u_c(0))=\tilde{E}_1$.
\end{theorem}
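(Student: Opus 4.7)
The plan is to establish the four assertions in turn: positivity $\tilde{E}_1>0$; the upper bound $\tilde{E}_1 \le \ell_{\F H^1}(Q)$; non-finiteness $\tilde{E}_2=\I$; and existence of a minimizer $u_c$ attaining $\tilde{E}_1$. Strictness of $\tilde{E}_1<\ell_{\F H^1}(Q)$ will emerge as a by-product of the analysis of the minimizer.

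First I would set up a small-data Cauchy theory in $\F H^1$ by working with the interaction variable $v(t)=e^{-it\Delta}u(t)$ and exploiting the pseudo-conformal identity, which converts the weight $|x|$ into a gradient on the interaction side at the conformal time. Combined with Strichartz estimates, this gives a fixed-point argument in a Strichartz-type norm whose smallness condition is governed by $\ell_{\F H^1}(u_0)$, yielding $\tilde{E}_1>0$. The upper bound $\tilde{E}_1 \le \ell_{\F H^1}(Q)$ is immediate because $u(t,x)=e^{it}Q(x)$ is a non-scattering solution in $\F H^1$ with $\ell_{\F H^1}(u(0))=\ell_{\F H^1}(Q)$. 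For $\tilde{E}_2=\I$, I would argue by contradiction: if some non-scattering solution $u$ had $\liminf_{t\uparrow T_{\max}}\ell_{\F H^1}(v(t))<\I$, pick $t_n\uparrow T_{\max}$ along which $v(t_n)$ is bounded in $\F H^1$; restarting the Cauchy problem from $v(t_n)$ produces lifespans bounded below in terms of $\ell_{\F H^1}(v(t_n))$ alone, so iteratively either we extend past $T_{\max}$ (contradicting maximality) or we obtain uniform Strichartz bounds on $[t_n,\I)$ and hence scattering, contradicting the hypothesis.

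The main work is the existence of a minimizer, for which I would follow a Kenig--Merle--type concentration-compactness roadmap: take a minimizing sequence of non-scattering solutions $u_n$ with $\ell_{\F H^1}(u_n(0))\to\tilde{E}_1$; develop a linear profile decomposition for bounded sequences in $\F H^1$ adapted to the symmetries preserved by $e^{it\Delta}$ on $\F H^1$-data (spatial translation, modulation, dilation, and phase); verify asymptotic decoupling of $\ell_{\F H^1}$ across profiles; then replace each linear profile by the corresponding nonlinear solution and apply a long-time perturbation lemma to deduce that exactly one profile can be ``critical.'' That critical profile provides a non-scattering solution $u_c$ with $\ell_{\F H^1}(u_c(0))=\tilde{E}_1$, and it inherits a compactness (almost periodicity modulo symmetry) property. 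The strict inequality $\tilde{E}_1<\ell_{\F H^1}(Q)$ then follows because $u_c$ must enjoy this compactness property, whereas $e^{it}Q$ does not: the interaction variable $e^{-it\Delta}(e^{it}Q)$ spreads under the linear flow and so its orbit fails to be precompact modulo symmetries in $\F H^1$, ruling out $e^{it}Q$ (and, by a modulation/dilation comparison, any other candidate with $\ell_{\F H^1}$-value equal to $\ell_{\F H^1}(Q)$) as the minimizing solution.

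The hard part will be the concentration-compactness step. The space $\F H^1$ is a weighted inhomogeneous space whose symmetry group is larger than usual, no conservation law shares the scaling of $\ell_{\F H^1}$, and the size function itself is a nonlinear combination of two norms; consequently, establishing the profile decomposition, the asymptotic orthogonality of $\ell_{\F H^1}$ along concentrating profiles, and a uniform long-time perturbation theory in this framework is exactly the technical heart of \cite{M1,M2}.
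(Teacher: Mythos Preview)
Your outline captures the general Kenig--Merle shape, and the existence of the minimizer via profile decomposition plus long-time stability is indeed the route taken in \cite{M1,M2}. However, two of the four assertions are handled incorrectly.

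First, your argument for $\tilde{E}_2=\I$ does not close. Restarting from $v(t_n)$ bounded in $\F H^1$ gives a uniform lower bound on the lifespan (hence $T_{\max}=\I$), but it does not by itself produce uniform Strichartz control on $[t_n,\I)$; iterating local bounds on intervals of fixed length says nothing about their sum. The mechanism the paper invokes is different and genuinely needed: boundedness of $\ell_{\F H^1}(e^{-it\Delta}u(t))$ together with mass conservation forces $\norm{J(t)u(t)}_{L^2}=\norm{xe^{-it\Delta}u(t)}_{L^2}$ to stay bounded, and the identity $J(t)=2it\,e^{i|x|^2/4t}\nabla e^{-i|x|^2/4t}$ then yields the dispersive decay $\norm{u(t)}_{L^p}\lesssim |t|^{-d(1/2-1/p)}$. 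That decay makes the Duhamel integral absolutely convergent and forces scattering, contradicting the hypothesis. Your restart sketch, if pushed through, would in fact have to rediscover this decay.

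Second, your derivation of the strict inequality $\tilde{E}_1<\ell_{\F H^1}(Q)$ is based on a false premise. You assert that the $\tilde{E}_1$-minimizer $u_c$ is almost periodic modulo symmetry and then rule out $e^{it}Q$ because its orbit is not precompact. But in this weighted setting the minimizer is \emph{not} almost periodic: since $\tilde{E}_2=\I$, even $u_c$ satisfies $\sup_{t\ge0}\norm{e^{-it\Delta}u_c(t)}_{\F H^1}=\I$, so no orbit-precompactness is available. Strictness is instead obtained by a direct construction: in the mass-subcritical range one computes $E[Q]=(\tfrac12-\tfrac{1}{\alpha d})\norm{\nabla Q}_{L^2}^2<0$, so for every $c\in(0,1)$ close to $1$ the datum $cQ$ still has $E[cQ]<0$ and hence generates a global non-scattering solution, while $\ell_{\F H^1}(cQ)=c\,\ell_{\F H^1}(Q)<\ell_{\F H^1}(Q)$ by homogeneity. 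This already gives $\tilde{E}_1<\ell_{\F H^1}(Q)$ without any appeal to properties of the minimizer.
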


Notice that $\tilde{E}_2=\I$ implies that 
even a minimizer $u_c(t)$ to $\tilde{E}_1$ satisfies
$\sup_{t\ge0} \norm{ e^{-it\Delta} u_c(t) }_{\F H^1}=\I$.
The proof of $\tilde{E}_2=\I$ is immediate from time decay property that
the boundedness of $\ell_{\F H^1}( e^{-it\Delta} u(t))$ gives.
The infiniteness of $\tilde{E}_2$ can be understood also as a reflection of the fact that
$\ell_{\F H^1}( e^{-it\Delta} u(t))$ is a scattering-solution-oriented value.
% it gives a penalty if $U(-t)u(t)$ ``disperses'' 
For example, even the ground state solution is not bounded;
\[
	\sup_{t\in\R} \ell_{\F H^1} ( e^{-it\Delta} Q_{\alpha}(t) )
	= \I.
\] 
Intuitively, if $u(t)$ does not scatter then $e^{-it\Delta}$ gives some uncancelled ``dispersion effect,''
which penalized by the weighted $L^2$ norm.
% The boundedness with respect to $\ell_{\F H^1}(U(-t)u(t))$ is a strong assumption to 
% exclude nonscattering solutions.

We have a similar result in the case $X=\F \dot{H}^{|s_c|}$.
\begin{theorem}[mass-subcritical case II, \cite{M2}]\label{thm:previous4}
Let $d\ge1$ and $\max(1/d, 2/(d+2)) < \alpha < 2/d$.
Let $X= \F \dot{H}^{|s_c|}$
and $\ell(\cdot) = \norm{\cdot}_{\F\dot{H}^{|s_c|}}$.
Then, $0<\tilde{E}_1 < \ell (Q) < \tilde{E}_2=\I$.
Further, there exists a solution $u_c(t)$ that attains $\tilde{E}_1$, i.e., 
$\ell(u_c(0))=\tilde{E}_1$.
\end{theorem}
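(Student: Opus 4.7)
The assertions to establish are (i) $\tilde{E}_1>0$, (ii) $\tilde{E}_1<\ell(Q)$, (iii) $\tilde{E}_2=\I$, and (iv) the existence of a minimizer $\uc$ for $\tilde{E}_1$. The strategy combines a critical Strichartz theory on the Fourier side with a concentration--compactness argument.

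The first step is to set up a critical Strichartz framework in $\F\dot{H}^{|s_c|}$. Under $\max(1/d,2/(d+2))<\alpha<2/d$ one can introduce a scaling-invariant Fourier--Lorentz scattering space $Y$ (with dual $Y'$) satisfying
\[
 \norm{e^{it\Delta}\phi}_{Y}\lesssim \norm{\phi}_{\F\dot{H}^{|s_c|}},\qquad
 \Bigl\|\int_{t_0}^{t}e^{i(t-s)\Delta}F(s)\,ds\Bigr\|_{L^{\I}_{t}\F\dot{H}^{|s_c|}\cap Y}\lesssim \norm{F}_{Y'},
\]
together with the nonlinear estimate $\norm{|v|^{2\alpha}v}_{Y'}\lesssim \norm{v}_{Y}^{2\alpha+1}$. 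A contraction argument then produces local well-posedness, a small-data scattering threshold $\eta_0>0$, and hence (i). The same framework yields (iii): if $\sup_{t\in[0,T_{\max})}\ell(e^{-it\Delta}u(t))<\I$, a bootstrap on short intervals whose length depends only on this bound forces the $Y$-norm of $u$ on $[0,T_{\max})$ to be finite, implying $T_{\max}=\I$ and scattering, so no non-scattering solution can have bounded $\limsup$.

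For (iv), I would run a concentration--compactness argument. Take a sequence of non-scattering solutions with $\ell(u_n(0))\to \tilde{E}_1$. Apply the linear profile decomposition in $\F\dot{H}^{|s_c|}$ developed in \cite{M2}:
\[
 u_n(0)=\sum_{j=1}^{J}\Gamma_n^{j}\phi^{j}+w_n^{J},\qquad
 \limsup_{n\to\I}\norm{e^{it\Delta}w_n^{J}}_{Y}\xrightarrow[J\to\I]{}0,
\]
with $\Gamma_n^{j}$ symmetries of the linear flow and a Pythagorean-type orthogonality for $\ell$. Attaching to each $\phi^{j}$ its nonlinear profile and invoking a long-time perturbation lemma, if every nonlinear profile scattered then so would $u_n$ for large $n$; hence some profile $\phi^{j_*}$ fails to scatter. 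By orthogonality $\ell(\phi^{j_*})\le \tilde{E}_1$, while $\tilde{E}_1\le \ell(\phi^{j_*})$ by definition. Equality forces all other profiles to vanish, and the nonlinear evolution of $\phi^{j_*}$ is the desired minimizer $\uc$. For (ii), since $e^{it}Q$ is non-scattering one has $\tilde{E}_1\le \ell(Q)$; strict inequality follows either from a Duyckaerts--Merle type construction of a non-scattering solution lying strictly below $\ell(Q)$ on the stable manifold of $Q$, or from (iv) combined with a rigidity/variational argument ruling out coincidence of a minimizer with the standing-wave orbit in this framework.

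The chief obstacle lies in (iv): building a linear profile decomposition in the non-Hilbertian Fourier--Lorentz space $\F\dot{H}^{|s_c|}$, securing an orthogonality identity for $\ell$ along the defect sequence, and pasting a stability theory strong enough to handle simultaneously many nonlinear profiles. The mass-subcritical, scaling-critical regime rules out the classical Strichartz spaces and forces the use of Fourier--Lorentz variants, which complicates both the linear compactness theory and the nonlinear perturbation analysis that underpin the whole argument.
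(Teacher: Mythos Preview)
The paper does not itself prove this theorem: parts (i), (ii), (iv) are attributed to \cite{M2}, and the paper explicitly notes that $\tilde{E}_2=\I$ is \emph{not} shown in \cite{M2} and that ``a proof will appear elsewhere.'' Your outline for (i) and (iv) matches the strategy of \cite{M2}, but there are genuine gaps in your treatment of (ii) and (iii).

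For (ii), both routes you propose---a Duyckaerts--Merle stable-manifold construction, or a rigidity argument for the minimizer---are far heavier than what is needed and are not known to be available in the $\F\dot H^{|s_c|}$ framework. The actual argument (see the discussion following the non-scattering theorem in Section~\ref{sec:preliminaries} and the references \cite{M1,M2,MS2}) is elementary: in the mass-subcritical range the Pohozaev identities give $E[Q]<0$, so by continuity $E[cQ]<0$ for $c<1$ close to $1$. The criterion ``$u_0\in H^1$ with $E[u_0]\le 0$ implies non-scattering'' then shows that the solution with data $cQ$ fails to scatter, while $\ell(cQ)=c\,\ell(Q)<\ell(Q)$. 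No stable manifold or rigidity is involved.

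For (iii), your bootstrap ``on short intervals whose length depends only on this bound'' does not work in a scaling-critical space: the local existence time depends on the \emph{profile} of the data (through the smallness of $S_I(e^{it\Delta}u_0)$), not merely on $\norm{u_0}_{\F\dot H^{|s_c|}}$, so a uniform bound on $\ell(e^{-it\Delta}u(t))$ does not by itself furnish a uniform step size. For the $\F H^1$ analogue (Theorem~\ref{thm:previous3}) the paper indicates that $\tilde E_2=\I$ follows from a \emph{time-decay} mechanism: boundedness of $\norm{x\,e^{-it\Delta}u(t)}_{L^2}$ together with mass conservation gives dispersive decay of $\norm{u(t)}_{L^p}$ via the vector field $J=x+2it\nabla$, which forces scattering. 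In the homogeneous space $\F\dot H^{|s_c|}$ the mass is not controlled and the corresponding decay argument is more delicate---precisely why the paper defers this part to a separate work rather than claiming it as immediate.
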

In \cite{M2}, the identity $\tilde{E}_2=\I$ is not shown.
A proof will appear elsewhere.
% Although the size of ground state is infinite also in this case;
% \[
% 	\sup_{t\in\R} \norm{ e^{-it\Delta} Q_{\alpha}(t) }_{\F H^1}
% 	= \I,
% \] 
% it is not clear whether $\tilde{E}_2(\alpha,\F \dot{H}^{-s_c} ) = \I$ or not.

 % is often  used as a state space.
% In this paper,  the weighted spaces which we work with are only the following two;
% \[
% 	 \F H^1:= \{ f \in \mathcal{S}' \ |\ \F f \in H^1 \}, \quad
% 	\F \dot{H}^{s}:= \{ f \in \mathcal{S}' \ |\ \F f \in L^{\frac{2d}{d-2s}} \cap \dot{H}^{s} \},
% \]
% One may not expect $u(t) \in X$ in general. 
% Remark that $e^{it\Delta}$ is not bounded operator on the weighted $L^2$
% spaces.

% Let $s_c:= d/2 - 1/\alpha (< 0)$.
% For $\max(1/d, 2/(d+2)) < \alpha < 2/d$, the followings are known.
% \begin{itemize}
% \item Let $X= \F H^1:= \{ f \in \mathcal{S}' \ |\ \F f \in H^1 \}$, where
% $\F$ stands for the Fourier transform.
% We take a size function 
% $$\ell_{\F H^1}(f) = \norm{|x| f}_{L^2}^{-s_c} \norm{f}_{L^2}^{1+s_c}.$$
% Then, it follows that 
% \[
% 	0 < E_1(\alpha , \F H^1, \ell_{\F H^1}) < \ell_{\F H^1} (Q_\alpha)
% 	< E_2(\alpha , \F H^1, \ell_{\F H^1}) = \I.
% \]
% The second inequality is due to \cite{M1,M2}.
% The last equality is Theorem.
% \item In \cite{M2}, it is also shown that
% \[
% 	0 < E_1 (\alpha, \F \dot{H}^{-s_c}) < \norm{Q_\alpha}_{\F \dot{H}^{-s_c}}.
% \]
% It is not clear whether $E_2 (\alpha, \F \dot{H}^{-s_c})=\I$.
% \end{itemize}

\subsection{Mass-subcritical case and hat-Morrey space}

Let us proceed to the main issue.
In this paper, we want to consider \eqref{eq:NLS} in mass-subcritical case
with choosing a state space $X$ and a size function $\ell$ 
so that  the both $E_1$ and $E_2$ becomes positive and finite.
As seen in the previous section, if we choose a weighted $L^2$ space as a
state space, then the finiteness of $E_2$ problem is not clear.
Hence, we will seek another space.

The conclusion is that a hat-Morrey space is a good candidate. 
% As one will see below, this is one of such spaces.
The space, introduced by Bourgain \cite{Bo1}, is used in a refinement of a Stein-Tomas estimate,
a special case of Strichartz' estimate, see \cite{Bo2,MV,CK,BV}.
The definition is as follows.

\begin{definition}
For $j\in \Z$, we let 
$$
\mathcal{D}_j:=\left\{ 2^{-j}( [0,1)^d + k)=\prod_{l=1}^N [k_l 2^{-j},(k_l+1)2^{-j})
\middle| k \in \Z^d \right\}$$
be a set of dyadic cubes with size $2^{-j}$.
Let $\mathcal{D}:= \cup_{j\in\Z} \mathcal{D}_j$. %be a set of all dyadic cubes.
For a cube $\tau \in \mathcal{D}$, we frequently use the notation
$\tau = \tau^{j}_k := 2^{-j}( [0,1)^d + k)
%\prod_{l=1}^d [k_l 2^{-j},(k_l+1)2^{-j})
$ with suitable $j\in\Z$ and $k\in \Z^d$.
Let us introduce a (generalized) Morrey norm by
\[
	\norm{f}_{{M}^{p}_{q,r}}
	:= \norm{|\tau^j_k|^{\frac1{p}- \frac1{q} } \norm{f}_{L^{q}(\tau_j^k)} }_{\ell^{r}_{j\in\Z,\,k\in\Z^d}}
\]
for $1 \le q \le p \le r \le \I$. If $r<\I$ we assume $q<p<r$.
A hat-Morrey norm is also introduced by
\[
	\norm{f}_{\hat{M}^{p}_{q,r}} := \norm{\F f}_{M^{p'}_{q',r}}
	= \norm{|\tau_k^j|^{\frac1{p'}- \frac1{q'} } \norm{\F{f}}_{L^{q'}(\tau_k^j)} }_{\ell^{r}_{j\in\Z,\,k\in\Z^d}}
\]
for $1 \le p \le q \le \I$ and $p'\le r \le \I$. Again, if $r<\I$ we assume $q'<p'<r$.
We define function spaces $M^{p}_{q,r}$ and $\hat{M}^{p}_{q,r}$ as 
sets of functions in $L^{q}_{\mathrm{loc}}(\R^d)$ and $\F L^{q'}_{\mathrm{loc}}(\R^d) $, respectively,
such that the corresponding norm is finite.
\end{definition}
\begin{remark}
We remark that $M^{p}_{q,\I}$ coincide with usual Morrey and so
that the ${M}^{p}_{q,r} $ is a generalization.
 If $r<\I$ then $M^{p}_{p,r}=\hat{M}^{p}_{p,r}  = \{0\}$.
If $r<\I$ then $M^{r}_{q,r}$ and $ M^{r'}_{q,r}$ do not contain ${\bf 1}_{\tau}(x)$ for any $\tau \in \mathcal{D}$, 
where ${\bf 1}_A(x)$ denotes the characteristic function of a set $A \subset \R^d$.
\end{remark}
The hat-Morrey space is a generalization
of a hat-Lebesgue space $\hat{L}^p = \hat{M}^p_{p,\I} = \F L^{p'}$.
It is known that some dispersive estimates, such as the Strichartz' estimates,
are naturally extended to the hat-Morrey and hat-Lebesgue spaces \cite{Bo2,HT,MS2}.
By means of these estimate, well-posedness of nonlinear Schr\"odinger
equation is established in \cite{G,HT,MS2} for $d=1$.
In \cite{G1,GV,MS1}, well-posedness of KdV-type equations are studied
in hat-Lebesgue spaces.

In \cite{MS2}, the first problem $E_1$ for generalized KdV equation is considered
and they show that existence of a special solution which attains this value, in
a suitable sense, under an assumption on a relation between the values of $E_1$
for generalized KdV equation and for nonlinear Schr\"odinger equation.
As one tool for obtaining the result,
well-posedness of \eqref{eq:NLS} in a hat-Morrey space is established for $d=1$.
% \cite{MS2}において、gKdV方程式に関して、
% $E_1$をある意味で達成する元が存在することを示した。
% 空間1次元の場合の hat-Morrey 空間における\eqref{eq:NLS} の適切性は、その際に、
% 道具の一つとして用いられている。
We first generalize this well-posedness for higher dimensions.
% まずは、\cite{MS2}の適切性の結果を一般次元に拡張する。
Although our main interest is mass-subcritical case $\alpha <2/d$,
one does not need this ``restriction'' for a well-posedness result.

\begin{theorem}\label{thm:LWP}
Let $d\ge 1$ and $ \frac2d\cdot\frac1{1+\frac2{d(d+3)}} < \alpha < \frac2d\cdot\frac2{1-\frac2{d(d+3)}}$.
\begin{enumerate}
\item The equation \eqref{eq:NLS} is locally well-posed in $\hat{L}^{d\alpha}$.
\item The equation \eqref{eq:NLS} is locally well-posed in $\hat{M}^{d\alpha}_{q,r}$,
provided
\[
	d \alpha < q< \(1+\frac{2}{d(d+3)}\){d\alpha},
\]
and
\[
	(d\alpha)' < r \le ((d+2)\alpha)^*,
\]
where $a^*=\min (a, 2a/(a-2))$
\end{enumerate}
\end{theorem}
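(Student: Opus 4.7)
The plan is to prove both parts by the standard contraction mapping argument applied to the Duhamel map
\[
\Phi(u)(t) := e^{i(t-t_0)\Delta} u_0 + i\int_{t_0}^{t} e^{i(t-s)\Delta} (|u|^{2\alpha} u)(s)\, ds,
\]
working on a small ball of a Strichartz-type space $L^{\rho,2}_t(I, \hat{M}^{d\alpha}_{q_1,r_1})$ chosen so that $\Phi$ is a contraction for $|I|$ sufficiently small. The two essential ingredients are (i) a Strichartz estimate for $e^{it\Delta}$ in mixed time-Lorentz/space-hat-Morrey norms, and (ii) a nonlinear estimate bounding $|u|^{2\alpha}u$ in the dual Strichartz space by $\|u\|^{2\alpha+1}$ in the solution space. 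Part (1) of the theorem is the special case $q=d\alpha$, $r=\infty$ and can be treated either as a corollary of part (2) by embedding $\hat{L}^{d\alpha}\hookrightarrow \hat{M}^{d\alpha}_{q,r}$ for appropriate $(q,r)$, or, more cleanly, by running the same scheme directly in the hat-Lebesgue framework.

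Step 1 (linear theory). I would establish the Bourgain-Morrey refinement of the Strichartz estimate in $\R^d$: for admissible triples $(\rho, p, q, r)$ one has
\[
\|e^{it\Delta} f\|_{L^{\rho,2}_t(\R, \hat{M}^{p}_{q,r})} \lesssim \|f\|_{\hat{M}^{d\alpha}_{q,r}},
\]
together with the dual inhomogeneous bound. For $d=1$ this is MS2; in higher dimensions the same proof works provided one replaces the Fefferman-Stein square-function trick on the paraboloid by the bilinear Fourier restriction estimate of Tao-Vargas-Vega (or the multilinear Bennett-Carbery-Tao estimate). The numerical constraint $\tfrac{2}{d(d+3)}$ appearing in the hypotheses on $\alpha$, $q$, and $r$ is exactly the sharp exponent gap coming from this bilinear restriction theorem, so the admissible range of $(q,r)$ stated in the theorem is precisely what the linear estimate allows.

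Step 2 (nonlinear estimate). This is the delicate part. Since $2\alpha+1$ is generally non-integer, the nonlinearity cannot be handled by a pure convolution argument on the Fourier side, so I would pass back to physical space. The dyadic characterization of $\hat{M}^{d\alpha}_{q,r}$ combined with Hausdorff-Young yields a controlled embedding into a Lorentz-Morrey space on the physical side, on which the pointwise bound $\bigl||u|^{2\alpha}u\bigr| = |u|^{2\alpha+1}$ is amenable to Hölder. I would then reverse the procedure to land in the dual of the space arising in Step 1. Summation over the Whitney decomposition of the paraboloid costs exactly the Lorentz index~$2$ of the time norm, which is why the theorem is formulated with $L^{\rho,2}_t$ rather than $L^{\rho}_t$. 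The precise range $d\alpha < q < (1 + \tfrac{2}{d(d+3)})d\alpha$ and $(d\alpha)' < r \le ((d+2)\alpha)^*$ is determined by the requirement that the Hölder exponents match both the Strichartz endpoint and the scale-critical index $d\alpha$.

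Step 3 (conclusion). Once (i) and (ii) are in hand, the rest is routine: $\Phi$ maps a small ball of the Strichartz norm into itself and is a strict contraction for small $|I|$; standard arguments then give uniqueness, continuity of the data-to-solution map, and the blow-up alternative. The main obstacle is Step 2, because the fractional power and the Morrey structure together prevent any clean Fourier-side multilinear reduction; the argument must interleave physical-space Hölder with Fourier-side dyadic summation, and the stated range of $(q,r)$ is essentially the maximal window in which both closures succeed simultaneously.
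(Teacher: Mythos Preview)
Your plan departs substantially from the paper's proof, and the departure is in the direction of unnecessary difficulty. The paper does \emph{not} run the contraction in a hat-Morrey-valued (or hat-Lebesgue-valued) space-time norm at all. Instead, it runs the fixed point entirely in the single Lebesgue space $L^{(d+2)\alpha}_{t,x}(I\times\R^d)$: by Proposition~\ref{prop:Strichartz} (resp.\ Proposition~\ref{prop:StrichartzhL}) the linear flow obeys $\|e^{it\Delta}u_0\|_{L^{(d+2)\alpha}_{t,x}(\R^{1+d})}\lesssim\|u_0\|_{\hat{M}^{d\alpha}_{q,r}}$ (resp.\ $\lesssim\|u_0\|_{\hat{L}^{d\alpha}}$), so one can shrink $I$ to make $\|e^{it\Delta}u_0\|_{L^{(d+2)\alpha}_{t,x}(I)}\le\delta$. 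The Duhamel term is handled by the non-admissible inhomogeneous Strichartz estimate (Proposition~\ref{prop:inhom}) with $p_1=q_1=(d+2)\alpha$, $p_2'=q_2'=\frac{(d+2)\alpha}{2\alpha+1}$, and the nonlinear estimate is the \emph{trivial} pointwise identity $\||u|^{2\alpha}u\|_{L^{(d+2)\alpha/(2\alpha+1)}_{t,x}}=\|u\|_{L^{(d+2)\alpha}_{t,x}}^{2\alpha+1}$. Persistence in $\hat{L}^{d\alpha}$ is then obtained a posteriori from the dual estimate (Corollary~\ref{cor:hom_dual}), and persistence in $\hat{M}^{d\alpha}_{q,r}$ follows from the embedding $\hat{L}^{d\alpha}\hookrightarrow\hat{M}^{d\alpha}_{q,r}$. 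No Lorentz-in-time refinement, no hat-Morrey-valued Strichartz, no physical-side Morrey embeddings are needed.

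Your Step~2 is therefore not just more complicated but a genuine gap as written: estimating $|u|^{2\alpha}u$ in a hat-Morrey norm for non-integer $2\alpha$ is a real obstruction, and the sketch you give (Hausdorff--Young into a physical-side Lorentz--Morrey space, H\"older, then back) is not an established mechanism in this setting; you have not shown the exponents close, and it is far from clear they do. The paper's point is precisely that this difficulty is illusory once one separates the fixed-point space (plain $L^{(d+2)\alpha}_{t,x}$) from the data/persistence space ($\hat{L}^{d\alpha}$ or $\hat{M}^{d\alpha}_{q,r}$). The numerical constraints on $\alpha$, $q$, $r$ in the statement arise exactly from (i) the bilinear-restriction range in Proposition~\ref{prop:Strichartz} (giving the upper bound on $q$ and the bound $r\le((d+2)\alpha)^*$), (ii) the validity of the dual estimate in Corollary~\ref{cor:hom_dual} (giving the upper bound on $\alpha$), and (iii) the embedding $\hat{L}^{d\alpha}\hookrightarrow\hat{M}^{d\alpha}_{q,r}$ (forcing $q>d\alpha$, $r>(d\alpha)'$); none come from a nonlinear Morrey estimate.
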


Let us now turn to the minimization problems.
For these problems, we assume that
\begin{equation}\label{asmp:alpha}
	\frac2d \cdot \frac1{1+\frac2{d(d+3)}} < \alpha < \frac2d, \quad
	(d\alpha)' < r < ((d+2)\alpha)^*
\end{equation}
and take $X=\hat{M}^{d\alpha}_{2,r}$ as a state space.
Notice that the first assumption of \eqref{asmp:alpha} is necessary to take $q=2$ in Theorem \ref{thm:LWP} (2),
and the second assumption is exclusion of the end point $r=((d+2)\alpha)^*$, which is necessary for 
concentration-compactness-type arguments.
We remark that $\hat{M}^{d\alpha}_{2,r}$ is one of the spaces on which
$\{e^{it\Delta}\}_t$ forms a one-parameter group of linear bijective isometries,
and that $e^{it\Delta}$ converges strongly to the identity operator 
as $|t|\to0$.

Introduce a size function as follows:
\begin{equation}\label{eq:M_size}
	\ell_{r}(f) := \inf_{\xi \in \R^d} \norm{e^{-i(\cdot )\cdot \xi} f(\cdot)}_{\hat{M}^{d\alpha}_{2,r}}.
\end{equation}
One sees that $\ell_r$ is an equivalent quasi-norm on $\hat{M}^{d\alpha}_{2,r}$ (see Remark \ref{rmk:quasi-norm}).
Since $\ell_r$ satisfies \eqref{eq:conj}, the meaning of $E_1$ and $E_2$
can be strengthen.
An important fact is that the size of ground state is bounded in time
\[
	\ell_r(e^{it} Q) = \ell_r( Q) <\I,
\]
which gives us a desired a priori bound on the second minimization value,
\[
	E_2(\alpha, \hat{M}^{d\alpha}_{2,r}, \ell_r) \le \ell_r(Q) <\I.
\]
\begin{remark}[defocusing case]
In this paper, we only consider focusing equations.
However, the focusing nation is only used for the above a priori bound on $E_2$.
For the defoucsing case, 
if we assume $E_2<\I$ then the same results as in Theorems \ref{thm:main2} and \ref{thm:main3} are obtained
by the same proof.
Further, if we obtain some contradiction from the conclusions in Theorems \ref{thm:main2} and \ref{thm:main3}
then we have $E_2=\I$.
It is needless to say that Theorem \ref{thm:LWP} holds without the boundedness assumption.
\end{remark}

Now, let us introduce the main results of this paper.
\begin{theorem}\label{thm:main1}
Let $d\ge1$ and suppose \eqref{asmp:alpha}.
Then, $0< E_1(\alpha, \hat{M}^{d\alpha}_{2,r}, \ell_r) < \ell_r(Q)$.
\end{theorem}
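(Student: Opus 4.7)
\emph{Proof strategy.} I would establish the two inequalities $E_1 > 0$ and $E_1 < \ell_r(Q)$ separately, with the first essentially a small-data scattering statement and the second being the main content.

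\emph{Positivity.} The Strichartz-type nonlinear estimates underlying the proof of Theorem \ref{thm:LWP} are set up in a scale-invariant Strichartz norm, so a standard bootstrap yields a universal threshold $\eta > 0$ such that every initial datum with $\norm{u_0}_{\hat M^{d\alpha}_{2,r}} < \eta$ generates a global scattering solution. By the Galilean invariance of \eqref{eq:NLS} together with the definition of $\ell_r$ in \eqref{eq:M_size} as the infimum of $\norm{e^{-i(\cdot)\cdot\xi} f}_{\hat M^{d\alpha}_{2,r}}$ over $\xi \in \R^d$, this small-data scattering extends to $\ell_r$-small data: given $u_0$ with $\ell_r(u_0)<\eta$, pick $\xi_0$ almost attaining the infimum, apply the scattering result to the solution with datum $e^{-i(\cdot)\cdot\xi_0}u_0$, and undo the Galilean boost. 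Hence $E_1 \ge \eta > 0$.

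\emph{Upper bound.} The standing wave $u(t)=e^{it}Q$ is a non-scattering solution of \eqref{eq:NLS}: were it to scatter in $\hat M^{d\alpha}_{2,r}$ to some $\phi_+$, then on the Fourier side the oscillatory expression $e^{it(1+|\xi|^2)}\hat Q(\xi)$ would have to converge in the corresponding dual Morrey norm as $t\to+\I$, which fails because the phase varies rapidly on the full support of $\hat Q$. Since $\ell_r$ is invariant under multiplication by a modulus-one constant, $\ell_r(e^{it}Q)=\ell_r(Q)$ for every $t$, and so $E_1 \le \ell_r(Q)$.

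\emph{Strict inequality.} To upgrade to $E_1<\ell_r(Q)$, I would follow the strategy of the weighted-$L^2$ analogs \cite{M1,M2} and construct a concrete non-scattering solution $u_\sharp$ whose $\ell_r$-size at some time is strictly below $\ell_r(Q)$. The natural candidate is a solution asymptotic to $e^{it}Q$ as $t\to+\I$ and scattering to a free solution $e^{it\Delta}\phi_-$ as $t\to-\I$. Writing $u_\sharp = e^{it}Q + h$, the correction $h$ is produced by a Duhamel fixed-point argument on $[T_0,+\I)$ driven by a small, exponentially decaying asymptotic profile coming from a non-trivial mode of the linearization about $Q$; the hat-Morrey Strichartz estimates from Theorem \ref{thm:LWP} then extend $u_\sharp$ backward to all of $\R$ with backward scattering for sufficiently small amplitude $\eps$. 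Since $e^{it\Delta}$ acts isometrically on $\hat M^{d\alpha}_{2,r}$, one has $\ell_r(u_\sharp(t))\to\ell_r(\phi_-)$ as $t\to-\I$, and the Duhamel representation
\[
	\phi_- = e^{-iT_0\Delta}u_\sharp(T_0) - i\int_{-\I}^{T_0} e^{-is\Delta}\bigl(|u_\sharp|^{2\alpha}u_\sharp\bigr)(s)\,ds
\]
presents $\phi_-$ as a leading term of $\ell_r$-size close to $\ell_r(Q)$ plus a first-order-in-$\eps$ nonlinear correction; choosing the sign of $\eps$ so that this correction decreases the Galilean-optimized norm would yield the strict inequality.

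The main obstacle is this last comparison. In the energy-critical analog (Theorem \ref{thm:previous2}), conservation of energy pins down $\norm{\phi_-}_{\dot H^1}=\sqrt{2/d}\norm{W}_{\dot H^1}$ sharply; here no conservation law controls $\ell_r$ directly, so one must carry out a perturbative analysis tracking how the Galilean infimum entering the definition of $\ell_r$ reacts to the small nonlinear correction---a subtle point, since the $\hat M^{d\alpha}_{2,r}$-norm of $Q$ need not be attained at the unmodulated function and the optimal Galilean frequency could shift under perturbation.
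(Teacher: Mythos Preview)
Your positivity argument is essentially the paper's: small data scattering in $\hat M^{d\alpha}_{2,r}$ plus Galilean invariance gives $E_1>0$.

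For the strict upper bound, however, you take a far more difficult route than needed and do not complete it. The paper's argument is elementary and rests on a fact specific to the mass-subcritical range: the ground state has \emph{strictly negative} energy, $E[Q]<0$. Indeed, from $-\Delta Q + Q = Q^{2\alpha+1}$ and the Pohozaev identity one computes $E[Q]=\frac{d\alpha-2}{2(2+2\alpha-d\alpha)}\|Q\|_{L^2}^2<0$ since $\alpha<2/d$. Consequently, for $c$ slightly less than $1$ one still has $E[cQ]<0$, and the paper's non-scattering criterion (any nonzero $H^1$ datum with $E[u_0]\le0$ launches a global non-scattering solution) applies to $cQ$. This immediately gives
\[
	E_1 \le \inf_{t} \ell_r(u_c(t)) \le \ell_r(cQ) = c\,\ell_r(Q) < \ell_r(Q).
\]
No construction of a stable/unstable manifold solution near $e^{it}Q$, and no perturbative analysis of $\ell_r(\phi_-)$, is needed.

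Your Duyckaerts--Merle type approach is not wrong in spirit, but it is substantially harder and you correctly identify the obstruction: without a conservation law controlling $\ell_r$, showing that the backward scattering state $\phi_-$ satisfies $\ell_r(\phi_-)<\ell_r(Q)$ would require a delicate first-order expansion of the Galilean-optimized hat-Morrey norm under the $O(\eps)$ correction, including control of how the optimal frequency shift moves. This is genuinely nontrivial and you leave it open. The scaling argument with $cQ$ bypasses all of this.
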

This theorem says two things.
Firstly, $E_1$ is positive, that is, a small data scattering result holds in $\hat{M}^{d\alpha}_{2,r}$.
Secondly, $E_1$ is strictly smaller than the size of ground state, and so
the ground state solution is not a minimizer to the $E_1$ problem.
As for a minimizer to $E_1$, we have the following;
\begin{theorem}\label{thm:main2}
Let $d\ge1$ and suppose \eqref{asmp:alpha}.
There exists a maximal-lifespan solution $u_{E_1}(t)$ to \eqref{eq:NLS} such that
\begin{enumerate}
\item $u_{E_1}(t)$ does not scatter for positive time direction.
\item $u_{E_1}(t)$ attains $E_1$ in such a sense that one of the following holds;
\begin{enumerate}
\item $\ell(u_{E_1}(0)) = E_1$,
\item $u_{E_1}(t)$ scatters for negative time direction and $$\ell_r\(\lim_{t\to-\I} e^{-it\Delta} u_{E_1}(t)\) = E_1.$$
\end{enumerate}
\end{enumerate}
\end{theorem}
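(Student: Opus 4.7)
The plan is a concentration--compactness / critical-element construction in the spirit of Kenig--Merle, adapted to the hat-Morrey setting. First I would fix a minimizing sequence of maximal-lifespan solutions $u_n$ that do not scatter for positive time direction, with $\ell_r(u_n(0))\to E_1$. After a time translation, and then a Galilean modulation $u_n(0)\mapsto e^{-ix\cdot\xi_n}u_n(0)$ (which preserves both the equation and $\ell_r$), I may assume that $\|u_n(0)\|_{\hat M^{d\alpha}_{2,r}}$ is essentially equal to $\ell_r(u_n(0))$, so that the sequence is bounded in $\hat M^{d\alpha}_{2,r}$.

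The central tool is a linear profile decomposition for bounded sequences in $\hat M^{d\alpha}_{2,r}$. Since \eqref{asmp:alpha} keeps us off the endpoint $r=((d+2)\alpha)^*$, the underlying refined Strichartz / hat-Morrey Stein--Tomas estimate is non-critical, and a decomposition
\[
  u_n(0)=\sum_{j=1}^J G_n^j\, e^{it_n^j\Delta}\phi^j + w_n^J
\]
can be produced, in which each $G_n^j$ is a product of scalings, space translations and modulations under which $\ell_r$ is invariant, with (i) asymptotic orthogonality of the parameters $(h_n^j,x_n^j,\xi_n^j,t_n^j)$, (ii) a Pythagorean-type lower bound
\[
  E_1^r=\liminf_{n\to\infty}\ell_r(u_n(0))^r\ge \sum_{j=1}^J \ell_r(\phi^j)^r+\limsup_{n\to\infty}\ell_r(w_n^J)^r,
\]
and (iii) smallness of $e^{it\Delta}w_n^J$ in the relevant Strichartz norm as $J,n\to\infty$.

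Next, I would attach to each profile a nonlinear profile $V^j$: if the shifts $t_n^j$ stay bounded (passing to a subsequence, $t_n^j\to 0$ after an auxiliary time translation), $V^j$ is the maximal-lifespan solution with data $\phi^j$; if $t_n^j\to\pm\infty$, $V^j$ is the nonlinear solution that scatters to $e^{it\Delta}\phi^j$ as $t\to\mp\infty$. Suppose for contradiction that more than one $V^j$ fails to scatter for positive time. The Pythagorean bound then forces $\ell_r(\phi^j)<E_1$ for every surviving profile, so by the very definition of $E_1$ each such $V^j$ must in fact scatter in both time directions. Combining this with the smallness of $w_n^J$ and the stability theory for \eqref{eq:NLS} in $\hat M^{d\alpha}_{2,r}$ (available from Theorem \ref{thm:LWP} together with standard nonlinear estimates on hat-Morrey norms), a nonlinear superposition argument shows that $u_n$ itself scatters for positive time for all large $n$, contradicting the choice. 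Hence exactly one profile $\phi^1$ survives, all others vanish, and $e^{it\Delta}w_n^J\to 0$ in Strichartz.

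The surviving nonlinear profile $V^1$ is the candidate $u_{E_1}$, and the alternative is read off from $t_n^1$: if $t_n^1\to-\infty$ then $V^1$ would scatter for positive time, contradicting (1), so this case is excluded; if $t_n^1$ stays bounded, time-translating yields alternative (a); if $t_n^1\to+\infty$ then $V^1$ scatters for negative time with scattering state $\phi^1$, and $\ell_r(\phi^1)=E_1$ by the Pythagorean identity, i.e.\ alternative (b). The main obstacle is establishing the linear profile decomposition together with a Pythagorean identity that respects the modulation infimum built into $\ell_r$ rather than just the norm of $\hat M^{d\alpha}_{2,r}$, and setting up a sufficiently robust nonlinear stability theory in this space; both are substantially more delicate in the Morrey setting than in the familiar $L^2$-based frameworks.
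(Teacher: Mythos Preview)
Your proposal is correct and follows essentially the same route as the paper: profile decomposition of a minimizing sequence in $\hat M^{d\alpha}_{2,r}$, nonlinear profiles, a stability argument showing at least one profile must fail to scatter forward, and then the decoupling inequality together with the definition of $E_1$ to force all other profiles and the remainder to vanish, with the dichotomy (a)/(b) read off from the limiting time shift. The paper's presentation differs only cosmetically, phrasing the uniqueness step directly as ``$\ell_r(\phi^1)\ge E_1$ so the decoupling inequality is saturated'' rather than via your contrapositive.
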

\begin{remark}
In the above theorem, validity of the case (2)-(a) implies that existence of a minimizer as in Theorems \ref{thm:previous3}
and \ref{thm:previous4}. On the other hand, (2)-(b) corresponds to the situation as in 
the energy-critical case (Theorem \ref{thm:previous2}).
Notice that in the energy-critical case, the case (2)-(a) never happens.
\end{remark}
We next state existence of a minimizer to $E_2$ of which flow
is totally bounded modulo dilations, and translations in both physical and Fourier sides.
\begin{theorem}\label{thm:main3}
Let $d\ge1$ and suppose \eqref{asmp:alpha}.
There exists a maximal-lifespan solution $u_{E_2}(t)$ to \eqref{eq:NLS} such that
\begin{enumerate}
\item $u_{E_2}(t)$ does not scatter for both time directions.
\item $\sup_{t\in I_{\max}(u_{E_2}) \cap \{t\ge 0\}} \ell_r(u_{E_2}(t))= \sup_{t\in I_{\max}(u_{E_2}) \cap \{t\le 0\}} \ell_r(u_{E_2}(t))=E_2$.
\item $u_{E_2}(t)$ is almost periodic modulo symmetry
i.e. there exist $y(t), z(t): I_{\max}(u_{E_2}) \to \R^d$,
$N(t): I_{\max}(u_{E_2}) \to 2^\Z$, and $C(\eta)>0$ such that 
\begin{multline}\label{eq:apms}
	\sup_{|w|\le \frac{N(t)}{C(\eta)}} \norm{ (e^{iw\cdot(x-y(t))}-1) u_{E_2}(t) }_{\hat{M}^{d\alpha}_{2,r}} \\
	+ \norm{\F^{-1} {\bf 1}_{|\xi-z(t)| \ge C(\eta) N(t)} \F u_{E_2}(t)}_{\hat{M}^{d\alpha}_{2,r}} \le \eta
\end{multline}
for any $\eta>0$ and for any $t\in I_{\max}(u_{E_2})$.
\end{enumerate}
\end{theorem}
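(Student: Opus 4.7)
To prove Theorem \ref{thm:main3}, the plan is to run a concentration-compactness and rigidity argument of Kenig--Merle type, adapted to the hat-Morrey framework of this paper. Start with a minimizing sequence $\{u_n\}$ of solutions to \eqref{eq:NLS} that do not scatter for positive time direction, with
\[
\limsup_{t\uparrow T_{\max}(u_n)} \ell_r(u_n(t)) \longrightarrow E_2,
\]
and select times $t_n \in I_{\max}(u_n)$ at which $\ell_r(u_n(t_n))$ exceeds $E_2$ by at most $o(1)$. Because $\{e^{it\Delta}\}_t$ is a one-parameter group of linear isometries on $\hat{M}^{d\alpha}_{2,r}$, time translation reduces the problem to a bounded sequence of initial data $\{u_n(0)\}$ with $\ell_r(u_n(0)) \to E_2$.

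The key ingredient is a linear profile decomposition for bounded sequences in $\hat{M}^{d\alpha}_{2,r}$. The natural symmetry group consists of spatial translations, Galilean boosts (the invariance of $\ell_r$ under which is already built into its definition via the infimum over $\xi$), dyadic dilations, and time shifts of the linear evolution; this produces
\[
u_n(0) = \sum_{j=1}^{J} g_n^j \, e^{i s_n^j \Delta} \phi^j + w_n^J,
\]
with parameters $\{(g_n^j, s_n^j)\}$ pairwise asymptotically orthogonal and with $w_n^J$ arbitrarily small, as $J, n \to \infty$, in the Strichartz-type norm (built from the hat-Morrey spaces of Theorem \ref{thm:LWP}) that controls the scattering criterion underlying Theorem \ref{thm:main1}. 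The decomposition comes with a Pythagorean-type decoupling for $\ell_r$ that exploits both the $\ell^r$-aggregation over dyadic frequency cubes in the definition of the hat-Morrey norm and the asymptotic orthogonality of the symmetry parameters.

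Next, promote the linear decomposition to nonlinear profiles $U^j$, the maximal-lifespan solutions of \eqref{eq:NLS} attached to the $\phi^j$, and approximate the nonlinear evolution of $u_n(0)$ by $\sum_j g_n^j U^j(\cdot - s_n^j) + e^{i \cdot \Delta} w_n^J$ via a stability/perturbation lemma for \eqref{eq:NLS} in $\hat{M}^{d\alpha}_{2,r}$ derived from Theorem \ref{thm:LWP} and the associated Strichartz theory. If every $U^j$ scattered forward in time, the stability lemma would force $u_n$ to scatter, contradicting our choice of $\{u_n\}$. Hence at least one nonlinear profile, say $U^1$, does not scatter forward; the Pythagorean decoupling and the definition of $E_2$ then force every other profile and the remainder to vanish in the limit, and
\[
\sup_{t \in I_{\max}(U^1) \cap \{t\ge 0\}} \ell_r(U^1(t)) = E_2.
\]
Setting $u_{E_2} := U^1$ (after absorbing the surviving symmetry) gives assertion (1) and the forward half of assertion (2); the backward half follows by running the same argument in negative time, using the invariance of $\ell_r$ under complex conjugation and the time-reversal symmetry of \eqref{eq:NLS}.

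Finally, almost periodicity modulo symmetry in assertion (3) comes from applying the same single-profile reduction to $\{u_{E_2}(t_n)\}$ for any sequence $t_n \in I_{\max}(u_{E_2})$: the Pythagorean decoupling together with the sharpness $\sup_t \ell_r(u_{E_2}(t)) = E_2$ forces exactly one profile to survive, showing that $\{u_{E_2}(t_n)\}$ is precompact in $\hat{M}^{d\alpha}_{2,r}$ modulo the symmetry group, and extracting the associated parameters yields $y(t), z(t) \in \R^d$ and $N(t) \in 2^\Z$ verifying \eqref{eq:apms}. The hardest step I anticipate is establishing the linear profile decomposition together with the Pythagorean decoupling for $\ell_r$ in the hat-Morrey space: unlike in the $L^2$ setting, $\ell_r$ is an $\ell^r$ aggregation of $L^{q'}$ masses on dyadic frequency cubes, further infimized over frequency translations, so the decoupling requires careful handling of both physical- and Fourier-side concentration of distinct profiles as well as of the possible overlaps of their dyadic frequency supports. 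Once that decoupling is secured, the remainder of the argument follows the by-now-standard concentration-compactness template.
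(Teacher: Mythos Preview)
Your overall outline follows the Kenig--Merle concentration-compactness template and matches the paper's strategy in spirit, but there is a substantive gap in the step where you assert that ``the Pythagorean decoupling and the definition of $E_2$ then force every other profile and the remainder to vanish.'' The Pythagorean inequality coming from the linear profile decomposition controls $\sum_j \ell_r(\phi^j)^r$ by $\limsup_n \ell_r(u_n(0))^r$, i.e.\ by information at the single time $t=0$. On the other hand, the fact that $U^1$ does not scatter forward only yields $\limsup_{t\uparrow T_{\max}(U^1)}\ell_r(U^1(t))\ge E_2$; it says nothing about $\ell_r(\phi^1)$ itself. Hence the decoupling at $t=0$ cannot by itself rule out further nonzero profiles. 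The paper closes this gap (Lemma~\ref{lem:pf2_2}) by first introducing the auxiliary level $E_c\le E_2$ and arranging that $\sup_{t\in I_n}\ell_r(u_n(t))\to E_c$, then proving a \emph{time-dependent} decoupling inequality \eqref{eq:decouple2} at carefully chosen times $t_n\in K_n^m$ at which $\ell_r(v_n^1(t_n))$ nearly attains $E_c$; this forces $\inf_t\ell_r(\Phi^j(t))=0$ for every $j\ge2$, whence $\Phi^j\equiv0$ by small data scattering. This time-dependent decoupling, not the static Pythagorean inequality for the data, is the crux of the uniqueness step, and it is exactly the place where the $E_2$ problem differs from the $E_1$ problem (compare Section~\ref{sec:main2}).

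There is a second gap in your treatment of the backward time direction. Running the argument separately in negative time would produce a \emph{different} minimal solution that fails to scatter backward; it does not show that the \emph{same} $u_{E_2}$ fails to scatter backward and satisfies the backward half of assertion~(2). The paper handles this by selecting $t_n$ so that both $S_{\ge t_n}(u_n)\to\infty$ and $S_{\le t_n}(u_n)\to\infty$ simultaneously (this is where working with $E_c$ and $L(E_c)=\infty$ is used), so that the single extracted profile $\Phi^1$ inherits non-scattering in both directions at once. Finally, your sketch of almost periodicity produces only sequence-dependent symmetry parameters; the paper supplies an explicit pointwise construction of $\lambda(t),a(t),b(t)$ from the data $v(t)$ alone (Step~1 of the proof in Section~\ref{sec:main3}) and then checks that any parameters arising from Proposition~\ref{prop:key_convergence} must remain within bounded symmetry of these fixed ones, which is what yields the uniform precompactness statement~\eqref{eq:apms}.
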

\begin{remark}
The validity of \eqref{eq:apms} is equivalent to pre-compactness (or total boundedness) of the set
\[
	\left\{ 
	\frac{e^{-iN(t)^{-1}x\cdot z(t)}}{N(t)^{1/\alpha}} u_{E_2}\(t,\frac{x}{N(t)}+y(t)\) \middle|
	t \in I_{\max}
	\right\} \subset \hat{M}_{2,r}^{d\alpha}
\]
(see Theorem \ref{thm:totallybounded}, below). $y(t)$ and $z(t)$ correspond to \emph{a spacial center} and
\emph{a frequency center}, respectively.
The meaning of the smallness of the first term in the left hand side of \eqref{eq:apms} is 
close to that of $\norm{ {\bf 1}_{|x-y(t)|\ge C(\eta)/N(t)} u_{E_2}(t) }_{\hat{M}^{d\alpha}_{2,r}}$.
However, the equivalence of two smallnesses is not clear.
\end{remark}
The ground state solution is an example that does not scatter
and is almost periodic modulo symmetry.
Namely, the ground state solution satisfies
the first and third property of Theorem \ref{thm:main3}. 
In mass-critical and energy-critical cases, $E_2$ coincides with the size of ground state (Theorems \ref{thm:previous1} and \ref{thm:previous2}).
In the proofs of these theorems, a solution with almost periodicity modulo symmetry  
plays a crucial role.
The main step of the proof there is to derive a contradiction from the assumption
that $E_2$ is less than the size of ground state via a precise analysis on
almost-periodic-modulo-symmetry
solutions similar to that given in Theorem \ref{thm:main3}.
In view of these facts, one conjecture in our case would be
$E_2(\alpha, \hat{M}^{d\alpha}_{2,r} , \ell_r) = \ell_r(Q)$.
This equality insists that every nonscatter solution $u(t)$ (even $u_{E_1}(t)$ given in Theorem \ref{thm:main2}) satisfies
\[
	\limsup_{t \uparrow T_{\max}} \ell_r (u(t)) \ge \ell_r (Q),
\]
which seems reasonable from the view point of the soliton resolution conjecture.
However, it is not clear even if we believe that there is no almost-periodic-modulo-symmetry solution
``smaller'' than the ground states, as in the mass-critical or energy-critical cases.
One negative reason is that
 we do not know whether the size function $\ell_r$ is chosen well enough to capture such phenomena.
An appropriate choice of a size function (for the above conclusion)
would be given by a variational characterization of $Q$,
which is not known in $\hat{M}^{d\alpha}_{2,r}$.

The rest of the paper is organized as follows.
In Section \ref{sec:preliminaries}, we introduce basic facts and several tools.
In particular, Theorems \ref{thm:LWP} and \ref{thm:main1} are established in this section.
Section \ref{sec:compactness} is devoted to the study of a compactness result,
a linear profile decomposition (Theorem \ref{thm:pd}).
Then, we turn to the minimization problems. We prove Theorem \ref{thm:main3}
in Section \ref{sec:main3}, and Theorem \ref{thm:main2} in Section \ref{sec:main2}.

\section{Preliminaries}\label{sec:preliminaries}

\subsection{Strichartz' estimates}
Strichartz' estimate is a key tool for well-posedness theory.
The estimates is naturally extended in terms of
 hat-Lebesgue spaces and hat-Morrey spaces.
In one dimensional case, this kind of generalization is established in \cite{HT}.

We first introduce Strichartz' estimate in hat-Morrey space.
\begin{proposition}\label{prop:Strichartz}
If 
\[
	% 	r > \frac{(d+3)}{(d+1)} \max(2,q)
	\frac1p < \frac{d+1}{d+3} \min\(\frac12,\frac1q\)
\]
then
\[
	\norm{e^{it\Delta}f}_{L^{p}_{t,x}(\R^{1+d})}
	\le C
	\norm{f}_{\hat{M}^{\frac{dp}{d+2}}_{q,p^*}},
\]
where $p^* = \min(p, \frac{2p}{p-2})$.
\end{proposition}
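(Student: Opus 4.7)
The plan is to adapt the one-dimensional argument of \cite{MS2} (or Bourgain's and Moyua–Vargas–Vega's refined Strichartz program) to higher dimensions by combining Tao's bilinear restriction theorem for the paraboloid with a Whitney decomposition on the Fourier side and an $\ell^{p^*}$-orthogonality argument on the physical side.

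First I would perform a Whitney decomposition of $\R^d\times\R^d$ off the diagonal. Writing $f_\tau := \F^{-1}(\chi_\tau \F f)$ for $\tau\in\mathcal{D}$ and expanding, one obtains a decomposition
\[
|e^{it\Delta}f|^2 = \sum_{j\in\Z}\sum_{\substack{\tau_1,\tau_2\in\mathcal{D}_j \\ \tau_1\sim\tau_2}} (e^{it\Delta}f_{\tau_1})\,\overline{e^{it\Delta}f_{\tau_2}},
\]
where $\tau_1\sim\tau_2$ means that $\tau_1,\tau_2$ have side length $2^{-j}$ and centers separated by a distance comparable to $2^{-j}$. The key dispersive input is Tao's bilinear restriction estimate, which, after the parabolic rescaling, provides for $p>\tfrac{2(d+3)}{d+1}$ and each comparable pair the bound
\[
\bigl\|(e^{it\Delta}f_{\tau_1})(e^{it\Delta}f_{\tau_2})\bigr\|_{L^{p/2}_{t,x}(\R^{1+d})} \lesssim 2^{-\sigma(d,p)\,j}\,\|\F f_{\tau_1}\|_{L^2(\tau_1)}\|\F f_{\tau_2}\|_{L^2(\tau_2)},
\]
with the scaling exponent $\sigma(d,p)$ dictated by homogeneity. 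Interpolating this against the trivial bound $\|e^{it\Delta}f_\tau\|_{L^\infty_{t,x}}\le\|\F f_\tau\|_{L^1(\tau)}$ upgrades the $L^2$ version to a version with $\|\F f_{\tau_i}\|_{L^{q'}(\tau_i)}$ on the right-hand side, still valid under the stated hypothesis $\tfrac1p<\tfrac{d+1}{d+3}\min(\tfrac12,\tfrac1q)$.

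The second stage is to sum the bilinear bounds into a single linear estimate. Here I would apply the Tao–Vargas–Vega style orthogonality: modulations coming from separated frequency cubes are almost orthogonal in $L^{p/2}_{t,x}$, which permits exchanging the $L^{p/2}$ norm of the sum above with an $\ell^{p^*/2}$ sum over cubes at each dyadic scale, followed by an $\ell^{p^*}$ sum over scales. The exponent $p^*=\min(p,\tfrac{2p}{p-2})$ appears precisely because it is the smallest exponent for which Minkowski's inequality (regime $p\le 4$) or a Córdoba-type square-function argument (regime $p\ge 4$) yields summable control. Combined with the scaling weight $|\tau|^{1/p_0'-1/q_0'}$ produced by $\sigma(d,p)$, this identifies the target norm as $\hat{M}^{dp/(d+2)}_{q,p^*}$, and squaring gives
\[
\|e^{it\Delta}f\|_{L^p_{t,x}}^2 = \bigl\||e^{it\Delta}f|^2\bigr\|_{L^{p/2}_{t,x}} \lesssim \|f\|_{\hat{M}^{dp/(d+2)}_{q,p^*}}^2.
\]

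I expect the main obstacle to be the $\ell^{p^*}$-orthogonality step. In the one-dimensional situation treated in \cite{MS2} the combinatorics of Whitney pairs is essentially trivial, and the inter-cube orthogonality follows from an elementary disjointness-of-supports argument on the physical side. In higher dimensions the geometry is considerably more delicate: the number of neighbors of each cube grows, and controlling overlaps among wave-packet tubes from distinct frequency cubes requires a careful dimensional accounting. The endpoint nature of the condition $\tfrac1p<\tfrac{d+1}{d+3}\min(\tfrac12,\tfrac1q)$ leaves essentially no slack, so verifying that the summation across scales closes at the prescribed exponent $p^*$ (rather than some larger exponent) is where the work concentrates.
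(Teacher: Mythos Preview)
Your approach is correct and coincides with the paper's: the paper simply remarks that the proof is the same as \cite[Theorem~1.2]{BV} (which is the case $p=2(d+2)/d$), with Tao's bilinear restriction estimate as the key input, and this is exactly the Whitney-decomposition/bilinear/$\ell^{p^*}$-orthogonality scheme you outline. Your worry about the higher-dimensional orthogonality step is unwarranted, since the B\'egout--Vargas argument already treats all $d\ge 1$; the passage from the mass-critical exponent to general $p$ is a routine modification.
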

The proof is similar to \cite[Theorem 1.2]{BV} which corresponds to the case $p=2(d+2)/d$.
The condition of the proposition is necessary because the proof is based on 
the following bilinear restriction estimate.
\begin{proposition}[\cite{T}]
Let $Q,Q'$ be cubes of sidelength $1$ in $\R^d$ such that
\[
	\min \{ |x-y| \ |\ x \in Q, y \in Q'\} \sim 1
\]
and let $f,g$ be functions such that $\hat{f}$ and $\hat{g}$ are supported in $Q$ and $Q'$, respectively.
Then, for all $p>\frac{2(d+3)}{d+1}$ and all $q$ such that
\[
	\frac1{q} > \frac{d+3}{d+1}\frac1p,
\] 
it holds that
\[
	\norm{(e^{it\Delta}f )(e^{it\Delta}g)}_{L^{p/2}_{t,x}} \le C \norm{f}_{\hat{L}^q} \norm{g}_{\hat{L}^q}
\]
with a positive constant $C$ independent of $Q$, $Q'$, $f$, and $g$.
\end{proposition}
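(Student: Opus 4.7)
This is Tao's sharp bilinear restriction theorem for the paraboloid \cite{T}. The plan is to establish the endpoint case $q = 2$ via the induction-on-scales and wave packet decomposition scheme of Tao, and then extend to the full range of $q$ by interpolating against a trivial $L^\infty$ bound.

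For the interpolation step, the trivial endpoint estimate
\[
  \norm{(e^{it\Delta}f)(e^{it\Delta}g)}_{L^{\I}_{t,x}}\le \norm{\F f}_{L^1}\norm{\F g}_{L^1}=\norm{f}_{\hat{L}^{\I}}\norm{g}_{\hat{L}^{\I}}
\]
corresponds to the point $(1/p,1/q) = (0,0)$. Complex interpolation of this with the $q=2$ estimate at any $p > 2(d+3)/(d+1)$ sweeps out exactly the open region $1/q > (d+3)/(d+1)\cdot 1/p$, which is precisely the range claimed in the proposition. The constant in the interpolated estimate remains independent of $Q, Q', f, g$ because both endpoint constants are.

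The substance of the argument is therefore the $q=2$ case. First I would carry out the standard reductions: a Whitney decomposition of the off-diagonal region of $Q\times Q'$ and rescaling reduce matters to the case where $Q, Q'$ are unit cubes with $\mathrm{dist}(Q, Q')\sim 1$, which is exactly the geometric hypothesis. The $L^2$ base case
\[
  \norm{(e^{it\Delta}f)(e^{it\Delta}g)}_{L^2_{t,x}}\lesssim \norm{f}_{L^2}\norm{g}_{L^2}
\]
then follows from Plancherel in spacetime: the push-forward of the product of paraboloid measures on $Q\times Q'$ under $(\xi,\eta)\mapsto(\xi+\eta,|\xi|^2+|\eta|^2)$ has bounded density on the transversal region, the relevant Jacobian being non-degenerate when $|\xi-\eta|\sim 1$. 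The $L^{p/2}$ estimate for every $p > 2(d+3)/(d+1)$ is then obtained by Tao's induction-on-scales bootstrap: at a large scale $R$, decompose $f = \sum_{T\in\mathcal{T}(f)} f_T$ into wave packets, each $f_T$ concentrated in an $R^{-1/2}$-cap $\theta\subset Q$ in Fourier and on a tube $T$ of radius $R^{1/2}$ and length $R$ oriented along the group velocity $2\xi_\theta$; the extension $e^{it\Delta}f_T$ lives essentially on this spacetime tube. Transversality forces each pair $T\in\mathcal{T}(f), T'\in\mathcal{T}(g)$ to intersect in a spacetime region of bounded volume, and a combinatorial count of such intersections combined with the $L^2$ base case and the induction hypothesis at the next smaller scale closes the induction with an $R^{\varepsilon}$-loss, which is removed by the standard $\varepsilon$-removal lemma.

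The hard part is, as always for this theorem, the induction step itself: setting up the wave packet decomposition with the correct Bessel-type orthogonality inside each tube family and carrying out the combinatorial bound on tube intersections sharply enough that the induction closes exactly at $p>2(d+3)/(d+1)$, with no further loss. This is the technical core of \cite{T}, and in the write-up I would quote that reference for the full details rather than attempt to re-derive the bilinear restriction theorem here.
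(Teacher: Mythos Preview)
Your sketch is correct in substance: the $q=2$ case is exactly Tao's bilinear restriction theorem for the paraboloid, and the extension to the stated range of $q$ follows by complex interpolation with the trivial $\hat L^\infty$ endpoint as you describe. The paper itself, however, does not prove this proposition at all; it simply quotes it from \cite{T} as an input to the hat-Morrey Strichartz estimate (Proposition~\ref{prop:Strichartz}). So there is nothing to compare against --- the paper treats the result as a black box, and your outline supplies precisely the content that the citation points to.
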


Let us proceed to Strichatz' estimates in hat-Lebesgue space.
We have an embedding between hat-Morrey and hat-Lebesgue spaces. 
\begin{proposition}\label{prop:gm_embedding}
We have the following embeddings.
\begin{itemize}
\item If $1 \le q < p < r \le \I $ then $L^p \hookrightarrow M^p_{q,r}$.
\item If $1\le q' <  p' <r \le \I $ then $\hat{L}^p \hookrightarrow \hat{M}^{p}_{q,r}$.
\end{itemize}
\end{proposition}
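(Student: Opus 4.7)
The plan is to reduce (ii) to (i) via the Fourier transform, then establish (i) by a combination of Hölder's inequality and a multi-scale argument.

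Part (ii) follows from part (i) by applying $\mathcal F$. Writing $g := \mathcal F f$, the definitions give $\|f\|_{\hat L^p} = \|g\|_{L^{p'}}$ and $\|f\|_{\hat M^p_{q,r}} = \|g\|_{M^{p'}_{q',r}}$, and the hypothesis $1 \le q' < p' < r$ in (ii) is exactly the hypothesis of (i) for the exponent triple $(p',q',r)$ applied to $g$. So only (i) needs to be established.

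For (i), the starting point is the cube-wise Hölder estimate
\[
|\tau^j_k|^{1/p - 1/q}\|f\|_{L^q(\tau^j_k)} \le \|f\|_{L^p(\tau^j_k)},
\]
which in the case $r = \infty$ immediately yields $\|f\|_{M^p_{q,\infty}} \le \|f\|_{L^p}$ by taking the supremum over $(j,k)$. For finite $r$, the naive sum of $\|f\|^r_{L^p(\tau^j_k)}$ over all $(j,k)$ diverges in the scale index $j$, since for each fixed $j$ the tiling property already gives $\sum_k \|f\|^p_{L^p(\tau^j_k)} = \|f\|^p_{L^p}$. I would therefore first verify the estimate for a single building block $f = c\mathbf 1_Q$ ($Q$ a cube) by splitting the dyadic cubes $\tau$ into the regimes $|\tau| \ll |Q|$ and $|\tau| \gg |Q|$: in the first regime the cube count $\sim|Q|/|\tau|$ is dominated by the shrinking weight $|\tau|^{r/p}$ thanks to $r > p$, and in the second regime only $O(1)$ cubes contribute and the weight $|\tau|^{(1/p-1/q)r}$ decays thanks to $q < p$, giving two convergent geometric series whose sum is $\lesssim c^r|Q|^{r/p}$.

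For a general $f \in L^p$ the plan is to combine a layer-cake decomposition $f = \sum_m f_m$ with $f_m = f\mathbf 1_{\{2^m \le |f| < 2^{m+1}\}}$ together with the exact disjoint-support identity $A_{j,k}(f)^q = \sum_m A_{j,k}(f_m)^q$, then regroup the pieces spatially so that distinct blocks live on well-separated dyadic regions; on such essentially orthogonal pieces one has
\[
\|f\|^r_{M^p_{q,r}} \lesssim \sum_{\text{blocks}} \|f_{\text{block}}\|^r_{M^p_{q,r}} \lesssim \sum_{\text{blocks}} \|f_{\text{block}}\|^r_{L^p},
\]
and the embedding $\ell^p \hookrightarrow \ell^r$ (available since $r > p$) closes this to $\|f\|^r_{L^p}$. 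The main technical obstacle is precisely this orthogonalization step: a crude Minkowski inequality in $\ell^{r/q}$ gives only $\|f\|^q_{M^p_{q,r}} \le \sum_m 2^{mq}|E_m|^{q/p}$, which in general exceeds $\|f\|^q_{L^p} = (\sum_m 2^{mp}|E_m|)^{q/p}$, so one really must separate the spatial and level-set contributions carefully in order to use $\ell^p \subset \ell^r$ on pieces that are genuinely non-interacting.
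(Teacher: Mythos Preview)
The paper does not actually prove this proposition; it refers the reader to \cite{Bo2,MVV,BV,MS2} and remarks that only the one-dimensional case appears there but the modification to higher dimensions is obvious. So there is no ``paper's proof'' to compare against beyond those citations.

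Your reduction of (ii) to (i) via the Fourier transform is correct, and your treatment of the case $r=\infty$ by the cubewise H\"older inequality is complete. Your computation for a single indicator $f=c\mathbf 1_Q$ is also correct: the two geometric series (over small and large dyadic scales) converge precisely because $q<p<r$.

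The gap is the passage to general $f\in L^p$, and you say so yourself: you describe a layer-cake plus spatial-regrouping strategy and then flag the ``orthogonalization step'' as an unresolved obstacle. That is not a proof. There are in fact two problems with the outlined route. First, the level sets $E_m=\{2^m\le |f|<2^{m+1}\}$ are in general neither cubes nor even bounded, so the single-cube estimate does not apply to $f_m$; one would have to further decompose each $E_m$ into dyadic pieces. Second, and more seriously, once you do that the resulting blocks coming from different values of $m$ are typically nested rather than disjoint, so they are \emph{not} orthogonal in the $M^p_{q,r}$ norm, and the inequality $\|f\|_{M^p_{q,r}}^r\lesssim\sum_{\text{blocks}}\|f_{\text{block}}\|_{M^p_{q,r}}^r$ that you want is simply false in general. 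Your own observation that Minkowski in $\ell^{r/q}$ only yields $\sum_m 2^{mq}|E_m|^{q/p}$, which by concavity is the wrong side of $(\sum_m 2^{mp}|E_m|)^{q/p}$, is exactly this failure.

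The arguments in the cited references do not go through a layer-cake decomposition of $f$. A clean way to organize the proof is to observe that the sublinear map $f\mapsto\{a_{j,k}\}_{j,k}$, with $a_{j,k}=|\tau^j_k|^{1/p-1/q}\|f\|_{L^q(\tau^j_k)}$, satisfies on the one hand the $\ell^\infty$ bound $\sup_{j,k}a_{j,k}\le\|f\|_{L^p}$ (your H\"older step), and on the other hand a weak-type bound at level $p$ coming from the dyadic structure; interpolating between these gives $\ell^r$ for every $r>p$. Equivalently, one can reduce to the case $q=1$ by setting $g=|f|^q$ and then exploit the monotonicity $\sum_{\tau\in\mathcal D_{j}}\bigl(|\tau|^{1/s-1}\textstyle\int_\tau g\bigr)^s\le\|g\|_{L^s}^s$ (with $s=p/q$) together with the strict inequality $r>p$ to control the sum over scales. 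Either way, the mechanism is an interpolation in the sequence space rather than a decomposition of $f$.
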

For the proof, see \cite{Bo2,MVV,BV,MS2}.
The only one dimensional case is considered there, however the modification is obvious.
\begin{proposition}\label{prop:StrichartzhL}
Let $d\ge 1$ and let $1 \le p,q,r \le \I$ be such that
\[
	\frac2p + \frac{d}q = \frac{d}r.
\]
Assume that a triplet $(p,q,d)$ is either $(p,q,d)=(\I,\I,2)$,  $(p,q,d)=(2,\frac{2d}{d-2},d)$ with $d\ge 3$,
or satisfies
\[
	0 \le \frac1q \le \frac12,\quad 0\le \frac1p < \frac12 -\frac1q ,\quad \frac1p \le \frac14, 	
\] 
if $d=1$;
\[
	0 < \frac1q \le \frac12,\quad 0\le \frac1p < \min \( -\frac2{3q} + \frac12 , -\frac{3}{2q} + \frac34 \)
\] 
if $d=2$; and
\[
	0 \le \frac1q \le \frac12,\quad 0\le \frac1p \le \frac{d}{d-2}\frac1q,
\] 
\[
	\frac1p <  -\frac{d}{3}\(\frac1q -\frac{d+1}{2(d+3)}\) + \frac{d+1}{2(d+3)},
\]
\[
	\frac1p < -\frac{d+1}{2}\(\frac1q -\frac{d+1}{2(d+3)}\) + \frac{d+1}{2(d+3)}
\]
if $d\ge 3$. 
Then, it holds that
\begin{equation}\label{eq:hL_hom}
	\norm{e^{it\Delta} g}_{L^{p}_{t}L^q_x(\R^{1+d})}
	\le C \norm{g}_{\hat{L}^{r}(\R^d)}
\end{equation}
and that
\begin{equation}\label{eq:hL_dual}
	\norm{\int_\R e^{-it'\Delta} F(t') dt'}_{\hat{L}^{r'}(\R^d)}
	\le C\norm{F}_{L^{p'}_t L^{q'}_x(\R^{1+d})}
\end{equation}
for some positive constant $C$.
\end{proposition}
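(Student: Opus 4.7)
The plan is to deduce Proposition \ref{prop:StrichartzhL} from the hat-Morrey Strichartz estimate (Proposition \ref{prop:Strichartz}) and the embedding (Proposition \ref{prop:gm_embedding}), then extend to mixed norms by interpolation with a few trivial endpoints, and finally dualize.

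First I would obtain the ``diagonal'' non-mixed case $p=q$, which under the scaling $\frac{2}{p}+\frac{d}{p}=\frac{d}{r}$ forces $r=\frac{dp}{d+2}$. Proposition \ref{prop:Strichartz} gives
\[
\norm{e^{it\Delta}f}_{L^p_{t,x}(\R^{1+d})}\le C\,\norm{f}_{\hat{M}^{\frac{dp}{d+2}}_{q,p^*}},
\]
and the second embedding in Proposition \ref{prop:gm_embedding}, applied with exponent $\frac{dp}{d+2}$ and summation index $p^*$, upgrades this to $\norm{e^{it\Delta}f}_{L^p_{t,x}}\lesssim\norm{f}_{\hat{L}^{dp/(d+2)}}$, provided the summation exponent is strictly larger than the dual of $\tfrac{dp}{d+2}$, which is exactly the reason for the bilinear restriction threshold $\tfrac{d+3}{d+1}$ appearing in the hypothesis of Proposition \ref{prop:Strichartz}.

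Second, I would interpolate this diagonal hat-Strichartz estimate with trivial endpoints:
\begin{itemize}
\item the energy identity $\norm{e^{it\Delta}f}_{L^\infty_t L^2_x}=\norm{f}_{L^2}=\norm{f}_{\hat{L}^2}$;
\item the trivial uniform bound $\norm{e^{it\Delta}f}_{L^\infty_t L^\infty_x}\le\norm{\widehat{f}}_{L^1}=\norm{f}_{\hat{L}^1}$;
\item for $d\ge3$, the classical Keel--Tao endpoint $\norm{e^{it\Delta}f}_{L^2_tL^{2d/(d-2)}_x}\lesssim\norm{f}_{L^2}=\norm{f}_{\hat{L}^2}$, which covers the $(p,q,d)=(2,\tfrac{2d}{d-2},d)$ case.
\end{itemize}
Real/complex interpolation between these endpoints and the diagonal estimate of Step~1 sweeps out the region of $(1/p,1/q)$ described in the statement. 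The polygonal constraints listed for $d=1,2,\ge3$ are precisely the convex hull of the corners: the condition $\tfrac{1}{p}\le\tfrac{d}{d-2}\tfrac{1}{q}$ is the segment between $(0,\tfrac12)$ and the Keel--Tao endpoint; the lines $\tfrac{1}{p}<-\tfrac{d}{3}(\tfrac{1}{q}-\tfrac{d+1}{2(d+3)})+\tfrac{d+1}{2(d+3)}$ and the other strict inequality join the diagonal corner $r=\tfrac{dp}{d+2}$, whose location is dictated by the bilinear restriction threshold, to $(0,\tfrac12)$ and $(\tfrac12,0)$ respectively.

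Third, I would obtain the dual inhomogeneous estimate \eqref{eq:hL_dual} by straightforward duality: since $e^{-it'\Delta}$ is the $L^2$-adjoint of $e^{it'\Delta}$ and $(\hat{L}^{r})^*=\hat{L}^{r'}$ (with the scaling preserved under reflection of exponents), pairing \eqref{eq:hL_hom} with $F\in L^{p'}_tL^{q'}_x$ and integrating against $g\in\hat{L}^r$ yields exactly \eqref{eq:hL_dual} by Fubini.

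The main obstacle will be the bookkeeping in Step~2: one must verify that every $(1/p,1/q)$ satisfying the stated inequalities lies in the convex hull of the endpoints, and, conversely, that every such point can be reached while respecting the hypotheses of Proposition \ref{prop:Strichartz} (in particular the restrictions $q<\tfrac{dp}{d+2}\cdot\tfrac{d+1}{d+3}\cdot\ldots$ implicit in the bilinear range). The scaling relation $\tfrac{2}{p}+\tfrac{d}{q}=\tfrac{d}{r}$ uses up one degree of freedom, so the diagram reduces to a polygon in $(1/p,1/q)$, and the conditions in the statement are exactly the description of this polygon in each of the three dimensional regimes.
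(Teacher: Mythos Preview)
Your approach coincides with the paper's: the diagonal case via Propositions~\ref{prop:Strichartz} and~\ref{prop:gm_embedding}, interpolation with the classical $r=2$ Strichartz estimates, and then duality for \eqref{eq:hL_dual}; the paper is terser and additionally defers the case $d=1$ to \cite{HT}. One small correction: in your second bullet the identity should read $\norm{\widehat f}_{L^1}=\norm{f}_{\hat L^\infty}$ (not $\hat L^1$), since by definition $\norm{f}_{\hat L^r}=\norm{\widehat f}_{L^{r'}}$ and this trivial endpoint corresponds to $r=\infty$ under the scaling relation.
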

\begin{proof}
The second estimate \eqref{eq:hL_dual} follows from \eqref{eq:hL_hom} by duality.
So, let us restrict our attention to the first estimate.
The one dimensional case is due to Hyakuna and Tsutsumi \cite{HT}.
Let us consider the multi dimensional case.
The diagonal case $p=q > \frac{2(d+3)}{d+1}$ is an immediate consequence of Propositions \ref{prop:Strichartz}
and \ref{prop:gm_embedding}.
The off-diagonal case follows by interpolating the diagonal case and well-known $r=2$ cases.
\end{proof}

As for the Strichartz estimate for the hat-Lebesgue space,
one can obtain a dual estimate \eqref{eq:hL_dual}.
Note that a dual space of a hat-Morrey space is not clear;
only a pre-dual space is characterized (see Section \ref{subsec:predual}).
% \begin{corollary}
% Let $\alpha \in (\frac{2(d+3)}{d^2+3d+2},\frac{2(d+3)}{d^2+3d-2})$. 
% Let $t_0\in\R$ and $\R \supset I \ni t_0$.
% Let $\Phi[F](t,x) = \int_{t_0}^t e^{i(t-t')\Delta} F(t') dt'$.
% Then, 
% \[
% 	\norm{\Phi[F]}_{L^\I(I,\hat{L}^{d\alpha})} + \norm{\Phi[F]}_{L^{(d+2)\alpha}_{t,x}(I\times \R^d)} 
% 	\le C \norm{F}_{L^{\frac{(d+2)\alpha}{2\alpha+1}}_{t,x}(I\times \R^d)}
% \]
% for any $F\in L^{\frac{(d+2)\alpha}{2\alpha+1}}_{t,x}(I\times \R^d)$.
% \end{corollary}

Let us now proceed to inhomogeneous estimates.
For $t_0\in\R$ and an interval $I \subset \R$ such that $I \ni t_0$,
let $$\Phi[F](t,x) = \int_{t_0}^t e^{i(t-t')\Delta} F(t') dt'. $$
The first estimate is as follows.
\begin{corollary}\label{cor:hom_dual}
If
\[
	1\le r < \frac{2}{1-\frac{2}{d(d+3)}} 
\]
then
\[
	\norm{\Phi[F]}_{L^\I(I,\hat{L}^{r})} \le C \norm{F}_{L^{\frac{(d+2)r}{2r+d}}_{t,x}(I\times \R^d)}
\]
for any $F\in L^{\frac{(d+2)r}{2r+d}}_{t,x}(I\times \R^d)$.
\end{corollary}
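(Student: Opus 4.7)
The plan is to factor $\Phi[F](t) = e^{it\Delta} G(t)$ with $G(t) := \int_{t_0}^t e^{-is\Delta} F(s)\,ds$, exploit that the Fourier multiplier $e^{-it|\xi|^2}$ has unit modulus so that $e^{it\Delta}$ acts as an isometry on $\hat{L}^r = \F L^{r'}$, and then bound $\norm{G(t)}_{\hat{L}^r}$ uniformly in $t$ by the dual Strichartz inequality \eqref{eq:hL_dual}. To handle the retarded integral, for each fixed $t\in I$ we rewrite
\[
G(t) = \int_\R e^{-is\Delta}\bigl(\mathbf{1}_{[\min(t_0,t),\max(t_0,t)]}(s)\, F(s)\bigr)\,ds,
\]
so that applying \eqref{eq:hL_dual} to the cut-off source and then estimating its $L^{(d+2)r/(2r+d)}_{s,x}$-norm by that of $F$ on $I\times\R^d$ yields a bound independent of $t$.

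To actually invoke \eqref{eq:hL_dual} we must pick a triple, call it $(P,Q,R)$ in the notation of Proposition \ref{prop:StrichartzhL}, with $R' = r$ and $P' = Q' = (d+2)r/(2r+d)$. This forces $R = r/(r-1)$ and the diagonal choice $P = Q = (d+2)r/(d(r-1))$, and the scaling identity $2/P + d/Q = d/R$ then follows by direct arithmetic. The core issue is admissibility in Proposition \ref{prop:StrichartzhL}: since the diagonal case of that proposition is proved via Proposition \ref{prop:Strichartz} together with the embedding $\hat{L}^R \hookrightarrow \hat{M}^R_{2,R^*}$ of Proposition \ref{prop:gm_embedding}, the binding constraint reduces to $1/P < (d+1)/(2(d+3))$. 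Rearranging $(d+2)r/(d(r-1)) > 2(d+3)/(d+1)$ gives exactly $r < 2d(d+3)/(d^2+3d-2) = 2/\bigl(1 - 2/(d(d+3))\bigr)$, which is the hypothesis of the corollary.

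Combining these ingredients, \eqref{eq:hL_dual} produces $\norm{G(t)}_{\hat{L}^r} \le C \norm{F}_{L^{(d+2)r/(2r+d)}_{t,x}(I\times\R^d)}$ uniformly in $t\in I$, and taking the supremum in $t$ finishes the argument; the degenerate case $r=1$ is immediate from Minkowski and Hausdorff--Young, so only $r>1$ genuinely invokes \eqref{eq:hL_dual}. The main obstacle is the bookkeeping of admissibility: Proposition \ref{prop:StrichartzhL} lists several constraints that split by dimension, and one needs to verify that on the diagonal $P=Q$ the bilinear-restriction constraint is indeed the binding one and that its translation through the relation $P = (d+2)r/(d(r-1))$ reproduces the stated bound on $r$ precisely, including for the low-dimensional cases $d=1,2$.
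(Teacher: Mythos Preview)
Your proof is correct and follows exactly the approach indicated in the paper, which simply states that the corollary follows from \eqref{eq:hL_dual} and the fact that $e^{it\Delta}$ is an isometry on $\hat{L}^r$. You have supplied the details the paper omits: the truncation argument for the retarded integral, the verification that the diagonal triple $P=Q=(d+2)r/(d(r-1))$, $R=r'$ satisfies the admissibility constraints of Proposition~\ref{prop:StrichartzhL} precisely under the hypothesis on $r$, and the separate treatment of the trivial case $r=1$.
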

This follows \eqref{eq:hL_dual} and the fact that $e^{it\Delta}$ is an isometry on $\hat{L}^{r}$.
The next one is an inhomogeneous estimate for non-admissible pairs. 
\begin{proposition}[\cite{Ka,Fo,Ko,Vi}]\label{prop:inhom}
Let $1\le p_1,p_2,q_1,q_2 \le \I$. The estimate
\[
	\norm{\Phi[F]}_{L^{p_1}_t(I,L^{q_1}_x)} \le C \norm{F}_{L^{p_2'}_t(I,L^{q_2'}_x)}
\]
holds true if the following three assumptions are fulfilled:
\begin{itemize}
\item (acceptability) For $i=1,2$,
\[
	p_i < d \( \frac12- \frac1{q_i}\) \quad \text{ or } \quad (p_i,q_i) =(\I,2);
\]
\item (scale condition)
\[
	\sum_{i=1,2} \(\frac2{p_i} + \frac{d}{q_i}\) = d;
\]
\item (additional assumption) $q_1,q_2 < \I$ if $d=2$, and
\begin{equation}\label{eq:Kcond}
	\frac1{p_1} + \frac1{p_2} < 1, \quad \frac{d-2}{d} \le \frac{q_1}{q_2} \le \frac{d}{d-2}.
\end{equation}
if $d\ge3$
\end{itemize}
\end{proposition}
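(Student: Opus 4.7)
The plan is to carry out the standard $TT^*$-duality argument for inhomogeneous Strichartz estimates originating with Kato, as refined by the cited authors. First I would dualize: the claimed estimate is equivalent to bounding the bilinear form
\[
B(F,G):=\iint_{I\times I} \bigl\langle e^{i(t-s)\Delta} F(s),\,G(t)\bigr\rangle_x\, ds\, dt
\]
by $\|F\|_{L^{p_2'}_t L^{q_2'}_x}\|G\|_{L^{p_1'}_t L^{q_1'}_x}$. Pointwise in $(s,t)$ I would then apply a dispersive-type estimate
\[
\bigl|\bigl\langle e^{i(t-s)\Delta} F(s),\,G(t)\bigr\rangle_x\bigr| \le C|t-s|^{-\sigma}\,\|F(s)\|_{L^{q_2'}_x}\,\|G(t)\|_{L^{q_1'}_x},
\]
with $\sigma=\tfrac{d}{2}\bigl(1-\tfrac{1}{q_1}-\tfrac{1}{q_2}\bigr)$. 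For the diagonal case $q_1=q_2$ this is the classical Schr\"odinger dispersive bound; for $q_1\ne q_2$ I would obtain it by Riesz--Thorin complex interpolation between $\|e^{i\tau\Delta}\|_{L^1\to L^\infty}\lesssim|\tau|^{-d/2}$ and $\|e^{i\tau\Delta}\|_{L^2\to L^2}=1$, combined with a Sobolev embedding to match the distinct indices. The constraint $(d-2)/d\le q_1/q_2\le d/(d-2)$ in dimension $d\ge 3$ is precisely what makes this factorization through an intermediate conjugate pair admissible.

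With the pointwise dispersive bound in $x$ in hand, $|B(F,G)|$ is reduced to
\[
\iint_{I\times I} |t-s|^{-\sigma}\,\|F(s)\|_{L^{q_2'}_x}\,\|G(t)\|_{L^{q_1'}_x}\,ds\,dt,
\]
which is controlled by the one-dimensional Hardy--Littlewood--Sobolev (HLS) inequality in time. The HLS scaling forces $\sigma=\tfrac{1}{p_1}+\tfrac{1}{p_2}$, which is identical to the stated scale condition after substituting the value of $\sigma$ in the $x$-variables. The constraint $\tfrac{1}{p_1}+\tfrac{1}{p_2}<1$ (the additional assumption for $d\ge 3$) is exactly the requirement $\sigma<1$ that places us in the strict HLS range, while the acceptability condition and the $d=2$ hypothesis $q_1,q_2<\infty$ handle the degenerate side $\sigma\to 0$ where one would otherwise lose the dispersive decay.

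The main obstacle is the non-conjugate dispersive step. When $q_1,q_2$ lie far apart (outside the ratio constraint), Sobolev embedding no longer allows one to factor $e^{i\tau\Delta}:L^{q_2'}\to L^{q_1}$ through an intermediate conjugate pair, and one must pass to a bilinear Fourier-restriction approach. Near the HLS endpoints $\sigma=0,1$ standard $L^p$-boundedness breaks down and weak-type or Lorentz-space refinements become necessary. Both of these subtleties are already resolved in the papers cited in the proposition, so in practice the proof amounts to invoking those results in the stated parameter range.
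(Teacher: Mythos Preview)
The paper does not prove this proposition at all: it is stated as a citation to \cite{Ka,Fo,Ko,Vi} and only the diagonal consequence \eqref{eq:inhom} is used afterward. Your sketch of the $TT^*$/dispersive/Hardy--Littlewood--Sobolev argument is indeed the backbone of the Kato--Foschi--Vilela approach in those references, and your final sentence---that in practice one simply invokes those papers---is exactly what the paper does.

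One small inaccuracy worth flagging: the off-diagonal dispersive bound $\|e^{i\tau\Delta}\|_{L^{q_2'}\to L^{q_1}}\lesssim|\tau|^{-\sigma}$ for $q_1\ne q_2$ does \emph{not} come from Sobolev embedding (that would cost derivatives and change the scaling). In Foschi and Vilela the off-diagonal range is reached by bilinear/complex interpolation between diagonal estimates (where $q_1=q_2$) and the admissible Strichartz endpoints, and the ratio constraint $(d-2)/d\le q_1/q_2\le d/(d-2)$ is the trace of that interpolation, not of an embedding. This does not affect the overall correctness of your proposal since you ultimately defer to the cited sources, but the mechanism you describe for the non-conjugate step is not the one actually used there.
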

Further restricting to the case $p_1=q_1$ and $p_2=q_2$, we obtain
\begin{equation}\label{eq:inhom}
	\norm{\Phi[F]}_{L^{(d+2)\alpha}_{t,x}(I \times \R^d)} \le C \norm{F}_{L^{\frac{(d+2)\alpha}{2\alpha+1}}_{t,x}(I\times \R^d)},
\end{equation}
% if $ \frac2d \cdot \frac{d+1}{d+2}< \alpha < \frac2d \cdot \frac{d+1}{d} $.
provided
\begin{equation}\label{eq:LWPcond}
 \frac2d \cdot \frac{1}{1+\frac1{d+1}}< \alpha < \frac2d \cdot \frac{1}{1-\frac1{1+d}}.
\end{equation}
This estimate is sufficient for our purpose.
We remark that the condition \eqref{eq:LWPcond} comes from the acceptability.
It is known that the condition \eqref{eq:Kcond} can be relaxed slightly (see \cite{Fo,Ko,Vi}).
% We use this version for the sake of simplicity since this is sufficient for the present purpose.
However, we do not recall it since, under the diagonal assumption,
the condition \eqref{eq:Kcond} is already weaker than \eqref{eq:LWPcond}.

\subsection{Well-posedness results}
With the Strihcartz' estimates, we obtain local well-posedness.
The following norm plays an important role in the well-posedness theory.
\begin{definition}[Scattering norm]
For an interval $I\subset \R$ and function $u(t,x):I \times \R^d \to \C$, we 
define a \emph{scattering norm} by
\[
	S_I(u) := \norm{u}_{L^{(d+2)\alpha}_{t,x}(I\times \R^d)}.
\]
\end{definition}
\begin{lemma}\label{lem:LWP}
Let $d\ge 1$ and $\alpha>0$ satisfy \eqref{eq:LWPcond}.
Then, there exists a constant $\delta=\delta(d)>0$ such that
if a function $u_0 \in \mathcal{S}'$ and an interval $I \subset \R$
satisfy $t_0 \in I$ and
\[
% 	\norm{e^{i(t-t_0)\Delta}u_0}_{L^{{(d+2)\alpha}}_{t,x}(I\times \R^d)} \le \delta
	S_I(e^{i(t-t_0)\Delta}u_0) \le \delta
\]
then there exists a unique solution $u(t): I \times \R^d \to \C$
to \eqref{eq:NLS} such that
\[
% 	\norm{u}_{L^{{(d+2)\alpha}}_{t,x}(I\times \R^d)} \le 2 \norm{e^{i(t-t_0)\Delta}u_0}_{L^{{(d+2)\alpha}}_{t,x}(I\times \R^d)}.
	S_I(u) \le 2 S_I(e^{i(t-t_0)\Delta}u_0).
\]
\end{lemma}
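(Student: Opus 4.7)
The plan is to run a standard contraction mapping argument for the Duhamel map
\[
	\Phi(u)(t) := e^{i(t-t_0)\Delta} u_0 + i\int_{t_0}^t e^{i(t-s)\Delta}(|u|^{2\alpha}u)(s)\,ds
\]
on a suitable ball in the scattering-norm space $L^{(d+2)\alpha}_{t,x}(I\times\R^d)$, using the non-admissible inhomogeneous Strichartz estimate \eqref{eq:inhom} as the sole analytic ingredient. Here the hypothesis \eqref{eq:LWPcond} is exactly what is required to apply Proposition \ref{prop:inhom} with the diagonal choice $p_1=q_1=p_2=q_2=(d+2)\alpha$.

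First I would introduce the complete metric space
\[
	B := \bigl\{ u \in L^{(d+2)\alpha}_{t,x}(I\times\R^d) : S_I(u) \le 2\delta \bigr\},
\]
equipped with the distance $d(u,v) = S_I(u-v)$. For $u\in B$, the linear term contributes at most $\delta$ to $S_I(\Phi(u))$ by assumption, and \eqref{eq:inhom} together with the pointwise bound $\bigl||u|^{2\alpha}u\bigr| = |u|^{2\alpha+1}$ gives
\[
	S_I\!\left(\int_{t_0}^t e^{i(t-s)\Delta}(|u|^{2\alpha}u)\,ds\right)
	\le C\,\bigl\| |u|^{2\alpha+1} \bigr\|_{L^{\frac{(d+2)\alpha}{2\alpha+1}}_{t,x}}
	= C\,S_I(u)^{2\alpha+1} \le C(2\delta)^{2\alpha+1}.
\]
Choosing $\delta=\delta(d)>0$ so that $C(2\delta)^{2\alpha+1}\le \delta$ ensures $\Phi$ maps $B$ into itself and that the target bound $S_I(u)\le 2S_I(e^{i(t-t_0)\Delta}u_0)$ is satisfied.

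For the contraction, I would use the elementary pointwise bound $\bigl| |u|^{2\alpha}u - |v|^{2\alpha}v \bigr| \lesssim (|u|^{2\alpha}+|v|^{2\alpha})|u-v|$ together with H\"older's inequality in $L^{\frac{(d+2)\alpha}{2\alpha+1}}_{t,x}$ to obtain
\[
	\bigl\| |u|^{2\alpha}u - |v|^{2\alpha}v \bigr\|_{L^{\frac{(d+2)\alpha}{2\alpha+1}}_{t,x}}
	\lesssim \bigl(S_I(u)^{2\alpha}+S_I(v)^{2\alpha}\bigr) S_I(u-v),
\]
so that \eqref{eq:inhom} yields $S_I(\Phi(u)-\Phi(v)) \le C'(2\delta)^{2\alpha} S_I(u-v)$. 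Shrinking $\delta$ further if necessary gives a strict contraction factor, hence a unique fixed point $u\in B$, which by construction satisfies the Duhamel formula and the required scattering-norm bound. Uniqueness on the ball $B$ is automatic from the Banach fixed point theorem; uniqueness in the full class follows by a standard continuity-in-time argument localizing to subintervals on which $S$ is small. The only thing to check carefully is that the exponent arithmetic is consistent with $\alpha>0$ and with the range \eqref{eq:LWPcond}, so that Proposition \ref{prop:inhom} applies at the diagonal point; this is the principal (and essentially bookkeeping) obstacle.
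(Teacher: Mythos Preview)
Your proposal is correct and follows essentially the same approach as the paper: both proofs apply the non-admissible inhomogeneous Strichartz estimate \eqref{eq:inhom} to bound $\Phi[|u|^{2\alpha}u]$ and the difference $\Phi[|u_1|^{2\alpha}u_1]-\Phi[|u_2|^{2\alpha}u_2]$ in $L^{(d+2)\alpha}_{t,x}$, then conclude by a Banach fixed-point argument. The paper's version is terser but the analytic content is identical; your write-up simply makes the ball, the smallness condition on $\delta$, and the derivation of the bound $S_I(u)\le 2S_I(e^{i(t-t_0)\Delta}u_0)$ more explicit.
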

\begin{proof}
By the non-admissible Strichartz' estimate \eqref{eq:inhom},
\begin{align*}
	\norm{\Phi[|u|^{2\alpha}u]}_{L^{(d+2)\alpha}_{t,x}(I \times \R^d)}
	&{}\le C \norm{|u|^{2\alpha}u}_{L^{\frac{(d+2)\alpha}{(2\alpha + 1)}}_{t,x}(I \times \R^d)} \\
	&{}= C \norm{u}_{L^{(d+2)\alpha}_{t,x}(I \times \R^d)}^{2\alpha + 1}
\end{align*}
and similarly,
\begin{multline*}
	\norm{\Phi[|u_1|^{2\alpha}u_1] - \Phi[|u_2|^{2\alpha}u_2]}_{L^{(d+2)\alpha}_{t,x}(I \times \R^d)} \\
	\le C (\norm{u_1}_{L^{(d+2)\alpha}_{t,x}(I \times \R^d)} + \norm{u_2}_{L^{(d+2)\alpha}_{t,x}(I \times \R^d)})^{2\alpha}
	\norm{u_1-u_2}_{L^{(d+2)\alpha}_{t,x}(I \times \R^d)}
\end{multline*}
By a standard fixed point argument, we obtain a unique solution $u \in L^{(d+2)\alpha}_{t,x}(I \times \R^d)$ of the 
equation $u(t) = e^{i(t-t_0)\Delta}u_0 + i \Phi[|u|^{2\alpha}u]$ if $\delta$ is sufficiently small.
\end{proof}

\begin{proof}[Proof of Theorem \ref{thm:LWP}]
Let $t_0=0$ for simplicity.
Notice that the assumption
\[
	\frac2d\cdot\frac{1}{1+\frac2{d(d+3)}} < \alpha < \frac2d\cdot\frac{1}{1-\frac2{d(d+3)}}
\]
is stronger than \eqref{eq:LWPcond}.
\smallskip

\noindent{\bf Step 1.}
Since $\alpha > \frac2d\cdot\frac{1}{1+\frac2{d(d+3)}}$,
if $u_0 \in \hat{L}^{d\alpha}$ then
\[
% 	\norm{e^{it\Delta} u_0}_{L^{(d+2)\alpha}_{t,x}(\R\times \R^d)} \le C
	S_\R(e^{it\Delta} u_0) \le C
	\norm{u_0}_{\hat{L}^{d\alpha}}
\]
by Proposition \ref{prop:StrichartzhL}.
The same conclusion is deduced from Proposition \ref{prop:Strichartz} for
$u_0 \in \hat{M}^{d\alpha}_{q,r}$ if $\alpha > \frac2d\cdot\frac{1}{1+\frac2{d(d+3)}}$,
$q< (1+\frac{2}{d(d+3)})d\alpha $, and $r \le ((d+2)\alpha)^*$.

\noindent{\bf Step 2.} Since $e^{it\Delta} u_0 \in L^{(d+2)\alpha}_{t,x}(\R\times \R^d)$,
there exists an interval $I \ni 0$ such that
\[
% 	\norm{e^{it\Delta} u_0}_{L^{(d+2)\alpha}_{t,x}(I\times \R^d)} 
	S_I(e^{it\Delta} u_0)
	\le \delta,
\]
where $\delta$ is the number given in Lemma \ref{lem:LWP}.
Then, the lemma gives a unique solution $u(t)\in L^{(d+2)\alpha}_{t,x} (I \times \R^{d})$.

\noindent{\bf Step 3.}
Let us show that $u(t)$ possesses the desired continuity.
By Corollary \ref{cor:hom_dual},
the Duhamel term obeys
\[
	\norm{\Phi[|u|^{2\alpha}u]}_{L^\I(I,\hat{L}^{d\alpha})}
	\le C \norm{u}_{L^{(d+2)\alpha}_{t,x}(I\times \R^d)}^{2\alpha+1}
\]
as long as $\alpha<\frac2d\cdot\frac{1}{1-\frac2{d(d+3)}}$.
Thus, $u(t)-e^{it\Delta}u_0= i \Phi[|u|^{2\alpha}u] \in C(I, \hat{L}^{d\alpha})$.

Obviously, if $u_0 \in \hat{L}^{d\alpha}$ then
the linear part satisfies $e^{it\Delta} u_0 \in C(I, \hat{L}^{d\alpha})$ and so $u(t) \in C(I.\hat{L}^{d\alpha})$.
On the other hand, if $u_0 \in \hat{M}^{d\alpha}_{q,r}$ then
$e^{it\Delta} u_0 \in C(I, \hat{M}^{d\alpha}_{q,r})$. Since $\hat{L}^{d\alpha} \hookrightarrow \hat{M}^{d\alpha}_{q,r}$ 
follows from the assumptions $q > d\alpha$ and $r > (d\alpha)'$,
we conclude that $u(t) \in C(I, \hat{M}^{d\alpha}_{q,r})$.

Continuous dependence on the data follows by a standard argument.
\end{proof}

\begin{remark}
Let us make a comment on the assumption of Theorem \ref{thm:LWP} on $\alpha$.
The lower bound of $\alpha$ is used in the existence part.
Without this assumption, neither $u_0\in \hat{L}^{d\alpha}$ nor $u_0 \in \hat{M}^{d\alpha}_{q,r}$ 
is sufficient to obtain a solution $u(t) \in L^{(d+2)\alpha}(I \times \R^d)$.
On the other hand, the upper bound of $\alpha$ is used in the persistence-of-regularity part.
Without this assumption, the obtained solution $u(t) \in L^{(d+2)\alpha}(I \times \R^d)$
does not necessarily belong to $C(I,\hat{L}^{d\alpha})$ or $C(I,\hat{M}^{d\alpha}_{q,r})$.
Remark that one can obtain a solution under a weaker assumption \eqref{eq:LWPcond} (and the lower bound on $\alpha$).
\end{remark}
\begin{remark}
If $u_0 \in \hat{L}^{d\alpha}$ then the solution $u(t)$ belongs to some off-diagonal space-time function space
$L^p_{t}(I,L^q_x(\R^d))$, $p\neq q$.
Indeed, $|u|^{2\alpha} u \in L^{\frac{(d+2)\alpha}{2\alpha+1}}_{t,x}(I \times \R^d)$
implies the Duhamel term $i\Phi[|u|^{2\alpha} u]$ belongs to $L^p_{t}(I,L^q_x(\R^d))$
for suitable $(p,q)$ satisfying the assumption of
non-admissible Strichartz's estimate (Proposition \ref{prop:inhom}).
Similarly, the linear part $e^{it\Delta} u_0$ also belongs to some $L^p_{t}(I,L^q_x(\R^d))$
by Proposition \ref{prop:StrichartzhL}. Hence, the solution belongs to the intersection.
A off-diagonal estimate, or a mixed-norm estimate,  similar to Proposition \ref{prop:StrichartzhL}
would be possible also for the hat-Morey space.
However, we do not pursue it in this paper.
\end{remark}

In the rest of this section, we let $X$ be $\hat{L}^{d\alpha}$ or $\hat{M}^{d\alpha}_{q,r}$
that satisfies the assumption of Theorem \ref{thm:LWP}.
We next characterize finite time blowup and scattering in terms of the scattering norm of the solution.
For the proof, see \cite{MS1}.
\begin{proposition}[Blowup and scattering criterion]\label{prop:criterion}
% Let $d\ge 1$ and $\alpha>0$ satisfy \eqref{eq:LWPcond}.
Let $u(t)$ be an $X$-solution of \eqref{eq:NLS} given in Theorem \ref{thm:LWP}. 
\begin{itemize}\item
If $T_{\mathrm{max}}<\I$ then $S_{[0,T)}(u) \to \I$
as $T \uparrow T_{\mathrm{max}}$.
\item
The solution scatters for positive time direction if and only if
$T_{\mathrm{max}}=+\I$ and $S_{[0,\I)}(u) <\I$.
\end{itemize}
A similar statements are true for negative time direction.
\end{proposition}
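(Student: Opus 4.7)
Both statements are standard consequences of the Strichartz machinery already assembled in the preliminaries, so the plan is to combine the smallness threshold of Lemma \ref{lem:LWP} with the inhomogeneous estimate of Corollary \ref{cor:hom_dual}. Throughout, for a solution $u$ on an interval $J$ I will repeatedly use
\[
	\norm{\Phi[|u|^{2\alpha}u]}_{L^\infty(J,\hat{L}^{d\alpha})}
	\le C S_J(u)^{2\alpha+1},
\]
which follows from Corollary \ref{cor:hom_dual} since $\frac{(d+2)\alpha}{2\alpha+1}$ is precisely the dual exponent appearing there, together with the embedding $\hat{L}^{d\alpha}\hookrightarrow\hat{M}^{d\alpha}_{q,r}$ guaranteed by $q>d\alpha$ and $r>(d\alpha)'$.

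For the blow-up criterion I argue by contraposition: assume $T_{\max}<\infty$ but $S_{[0,T_{\max})}(u)<\infty$. Choose $0=t_0<t_1<\cdots<t_N=T_{\max}$ so that $S_{[t_j,t_{j+1})}(u)\le\delta$ for each $j$, where $\delta$ is the threshold of Lemma \ref{lem:LWP}. Then on each interval the Duhamel formula together with the inequality above gives $u\in L^\infty([t_j,t_{j+1}),X)$, and iterating yields a uniform bound $M:=\sup_{t\in[0,T_{\max})}\norm{u(t)}_{X}<\infty$. Choose $t_*$ close to $T_{\max}$ with $S_{[t_*,T_{\max})}(u)\le\delta/4$; by Strichartz (Proposition \ref{prop:Strichartz} or \ref{prop:StrichartzhL}) applied to $u(t_*)\in X$, the map $T\mapsto S_{[t_*,T]}(e^{i(t-t_*)\Delta}u(t_*))$ is continuous and small near $t_*$, so it stays $\le\delta$ on some $[t_*,t_*+\eta]$ with $t_*+\eta>T_{\max}$. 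Lemma \ref{lem:LWP} then produces a solution on $[t_*,t_*+\eta]$ which, by uniqueness, extends $u$ strictly beyond $T_{\max}$, contradicting maximality.

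For the scattering criterion, the ``if'' direction proceeds by showing that $v(t):=e^{-it\Delta}u(t)$ is Cauchy in $X$ as $t\to\infty$. From the Duhamel formula,
\[
	v(t')-v(t)=i\int_t^{t'} e^{-is\Delta}(|u|^{2\alpha}u)(s)\,ds,
\]
and applying $e^{it\Delta}$ (an isometry on $\hat{L}^{d\alpha}$) reduces the norm of the right-hand side to the $L^\infty_t\hat{L}^{d\alpha}$ norm of $\Phi[|u|^{2\alpha}u]$ over $[t,t']$, which by the inequality in the first paragraph is $\le CS_{[t,t']}(u)^{2\alpha+1}$. Since $S_{[0,\infty)}(u)<\infty$ by assumption, this tail tends to $0$, giving a limit $v_+\in X$ and hence scattering. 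The ``only if'' direction is now routine: if $e^{-it\Delta}u(t)\to u_+$ in $X$, then $S_{[T,\infty)}(e^{i(t-T)\Delta}u(T))\le\delta$ for $T$ large by Strichartz and continuity of the scattering norm in the data, so Lemma \ref{lem:LWP} and uniqueness identify $u$ on $[T,\infty)$ with a solution of small scattering norm; combined with $S_{[0,T]}(u)<\infty$ from the local theory this gives $S_{[0,\infty)}(u)<\infty$.

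The main technical point to watch is that $X$ may be $\hat{M}^{d\alpha}_{q,r}$, in which case the dual Strichartz estimate is not directly available; this is precisely why one routes the Duhamel bound through $\hat{L}^{d\alpha}$ via Corollary \ref{cor:hom_dual} and invokes the embedding $\hat{L}^{d\alpha}\hookrightarrow \hat{M}^{d\alpha}_{q,r}$ to return to the state space. The statement in \cite{MS2} is for $d=1$, so the only new ingredient for the general dimension is to verify that these Strichartz and embedding inputs, as recorded above, have identical range of applicability as the assumptions of Theorem \ref{thm:LWP}; once this is checked the proof in the one-dimensional case transfers verbatim.
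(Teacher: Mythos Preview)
The paper does not give its own proof here, deferring to \cite{MS1}; your argument is the standard one and is essentially correct, with one step that needs tightening.

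In the blow-up criterion you write that the map $T\mapsto S_{[t_*,T]}(e^{i(t-t_*)\Delta}u(t_*))$ is ``continuous and small near $t_*$, so it stays $\le\delta$ on some $[t_*,t_*+\eta]$ with $t_*+\eta>T_{\max}$.'' Continuity and vanishing at $T=t_*$ only produce \emph{some} $\eta>0$; nothing in what you wrote forces $t_*+\eta>T_{\max}$, since the length $\eta$ depends on the profile of the linear evolution, not on how close $t_*$ is to $T_{\max}$. The missing link is to first control the linear evolution on the whole interval $[t_*,T_{\max})$ via Duhamel and \eqref{eq:inhom}:
\[
S_{[t_*,T_{\max})}\bigl(e^{i(t-t_*)\Delta}u(t_*)\bigr)
\le S_{[t_*,T_{\max})}(u)+C\,S_{[t_*,T_{\max})}(u)^{2\alpha+1}
\le \tfrac{\delta}{4}+C\bigl(\tfrac{\delta}{4}\bigr)^{2\alpha+1}<\delta,
\]
after which continuity of $T\mapsto S_{[t_*,T]}(e^{i(t-t_*)\Delta}u(t_*))$ across $T=T_{\max}$ (the linear evolution lies in $L^{(d+2)\alpha}_{t,x}(\R\times\R^d)$ by Step~1 of the proof of Theorem~\ref{thm:LWP}) lets you push slightly beyond $T_{\max}$ and apply Lemma~\ref{lem:LWP}. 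With this correction your argument is complete. (Minor point: the paper cites \cite{MS1}, not \cite{MS2}, for the one-dimensional analogue.)
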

An immediate consequence is  small data scattering,
\begin{theorem}[small data scattering]
Let $u(t)$ be a nonzero $X$-solution of \eqref{eq:NLS} given in Theorem \ref{thm:LWP}. 
If $S_{\R}(e^{i(t-t_0)\Delta}u_0) \le \delta$ then $u(t)$ scatters in $X$ for both time directions,
where $\delta$ is the constant given in Lemma \ref{lem:LWP}.
\end{theorem}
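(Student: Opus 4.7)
The plan is to apply Lemma \ref{lem:LWP} directly with $I=\R$ and then invoke the scattering criterion of Proposition \ref{prop:criterion}. Since the hypothesis gives $S_\R(e^{i(t-t_0)\Delta}u_0) \le \delta$, Lemma \ref{lem:LWP} produces a function $\tilde u\in L^{(d+2)\alpha}_{t,x}(\R\times\R^d)$ satisfying the Duhamel formula on all of $\R$, with $S_\R(\tilde u) \le 2\delta$. The whole point of applying the lemma globally is that no interval shrinking is needed: the smallness is already available on $\R$.

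Next I would identify this $\tilde u$ with the maximal-lifespan $X$-solution $u$ furnished by Theorem \ref{thm:LWP}. On a small enough neighborhood of $t_0$ where the scattering norm of the linear flow is below $\delta$, the uniqueness clause of Lemma \ref{lem:LWP} forces $u \equiv \tilde u$; a standard continuation argument (partitioning any compact subinterval of $I_{\max}(u)$ into finitely many pieces on which this smallness holds, and applying local uniqueness on each piece) propagates the agreement to the whole of $I_{\max}(u)$. Consequently $S_{I_{\max}(u)}(u) \le S_\R(\tilde u) \le 2\delta<\infty$, and the blow-up clause of Proposition \ref{prop:criterion} rules out both $T_{\max}(u)<\infty$ and $T_{\min}(u)>-\infty$; that is, $I_{\max}(u)=\R$.

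With $T_{\max}(u)=+\infty$ and $S_{[t_0,\infty)}(u)\le 2\delta<\infty$, the scattering clause of Proposition \ref{prop:criterion} gives scattering in $X$ for positive time. The inequality $S_{(-\infty,t_0]}(u)\le 2\delta$ gives scattering for negative time by the same argument.

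The only subtle point I expect is whether local uniqueness in the $L^{(d+2)\alpha}_{t,x}$ class is strong enough to identify $\tilde u$ with the a priori different $C(I,X)$-solution from Theorem \ref{thm:LWP}. This is a persistence-of-regularity issue already implicit in the proof of Theorem \ref{thm:LWP}: an $L^{(d+2)\alpha}_{t,x}$ solution whose data lies in $X$ automatically belongs to $C(I,X)$ through the Duhamel identity together with Corollary \ref{cor:hom_dual} (and the embedding $\hat L^{d\alpha}\hookrightarrow \hat M^{d\alpha}_{q,r}$ in the hat-Morrey case). Hence the two notions of solution coincide and the argument closes.
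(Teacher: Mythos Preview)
Your argument is correct and matches the paper's intended reasoning: the paper presents this theorem as an immediate consequence of Proposition~\ref{prop:criterion} together with Lemma~\ref{lem:LWP}, without spelling out details. Your care in identifying the $L^{(d+2)\alpha}_{t,x}$ solution from Lemma~\ref{lem:LWP} with the $C(I,X)$-solution via the persistence-of-regularity step from the proof of Theorem~\ref{thm:LWP} is exactly the right way to close the argument.
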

The first part of Theorem \ref{thm:main1} is a rephrase of this theorem.
One has also a nonscattering result for solution with non-positive energy.
\begin{theorem}
Let $u(t)$ be a nonzero $X$-solution of \eqref{eq:NLS} given in Theorem \ref{thm:LWP}. 
We further assume that $u_0 \in H^1$ and $E[u_0]:= \frac12 \norm{\nabla u_0}_{L^2} - \frac1{p+1} \norm{u_0}_{L^{2\alpha+1}}^{2\alpha+1} \le 0$.
Then, $u(t)$ is global and does not scatter for both time directions.
\end{theorem}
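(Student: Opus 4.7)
The plan is to work in $H^1$, combining the mass-subcritical $H^1$-theory with the $X$-theory developed above. First, since $u_0 \in H^1$, the classical local $H^1$ theory yields a maximal $H^1$-solution $v$; mass and energy conservation, together with the Gagliardo--Nirenberg inequality
\[
    \|v(t)\|_{L^{2\alpha+2}}^{2\alpha+2} \le C_{GN}\|u_0\|_{L^2}^{2+2\alpha-d\alpha}\|\nabla v(t)\|_{L^2}^{d\alpha}
\]
and the mass-subcritical condition $d\alpha<2$, yield a uniform bound on $\|v(t)\|_{H^1}$, so $v$ is global. Uniqueness in the class $L^{(d+2)\alpha}_{t,x}$ (the one used in Lemma~\ref{lem:LWP}, noting that $H^1\hookrightarrow L^{(d+2)\alpha}$ throughout the mass-subcritical range) forces $u=v$ on their common interval of existence, hence the $X$-solution $u$ is also $H^1$-valued and uniformly $H^1$-bounded on $I_{\max}(u)$. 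If $T_{\max}(u)<\infty$, the same Sobolev embedding combined with finiteness of the time interval gives $S_{[0,T_{\max})}(u)<\infty$, contradicting Proposition~\ref{prop:criterion}. Hence $u$ is global with $\sup_{t\in\R}\|u(t)\|_{H^1}<\infty$; the negative direction is identical.

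Next, assuming for contradiction that $u$ scatters in $X$ for positive time direction, I would upgrade this to scattering in $H^1$. From $S_{[0,\infty)}(u)<\infty$ and $\nabla u\in L^\infty_tL^2_x$, writing
\[
    e^{-it\Delta}u(t)-e^{-is\Delta}u(s) = i\int_s^t e^{-i\tau\Delta}(|u|^{2\alpha}u)\,d\tau
\]
and applying the dual Strichartz estimate at an admissible pair $(p,q)$ with the H\"older bound
\[
    \bigl\|\nabla(|u|^{2\alpha}u)\bigr\|_{L^{p'}_tL^{q'}_x(s,t)}\le C\|u\|_{L^{(d+2)\alpha}_{t,x}(s,t)}^{2\alpha}\|\nabla u\|_{L^\infty_tL^2_x}
\]
shows that $e^{-it\Delta}u(t)$ is Cauchy in $H^1$ as $t\to\infty$. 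Thus there exists $u_+\in H^1$ with $\|u(t)-e^{it\Delta}u_+\|_{H^1}\to 0$.

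Finally, the Sobolev embedding $H^1\hookrightarrow L^{2\alpha+2}$ gives $\|u(t)-e^{it\Delta}u_+\|_{L^{2\alpha+2}}\to0$, and the dispersive estimate for the free Schr\"odinger group (first applied to Schwartz approximants and then extended by density to $u_+\in H^1$, using that $H^1\hookrightarrow L^{2\alpha+2}$ bounds $\|e^{it\Delta}(u_+-\phi_n)\|_{L^{2\alpha+2}}$ uniformly in $t$) yields $\|e^{it\Delta}u_+\|_{L^{2\alpha+2}}\to 0$. Hence $\|u(t)\|_{L^{2\alpha+2}}\to 0$, and conservation of energy gives $\tfrac12\|\nabla u_+\|_{L^2}^2 = \lim_{t\to\infty}E[u(t)] = E[u_0]\le 0$, which together with $u_+\in L^2$ forces $u_+=0$. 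Conservation of mass then yields $\|u_0\|_{L^2}=\lim_{t\to\infty}\|u(t)\|_{L^2}=0$, contradicting $u\not\equiv 0$; the negative time direction follows symmetrically. The main obstacle is the scattering upgrade in the second step: since the diagonal pair $((d+2)\alpha,(d+2)\alpha)$ is not $L^2$-admissible in the mass-subcritical regime, one must carefully select an $L^2$-admissible pair $(p,q)$ compatible with both the dual Strichartz estimate and the H\"older decomposition above.
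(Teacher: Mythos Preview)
Your overall strategy is correct and is the standard route (the paper itself gives no proof, deferring to \cite[Theorem~1.10]{MS1}): global existence via $H^1$ conservation and Gagliardo--Nirenberg, identification of the $X$- and $H^1$-solutions in the common uniqueness class $L^{(d+2)\alpha}_{t,x}$, upgrade of $X$-scattering to $H^1$-scattering, and the energy argument forcing $u_+=0$. Steps~1 and~3 are fine.

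The gap is exactly in Step~2, and it is sharper than you indicate. With $\nabla u$ placed in $L^\infty_t L^2_x$ and $|u|^{2\alpha}$ in $L^{(d+2)/2}_{t,x}$, the forced exponents are $p'=(d+2)/2$ and $q'=2(d+2)/(d+6)$, i.e.\ $(p,q)=\bigl((d+2)/d,\,2(d+2)/(d-2)\bigr)$. This pair is \emph{never} $L^2$-admissible: for $d\ge3$ one has $p<2$ and $q>2d/(d-2)$; for $d=2$ it is the forbidden endpoint $(2,\infty)$; for $d=1$ the space exponent is not even well-defined ($1/q'>1$). So no ``careful selection'' of $(p,q)$ rescues this particular H\"older splitting.

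The fix is a standard bootstrap: since $S_{[0,\infty)}(u)<\infty$, choose $T$ with $S_{[T,\infty)}(u)$ small and run the Strichartz estimate for $\nabla u$ (and $u$) on $[T,T']$, placing $|u|^{2\alpha}$ in $L^{(d+2)/2}_{t,x}$ and $\nabla u$ on the left in an admissible pair with $p<\infty$. The nonlinear term is then absorbed, yielding $\nabla u\in L^{p}_tL^{q}_x([T,\infty))$ uniformly in $T'$ for every admissible $(p,q)$; for instance $(p,q)=(2,2d/(d-2))$ when $d\ge3$, with the companion dual pair $(p_1,q_1)=\bigl(2(d+2)/(d-2),\,2d(d+2)/(d^2+4)\bigr)$ also admissible, and analogous choices in low dimensions. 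With $\nabla u$ now in a Strichartz space with finite time exponent, the Duhamel tail is Cauchy in $H^1$ and your Step~3 goes through unchanged.
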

The proof is similar to \cite[Theorem 1.10]{MS1}.
Then, the rest of Theorem \ref{thm:main1} is immediate by looking at energy of $ c Q(x)$ for $0<c\le 1$
(see \cite{MS2,M1,M2}, for instance).

We also use the following stability estimate.
For an interval $I \subset \R$, 
we say a function $u(t) \in C(I,X)$ is an $X$-solution to \eqref{eq:NLS} on $I$ with error 
$e(t) \in L^{(d+2)\alpha/(2\alpha+1)}_{t,x}(I \times \R^d) $ if
$u(t)$ satisfies
\[
	u(t) = e^{i(t-t_0)}u(t_0) + i \Phi[|u|^{2\alpha} u ](t) -i \Phi[e](t) \IN X
\] 
for any $t,t_0 \in I$. Namely, $i\d_t u + \Delta u = - |u|^{2\alpha}u + e$ (at least formally).
\begin{proposition}[Long time stability]\label{prop:stability}
Let $t_0 \in\R$ and $I \subset \R$ be a interval containing $t_0$.
Let $e(t),\tilde{e}(t) \in L^{(d+2)\alpha/(2\alpha+1)}_{t,x}(I \times \R^d) $.
Let $\tilde{u}\in C(I,X)$ be an $X$-solution to \eqref{eq:NLS} with error $\tilde{e}(t)$.
Assume that $\tilde{u}$ satisfies 
\[
	S_I(\tilde{u})\le M,
\]
for some $M>0$. 
Then there exists 
$\varepsilon_{1}=\varepsilon_1(M)$
such that if $u(t_0),\tilde{u}(t_0)\in X$, $e(t)$, and $\tilde{e}(t)$ satisfy
\begin{eqnarray*}
S_I\(e^{i(t-t_0)\Delta}(u(t_0)-\tilde{u}(t_0))\) 
+\|e\|_{L_{t,x}^{\frac{(d+2)\alpha}{2\alpha + 1}}(I\times\R^d)}
+\|\tilde{e}\|_{L_{t,x}^{\frac{(d+2)\alpha}{2\alpha + 1}}(I\times\R^d)}
\le\varepsilon
\end{eqnarray*}
for some $0<\varepsilon<\varepsilon_{1}$, 
then there exists an $X$-solution 
$u\in C(I,X)$ to \eqref{eq:NLS} on the same $I$ with error $e(t)$. 
Further, the following estimates are valid.
\begin{align*}
% \|u-\tilde{u}\|_{L^{3\alpha}_{t,x}(I\times \R)}
S_I(u-\tilde{u})
&\le C\varepsilon,\\
\norm{|u|^{2\alpha}u-|\tilde{u}|^{2\alpha}\tilde{u}}_{L_{t,x}^{\frac{(d+2)\alpha}{2\alpha+1}}
(I\times\R^d)}&\le C\varepsilon,\\
\|u -\tilde{u}\|_{L^\I(I,X)}
	&\le \norm{ u(t_0)-\tilde{u}(t_0) }_{X}+ C\varepsilon.
\end{align*}
\end{proposition}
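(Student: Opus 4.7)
The plan is to run the standard Kenig--Merle long-time perturbation argument, adapted to the hat-Morrey/hat-Lebesgue framework via the non-admissible inhomogeneous Strichartz estimate \eqref{eq:inhom} and Corollary \ref{cor:hom_dual}. Setting $w:=u-\tilde{u}$, $w$ formally solves
\[
  i\d_t w + \Delta w = -\bigl(|u|^{2\alpha}u - |\tilde{u}|^{2\alpha}\tilde{u}\bigr) + e - \tilde{e},
\]
so by Duhamel,
\[
  w(t) = e^{i(t-t_0)\Delta}w(t_0) + i\Phi\bigl[\bigl(|u|^{2\alpha}u - |\tilde{u}|^{2\alpha}\tilde{u}\bigr) - e + \tilde{e}\bigr](t).
\]
The argument splits naturally into three parts: a one-step bootstrap on any subinterval on which $S(\tilde u)$ is small, an iteration across finitely many such subintervals to produce and control $u$ on all of $I$, and finally the $L^\I(I,X)$ bound.

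First, I would partition $I$ into $J=J(M)$ subintervals $I_j=[t_j,t_{j+1}]$ on each of which $S_{I_j}(\tilde{u})\le\eta$, with $\eta=\eta(d,\alpha)$ a small constant to be fixed. Using \eqref{eq:inhom} together with the pointwise bound $\bigl||u|^{2\alpha}u-|\tilde{u}|^{2\alpha}\tilde{u}\bigr|\lesssim(|u|+|\tilde{u}|)^{2\alpha}|w|$ and H\"older in space-time, one gets on each $I_j$ an inequality of the schematic form
\[
  S_{I_j}(w)\le C\,S_{I_j}\bigl(e^{i(t-t_j)\Delta}w(t_j)\bigr)+C(\eta+S_{I_j}(w))^{2\alpha}S_{I_j}(w)+C\varepsilon.
\]
Choosing $\eta$ so that $C\eta^{2\alpha}\le\tfrac14$ and applying a continuity-in-$t$ bootstrap (starting from the interval collapsing to $\{t_j\}$, where the norm vanishes) absorbs the quadratic remainder and yields the linearised bound $S_{I_j}(w)\le 2C\bigl(S_{I_j}(e^{i(t-t_j)\Delta}w(t_j))+\varepsilon\bigr)$, as long as the right-hand side stays below a fixed absolute constant. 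The existence of $u$ on $I_j$ comes along for free: starting from $u(t_j)$ obtained at the previous step, Lemma \ref{lem:LWP} produces a local solution, and the a-priori bound on $S_{I_j}(w)$ combined with $S_{I_j}(\tilde u)\le\eta$ and the blowup criterion of Proposition \ref{prop:criterion} continues this solution across $I_j$.

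Next, I would iterate across the $I_j$. Writing the Duhamel formula for $w$ at $t=t_{j+1}$ and applying $e^{i(t-t_{j+1})\Delta}$ gives
\[
  e^{i(t-t_{j+1})\Delta}w(t_{j+1})=e^{i(t-t_j)\Delta}w(t_j)+i\,e^{i(t-t_{j+1})\Delta}\int_{t_j}^{t_{j+1}}e^{i(t_{j+1}-s)\Delta}[\cdots]\,ds,
\]
and taking $S_{I_{j+1}}$-norms and invoking \eqref{eq:inhom} once more produces a recursion $A_{j+1}\le C_0(A_j+\varepsilon)$ for $A_j:=S_{I_j}(e^{i(t-t_j)\Delta}w(t_j))$, with $A_0\le\varepsilon$. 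Iterating across the $J(M)$ subintervals yields $A_j\le C(M)\varepsilon$; fixing $\varepsilon_1(M)$ so that $C(M)\varepsilon_1$ stays below the smallness threshold required at every bootstrap step closes the induction simultaneously on existence and on the estimate, and summing gives $S_I(u-\tilde u)\lesssim_M\varepsilon$ together with the stated nonlinear-difference bound. The $L^\I(I,X)$ bound on $u-\tilde u$ is then obtained by applying Corollary \ref{cor:hom_dual} to the Duhamel contribution, whose source is now controlled in $L^{\frac{(d+2)\alpha}{2\alpha+1}}_{t,x}$.

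The main obstacle I anticipate is precisely this last step in the case $X=\hat{M}^{d\alpha}_{2,r}$, because Corollary \ref{cor:hom_dual} is stated only for a hat-Lebesgue target. One needs the analogous $L^\I_t$-into-$\hat M^{d\alpha}_{2,r}$ bound for $\Phi[F]$, which should follow by combining \eqref{eq:inhom} with the embedding $\hat L^{d\alpha}\hookrightarrow\hat M^{d\alpha}_{2,r}$ from Proposition \ref{prop:gm_embedding} (valid precisely in the parameter range imposed in Theorem \ref{thm:LWP}), but verifying the acceptability/scaling conditions across the full admissible range of $r$ is the most delicate bookkeeping in the whole argument. Apart from this, all steps are essentially mechanical consequences of the non-admissible Strichartz estimate \eqref{eq:inhom}, Lemma \ref{lem:LWP}, and the continuity-in-$t$ bootstrap.
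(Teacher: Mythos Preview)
Your proposal is correct and is exactly the standard long-time perturbation argument the paper has in mind; the paper itself gives no proof of Proposition~\ref{prop:stability} beyond the remark ``The proof is standard.\ For instance, see \cite{MS2,M1}.'' Your worry about the $L^\infty(I,X)$ estimate in the hat-Morrey case is unfounded: the embedding $\hat{L}^{d\alpha}\hookrightarrow\hat{M}^{d\alpha}_{q,r}$ you propose is precisely what the paper already uses in Step~3 of the proof of Theorem~\ref{thm:LWP}, and the required conditions $q>d\alpha$, $r>(d\alpha)'$ are part of the standing hypotheses there, so no additional bookkeeping is needed.
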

The proof is standard.
For instance, see \cite{MS2,M1}.

\subsection{Functional analysis}\label{subsec:predual}
In this section, we introduce two functional analysis results on Morrey spaces.

We first give a pre-dual of Morrey space $M^{p}_{q,r}$.
This allows us to consider a weak-$*$ convergence in Morrey and hat-Morrey spaces.
In particular, thanks to the Banach-Alaoglu theorem, a closed unit ball of $M^{p}_{q,r}$
is compact with respect to weak-$*$ topology.
We use an argument similar to \cite{GS}.
\begin{definition}
% Let $1<p'<\I$ and $1<q'<\I$.
Let $ 1\le q \le p \le \I$.
We say a function $g$ is a $(p',q')$-block with respect to a dyadic cube
$Q \in \mathcal{D}$ if
% if there exists a dyadic cube $Q$ such that 
$\supp g \subset \overline{Q}$ and
\[
	\norm{g}_{L^{q'}(\R^d)}= \norm{g}_{L^{q'}(Q)} \le |Q|^{\frac{1}{q'}-\frac1{p'}}.
\]
A function $g$ is simply called a $(p',q')$-block
if there exists a dyadic cube $Q_0$ such that
$g$ becomes a $(p',q')$-block with respect to $Q_0$.
\end{definition}
\begin{definition}
% Let $1<p'<\I$, $1<q'<\I$, and $1<r'<\I$.
Let $1 \le q \le p \le r \le \I$.
The block space $N^{p'}_{q',r'}$ is the set of all measurable functions $g$
for which there exists a decomposition
\[
	g(x) = \sum_{{\bf j}\in \Z^{1+d}} \lambda_{\bf j} A_{\bf j} (x),
\]
where $A_{\bf j}$ is a $(p',q')$-block with respect to $\tau^{j_1}_{j'} = 2^{-j_1}([0,1)^d+ j')$ (${\bf j}=(j_1,j') \in \Z \times \Z^d$),
% \[
% 	\supp A_j \subset \tau^{j}_k = 2^{-j}(k + [0,1)^d)	
% \]
 $\lambda_{\bf j} \in \ell^{r'}_{\bf j}(\Z^{1+d})$,
and the convergence takes place for almost all $x\in \R^d$.
The norm of $g$ is given by
\[
	\norm{g}_{N^{p'}_{q',r'}} = \inf \norm{\lambda_{\bf j}}_{\ell^{r'}_{\bf j}(\Z^{1+d})},
\]
where $\{\lambda_{\bf j}\}$ runs over all admissible expressions above.
\end{definition}

\begin{theorem}\label{thm:predual}
Let $1\le q \le p \le r \le \I$ be such that $M^{p}_{q,r}$ is defined and let $p>1$.
The generalized Morrey space
$M^{p}_{q,r}$ is the dual of the block space $N^{p'}_{q',r'}$ in the following sense.
\begin{enumerate}
\item Let $f \in M^{p}_{q,r}$. Then, for any $g \in N^{p'}_{q',r'}$,
we have $f\cdot g \in L^1(\R^d)$ and the mapping
\[
	N^{p'}_{q',r'} \ni g \mapsto \int_{\R^d} f(x) g(x) dx \in \C
\]
defines a continuous linear functional $L_f$ on $N^{p'}_{q',r'}$.
\item Conversely, any continuous linear functional $L$ on $N^{p'}_{q',r'}$
can be realized as $L=L_f| N^{p'}_{q',r'}$ with a certain $f\in M^{p}_{q,r}$.
If $f_1,f_2 \in M^p_{q,r}$ define the same functional then $f_1=f_2$ almost everywhere.
\end{enumerate}
Furthermore, we have $\norm{L_f}_{M^{p}_{q,r} \to \C} = \norm{f}_{N^{p'}_{q',r'}}$. 
\end{theorem}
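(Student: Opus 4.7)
The plan is the classical two-step argument for duality of Morrey-type spaces (cf.\ the reference \cite{GS} cited just above): first bound the pairing $\langle f,g\rangle := \int f g\, dx$ by $\|f\|_{M^p_{q,r}}\|g\|_{N^{p'}_{q',r'}}$ using H\"older on each dyadic cube (this settles (1) and one half of the norm identity), and then, for an arbitrary continuous linear functional $L$ on $N^{p'}_{q',r'}$, construct the representing $f$ cube-by-cube via the Riesz representation theorem and upgrade the resulting local $L^q$-estimates to the correct $\ell^r$-summed bound by testing $L$ against well-chosen block sums.

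For Part (1), given $f\in M^p_{q,r}$ and a block decomposition $g=\sum_{\mathbf{j}}\lambda_{\mathbf{j}}A_{\mathbf{j}}$ with $A_{\mathbf{j}}$ a $(p',q')$-block on $\tau^{j_1}_{j'}$, I would use the block condition $\|A_{\mathbf{j}}\|_{L^{q'}}\le |\tau^{j_1}_{j'}|^{1/q'-1/p'}=|\tau^{j_1}_{j'}|^{1/p-1/q}$ together with H\"older's inequality on $\tau^{j_1}_{j'}$ to obtain
\[
\Bigl|\int_{\R^d} f A_{\mathbf{j}}\, dx \Bigr| \le |\tau^{j_1}_{j'}|^{1/p-1/q}\|f\|_{L^q(\tau^{j_1}_{j'})},
\]
and then sum against $\lambda_{\mathbf{j}}$ using H\"older in $\ell^{r'}\times\ell^r$ over $\mathbf{j}\in \Z^{1+d}$ to get $|\langle f,g\rangle|\le \|\lambda\|_{\ell^{r'}}\|f\|_{M^p_{q,r}}$. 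Minimizing over admissible decompositions yields $fg\in L^1$, continuity of $L_f$, and the bound $\|L_f\|\le \|f\|_{M^p_{q,r}}$.

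For Part (2), observe that for any dyadic cube $Q\in \mathcal{D}$ and $g\in L^{q'}(Q)$, the normalized function $g/(|Q|^{1/p'-1/q'}\|g\|_{L^{q'}})$ is a $(p',q')$-block on $Q$, so $\|g\|_{N^{p'}_{q',r'}}\le |Q|^{1/p'-1/q'}\|g\|_{L^{q'}(Q)}$. Hence $L$ restricted to $L^{q'}(Q)$ is bounded with norm at most $|Q|^{1/p'-1/q'}\|L\|$, and the Riesz representation theorem produces $f_Q\in L^q(Q)$ realizing $L$ there, with $\|f_Q\|_{L^q(Q)}\le |Q|^{1/q-1/p}\|L\|$. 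Uniqueness of Riesz representatives on nested dyadic cubes forces $f_{Q'}=f_Q|_{Q'}$ a.e.\ whenever $Q'\subset Q$, so these patch into a measurable $f$ on $\R^d$ obeying $|Q|^{1/p-1/q}\|f\|_{L^q(Q)}\le \|L\|$ cube by cube. To upgrade this $\ell^\infty$-type control to the $\ell^r$-bound in the Morrey norm, I would take a finite family $F\subset \Z^{1+d}$ of cubes, pick normalized H\"older extremizers $g_{\mathbf{j}}\in L^{q'}(\tau_{\mathbf{j}})$ so that $\int f g_{\mathbf{j}}$ approximates $\|f\|_{L^q(\tau_{\mathbf{j}})}$, and form the blocks $A_{\mathbf{j}}:=|\tau_{\mathbf{j}}|^{1/q'-1/p'}g_{\mathbf{j}}$. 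For any $\{\lambda_{\mathbf{j}}\}_{F}$ with $\|\lambda\|_{\ell^{r'}(F)}\le 1$, the finite sum $g:=\sum_{\mathbf{j}\in F}\lambda_{\mathbf{j}}A_{\mathbf{j}}$ satisfies $\|g\|_{N^{p'}_{q',r'}}\le 1$ by definition, so expanding $L(g)=\sum_{\mathbf{j}\in F} \lambda_{\mathbf{j}}\int f A_{\mathbf{j}}$ and invoking the $\ell^r$--$\ell^{r'}$ duality on $F$ yields
\[
\Bigl\||\tau_{\mathbf{j}}|^{1/p-1/q}\|f\|_{L^q(\tau_{\mathbf{j}})}\Bigr\|_{\ell^r(F)} \le \|L\|.
\]
Supremum over finite $F$ gives $\|f\|_{M^p_{q,r}}\le \|L\|$; density of finite block combinations in $N^{p'}_{q',r'}$ together with continuity of $L_f$ then extends the identity $L(A)=\int f A$ from blocks to all of $N^{p'}_{q',r'}$, producing both the representation and the matching norm identity. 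Uniqueness of $f$ is immediate since $L_{f_1-f_2}=0$ forces $\int_Q(f_1-f_2)g\,dx=0$ for every $g\in L^{q'}(Q)$ and every $Q\in \mathcal{D}$, hence $f_1=f_2$ a.e.

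The main obstacle is the $\ell^r$-upgrade in Part (2): the cube-by-cube Riesz construction only yields $\ell^\infty$ control of $|Q|^{1/p-1/q}\|f\|_{L^q(Q)}$, and recovering the full Morrey norm hinges on the block-sum test with the precise normalization $|\tau_{\mathbf{j}}|^{1/q'-1/p'}$ so that $\|\sum_F\lambda_{\mathbf{j}}A_{\mathbf{j}}\|_{N^{p'}_{q',r'}}\le \|\lambda\|_{\ell^{r'}}$ falls directly out of the definition of $N^{p'}_{q',r'}$. A minor technical point is that $L(g)=\sum_{\mathbf{j}}\lambda_{\mathbf{j}}L(A_{\mathbf{j}})$ for infinite sums would require a justification of convergence in the $N^{p'}_{q',r'}$-norm, but this can be bypassed by working exclusively with finite $F$ for the quantitative estimate and invoking density only at the final identification step, which is also where the hypothesis $p>1$ enters (it ensures $L^q(Q)$ is reflexive and Riesz representation applies).
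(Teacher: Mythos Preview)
Your approach is essentially the same as the paper's: H\"older on each dyadic cube followed by the $\ell^{r}$--$\ell^{r'}$ pairing for Part (1), and a local Riesz representation patched into a global $f$, then tested against finite block sums with H\"older extremizers $g_{\mathbf{j}}\propto |f|^{q-1}\overline{\operatorname{sgn}(f)}\mathbf{1}_{\tau_{\mathbf{j}}}$ to recover the $\ell^r$ Morrey bound for Part (2). The paper carries out exactly these steps.

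One small correction: the hypothesis $p>1$ does not ensure $q>1$, so your Riesz step on $L^{q'}(Q)$ fails when $q=1$ (then $q'=\infty$ and $(L^\infty)^*\ne L^1$). The paper sidesteps this by applying Riesz to the restriction of $L$ to $L^{p'}$ on each cube (using $p'<\infty$, which is precisely where $p>1$ enters), obtaining $f\in L^p_{\mathrm{loc}}\subset L^q_{\mathrm{loc}}$; the rest of your argument then goes through unchanged, including the case $q=1$ where the extremizer is simply $\overline{\operatorname{sgn}(f)}\mathbf{1}_{\tau}$.
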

\begin{remark}
By the above theorem, the dual of $\hat{M}^{p}_{q,r}$ is given by
$\hat{N}^{p'}_{q',r'} = \{ f \in \mathcal{S}' \ |\ \F f \in N^{p}_{q,r'} \}$
with $(f,g)_{\hat{M}^{p}_{q,r},\hat{N}^{p'}_{q',r'}} = \int_{\R^d} \F f(\xi) \F g(\xi) d\xi$,
for $1 \le q'  \le p' \le r \le \I$ such that $\hat{M}^{p}_{q,r}$ is defined and $p<\I$.
\end{remark}
\begin{proof}
We first prove the first assertion.
For given $g\in N^{p'}_{q',r'}$ and $\eps>0$, there exists a decomposition
\[
	g(x) = \sum_{j\in \Z, k\in \Z^d} \lambda_k^j g_k^j (x),
\]
where each $g_k^j$ is a $(p',q')$-block with respect to $\tau_k^j$
and 
\[
	\norm{\lambda_k^j}_{\ell^{r'}_{j,k}(\Z^{1+d})} \le (1+\eps) \norm{g}_{N^{p'}_{q',r'}} .
\]
Then, twice use of H\"older's inequality yields
\begin{align*}
	\norm{fg}_{L^1(\R^d)}
	\le{}& \sum_{j\in\Z,\,k\in\Z^d} |\lambda_k^j| \int_{\tau_k^j} |f(x) g_k^j(x)| dx \\
	\le{}& \sum_{j\in\Z,\,k\in\Z^d} |\lambda_k^j| \norm{f}_{L^q(\tau_k^j)} \norm{g_k^j}_{L^{q'}} \\
	\le{}& \sum_{j\in\Z,\,k\in\Z^d} |\lambda_k^j| \(|\tau_k^j|^{\frac1p-\frac1q} \norm{f}_{L^q(\tau_k^j)}\) \\
	\le{}& \norm{f}_{M^{p}_{q,r}} \norm{\lambda_k^j}_{\ell^{r'}_{j,k}(\Z^{1+d})}
	\le (1+\eps)\norm{f}_{M^p_{q,r}} \norm{g}_{N^{p'}_{q',r'}}.
\end{align*}

Let us proceed to the second assertion.
Let $L$ be a bounded linear functional on $N^{p'}_{q',r'}$.
For each $j\in \Z$ and $k\in\Z^d$, the mapping
\[
	L^{p'}(\R^d) \ni g \mapsto L_k^j(g) \equiv L(g {\bf 1}_{\tau_k^j}) \in \C
\]
is a bounded linear functional.
Since $p' \in [1,\I)$, we see that $L_k^j$ 
 is realized by an $L^p_{\mathrm{loc}}(\R^d)$-function $f_k^j$ with $\supp f_k^j \subset \overline{\tau_k^j}$.
Since $L_{k_1}^{j_1}(g) = L_{k_2}^{j_2}(g)$ holds by definition for all  $g\in L^{p'}(\R^d)$ with
$\supp g \subset \tau_{k_1}^{j_1} \cap \tau_{k_2}^{j_2}$,
one sees that there is a function $f\in L^p_{\mathrm{loc}}(\R^d)$ such that
$f_k^j (x)= f {\bf 1}_{\tau_k^j}(x)$ for all $j\in \Z$ and $k\in \Z^d$.
Then,
\[
	L_k^j(g) = \int_{\tau_k^j} g(x) f(x) dx
\]
for all $g \in L^{p'}_{\mathrm{loc}}(\R^d)$, $j\in\Z$, and $k \in \Z^d$.
We now show that $f \in M^{p}_{q,r}$.
We define
\[
	g_k^j(x) =
	\left\{
	\begin{aligned}
	&|\tau_k^j|^{\frac1p-\frac1q} \norm{f}_{L^q(\tau_k^j)}^{1-q}|f(x)|^{q-1} {\bf 1}_{\tau_k^j}(x), && 
	\norm{f}_{L^q(\tau_k^j)} > 0\\
	&0 &&\text{otherwise}.
	\end{aligned}
	\right.
\]
Remark that $q<\I$ by the definition of $M^{p}_{q,r}$.
Then, $ g_k^j$ is a $(p',q')$-block with respect to $\tau_k^j$
since $\norm{g_k^j}_{L^{q'}} = |\tau_k^j|^{\frac1p-\frac1q}$. 
Fix a finite set $K \subset \Z^{1+d}$.
Take an arbitrary nonnegative sequence $\{\rho_k^j\}_{j,k} \in \ell^{r'}(\Z^{1+d})$ supported on $K$
and set
\begin{equation}\label{eq:dualpf1}
	g_K = \sum_{(j,k) \in K} \rho_k^j g^j_k \in N^{p'}_{q',r'}.
\end{equation}
We have
\[
	\sum_{(j, k)\in K} \rho_k^j |\tau_k^j|^{\frac1p-\frac1q}  \norm{f}_{L^q(\tau_k^j)}  = \int_{\R^d} |f(x)|g_K(x) dx
	= L(h),
\]
where $h= g_K(x) \overline{\mathrm{sgn}(f(x))}$.
Further, by the decomposition \eqref{eq:dualpf1},
\[
	|L(h)| \le \norm{L}_{N^{p'}_{q',r'} \to \C} \norm{h}_{N^{p'}_{q',r'}}
	\le \norm{L}_{N^{p'}_{q',r'} \to \C} \norm{\rho_k^j}_{\ell^{r'}_{j,k}(\Z^{1+d})}.
\]
Since $r>1$ and since $K$ and $\{\rho_k^j\}$ are arbitrary, we conclude that
\[
	\norm{f}_{M^{p}_{q,r}} \le \norm{L}_{N^{p'}_{q',r'} \to \C}.
\]
Furthermore, $L=L_f|N^{p'}_{q',r'}$ holds.
The uniqueness follows by a standard argument.
\end{proof}

We next characterize total boundedness of a bounded set $K \subset M^{p}_{q,r}$.
\begin{theorem}\label{thm:totallybounded}
Let $1 \le q < p <r <\I$.
A bounded set $K \subset M^{p}_{q,r}$ is totally bounded if and only if for any $\eta>0$ there exists $C(\eta)>0$
such that
\begin{equation}\label{eq:tbdd}
	 \norm{f {\bf 1}_{\{|x|\ge C(\eta)\}} }_{M^{p}_{q,r}}
	+  \sup_{|z| \le 1/C(\eta)} \norm{(T(z)-1)f}_{M^{p}_{q,r}}
	\le \eta
\end{equation}
for any $f\in K$, where $T(a)$ is a translation $(T(a)f)(x)=f(x-a)$, $a\in \R^d$.
\end{theorem}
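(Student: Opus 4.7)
The plan is to adapt the classical Riesz--Kolmogorov compactness criterion to the generalized Morrey setting, with the condition $r<\infty$ playing the role of ``$p<\infty$'' that keeps the $\ell^r$ sum over dyadic cubes sensitive to dominated convergence. The necessity direction is rather direct, whereas the sufficiency direction follows the standard cutoff-plus-mollify strategy, closed off by an Arzel\`a--Ascoli argument on a bounded region.

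\emph{Necessity.} Given $\eta>0$, extract a finite $\eta/3$-net $\{f_1,\dots,f_N\}\subset K$. Since the cutoff $f\mapsto f\mathbf{1}_{\{|x|\ge C\}}$ has norm at most $1$ on $M^p_{q,r}$ and the translation $T(z)$ is bounded uniformly in $z$ (with a harmless combinatorial constant $\lesssim 2^d$ coming from non-invariance of the dyadic grid), errors from the net transfer linearly. It thus suffices to verify, for a single fixed $f\in M^p_{q,r}$, that $\|f\mathbf{1}_{|x|\ge C}\|_{M^p_{q,r}}\to0$ as $C\to\infty$ and $\sup_{|z|\le\delta}\|(T(z)-1)f\|_{M^p_{q,r}}\to0$ as $\delta\to0$. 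Both reduce to dominated convergence in $\ell^r_{(j,k)}$: the sequence $|\tau^j_k|^{1/p-1/q}\|f\|_{L^q(\tau^j_k)}$ is an $\ell^r$-envelope, and for each fixed cube the local $L^q$ quantity inside vanishes (cubes are bounded for the first limit; continuity of translations in $L^q_{\mathrm{loc}}$ for the second).

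\emph{Sufficiency.} Given $\eta>0$, set $C:=C(\eta/3)$ from \eqref{eq:tbdd}, fix a smooth cutoff $\chi\in C_c^\infty$ with $\chi\equiv 1$ on $B(0,C)$ and $\supp\chi\subset B(0,C+1)$, and a standard mollifier $\phi_\delta(x)=\delta^{-d}\phi(x/\delta)$ with $\supp\phi\subset B(0,1)$ and $\delta<1/C(\eta/3)$. Put $\tilde f_\delta:=(\chi f)*\phi_\delta$. Combining \eqref{eq:tbdd}, the estimate $\|h\cdot g\|_{M^p_{q,r}}\le\|h\|_{L^\infty}\|g\|_{M^p_{q,r}}$, and Minkowski's integral inequality $\|g-g*\phi_\delta\|_{M^p_{q,r}}\le\sup_{|z|\le\delta}\|g-T(z)g\|_{M^p_{q,r}}$, one shows $\|f-\tilde f_\delta\|_{M^p_{q,r}}\lesssim\eta$ uniformly for $f\in K$. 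The family $\{\tilde f_\delta : f\in K\}$ is supported in $B(0,C+2)$ and uniformly bounded in $C^1$: indeed, $\|\tilde f_\delta\|_{L^\infty}+\|\nabla\tilde f_\delta\|_{L^\infty}\lesssim_\delta\|f\|_{L^q(B(0,C+1))}\lesssim\|f\|_{M^p_{q,r}}$, where the last bound controls the local $L^q$-norm by a single dyadic block of the Morrey norm (one enclosing large dyadic cube of size $\gtrsim C$). Arzel\`a--Ascoli on $\overline{B(0,C+2)}$ produces a finite sup-norm $\eta$-net, which is finally transferred to a Morrey-norm net via the compact-support embedding $\|h\|_{M^p_{q,r}}\le C(R)\|h\|_{L^\infty}$ valid for $\supp h\subset B(0,R)$.

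\emph{Main obstacle.} The chief technical point is the compact-support inequality $\|h\|_{M^p_{q,r}}\lesssim_R\|h\|_{L^\infty}$, since all three assumptions $q<p<r<\infty$ enter simultaneously. For cubes at scales $\le R$, the number $\sim(R 2^j)^d$ of cubes intersecting $\supp h$ forces the $\ell^r$-sum to converge only when $r>p$; for cubes at scales $\ge R$, the factor $|\tau^j_k|^{1/p-1/q}$ gives decay only when $p>q$. A secondary but delicate point is that the cutoff $\chi$ must be smooth rather than a sharp indicator, so that the commutator $\chi-T(z)\chi$ is of order $|z|$ in $L^\infty$ when one combines tightness of $K$ with the equicontinuity of translations; sharp cutoffs generate boundary layers that are not directly controllable by \eqref{eq:tbdd}.
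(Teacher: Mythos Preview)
Your proof is correct and takes a genuinely different route from the paper's. For the sufficiency direction, the paper does not mollify: instead it builds a piecewise-constant projection $\mathcal{P}$ that replaces $f$ on a large dyadic box $D_0$ by its averages over a fine dyadic mesh of scale $2^{-j_1}$, shows $\|f-\mathcal{P}f\|_{M^p_{q,r}}\le 2\eps$ for all $f\in K$ by splitting the $\ell^r$-sum over cubes into three scale regimes ($j\le j_0$, $j_0\le j\le j_1$, $j\ge j_1$) and reducing each to $\sup_{|z|\lesssim 2^{-j_1}}\|(T(z)-1)f\|_{M^p_{q,r}}$, and then concludes via the abstract metric-space lemma (Lemma~\ref{lem:tbdd}) since $\mathcal{P}$ has finite-dimensional range. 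Your mollification-plus-Arzel\`a--Ascoli scheme is the more classical Kolmogorov--Riesz template; it trades the paper's dyadic bookkeeping for the smooth-cutoff commutator estimate and the compact-support embedding $\|h\|_{M^p_{q,r}}\lesssim_R\|h\|_{L^\infty}$ that you correctly isolate as the crux (note this also follows immediately from $L^p\hookrightarrow M^p_{q,r}$, which the paper invokes to bound $\mathcal{P}$). The paper's approach has the minor advantage of staying entirely within the dyadic framework with no smoothness at all, while yours is arguably more transparent and transplants directly from the $L^p$ case. The necessity arguments are essentially the same in both.
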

\begin{remark}
This kind of characterization for $L^p$ space for $1\le p < \I$ is due to Kolmogorov, Tamarkin, Tulajkov, and Riesz
\cite{Kol,Rie,Tam,Tula} (see also \cite{HOH}).
Further, when $p=2$, a characterization in terms of Fourier transformation is given by Pego \cite{Pe}.
\end{remark}
The proof relies on the following lemma. % by Hanche-Olsen and Holden \cite{HOH}.
\begin{lemma}[\cite{HOH}]\label{lem:tbdd}
Let $X$ be a metric space. Assume that, for every $\eps$, there exist some $\delta$, a metric space
$W$, and a mapping $P: X\to W$ so that $P(X)$ is totally bounded, and if $x,y \in X$ are such that
 $d_W(P(x),P(y))< \delta$ then $d_X(x,y)< \eps$. Then $X$ is totally bounded.
\end{lemma}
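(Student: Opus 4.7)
The plan is to verify directly that $X$ admits a finite $\varepsilon$-net for every $\varepsilon>0$, by pulling back a finite net from $W$ via $P$. Fix $\varepsilon>0$ and let $\delta>0$, the metric space $W$, and the map $P:X\to W$ be supplied by the hypothesis for this $\varepsilon$. The strategy is to cover $P(X)$ finitely in $W$ by balls of radius strictly less than $\delta$, then use the implication $d_W(P(x),P(y))<\delta \Rightarrow d_X(x,y)<\varepsilon$ to turn any preimages of the ball-centers into an $\varepsilon$-net for $X$.

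Concretely, since $P(X)$ is totally bounded in $W$, there exist finitely many points $w_1,\dots,w_n\in W$ such that $P(X)\subset\bigcup_{i=1}^n B_W(w_i,\delta/2)$. Discard those indices $i$ for which $B_W(w_i,\delta/2)\cap P(X)=\emptyset$, and for each surviving index pick some $x_i\in X$ with $P(x_i)\in B_W(w_i,\delta/2)$. I claim $\{x_i\}$ is a finite $\varepsilon$-net for $X$: given any $x\in X$, there is some surviving $i$ with $P(x)\in B_W(w_i,\delta/2)$, hence by the triangle inequality in $W$,
\[
  d_W(P(x),P(x_i))\le d_W(P(x),w_i)+d_W(w_i,P(x_i))<\tfrac{\delta}{2}+\tfrac{\delta}{2}=\delta.
\]
The hypothesis then yields $d_X(x,x_i)<\varepsilon$, completing the verification. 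Since $\varepsilon>0$ was arbitrary, $X$ is totally bounded.

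There is no real obstacle here; the only subtlety is handling the strict inequality in the hypothesis, which is exactly what forces the use of $\delta/2$-balls rather than $\delta$-balls and the triangle-inequality step above. A secondary point is that the centers $w_i$ of a finite net for $P(X)$ need not themselves lie in $P(X)$, which is why one passes to representatives $x_i$ lying in the preimages of the surviving balls rather than trying to invert $P$ on $\{w_1,\dots,w_n\}$ directly.
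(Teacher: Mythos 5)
Your proof is correct and is essentially the standard argument from the cited reference \cite{HOH} (the paper itself states this lemma without proof, deferring to that reference): cover $P(X)$ by finitely many $\delta/2$-balls, pull back representatives, and use the triangle inequality in $W$ together with the hypothesis to get a finite $\varepsilon$-net in $X$. The handling of the strict inequality via $\delta/2$-balls and of centers possibly lying outside $P(X)$ is exactly the right care to take; nothing is missing.
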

\begin{proof}[Proof of Theorem \ref{thm:totallybounded}]
Let us first prove if a bounded set $K\subset M^{p}_{q,r}$ satisfies \eqref{eq:tbdd} then $K$ is totally bounded.

Given $\eps>0$, take $C(\eps)>0$ so that \eqref{eq:tbdd} holds.
Let $j_0 \in \Z $ be the maximum number that satisfies $2^{-j_0} > C(\eps)$. 
Define $D_0 := [-2^{-j_0},2^{-j_0})^d \subset \R^d$.
Remark that $D_0 \supset \{|x| \le C(\eps)\}$.
Let $j_1=j_1(\eps)\ge j_0$ be an integer to be chosen later.
Recall the notation $\tau^j_k := 2^{-j} ([0,1)^d + k) \in \mathcal{D}_j$ with $j\in\Z$ and $k\in\Z^d$.
Let $K_0 = \Z^d \cap [-2^{j_1-j_0},2^{j_1-j_0})^d$ so that $D_0 = \bigcup_{k \in K_0} \tau_k^{j_1}$.
We define a projection operator $\mathcal{P}=\mathcal{P}(\eps):M^{p}_{q,r}\to M^{p}_{q,r}$ as 
\[
	\mathcal{P} f (x)
	:=
	\left\{
	\begin{aligned}
	&\frac1{|\tau_k^{j_1}|} \int_{\tau_k^{j_1}} f(y) dy,& & x \in \tau_k^{j_1},\, \exists k\in K_0,\\
	&0 ,&&\text{otherwise}.
	\end{aligned}
	\right.
\]
Remark that $\supp \mathcal{P}f \subset \overline{D_0}$. 
$\mathcal{P}$ is a bounded operator. Indeed,
by the embedding $L^p \hookrightarrow M^p_{q,r}$ for $q<p<r$, $\supp \mathcal{P}f \subset \overline{D_0}$, and triangle inequality,
we have
\begin{align*}
	\norm{\mathcal{P}f}_{M^{p}_{q,r}}
	&{}\le C\norm{\mathcal{P}f}_{L^p(D_0)}
	\le C\sum_{k \in K_0 %[-2^{j_1-j_0},2^{j_1-j_0})^d
	} 
	\norm{\frac1{|\tau_k^{j_1}|}\int_{\tau^{j_1}_k}fdy}_{L^p(\tau_k^{j_1})} \\
	&{}
	\le C\sum_{k \in K_0 %[-2^{j_1-j_0},2^{j_1-j_0})^d
	} 
	|\tau_k^{j_1}|^{\frac1p-\frac1q}\norm{f}_{L^q(\tau^{j_1}_k)}
	\le C 2^{d(j_1-j_0)(1-\frac1r)} \norm{f}_{M^p_{q,r}}.
\end{align*}

By the assumption \eqref{eq:tbdd}, 
\begin{equation}\label{eq:pf_tbdd1}
	\norm{f-\mathcal{P}f}_{{M}^p_{q,r}} \le
	\norm{f{\bf 1}_{D_0^c}}_{{M}^p_{q,r}}
	+\norm{f{\bf 1}_{D_0} -\mathcal{P}f}_{{M}^p_{q,r}}
	\le \eps + \norm{f{\bf 1}_{D_0} -\mathcal{P}f}_{{M}^p_{q,r}}
\end{equation}
for any $f\in K$.
We estimate $\norm{f{\bf 1}_{D_0} -\mathcal{P}f}_{{M}^p_{q,r}}$.
Remark that
\begin{align*}
	\norm{f{\bf 1}_{D_0} -\mathcal{P}f}_{{M}^p_{q,r}}^r
	=
	\sum_{j \in\Z} \sum_{k \in \Z^d} |\tau_k^j|^{\(\frac1p-\frac1q\)r} 
	\norm{f{\bf 1}_{D_0}-\mathcal{P}f}_{L^q(\tau_k^j)}^r.
\end{align*}

We first consider the case $j \le j_0$. In this case, $\tau_k^j \cap D_0 \neq \emptyset$ if and only if
$k\in \{-1,0\}^d \subset \Z^d$. Further, for $k\in \{-1,0\}^d$, $\tau_k^j \cap D_0=\tau_k^{j_0}$.
Therefore,
\begin{equation}\label{eq:pf_tbdd2}
\begin{aligned}
	&\sum_{j \le j_0} \sum_{k \in \Z^d} |\tau_k^j|^{\(\frac1p-\frac1q\)r} 
	\norm{f{\bf 1}_{D_0}-\mathcal{P}f}_{L^q(\tau_k^j)}^r \\
	&{}= \sum_{j \le j_0} \sum_{k \in \{-1,0\}^d} |\tau_k^j|^{\(\frac1p-\frac1q\)r} 
	\norm{f-\mathcal{P}f}_{L^q(\tau_k^{j_0})}^r \\
	&{}= \sum_{j \le j_0} 2^{-d(j-j_0)\(\frac1p-\frac1q\)r} \sum_{k \in \{-1,0\}^d} |\tau_k^{j_0}|^{\(\frac1p-\frac1q\)r} 
	\norm{f-\mathcal{P}f}_{L^q(\tau_k^{j_0})}^r \\
	&{}\le C_{p,q,r,d} \sum_{k \in \{-1,0\}^d} |\tau_k^{j_0}|^{\(\frac1p-\frac1q\)r} 
	\norm{f-\mathcal{P}f}_{L^q(\tau_k^{j_0})}^r
\end{aligned}
\end{equation}
since $q<p$.
We next consider the case $j_0 \le j \le j_1$.
If $j \ge j_0$ then, for each $\tau_k^j \in \mathcal{D}_j$, either $\tau_k^j \subset \tau_l^{j_0}\subset D_0$ for some $l\in\{-1,0\}^d$ or $\tau_k^j \cap D_0 =\emptyset$ holds.
Further, $\tau_k^j \subset \tau_l^{j_0}$ if and only if $k \in [-2^{j-j_0},2^{j-j_0})^d$. We have
\begin{multline*}
	\sum_{j_0 \le j \le j_1} \sum_{k \in \Z^d} |\tau_k^j|^{\(\frac1p-\frac1q\)r} 
	\norm{f{\bf 1}_{D_0}-\mathcal{P}f}_{L^q(\tau_k^j)}^r \\
	= \sum_{j_0 \le j \le j_1} \sum_{k \in [-2^{j-j_0},2^{j-j_0})^d} |\tau_k^j|^{\(\frac1p-\frac1q\)r} 
	\norm{f-\mathcal{P}f}_{L^q(\tau_k^j)}^r.
\end{multline*}
% For such $(j,k)$ in the sum on the right hand side, 
For each $\tau_k^j \in \mathcal{D}_j$ in the summand of the right hand side,
H\"older's inequality yields
\begin{align*}
\norm{f-\mathcal{P}f}_{L^q(\tau_k^j)}^q
&{}= \sum_{m\in\Z^d,\, \tau_{m}^{j_1} \subset \tau_{k}^{j}}
\int_{\tau_{m}^{j_1}}\abs{ \frac1{|\tau_m^{j_1}|} \int_{\tau_m^{j_1}} (f(x) -f(y)) dy}^q dx\\
&{}\le \sum_{m\in\Z^d,\, \tau_{m}^{j_1} \subset \tau_{k}^{j}}
\frac1{|\tau_m^{j_1}|} \int_{\tau_{m}^{j_1}}  \int_{\tau_m^{j_1}} |f(x) -f(y)|^q dy dx.
\end{align*}
We introduce the change of variable $y=x-z$.
Since it holds for any $m\in\Z^d$ that $|x-y|\le C_d 2^{-j_1}$ as long as $x,y \in \tau_m^{j_1}$,
one has
\begin{align*}
\norm{f-\mathcal{P}f}_{L^q(\tau_k^j)}^q
&{} \le \sum_{m\in\Z^d,\, \tau_{m}^{j_1} \subset \tau_{k}^{j}}
\frac1{|\tau_m^{j_1}|} \int_{|z| \le C 2^{-j_1}}  \( \int_{\tau_m^{j_1}} |f(x) -T(z)f(x)|^q dx \)dz\\
&{}=
2^{j_1d} \int_{|z| \le C 2^{-j_1}} \norm{(T(z)-1)f}_{L^q(\tau_k^j)}^q dz\\
&{}\le C 2^{j_1d\frac{q}{r}}\(\int_{|z| \le C 2^{-j_1}} \norm{(T(z)-1)f}_{L^q(\tau_k^j)}^r dz\)^{\frac{q}{r}},
\end{align*}
where we have used H\"older's inequality in $z$ to obtain the last line.
Hence, combining above estimates, we reach to
\begin{equation}\label{eq:pf_tbdd3}
	\begin{aligned}
	&\sum_{j_0 \le j \le j_1} \sum_{k \in \Z^d} |\tau_k^j|^{\(\frac1p-\frac1q\)r} 
	\norm{f{\bf 1}_{D_0}-\mathcal{P}f}_{L^q(\tau_k^j)}^r	\\
	&{}\le C_{q,r,d}
	\sum_{j_0 \le j \le j_1} \sum_{k \in \Z^d} |\tau_k^j|^{\(\frac1p-\frac1q\)r} 
	2^{j_1d}\int_{|z| \le C 2^{-j_1}} \norm{(T(z)-1)f}_{L^q(\tau_k^j)}^r dz \\
	&{}= C_{q,r,d} 2^{j_1d}\int_{|z| \le C 2^{-j_1}}
	\(\sum_{j_0 \le j \le j_1} \sum_{k \in \Z^d} |\tau_k^j|^{\(\frac1p-\frac1q\)r} 
	 \norm{(T(z)-1)f}_{L^q(\tau_k^j)}^r\) dz.
	\end{aligned}
\end{equation}
We finally consider $j \ge j_1$.
As in the previous case,
\begin{multline*}
	\sum_{ j \ge j_1} \sum_{k \in \Z^d} |\tau_k^j|^{\(\frac1p-\frac1q\)r} 
	\norm{f{\bf 1}_{D_0}-\mathcal{P}f}_{L^q(\tau_k^j)}^r \\
	= \sum_{ j \ge j_1} \sum_{k \in [-2^{j-j_0},2^{j-j_0})^d} |\tau_k^j|^{\(\frac1p-\frac1q\)r} 
	\norm{f-\mathcal{P}f}_{L^q(\tau_k^j)}^r.
\end{multline*}
For each $\tau_k^j \in \mathcal{D}_j$ in the summand of the right hand side, $\tau_k^j \subset \tau_l^{j_1}$ for some $l\in K_0$.
Denoting $l=l(k) \in K_0$ be a (unique) vector such that $\tau_k^j \subset \tau_{l}^{j_1}$, we have
\[
\norm{f-\mathcal{P}f}_{L^q(\tau_k^j)}^q
\le \frac1{|\tau_{l(k)}^{j_1}|} \int_{\tau_{k}^{j}} \( \int_{\tau_{l(k)}^{j_1}} |f(x) -f(y)|^q dy\) dx
\]
as in the previous case. Now, we again introduce change of variable $y=x-z$. In this case,
since $x \in \tau_k^j$ and $y \in \tau_{l(k)}^{j_1}$, we have
$|z| \le C_d(2^{-j}+2^{-j_1}) \le C_d2^{-j_1}$. 
The rest of the estimate is similar to the previous case. We obtain
\begin{equation}\label{eq:pf_tbdd4}
	\begin{aligned}
	&\sum_{ j \ge j_1} \sum_{k \in \Z^d} |\tau_k^j|^{\(\frac1p-\frac1q\)r} 
	\norm{f{\bf 1}_{D_0}-\mathcal{P}f}_{L^q(\tau_k^j)}^r	\\
	&{}\le C_{q,r,d} 2^{j_1d}\int_{|z| \le C 2^{-j_1}}
	\(\sum_{ j \ge j_1} \sum_{k \in \Z^d} |\tau_k^j|^{\(\frac1p-\frac1q\)r} 
	 \norm{(T(z)-1)f}_{L^q(\tau_k^j)}^r\) dz.
	\end{aligned}
\end{equation}

By \eqref{eq:pf_tbdd2}, \eqref{eq:pf_tbdd3}, and \eqref{eq:pf_tbdd4}, we have
\begin{align*}
	\norm{f{\bf 1}_{D_0} -\mathcal{P}f}_{{M}^p_{q,r}}^r
	&{}\le C_{p,q,r,d} 2^{j_1d} \int_{|z|\le C_d 2^{-j_1}} \norm{(T(z)-1)f}_{M^{p}_{q,r}}^r dz\\
	&{}\le C_{p,q,r,d} \(\sup_{|z| \le C_d 2^{-j_1}}\norm{(T(z)-1)f}_{M^{p}_{q,r}}\)^r.
\end{align*}
Now, we choose $j_1$ so that $C_d 2^{-j_1} \le 1/C(\eps(C_{p,q,r,d})^{-1/r})$, where $C(\cdot)$ is the function in
the assumption \eqref{eq:tbdd}. Then, plugging the above estimate to \eqref{eq:pf_tbdd1} and using the assumption \eqref{eq:tbdd}, we conclude that $\norm{f-\mathcal{P}f}_{M^p_{q,r}} \le 2\eps$ for any $f\in K$ and so that 
$\norm{f-g}_{M^p_{q,r}} \le 4\eps + \norm{\mathcal{P}f-\mathcal{P}g}_{M^p_{q,r}}$ for any $f,g \in K$.
This shows that if $\norm{\mathcal{P}f-\mathcal{P}g}_{M^p_{q,r}} \le \eps$ then $\norm{f-g}_{M^p_{q,r}} \le 5\eps$.
Since $\mathcal{P}$ is bounded and since images of $\mathcal{P}$ is finite dimensional,
$\mathcal{P}K$ is totally bounded. Thus, we conclude from Lemma \ref{lem:tbdd} that $K$ is totally bounded.

Conversely, assume that a bounded set $K$ is totally bounded and prove \eqref{eq:tbdd}.
By a standard argument, it suffices to show that each $f \in K \subset M^p_{q,r}$ satisfies
\[
	\norm{f {\bf 1}_{\{|x|\ge R\}}}_{M^{p}_{q,r}} + \sup_{|z|\le 1/R}
	\norm{(T(z)-1)f}_{M^p_{q,r}} \to 0
\]
as $R\to\I$. Fix $f\in K$.
Since $r<\I$, for any $\eps>0$, there exists a finite set $\Omega \in \Z \times \Z^d$ such that
\[
	\sup_{|z|\le 1} \(\sum_{(j,k) \in \Z^{1+d}\setminus \Omega} |\tau_k^j|^{(\frac1p-\frac1q)r}\norm{T(z)f}_{L^q(\tau_k^j)}^r\)^{1/r} \le \eps.
\]
Hence, the proof is reduced to showing that
$$\norm{f {\bf 1}_{\{|x|\ge R\}}}_{L^q(\tau)} + \sup_{|z|\le 1/R}\norm{(T(z)-1)f}_{L^q(\tau)} \to 0$$
as $R\to\I$ for each dyadic cube $\tau$ and $f\in L^q_{\mathrm{loc}}$.
This is obvious.
\end{proof}

\section{Compactness tool}\label{sec:compactness}

In this section, we treat a compactness result, a linear profile decomposition.
We first collect notations and elementary facts in Section \ref{subsec:pd1} and Section \ref{subsec:pd1.5}.
The main result of this section is Theorem \ref{thm:pd} in Section \ref{subsec:pd2}.
Throughout this section, we suppose $d\ge1$ and \eqref{asmp:alpha}.

\subsection{Deformations}\label{subsec:pd1}
We introduce a dilation
\[
	(D(h)f)(x) = h^{\frac1\alpha} f(hx), \quad h\in 2^\Z,
\]
translation in physical space
\[
	(T(a)f)(x) = f(x-a), \quad a \in \R^d,
\]
translation in Fourier space
\[
	(P(b)f)(x) = e^{-ix\cdot b} f(x), \quad b\in \R^d,
\]
and Schr\"odinger group $U(s) = e^{is\Delta}$, $s\in \R$.
Each of them forms a group and inverses of them are summarized as follows:
\[
	D(h)^{-1} = D(h^{-1}),\,
	T(a)^{-1} = T(-a),\,
	P(b)^{-1} = P(-b),\,
	U(s)^{-1} = U(-s).
\]
It is easy to see that $T(a)$ and $U(s)$ are isometric bijection on $\hat{M}^{d\alpha}_{2,r}$ since
they are just multiplication by $e^{-ia\cdot \xi}$ and $e^{-is|\xi|^2}$ respectively in the Fourier side.
Similarly, $D(h)$ is also an isometric bijection on $\hat{M}^{d\alpha}_{2,r}$ as long as $h$ is a dyadic number.
For $P(b)$, we have the following.
\begin{lemma}\label{lem:Paction}
It holds that
\[
	2^{-d} \norm{ f}_{\hat{M}^{d\alpha}_{2,r}}
	\le \norm{P(b)f}_{\hat{M}^{d\alpha}_{2,r}}
	\le 2^d \norm{ f}_{\hat{M}^{d\alpha}_{2,r}}
\]
for any $b\in \R^d$ and $f \in \hat{M}^{d\alpha}_{2,r}$.
\end{lemma}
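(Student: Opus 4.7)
The plan is to reduce the statement to a claim on the Fourier side. Since the Fourier convention used here gives $\mathcal{F}(P(b)f)(\xi) = (\mathcal{F}f)(\xi+b)$, the operator $P(b)$ is realized as translation on the Fourier side, and a change of variable produces the identity
\[
\|\mathcal{F}(P(b)f)\|_{L^2(\tau_k^j)} = \|\mathcal{F}f\|_{L^2(\tau_k^j + b)}.
\]
Thus the lemma reduces to proving that translation acts on the Morrey space $M^{(d\alpha)'}_{2,r}$ with operator norm at most $2^d$, and the lower bound will then follow immediately by applying the upper bound to $P(-b) = P(b)^{-1}$.

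The key geometric observation is that, for each fixed scale $j \in \mathbb{Z}$, a translate $\tau_k^j + b$ of a dyadic cube of sidelength $2^{-j}$ is covered by at most $2^d$ elements of $\mathcal{D}_j$, regardless of $b$. Writing $K_{j,k,b} \subset \mathbb{Z}^d$ for the corresponding set of indices, one has $|K_{j,k,b}| \le 2^d$, and the additivity of the $L^2$ norm gives
\[
\|\mathcal{F}f\|_{L^2(\tau_k^j + b)} \le \sum_{k' \in K_{j,k,b}} \|\mathcal{F}f\|_{L^2(\tau_{k'}^j)}.
\]

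I would next raise this inequality to the $r$-th power, which is allowed since $r \ge (d\alpha)' > 1$ under \eqref{asmp:alpha}, and apply Jensen's inequality on $K_{j,k,b}$ to produce a first factor $(2^d)^{r-1}$. Multiplying by the Morrey weight $|\tau_k^j|^{r(1/(d\alpha)'-1/2)}$ and summing over $(j,k)$, I would then swap the double sum $\sum_{j,k}\sum_{k'\in K_{j,k,b}}$ into $\sum_{j,k'}\sum_{\{k:\,k'\in K_{j,k,b}\}}$. Within a given scale the weight $|\tau_k^j|^{r(1/(d\alpha)'-1/2)}$ depends only on $j$, so only the cardinality of the inner sum matters; the map $k \mapsto \tau_k^j+b$ being a bijection on $\mathcal{D}_j$ and the same covering estimate applied in reverse shows this cardinality is also bounded by $2^d$. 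Combining the two factors gives $(2^d)^{r-1}\cdot 2^d = 2^{dr}$, and taking $r$-th roots yields precisely $\|P(b)f\|_{\hat{M}^{d\alpha}_{2,r}} \le 2^d \|f\|_{\hat{M}^{d\alpha}_{2,r}}$.

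I do not anticipate any serious obstacle, since the proof is an elementary counting exercise. The only care required is the bookkeeping of multiplicities so that the covering cost $2^d$ does not get inflated by a sloppy triangle inequality — in particular one needs the exponents from the Jensen step and the sum-swap step to add up to exactly $r$, so that the final constant is $2^d$ and not some larger power.
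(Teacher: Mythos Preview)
Your argument is correct. The paper does not supply its own proof here but simply refers to \cite[Lemma 2.3]{MS2}; your proof---passing to the Fourier side, covering a translated dyadic cube $\tau_k^j+b$ by at most $2^d$ cubes of $\mathcal{D}_j$, and then tracking multiplicities through a Jensen step and a sum-swap---is precisely the standard elementary argument and recovers the constant $2^d$ exactly. One incidental remark: since the $\tau_{k'}^j$ are pairwise disjoint you may replace the triangle inequality by the exact $L^2$ additivity $\|\mathcal{F}f\|_{L^2(\tau_k^j+b)}^2\le\sum_{k'}\|\mathcal{F}f\|_{L^2(\tau_{k'}^j)}^2$, which (using $r\ge 2$) sharpens the constant to $2^{d/2}$; but this is not needed for the lemma as stated.
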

For the proof, see \cite[Lemma 2.3]{MS2}.
\begin{remark}\label{rmk:quasi-norm}
Let $\ell_r:\hat{M}^{d\alpha}_{2,r} \to \R$ be as in \eqref{eq:M_size}.
As an immediate consequence of Lemma \ref{lem:Paction},
we see that $\ell_r(\cdot)$ is a quasi norm of $\hat{M}^{d\alpha}_{2,r}$ which is
equivalent to $\norm{\cdot}_{\hat{M}^{d\alpha}_{2,r}}$.
\end{remark}

Next we collect commutation of the above operators:
\begin{align*}
	D(h)T(a) &{}= T(h^{-1}a) D(h), &
	D(h)P(b) &{}= P(hb)D(h), \\
	D(h)U(t) &{}= U(h^{-2}t)D(h), &
	T(a)P(b) &{}= e^{ia\cdot b}P(b)T(a), \\
	T(a)U(s) &{}= U(s)T(a), &
\end{align*}
and 
\[
	U(s)P(b) = e^{-is|b|^2}P(b)U(s) T(-2sb).
\]
The last one is nothing but a Galilean transform.

\begin{definition}\label{def:deformation}
We call a bounded operator on $\hat{M}^{d\alpha}_{2,r}$ of the form
\begin{equation}\label{eq:nrep}
	\mathcal{G} =e^{i\theta} D(h) P(b) U (s) T(a) , \quad
	(\theta,h,s,a,b) \in \R \times 2^\Z \times \R \times \R^d \times \R^d
\end{equation}
as a deformation of $\hat{M}^{d\alpha}_{2,r}$. 
We define $G \subset \mathcal{L}(\hat{M}^{d\alpha}_{2,r})$ as a set of 
all deformations.
$G$ forms a group with functional composition as a multiplication.
$\mathrm{Id} \in G$ is the identity element.
\end{definition}
It follows from Lemma \ref{lem:Paction} and the above commutation that
\[
	2^{-d} \norm{ f}_{\hat{M}^{d\alpha}_{2,r}}
	\le \norm{\mathcal{G}f}_{\hat{M}^{d\alpha}_{2,r}}
	\le 2^d \norm{ f}_{\hat{M}^{d\alpha}_{2,r}}
\]
and
\[
	\ell_r( \mathcal{G} f) = \ell_r (f)
\]
for any $\mathcal{G} \in G$ and $f \in \hat{M}^{d\alpha}_{2,r}$.

\begin{remark}[Normal representation]
By the commutation relations above, we can freely change the order of four operators
$D$, $P$, $U$, and $T$ in the representation \eqref{eq:nrep} by a suitable change of parameters.
We refer the representation as in \eqref{eq:nrep} to as a
\emph{normal representation of $\mathcal{G}\in G$}.
% However, in most of the rest part, we work with the following different representation
% \begin{equation}\label{eq:rrep}
% 	\mathcal{G} = e^{i\tilde{\theta}}
% 	U (\tilde{s}) D(\tilde{h}) P(\tilde{b}) T(\tilde{a}).
% \end{equation}
% \begin{equation}\label{eq:rrep2}
% 	\mathcal{G} = e^{i\tilde{\theta}}
% 	D(\tilde{h}) T(\tilde{a}) P(\tilde{b}) U (\tilde{s}).
% \end{equation}
% The correspondence of this representation and the normal representation \eqref{eq:nrep}
% is given by
% \begin{equation}\label{eq:changeparameteres}
% 	\theta=\tilde{\theta}-\tilde{h}^2\tilde{s}|\tilde{b}|^2,\quad h=\tilde{h},\quad
% 	b=\tilde{b},\quad s=\tilde{h}^2\tilde{s},\quad a= \tilde{a} - 2(\tilde{h})^2 \tilde{s} \tilde{b}.
% \end{equation}
% \begin{equation}\label{eq:changeparameteres2}
% 	\theta=\tilde{\theta}+\tilde{a}\cdot\tilde{b},\quad h=\tilde{h},\quad
% 	b=\tilde{b},\quad s=\tilde{s},\quad a= \tilde{a} .
% \end{equation}
\end{remark}

\subsection{Orthogonality of families of deformations}\label{subsec:pd1.5}
We next introduce several notions on families of deformations.
\begin{definition}[a vanishing family]
We say a family of deformations $\{\mathcal{G}_n\}_n \subset G$ is \emph{vanishing}
if, in the normal representation
\[
	\mathcal{G}_n = e^{i\theta_n} D(h_n) P(b_n) U (s_n) T(a_n),
\]
it holds that
\[
	|\log h_n| + |b_n| + |s_n| + |a_n| \to \I
\]
as $n\to\I$.
\end{definition}
\begin{lemma}\label{lem:inv_vanishing}
A family $\{\mathcal{G}_n\}_n \subset G$ is vanishing if and only if
$\{\mathcal{G}_n^{-1}\}_n$ is vanishing.
\end{lemma}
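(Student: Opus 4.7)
The plan is to write $\mathcal{G}_n^{-1}$ in normal form and read off how its parameters depend on those of $\mathcal{G}_n$; the equivalence of the two vanishing conditions then reduces to the observation that the map between parameters is proper.

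Starting from $\mathcal{G}_n = e^{i\theta_n}D(h_n)P(b_n)U(s_n)T(a_n)$, I would invert termwise to get $\mathcal{G}_n^{-1} = e^{-i\theta_n}T(-a_n)U(-s_n)P(-b_n)D(h_n^{-1})$ and then push $D(h_n^{-1})$ to the left, $P$ to the middle-left, and $T$ to the right by repeated use of the commutation relations displayed just before Definition \ref{def:deformation} (together with the Galilean identity $U(s)P(b) = e^{-is|b|^2}P(b)U(s)T(-2sb)$). A routine bookkeeping computation yields the normal representation
\begin{equation*}
\mathcal{G}_n^{-1} = e^{i\theta'_n} D(h'_n) P(b'_n) U(s'_n) T(a'_n)
\end{equation*}
with
\begin{equation*}
h'_n = h_n^{-1},\quad b'_n = -h_n b_n,\quad s'_n = -h_n^{-2}s_n,\quad a'_n = -h_n^{-1}(a_n + 2s_n b_n),
\end{equation*}
and some explicit phase $\theta'_n$ (whose precise value is irrelevant for this lemma).

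Now I would argue by contradiction. Suppose $\{\mathcal{G}_n\}$ is vanishing but $\{\mathcal{G}_n^{-1}\}$ is not. Passing to a subsequence we may assume $|\log h'_n| + |b'_n| + |s'_n| + |a'_n|$ is uniformly bounded. Since $|\log h'_n| = |\log h_n|$, the sequence $h_n$ stays in a compact subset of $(0,\infty)$. Boundedness of $b'_n = -h_n b_n$ then forces $|b_n|$ to be bounded, and boundedness of $s'_n = -h_n^{-2}s_n$ forces $|s_n|$ to be bounded. Since $s_n b_n$ is then bounded, boundedness of $a'_n = -h_n^{-1}(a_n + 2s_nb_n)$ finally forces $|a_n|$ to be bounded. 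This contradicts the vanishing of $\{\mathcal{G}_n\}$. The converse direction follows from the identity $(\mathcal{G}_n^{-1})^{-1} = \mathcal{G}_n$ together with the direction just proved.

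The only mildly delicate point will be keeping track of the commutation relations correctly when bringing $\mathcal{G}_n^{-1}$ into normal form; in particular the Galilean commutation produces the shift $-2s_nb_n$ in the $a'_n$ entry, which is what links the translation and frequency-translation parameters. Once the explicit formulas for $(h'_n,b'_n,s'_n,a'_n)$ are in hand, the equivalence of the two vanishing conditions is just the elementary observation above.
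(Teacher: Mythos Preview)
Your proof is correct and follows essentially the same approach as the paper: compute the normal representation of $\mathcal{G}_n^{-1}$ (your formulas for $h_n',b_n',s_n',a_n'$ match the paper's exactly), observe that boundedness of one parameter set along a subsequence forces boundedness of the other, and then invoke $(\mathcal{G}_n^{-1})^{-1}=\mathcal{G}_n$ for the reverse implication. The only difference is that the paper declares the boundedness transfer ``obvious'' while you spell it out step by step.
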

\begin{proof}
If we denote $\mathcal{G}_n = e^{i\theta_n} D(h_n) P(b_n) U (s_n) T(a_n)$ then 
\[
	\mathcal{G}_n^{-1} = e^{i(-\theta_n + a_n\cdot b_n + s_n |b_n|^2)} D(h_n^{-1}) P(-h_n b_n) U \(-\frac{s_n}{h_n^2}\)
	T\(-\frac{a_n+2 s_n b_n}{h_n}\).
\]
It is obvious that if $\mathcal{G}_n$ is not vanishing then $\mathcal{G}_n^{-1}$ is not vanishing.
The other direction follows from the same argument by the relation $(\mathcal{G}_n^{-1})^{-1}=\mathcal{G}_n$.
\end{proof}
\begin{lemma}\label{lem:nonvanishing}
If a family $\{\mathcal{G}_n\}_n \subset G$ is not vanishing then
there exist a subsequence $n_k$ of $n$ and $\mathcal{G} \in G$
such that $\mathcal{G}_{n_k} \to \mathcal{G}$ strongly in $\mathcal{L}(\hat{M}^{d\alpha}_{2,r})$ as $k \to\I$,
i.e., for any $\phi \in \hat{M}^{d\alpha}_{2,r}$, $\mathcal{G}_{n_k} \phi \to \mathcal{G}\phi$ (strongly)
in $\hat{M}^{d\alpha}_{2,r}$ as $k\to\I$.
\end{lemma}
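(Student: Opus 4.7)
The plan is to extract a subsequence along which every parameter in the normal representation converges, read off the limit deformation, and then verify strong convergence on $\hat{M}^{d\alpha}_{2,r}$ factor by factor. Writing $\mathcal{G}_n = e^{i\theta_n} D(h_n) P(b_n) U(s_n) T(a_n)$, non-vanishing guarantees, along some subsequence, boundedness of $|\log h_n|+|b_n|+|s_n|+|a_n|$. Since $h_n\in 2^\Z$ with $|\log h_n|$ bounded lies in a finite set, a first extraction gives $h_n\equiv h_\infty$. A standard diagonal argument then yields a further subsequence along which $e^{i\theta_n}\to e^{i\theta_\infty}$ on $S^1$ and $b_n\to b_\infty$, $s_n\to s_\infty$, $a_n\to a_\infty$ in $\R^d$ or $\R$. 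I would then define $\mathcal{G}:=e^{i\theta_\infty} D(h_\infty) P(b_\infty) U(s_\infty) T(a_\infty)\in G$ and claim $\mathcal{G}_{n_k}\to \mathcal{G}$ strongly.

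By Lemma \ref{lem:Paction} and the uniform-boundedness comment after Definition \ref{def:deformation}, every deformation is uniformly bounded on $\hat{M}^{d\alpha}_{2,r}$, so strong convergence of a product of uniformly bounded operator sequences at a fixed vector follows from strong convergence of each factor at the relevant input. Hence it suffices to prove, for each $\psi\in\hat{M}^{d\alpha}_{2,r}$, the three continuity statements $U(s_n)\psi\to U(s_\infty)\psi$, $P(b_n)\psi\to P(b_\infty)\psi$, and $T(a_n)\psi\to T(a_\infty)\psi$; the scalar phase $e^{i\theta_n}\to e^{i\theta_\infty}$ and the fixed isometry $D(h_\infty)$ are trivial. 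Continuity of $U$ is immediate from the paper's standing hypothesis that $e^{it\Delta}$ is strongly continuous to the identity at $t=0$, combined with the group property. Continuity of $P$ becomes, in the Fourier picture, strong continuity of translation on $M^{(d\alpha)'}_{2,r}$ at $\F\psi$, which follows at once from Theorem \ref{thm:totallybounded} applied to the (trivially totally bounded) singleton $\{\F\psi\}$.

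The main obstacle is the last continuity, because $T(a)$ corresponds on the Fourier side to pointwise modulation by $e^{-ia\cdot\xi}$ rather than a translation, so Theorem \ref{thm:totallybounded} does not apply directly. I would handle it by dominated convergence on the defining $\ell^r$ sum of the Morrey norm. With $c_n:=a_n-a_\infty\to 0$, on each fixed dyadic cube $\tau_k^j$ one has $\norm{(e^{-ic_n\cdot\xi}-1)\F\psi}_{L^2(\tau_k^j)}\to 0$ by dominated convergence on the bounded set $\tau_k^j$. Meanwhile the summand
\[
|\tau_k^j|^{\left(\frac{1}{(d\alpha)'}-\frac{1}{2}\right)r}\,\norm{(e^{-ic_n\cdot\xi}-1)\F\psi}_{L^2(\tau_k^j)}^r
\]
is dominated uniformly in $n$ by $2^r|\tau_k^j|^{(\frac{1}{(d\alpha)'}-\frac{1}{2})r}\norm{\F\psi}_{L^2(\tau_k^j)}^r$, which is $(j,k)$-summable because $\F\psi\in M^{(d\alpha)'}_{2,r}$ and $r<\infty$ under \eqref{asmp:alpha}. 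Dominated convergence for sums then gives $\norm{(T(c_n)-\mathrm{Id})\psi}_{\hat{M}^{d\alpha}_{2,r}}\to 0$, which is the desired continuity of $T$. Putting the three continuities together with the composition remark yields $\mathcal{G}_{n_k}\phi\to\mathcal{G}\phi$ in $\hat{M}^{d\alpha}_{2,r}$ for every $\phi$, i.e., strong convergence of $\mathcal{G}_{n_k}$ to $\mathcal{G}$ in $\mathcal{L}(\hat{M}^{d\alpha}_{2,r})$.
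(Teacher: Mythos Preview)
Your proposal is correct and follows the same approach as the paper: extract a subsequence along which all parameters in the normal representation converge, define $\mathcal{G}$ from the limiting parameters, and conclude strong convergence. The paper's proof stops at this point and declares the conclusion ``obvious,'' whereas you supply the factor-by-factor verification (strong continuity of $U$, $P$, $T$ on $\hat{M}^{d\alpha}_{2,r}$ and uniform boundedness to pass to the composition); the dominated-convergence argument for $T$ and the appeal to Theorem~\ref{thm:totallybounded} for $P$ are correct elaborations of what the paper leaves implicit.
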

\begin{proof}
We denote $\mathcal{G}_n = e^{i\theta_n} D(h_n) P(b_n) U (s_n) T(a_n)$.
Since $\mathcal{G}_n$ is not vanishing, there exists a subsequence $n_k$ such that
$(e^{i\theta_{n_k}},h_{n_k},b_{n_k},s_{n_k},a_{n_k})$ converges to
$(e^{i\theta},h,b,s,a)\in \{z\in\C\ |\ |z|=1\} \times 2^\Z \times \R^d \times \R \times \R^d$ as $k\to\I$.
The conclusion is obvious by taking $\mathcal{G}:=e^{i\theta} D(h) P(b) U (s) T(a)$.
\end{proof}
\begin{proposition}\label{prop:vanishing}
For a family $\{\mathcal{G}_n\}_n \subset G$ of deformations,
the following three statements are equivalent:
\begin{enumerate}
\item $\{\mathcal{G}_n\}_n$ is vanishing;
\item For any $\phi \in \hat{M}^{d\alpha}_{2,r}$, $\mathcal{G}_n \phi \rightharpoonup 0$ weakly-$*$ in $\hat{M}^{d\alpha}_{2,r}$ as $n\to\I$;
\item For any subsequence $n_k$ of $n$
there exist a sequence $\{ u_k \}_k \subset \hat{M}^{d\alpha}_{2,r}$ 
and subsequence $k_l$ of $k$ such that
$ u_{k_l} \rightharpoonup 0$ and $\mathcal{G}_{n_{k_l}}^{-1} u_{k_l} \rightharpoonup \phi \neq 0 $
weakly-$*$ in $\hat{M}^{d\alpha}_{2,r}$ as $l\to\I$.
\end{enumerate}
\end{proposition}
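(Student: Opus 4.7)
The plan is to establish the cyclic chain $(1)\Rightarrow(2)\Rightarrow(3)\Rightarrow(1)$, using Lemmas \ref{lem:inv_vanishing} and \ref{lem:nonvanishing} as the principal structural tools and Theorem \ref{thm:predual} to make sense of the weak-$*$ topology on $\hat{M}^{d\alpha}_{2,r}$ via its pre-dual $\hat{N}^{(d\alpha)'}_{2,r'}$. The easy ``contrapositive'' directions $(2)\Rightarrow(1)$ and $(3)\Rightarrow(1)$ will be handled by Lemma \ref{lem:nonvanishing}, while $(2)\Rightarrow(3)$ is trivial, so the content lies in $(1)\Rightarrow(2)$.

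For $(1)\Rightarrow(2)$, I would first use density together with the uniform bound $\|\mathcal{G}_n\|\le 2^d$ to reduce to testing the pairing $\langle \mathcal{G}_n\phi,\psi\rangle$ only for a dense class of $\phi\in\hat{M}^{d\alpha}_{2,r}$ and $\psi$ in the block pre-dual, e.g.\ Schwartz functions with Fourier support in a fixed compact subset of $\R^d\setminus\{0\}$. Next, I would extract a subsequence along which each of the five parameters $(\theta_n,h_n,b_n,s_n,a_n)$ in the normal representation either converges or diverges; vanishing guarantees at least one diverges. The orthogonality of the pairing is then obtained by analyzing, parameter by parameter, on the Fourier side: dilation $h_n\to 0$ or $\infty$ drives the Fourier support of $\mathcal{G}_n\phi$ out of the support of $\widehat{\psi}$, the Fourier translation $b_n\to\infty$ does the same, a physical translation $a_n\to\infty$ produces a rapidly oscillating phase killing the integral by Riemann--Lebesgue, and the Schr\"odinger time $s_n\to\infty$ produces a quadratic phase $e^{-is_n|\xi|^2}$ that yields decay by a non-stationary/stationary phase estimate on compactly Fourier-supported profiles. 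Combining the cases with the commutation relations of Section \ref{subsec:pd1} to normalize the order, one concludes $\langle \mathcal{G}_n\phi,\psi\rangle\to 0$.

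For $(2)\Rightarrow(1)$, I argue by contraposition: if $\{\mathcal{G}_n\}$ is not vanishing, Lemma \ref{lem:nonvanishing} produces a subsequence $\mathcal{G}_{n_k}\to\mathcal{G}\in G$ strongly, so for any $\phi\ne 0$ one has $\mathcal{G}_{n_k}\phi\to\mathcal{G}\phi\ne 0$ strongly, contradicting weak-$*$ convergence to $0$. For $(2)\Rightarrow(3)$, fix any $\phi\in\hat{M}^{d\alpha}_{2,r}\setminus\{0\}$, set $u_k:=\mathcal{G}_{n_k}\phi$; by (2), $u_k\rightharpoonup 0$ weakly-$*$, and $\mathcal{G}_{n_k}^{-1}u_k=\phi$, so (3) holds along the full sequence. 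For $(3)\Rightarrow(1)$ (contrapositive), if $\{\mathcal{G}_n\}$ is not vanishing, extract $\mathcal{G}_{n_k}\to\mathcal{G}\in G$ strongly; inspecting the formula for $\mathcal{G}_n^{-1}$ given in the proof of Lemma \ref{lem:inv_vanishing} one also gets $\mathcal{G}_{n_k}^{-1}\to\mathcal{G}^{-1}$ strongly. Applied to this subsequence, (3) supplies $u_{k_l}\rightharpoonup 0$ with $\mathcal{G}_{n_{k_l}}^{-1}u_{k_l}\rightharpoonup\phi\ne 0$; but any fixed $\mathcal{G}^{-1}\in G$ is weakly-$*$ continuous (its adjoint is realized as a bounded operator on the block pre-dual via the explicit formulas in Definition \ref{def:deformation} and Theorem \ref{thm:predual}), so $\mathcal{G}^{-1}u_{k_l}\rightharpoonup 0$, and the error term $(\mathcal{G}_{n_{k_l}}^{-1}-\mathcal{G}^{-1})u_{k_l}$ is weakly-$*$ negligible because $\{u_{k_l}\}$ is norm-bounded and the adjoint of $\mathcal{G}_{n_{k_l}}^{-1}-\mathcal{G}^{-1}$ tends strongly to $0$ on pre-dual elements. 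This contradicts $\phi\ne 0$.

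The main obstacle is the orthogonality step inside $(1)\Rightarrow(2)$. Standard $L^2$-type orthogonality arguments do not directly apply because $\hat{M}^{d\alpha}_{2,r}$ is not a Hilbert space and its dual pairing is defined only through the block pre-dual; the Schr\"odinger group does not decay in hat-Morrey norms, so the quadratic-phase contribution from $U(s_n)$ must be extracted by localizing on the Fourier side and running a careful stationary/non-stationary phase analysis against block-type test functions. A secondary but routine point is the weak-$*$ continuity of a fixed deformation used in $(3)\Rightarrow(1)$, which amounts to checking that each of $D(h), P(b), U(s), T(a)$ has an adjoint mapping $\hat{N}^{(d\alpha)'}_{2,r'}$ boundedly into itself.
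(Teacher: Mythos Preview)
Your proposal is correct and follows essentially the same cyclic scheme as the paper: $(2)\Rightarrow(3)$ by taking $u_k=\mathcal{G}_{n_k}\phi$, $(3)\Rightarrow(1)$ by contraposition via Lemma~\ref{lem:nonvanishing}, and the content sits in $(1)\Rightarrow(2)$ via a density reduction and case analysis on which parameter diverges.

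The one place worth noting is that you overcomplicate the two cases you flag as obstacles. For $|s_n|\to\infty$ (with $|\log h_n|+|b_n|$ bounded), no stationary-phase machinery is needed: since the test functions $\phi,\psi$ have $\hat\phi,\hat\psi\in C_0^\infty$, one simply passes from the Fourier-side pairing back to physical space and uses the dispersive estimate,
\[
|(\F\mathcal{G}_n\phi,\F\psi)|=|(\mathcal{G}_n\phi,\psi)|\le C\|U(s_n)\phi\|_{L^\infty}\|\psi\|_{L^1}\le C|s_n|^{-d/2}\|\phi\|_{L^1}\|\psi\|_{L^1}.
\]
Likewise, for $|\log h_n|\to\infty$ there is no need to excise the origin from the Fourier supports: H\"older gives $|(\F\mathcal{G}_n\phi,\F\psi)|\le\|\F\mathcal{G}_n\phi\|_{L^{\rho'}}\|\F\psi\|_{L^\rho}=C_{\phi,\psi}\,h_n^{\,1/\alpha-d/\rho}$, and one chooses $\rho>d\alpha$ if $h_n\to0$ and $\rho<d\alpha$ if $h_n\to\infty$. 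With these simplifications your argument coincides with the paper's.
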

\begin{proof}
``(2)$\Rightarrow$(3)'' is obvious by taking $u_k=\mathcal{G}_{n_k} \phi$ for some $\phi \neq0$.
``(3)$\Rightarrow$(1)'' is also immediate because the contraposition is Lemma \ref{lem:nonvanishing}.
% Finally, ``(1)$\Rightarrow$(2)'' follows by a standard argument (see \cite[Lemma 5.3]{MS2}, for instance).

Let us prove ``(1)$\Rightarrow$(2)''.
By density argument, it suffices to show that $(\F \mathcal{G}_n \phi,\F \psi) \to 0$ as $n\to\I$
for any $\phi,\psi \in \F (C_0^\I) \subset \mathcal{S}$.
If $|\log h_n| \to\I$ then we use H\"older's inequality to obtain
$|(\F \mathcal{G}_n \phi,\F \psi)| \le \norm{\F \mathcal{G}_n \phi}_{L^{r'}} \norm{\F \psi}_{L^{r}} =
C_{\phi,\psi} (h_n)^{\frac1\alpha-\frac{d}r}$.
We obtain the result by taking $r>d\alpha$ if $h_n\to0$ and $r<d\alpha$ if $h_n\to\I$.
Let us next suppose that $|\log h_n|$ is bounded and $|b_n|\to\I$ as $n\to\I$. In this case, we have 
$(\F \mathcal{G}_n \phi,\F \psi)=0$ for large $n$ because $\F \phi$ and $\F \psi$ have compact support
and because $|\log h_n|$ is bounded.
Let us suppose that $|\log h_n|+|b_n|$ is bounded and $|s_n|\to\I$ as $n\to\I$. In this case, the result follows from
\[
	|(\F \mathcal{G}_n \phi,\F \psi)| =
	|(\mathcal{G}_n \phi, \psi)| \le C \norm{U(s_n) \phi}_{L^\I} \norm{\psi}_{L^1}
	\le C |s_n|^{-\frac{d}2} \norm{\phi}_{L^1}\norm{\psi}_{L^1},
\]
where the constant $C$ depends on the bound of $|\log h_n|$. Finally, let us consider
the case where $|\log h_n|+|b_n| + |s_n|$ is bounded and $|a_n|\to\I$ as $n\to\I$.
Thanks to the boundedness of $|\log h_n|+|b_n| + |s_n|$, the proof boils down to showing that
$(T(a_n)\phi,\psi)\to0$ as $n\to\I$, which is obvious.
\end{proof}

Let us now introduce a notion of orthogonality of two families of deformations.
\begin{definition}[Orthogonality]
Let $\{\mathcal{G}_n\}_n, \{\tilde{\mathcal{G}}_n\}_n \subset G$ be
two families of deformations.
We say $\{\mathcal{G}_n\}_n$ and $ \{\tilde{\mathcal{G}}_n\}_n$ are \emph{orthogonal}
if $\{\mathcal{G}_n^{-1} \tilde{\mathcal{G}}_n \}_n$ is vanishing.
\end{definition}
\begin{proposition}
We introduce the following relation $\backsim$
for families of deformations: For
$\{\mathcal{G}_n\}_n, \{\tilde{\mathcal{G}}_n\}_n \subset G$,
$\{\mathcal{G}_n\}_n \backsim \{\tilde{\mathcal{G}}_n\}_n$
is true if  $\{{\mathcal{G}}_n\}_n$ and $\{\tilde{\mathcal{G}}_n\}_n$ are not orthogonal.
Then, $\backsim$ defines an equivalent relation.
\end{proposition}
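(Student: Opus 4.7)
The plan is to verify the three defining properties of an equivalence relation: reflexivity, symmetry, and transitivity. Reflexivity is immediate, symmetry follows at once from Lemma \ref{lem:inv_vanishing}, and the substantive content lies in proving transitivity, which I would handle by combining Lemma \ref{lem:nonvanishing} (extraction of a strongly convergent subsequence from a non-vanishing family) with Proposition \ref{prop:vanishing} (the equivalence between vanishing and weak-$\ast$ convergence to $0$).

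For reflexivity, given $\{\mathcal{G}_n\}_n \subset G$, the family $\{\mathcal{G}_n^{-1}\mathcal{G}_n\}_n = \{\mathrm{Id}\}_n$ is obviously not vanishing (all parameters are $0$), so $\{\mathcal{G}_n\}_n \backsim \{\mathcal{G}_n\}_n$. For symmetry, suppose $\{\mathcal{G}_n\}_n \backsim \{\tilde{\mathcal{G}}_n\}_n$, i.e., $\{\mathcal{G}_n^{-1}\tilde{\mathcal{G}}_n\}_n$ is not vanishing. Since $(\mathcal{G}_n^{-1}\tilde{\mathcal{G}}_n)^{-1} = \tilde{\mathcal{G}}_n^{-1}\mathcal{G}_n$, Lemma \ref{lem:inv_vanishing} gives that $\{\tilde{\mathcal{G}}_n^{-1}\mathcal{G}_n\}_n$ is also not vanishing, so $\{\tilde{\mathcal{G}}_n\}_n \backsim \{\mathcal{G}_n\}_n$.

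The main step is transitivity. Assume $\{\mathcal{G}_n\}_n \backsim \{\tilde{\mathcal{G}}_n\}_n$ and $\{\tilde{\mathcal{G}}_n\}_n \backsim \{\hat{\mathcal{G}}_n\}_n$; I want to show that $\{\mathcal{G}_n^{-1}\hat{\mathcal{G}}_n\}_n$ is non-vanishing. By hypothesis, neither $\{\mathcal{G}_n^{-1}\tilde{\mathcal{G}}_n\}_n$ nor $\{\tilde{\mathcal{G}}_n^{-1}\hat{\mathcal{G}}_n\}_n$ is vanishing. Applying Lemma \ref{lem:nonvanishing} twice and thinning successively, I obtain a single subsequence $n_k$ and deformations $\mathcal{H}_1,\mathcal{H}_2 \in G$ such that $\mathcal{G}_{n_k}^{-1}\tilde{\mathcal{G}}_{n_k} \to \mathcal{H}_1$ and $\tilde{\mathcal{G}}_{n_k}^{-1}\hat{\mathcal{G}}_{n_k} \to \mathcal{H}_2$ strongly in $\mathcal{L}(\hat{M}^{d\alpha}_{2,r})$. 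Writing the identity
\[
\mathcal{G}_{n_k}^{-1}\hat{\mathcal{G}}_{n_k} = (\mathcal{G}_{n_k}^{-1}\tilde{\mathcal{G}}_{n_k})(\tilde{\mathcal{G}}_{n_k}^{-1}\hat{\mathcal{G}}_{n_k})
\]
and invoking the uniform operator-norm bound on deformations (the factor-$2^d$ bound stemming from Lemma \ref{lem:Paction}), the standard ``strong convergence composed with strong convergence is strong convergence'' argument yields $\mathcal{G}_{n_k}^{-1}\hat{\mathcal{G}}_{n_k} \to \mathcal{H}_1\mathcal{H}_2 \in G$ strongly on $\hat{M}^{d\alpha}_{2,r}$.

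If $\{\mathcal{G}_n^{-1}\hat{\mathcal{G}}_n\}_n$ were vanishing, the implication $(1)\Rightarrow(2)$ of Proposition \ref{prop:vanishing} would force $\mathcal{G}_n^{-1}\hat{\mathcal{G}}_n \phi \rightharpoonup 0$ weakly-$\ast$ for every $\phi$, and in particular along $n_k$; but this contradicts the strong (and thus weak-$\ast$) convergence to $\mathcal{H}_1\mathcal{H}_2\phi$, which is nonzero for any $\phi \neq 0$ because $\mathcal{H}_1\mathcal{H}_2 \in G$ is invertible. Hence $\{\mathcal{G}_n^{-1}\hat{\mathcal{G}}_n\}_n$ is non-vanishing and $\{\mathcal{G}_n\}_n \backsim \{\hat{\mathcal{G}}_n\}_n$. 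The only subtlety I anticipate is the bookkeeping of successive subsequences, ensuring that both the strong convergences $\mathcal{G}_{n_k}^{-1}\tilde{\mathcal{G}}_{n_k}\to \mathcal{H}_1$ and $\tilde{\mathcal{G}}_{n_k}^{-1}\hat{\mathcal{G}}_{n_k}\to \mathcal{H}_2$ occur along a \emph{common} $n_k$ before the product is formed; once this is done, the uniform bound on deformations renders the ``strong times strong'' step routine.
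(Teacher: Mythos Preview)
Your approach mirrors the paper's proof exactly: reflexivity via the constant identity family, symmetry via Lemma~\ref{lem:inv_vanishing}, and transitivity via Lemma~\ref{lem:nonvanishing} applied twice followed by composition of the strong limits. However, both your argument and the paper's share a genuine gap in the transitivity step, and in fact the proposition as stated is false.

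The issue is hidden in your phrase ``thinning successively'': after extracting a subsequence $n_k$ along which $\mathcal{G}_{n_k}^{-1}\tilde{\mathcal{G}}_{n_k}$ converges, you implicitly assume that the restricted family $\{\tilde{\mathcal{G}}_{n_k}^{-1}\hat{\mathcal{G}}_{n_k}\}_k$ remains non-vanishing so that Lemma~\ref{lem:nonvanishing} applies again. But non-vanishing is not hereditary under passage to subsequences: a family is non-vanishing precisely when \emph{some} subsequence has bounded parameters, so restricting to a different subsequence can destroy this. Concretely, take $\mathcal{G}_n=\mathrm{Id}$, $\hat{\mathcal{G}}_n=T(n)$, and set $\tilde{\mathcal{G}}_n=\mathrm{Id}$ for even $n$ and $\tilde{\mathcal{G}}_n=T(n)$ for odd $n$. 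Then $\{\mathcal{G}_n^{-1}\tilde{\mathcal{G}}_n\}_n$ is non-vanishing (it equals $\mathrm{Id}$ along the even indices) and $\{\tilde{\mathcal{G}}_n^{-1}\hat{\mathcal{G}}_n\}_n$ is non-vanishing (it equals $\mathrm{Id}$ along the odd indices), yet $\{\mathcal{G}_n^{-1}\hat{\mathcal{G}}_n\}_n=\{T(n)\}_n$ is vanishing. Thus $\{\mathcal{G}_n\}\backsim\{\tilde{\mathcal{G}}_n\}$ and $\{\tilde{\mathcal{G}}_n\}\backsim\{\hat{\mathcal{G}}_n\}$ while $\{\mathcal{G}_n\}\not\backsim\{\hat{\mathcal{G}}_n\}$, so transitivity fails outright. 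The relation $\backsim$ is therefore not an equivalence relation on arbitrary families of deformations; in the paper's applications the statement is effectively only used after a common subsequence has already been fixed, where this obstruction does not arise.
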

\begin{proof}
The reflexivity of $\backsim$ follows from the fact that sequence of the identity $\{\mathcal{G}_n = \mathrm{Id}\}_n$
is not vanishing. The symmetry of $\backsim$ follows from Lemma \ref{lem:inv_vanishing}.
The transitivity of $\backsim$ is a consequence of Lemma \ref{lem:nonvanishing}.
Indeed, if $\{\mathcal{G}_n^1\}_n \backsim  \{\mathcal{G}^2_n\}_n$ and $\{\mathcal{G}_n^2\}_n \backsim  \{\mathcal{G}^3_n\}_n$
then there exists a subsequence $n_k$ such that
\[
	(\mathcal{G}_{n_k}^1)^{-1}\mathcal{G}_{n_k}^2 \to \mathcal{G} \in G,\quad
	(\mathcal{G}_{n_k}^2)^{-1}\mathcal{G}_{n_k}^3 \to \tilde{\mathcal{G}} \in G
\]
strongly in $\mathcal{L}(\hat{M}^{d\alpha}_{2,r})$ as $k\to\I$, in light of Lemma \ref{lem:nonvanishing}.
For the same subsequence $n_k$, we have
\[
	(\mathcal{G}_{n_k}^1)^{-1}\mathcal{G}_{n_k}^3 
	= [(\mathcal{G}_{n_k}^1)^{-1}\mathcal{G}_{n_k}^2][(\mathcal{G}_{n_k}^2)^{-1}\mathcal{G}_{n_k}^3]
	\to \mathcal{G}\tilde{\mathcal{G}} \in G
\]
strongly in $\mathcal{L}(\hat{M}^{d\alpha}_{2,r})$ as $k\to\I$.
This implies that the sequence $\{(\mathcal{G}_{n}^1)^{-1}\mathcal{G}_{n}^3\}_n$ does not satisfy
the third assertion of Proposition \ref{prop:vanishing}.
% Therefore, $\{(\mathcal{G}_{n}^1)^{-1}\mathcal{G}_{n}^3\}_n$ is not vanishing. 
\end{proof}
We conclude this section with an explicit representation of orthogonality.
Let $\{\mathcal{G}_n^j = e^{i\theta_n^j} D(h_n^j) P(b_n^j)  U (s_n^j) T(a_n^j)\}_n \subset G$
($j=1,2$) be families of deformations in the normal representation. 
$\{ \mathcal{G}_n^1 \}_n$ and $\{ \mathcal{G}_n^2\}_n$ are orthogonal if and only if
\begin{multline}\label{eq:orthty}
	\abs{\log \frac{h_n^1}{h_n^2}} +
	\abs{b_n^1 - \frac{ h_n^2 }{h_n^1}b_n^2} +
	\abs{s_n^1  -\(\frac{h_n^1}{h_n^2}\)^2 s_n^2 } \\
	+\abs{a_n^1  -\frac{h_n^1}{h_n^2} a_n^2 +2 \(\frac{h_n^1}{h_n^2}\)^2 s_n^2 \( b_n^1 - \frac{ h_n^2 }{h_n^1}b_n^2 \)} \to \I
\end{multline}
as $n\to\I$. This is immediate from the identity
\begin{multline*}
	(\mathcal{G}^2_n)^{-1}\mathcal{G}^1_n = e^{i\theta_n}
	D\( \frac{h^1_n}{h^2_n} \) P\( b_n^1 - \frac{ h_n^2 }{h_n^1}b_n^2 \)\\
	U\( s_n^1  -\(\frac{h_n^1}{h_n^2}\)^2 s_n^2 \)
	T\( a_n^1  -\frac{h_n^1}{h_n^2} a_n^2 +2 \(\frac{h_n^1}{h_n^2}\)^2 s_n^2 \( b_n^1 - \frac{ h_n^2 }{h_n^1}b_n^2 \) \) 
\end{multline*}
with suitable $\theta_n \in \R$. 
% Further, if we parameterize $\{ \mathcal{G}_n^1 \}_n$ and $\{ \mathcal{G}_n^2\}_n$
% as in \eqref{eq:rrep} then the orthogonality is
% \begin{multline}\label{eq:orthty}
% 	\abs{\log \frac{h_n^1}{h_n^2}} +
% 	\abs{b_n^1 - \frac{ h_n^2 }{h_n^1}b_n^2} +
% 	\abs{(h_n^1)^2(s_n^1 - s_n^2) } \\
% 	+\abs{a_n^1  -\frac{h_n^1}{h_n^2} a_n^2 +2 (h_n^1)^2 (s_n^2-s_n^1) b_n^1} \to \I
% \end{multline}
% by means of \eqref{eq:changeparameteres}.
% Further, by means of \eqref{eq:changeparameteres2}, the representation is the same 
% even if $\{ \mathcal{G}_n^1 \}_n$ and $\{ \mathcal{G}_n^2\}_n$ are parameterized
%  as in \eqref{eq:rrep2}.
\subsection{Linear profile decomposition}\label{subsec:pd2}

\begin{theorem}[Linear profile decomposition]\label{thm:pd}
Let $\frac{2}{d+\frac{2}{d+3}}<\alpha<2/d$ and $(d\alpha)'<r < ((d+2)\alpha)^*$.
For any bounded sequence $\{u_n\}_n \subset \hat{M}^{d\alpha}_{2,r}$,
there exist $\phi^j \in \hat{M}^{d\alpha}_{2,r} $, $R_n^j \in \hat{M}^{d\alpha}_{2,r} $ and 
pairwise orthogonal families of deformations $\{\mathcal{G}^j_n\}_n \subset G$ 
($j=1,2,\dots$) parametrized as in \eqref{eq:nrep} by $\{ \Gamma_n^j = (0, h_n^j, \xi_n^j, s_n^j , y_n^j) \}_n$
such that, extracting a subsequence in $n$,
\begin{equation}\label{eq:pf:decomp}
	u_n = \sum_{j=1}^l \mathcal{G}^j_n \phi^j + R_n^l
\end{equation}
for all $n,l\ge1$. Moreover, $\{R_n^j\}_{n,j}$ satisfies
\begin{equation}\label{eq:pf:swlimit}
	(\mathcal{G}_n^{k})^{-1}R_n^j \rightharpoonup 
	\begin{cases}
	\phi^k & j <k,\\
	0 & j \ge k
	\end{cases}
\end{equation}
weakly-$*$ in $\hat{M}^{d\alpha}_{2,r}$ as $n\to\I$ for all $j\ge0$ and $k\ge1$, with a convention $R_n^0=u_n$,
and
\begin{equation}\label{eq:pf:smallness}
	\limsup_{n\to\I} \norm{ e^{it\Delta} R_n^l }_{L^{(d+2)\alpha}_{t,x} (\R^{1+d})} \to 0
\end{equation}
as $l\to\I$. 
Furthermore, a decoupling inequality
\begin{equation}\label{eq:pf:Pythagorean}
	\limsup_{n\to\I} \ell_r(u_n) \ge \(\sum_{j=1}^J \ell_r (\phi^j)^r \)^{1/r}
	+ \limsup_{n\to\I} \ell_r( R_n^J ) 
\end{equation}
holds for all $J\ge1$.
\end{theorem}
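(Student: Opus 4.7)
The plan is to run the now-standard iterative extraction scheme, adapted to the hat-Morrey setting. The central ingredient is an \emph{inverse Strichartz} lemma: if $\{v_n\}\subset \hat{M}^{d\alpha}_{2,r}$ is bounded by $M$ and satisfies $\limsup_n\|e^{it\Delta}v_n\|_{L^{(d+2)\alpha}_{t,x}(\R^{1+d})}\ge\varepsilon$, then, up to a subsequence, there exist parameters $(h_n,\xi_n,s_n,y_n)\in 2^\Z\times\R^d\times\R\times\R^d$ and a nonzero $\phi\in \hat{M}^{d\alpha}_{2,r}$ such that, with $\mathcal{G}_n:=D(h_n)P(\xi_n)U(s_n)T(y_n)$,
\begin{equation*}
\mathcal{G}_n^{-1}v_n\rightharpoonup\phi\quad\text{weakly-$*$ in }\hat{M}^{d\alpha}_{2,r},\qquad \ell_r(\phi)\gtrsim_M \varepsilon^\beta
\end{equation*}
for some fixed interpolation exponent $\beta>0$. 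I would derive this by reversing the proof of Proposition \ref{prop:Strichartz}: combining the bilinear restriction estimate with a Whitney-type decomposition into dyadic cubes and interpolating against the trivial bound $\|e^{it\Delta}v_n\|_{L^\infty_{t,x}}\lesssim\|v_n\|_{\hat{L}^1}$ forces a single cube $\tau_{k_n}^{j_n}\in\mathcal{D}$ to carry a comparable share of the Morrey norm, and forces $e^{it\Delta}v_n$ to concentrate at some space-time point $(-s_n,y_n)$. Setting $h_n:=2^{j_n}$ and $\xi_n:=2^{-j_n}k_n$ reads off the deformation; the weak-$*$ limit $\phi$ exists by Banach--Alaoglu, applied through the pre-duality of Theorem \ref{thm:predual}.

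With this lemma in hand, the iteration is routine. Set $R_n^0:=u_n$ and, at step $l\ge1$, if $\varepsilon_l:=\limsup_n\|e^{it\Delta}R_n^{l-1}\|_{L^{(d+2)\alpha}_{t,x}}>0$, apply the inverse Strichartz lemma to $R_n^{l-1}$ and choose $\phi^l$ so that $\ell_r(\phi^l)$ is within a factor $1/2$ of the supremum of admissible sizes; then set $R_n^l:=R_n^{l-1}-\mathcal{G}_n^l\phi^l$. A diagonal subsequence will simultaneously validate all of \eqref{eq:pf:swlimit}. Orthogonality of the extracted families follows by contradiction using Lemma \ref{lem:nonvanishing}: non-orthogonality of $\{\mathcal{G}_n^j\}$ and $\{\mathcal{G}_n^k\}$ with $j<k$ would yield $\mathcal{G}\in G$ and a subsequence along which $(\mathcal{G}_n^j)^{-1}\mathcal{G}_n^k\to\mathcal{G}$ strongly, forcing $(\mathcal{G}_n^j)^{-1}R_n^{k-1}\rightharpoonup\mathcal{G}\phi^k$; but the construction of $R_n^{k-1}$ from $R_n^j$ makes this limit zero, so $\phi^k=0$, contradicting the near-maximal choice.

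The decoupling inequality \eqref{eq:pf:Pythagorean} I would obtain by iterating a single-step Brezis--Lieb-type identity: whenever $\mathcal{G}_n^{-1}w_n\rightharpoonup\phi$ weakly-$*$,
\begin{equation*}
\limsup_n\ell_r(w_n)^r\ge\ell_r(\phi)^r+\limsup_n\ell_r(w_n-\mathcal{G}_n\phi)^r.
\end{equation*}
Since $\ell_r$ is $G$-invariant, conjugation by $\mathcal{G}_n$ reduces this to a decoupling for genuinely weakly-$*$ convergent sequences, which in turn follows from the fact that the contribution of any weakly-null tail to $\|\F\cdot\|_{L^2(\tau)}$ on each fixed dyadic cube $\tau$ vanishes, combined with the $\ell^r$ aggregation ($r<\infty$) defining the Morrey norm. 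Summing over profiles yields $\sum_j\ell_r(\phi^j)^r\le\limsup_n\ell_r(u_n)^r<\infty$, hence $\ell_r(\phi^j)\to0$; the inverse Strichartz lower bound $\ell_r(\phi^l)\gtrsim\varepsilon_l^\beta$ then delivers \eqref{eq:pf:smallness}.

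The principal obstacles are the two ``inverse'' statements. The inverse Strichartz lemma requires carefully tracking how the bilinear restriction estimate interacts with the $\ell^r$-aggregation over dyadic cubes defining $\hat{M}^{d\alpha}_{2,r}$, and extracting deformation parameters from a single concentrating cube rather than from the diagonal Lebesgue estimate available in the hat-Lebesgue case $r=\infty$. The Brezis--Lieb-type decoupling is delicate because the classical pointwise-a.e.\ argument is unavailable in the Morrey framework, so one must work directly with the pre-duality of Theorem \ref{thm:predual} and exploit $r<\infty$ to separate the nontrivial profile from a weakly-null tail on each dyadic cube.
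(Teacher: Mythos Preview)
Your proposal is correct and takes essentially the same approach as the paper. The paper organizes the argument as an abstract extraction (Theorem~\ref{thm:pd1}) based on the quantity $\eta(P)=\sup_{\phi\in\mathcal{M}(P)}\ell_r(\phi)$ together with a separate concentration-compactness statement (Theorem~\ref{thm:cc}) that is exactly your inverse-Strichartz lemma; the underlying analysis---scale/frequency extraction via dyadic cubes, space-time concentration from the bilinear restriction estimate, and cube-wise $L^2$ decoupling summed in $\ell^r$ (with an auxiliary $\gamma>1$ sent to $1$ rather than a clean Brezis--Lieb identity)---matches what you describe.
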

The proof is done by modifying the argument in the $L^2$ case \cite{BV,CK}.
The modification to the $\hat{M}^{d\alpha}_{2,r}$-framework
is essentially the same as for the Airy equation, see \cite{MS2}.
For self-containedness, we give a proof in Appendix \ref{sec:pf_profile}.

\section{Proof of Theorem \ref{thm:main3}}\label{sec:main3}

We first introduce a function $L(E)$ for $E\ge 0$ by
\[
	L(E) =\sup \left\{
	%\norm{u}_{L^{(d+2)\alpha}_{t,x}(I \times \R^d)}
	S_I(u)
	\middle|
	\begin{aligned}
	&u(t): I\times \R^d \to \C\text{ : sol.\ to }\eqref{eq:NLS},\\
	& \sup_{t\in I} \ell_r(u(t)) \le E
 	\end{aligned}
	\right\} \in [0,\I].
\]
Remark that, in the above definition, $u(t)$ is not always a maximal-lifespan solution. 
Small data scattering implies that $L(E) \le CE $ for $E \le \delta$.
Further, since $Q_\alpha(t,x)$ is a nonscattering solution,
$L(\ell_r(Q)) = \I$. 
By the long time stability, we see that $L(E)$ is continuous.
Combining these facts, one sees that there exists a critical value 
\begin{align*}
	E_c = E_c(\alpha,\hat{M}^{d\alpha}_{2,r},\ell_r) &{}:= \sup \{E \ |\ L(E) <\I\} \\
	&{} = \min \{ E \ |\ L(E)=\I\} \in [\delta, \ell_r(Q)].
\end{align*}
By definition, one has
\begin{equation}\label{eq:EcE2}
	E_c  \le E_2.
\end{equation}
Indeed, by definition of $E_2$, for any $\eps>0$, there exists
a solution $v(t)$ with maximal interval $I$
that does not scatter for positive time
direction and $$\limsup_{t\uparrow \sup I} \ell_r(v(t)) \le E_2 + \eps.$$
Then, one can choose $t_0 \in I$ so that
\[
	\sup_{t\in [t_0,\sup I) } \ell_r(v(t)) \le E_2 + 2\eps.
\]
On the other hand, since $v(t)$ does not scatter for positive time direction,
$L(\sup_{t\in [t_0,\sup I) } \ell_r(v(t)))=\I$, proving
\[
	E_c \le \sup_{t\in [t_0,\sup I) } \ell_r(v(t))
\]
and so $E_c \le E_2 + 2\eps$.
Since $\eps>0$ is arbitrary, we obtain \eqref{eq:EcE2}.

Our task is now to show
\begin{theorem}\label{thm:main3b}
Let $d\ge1$ and suppose \eqref{asmp:alpha}.
There exists a solution $v(t)$ to \eqref{eq:NLS} with 
maximal existence interval such that
\begin{enumerate}
\item $v(t)$ does not scatter for both time directions.
\item $\displaystyle\sup_{I_{\max}(v) \cap \{t\ge 0\}} \ell_r(v(t))=\sup_{I_{\max}(v) \cap \{t\le 0\}} \ell_r(v(t)) =E_c(\alpha,\hat{M}^{d\alpha}_{2,r},\ell_r)$.
\item $v(t)$ is almost periodic modulo symmetry as in \eqref{eq:apms}.
\end{enumerate}
\end{theorem}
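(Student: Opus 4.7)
\emph{Strategy.}
The plan is to execute a Kenig--Merle concentration-compactness extraction adapted to the hat-Morrey framework, using the linear profile decomposition (Theorem \ref{thm:pd}) and the long-time stability lemma (Proposition \ref{prop:stability}) as primary tools, and Theorem \ref{thm:totallybounded} to convert the resulting orbital precompactness into the quantitative statement \eqref{eq:apms}. By definition of $E_c$ and continuity of $L(\cdot)$, select maximal-lifespan solutions $u_n$ on intervals $I_n \ni 0$ with $\sup_{I_n} \ell_r(u_n(t)) \to E_c$ and $S_{[0,T_{\max}(u_n))}(u_n) = \infty$ (by the conjugation invariance \eqref{eq:conj} we may assume the blowup is on the forward side), and time-translate so that $u_n(0)$ does not already have small linear scattering norm.

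\emph{Profile decomposition and single-profile reduction.}
Apply Theorem \ref{thm:pd} to the bounded sequence $\{u_n(0)\} \subset \hat{M}^{d\alpha}_{2,r}$, obtaining profiles $\phi^j$ with pairwise orthogonal deformations $\mathcal{G}_n^j = D(h_n^j)P(b_n^j)U(s_n^j)T(a_n^j)$ and remainders $R_n^l$ satisfying \eqref{eq:pf:smallness}. Associate to each $\phi^j$ a nonlinear profile $v^j$, defined as the maximal-lifespan solution of \eqref{eq:NLS} with data $\phi^j$ at the appropriate time when $\sigma_n^j := s_n^j/(h_n^j)^2$ is bounded, and via a wave-operator construction when $|\sigma_n^j| \to \infty$; transport each $v^j$ through the deformation and time-shift parameters and form the candidate approximate solution $\tilde u_n^l$ as the sum of these transported profiles plus the linear evolution of $R_n^l$. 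The decoupling \eqref{eq:pf:Pythagorean} forces $\sum_j \ell_r(\phi^j)^r \le E_c^r$; if no single profile attained $E_c$, then each nonlinear profile would satisfy $\sup_t \ell_r(v^j(t)) < E_c$ and hence scatter with $S_{\R}(v^j) \le L(\ell_r(\phi^j)) < \infty$ by the very definition of $E_c$. The orthogonality \eqref{eq:orthty} decouples the profiles in $L^{(d+2)\alpha}_{t,x}$, making $S_{\R}(\tilde u_n^l)$ uniformly bounded, and the nonlinear residual of $\tilde u_n^l$ is small in $L^{(d+2)\alpha/(2\alpha+1)}_{t,x}$ by cross-term cancellation and \eqref{eq:pf:smallness}. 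Proposition \ref{prop:stability} then upgrades $\tilde u_n^l$ to a true solution with bounded scattering norm, contradicting $S_{[0,T_{\max}(u_n))}(u_n) = \infty$. Hence exactly one profile survives, with $\ell_r(\phi^1) = E_c$ and $\ell_r(R_n^1) \to 0$ (so $R_n^1 \to 0$ strongly by Remark \ref{rmk:quasi-norm}), and the resulting nonlinear profile $v := v^1$ is a critical solution with $\sup_{I_{\max}(v)} \ell_r(v(t)) = E_c$ that does not scatter forward.

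\emph{Compactness, conclusions (1)--(2), and main obstacle.}
For any sequence $\{t_n\} \subset I_{\max}(v)$, applying Theorem \ref{thm:pd} to $\{v(t_n)\}$ and repeating the single-profile argument (now with the minimality of $v$ itself replacing the minimality of $E_c$) forces a single profile with trivial remainder, so $\{v(t) : t \in I_{\max}(v)\}$ is precompact modulo $G$; Theorem \ref{thm:totallybounded} applied in the Fourier variable then yields \eqref{eq:apms}, with $y(t), z(t), N(t)$ read off from the $a$-, $b$-, and $h$-components of the compensating deformation. Precompactness modulo symmetry precludes scattering at either endpoint of $I_{\max}(v)$ (scattering would force $v$ to approach a pure linear free wave, which is not on the precompact orbit), giving (1) and in particular $S_{(T_{\min}(v),0]}(v) = S_{[0,T_{\max}(v))}(v) = \infty$. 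The same precompactness implies that $\ell_r(v(t))$ is essentially bounded between two positive constants and takes values arbitrarily close to its supremum $E_c$ on every sufficiently long time window inside $I_{\max}(v)$, so the sup on each half-line equals $E_c$, giving (2). The main obstacle is the nonlinear profile construction and the decoupling step: one must track the simultaneous action of dilation, Galilean boost (noting that $P(b)$ is only quasi-isometric on $\hat{M}^{d\alpha}_{2,r}$ by Lemma \ref{lem:Paction}), and time shift, and verify that the orthogonality condition \eqref{eq:orthty} suffices to decouple the cross terms in the non-admissible $L^{(d+2)\alpha}_{t,x}$ norm where the standard Hilbert-space orthogonality techniques are unavailable.
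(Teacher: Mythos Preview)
The central gap is in your single-profile reduction. You argue that if $\ell_r(\phi^j) < E_c$ for every $j$, then each nonlinear profile $v^j$ satisfies $\sup_t \ell_r(v^j(t)) < E_c$ and hence scatters by definition of $E_c$. This implication is false: $\ell_r$ is \emph{not} a conserved quantity, so a small value of $\ell_r$ at the initial time says nothing about $\sup_t \ell_r(v^j(t))$. The function $L(E)$ is defined in terms of the supremum of $\ell_r$ over the whole trajectory, not the initial datum, so the inequality $S_{\R}(v^j) \le L(\ell_r(\phi^j))$ you invoke has no content. Consequently your conclusion ``exactly one profile survives with $\ell_r(\phi^1) = E_c$ and $\ell_r(R_n^1)\to 0$'' is unsupported; in fact the paper never claims $\ell_r(\phi^1)=E_c$, only that $\sup_{t\ge 0}\ell_r(\Phi^1(t))=E_c$.

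The paper circumvents this obstacle by a genuinely different mechanism (Lemmas \ref{lem:pf2_1} and \ref{lem:pf2_2}). First it shows by the stability argument that at least one nonlinear profile, say $\Phi^1$, fails to scatter forward. Then, to kill the other profiles, it does \emph{not} compare initial sizes: instead it chooses intervals $K_n^m=[0,\tau_n^m]$ on which all non-scattering profiles have scattering norm exactly $m$, uses stability to approximate $u_n$ by $\widetilde u_n^J$ uniformly on $K_n^m$, and then proves a \emph{time-dependent} decoupling inequality \eqref{eq:decouple2} at carefully chosen times $t_n\in K_n^m$. Combining this with \eqref{eq:1stprofile}, which says $\sup_{K_n^m}\ell_r(v_n^1)\to E_c$, forces $\inf_t \ell_r(\Phi^j(t))=0$ for every $j\ge 2$; but a non-scattering solution cannot have arbitrarily small $\ell_r$ along its trajectory (by small-data scattering), so $\phi^j\equiv 0$. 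The same argument then gives $w_n^1\to 0$ strongly. You will need to reproduce this step, or find another way to compensate for the non-conservation of $\ell_r$.

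A secondary issue: for the almost-periodicity, your argument yields only that for each sequence $\{t_n\}$ there exist deformations making a subsequence converge. As the paper's remark after the proposition in Section~4.2 points out, this is weaker than total boundedness of the orbit for a \emph{fixed} choice of parameter functions $N(t),y(t),z(t)$; the paper constructs these functions explicitly (via a selection argument based on where the Strichartz mass of $v(t)$ concentrates) and then checks that they work uniformly.
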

As an immediate consequence of this theorem, we obtain $E_2 =E_c$.
Indeed, once we obtain a solution $v(t)$ with the first two
properties of the above theorem, it follows that
\[
	E_2 \le \limsup_{t\uparrow \sup I_{\max}(v)} \ell_r(v(t))
	\le \sup_{I_{\max}(v) \cap \{t\ge 0\}} \ell_r(v(t))=E_c.
\]
By means of \eqref{eq:EcE2}, we obtain the desired result.
\subsection{The key convergence result}
For $u:I\times \R^d \to \C$ and $\tau \in I$,
we denote $S_{\ge \tau} (u) := S_{I \cap \{t \ge \tau \}}(u)$, $S_{\le \tau} (u) := S_{I \cap \{t \le \tau \}}(u)$.
\begin{proposition}\label{prop:key_convergence}
Let $u_n:I_n \times \R^d \to \C$ be a sequence of solutions to \eqref{eq:NLS}
such that
\begin{equation}\label{asmp:PS1}
	\limsup_{n\to\I} \sup_{t\in I_n} \ell_r (u_n(t)) = E_c,
\end{equation}
and let $t_n\in I_n$ be a sequence of times such that
\begin{equation}\label{asmp:PS2}
	\lim_{n\to\I} S_{\ge t_n} (u_n) = \lim_{n\to\I} S_{\le t_n} (u_n) =\I.
\end{equation} 
Then, there exist a sequence of deformations $\mathcal{G}_n=\{ D(h_n)P(b_n) T(a_n) \}_n$ and 
a subsequence of $n$ such that $(\mathcal{G}_{n})^{-1}u_n(t_n)$  converges strongly in $\hat{M}^{d\alpha}_{2,r}$ to
a function $\phi\in\hat{M}^{d\alpha}_{2,r}$ along the subsequence.
Further, a solution $\Phi(t)$ of \eqref{eq:NLS} with $\Phi(0) = \phi$ satisfies the first two properties 
in Theorem \ref{thm:main3b}.
\end{proposition}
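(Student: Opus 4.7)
The plan is a concentration-compactness / rigidity argument driven by the linear profile decomposition. Apply Theorem~\ref{thm:pd} to the bounded sequence $\{u_n(t_n)\}_n \subset \hat{M}^{d\alpha}_{2,r}$ (bounded by \eqref{asmp:PS1} together with Remark~\ref{rmk:quasi-norm}) to obtain, along a subsequence, profiles $\phi^j$, pairwise orthogonal deformations $\mathcal{G}^j_n = D(h_n^j)P(b_n^j)U(s_n^j)T(a_n^j)$, and remainders $R_n^l$ with the smallness \eqref{eq:pf:smallness} and decoupling \eqref{eq:pf:Pythagorean}. Along a further subsequence I may assume, for each $j$, either $s_n^j \to s^j \in \R$ (then absorb $U(s^j)$ into $\phi^j$ to reduce to $s_n^j \equiv 0$) or $|s_n^j| \to \infty$ (then build the nonlinear profile through a wave operator in $\hat{M}^{d\alpha}_{2,r}$, using Theorem~\ref{thm:LWP} together with the vanishing of the tails of $\|e^{it\Delta}\phi^j\|_{L^{(d+2)\alpha}_{t,x}}$). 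This associates to each $\phi^j$ a maximal-lifespan nonlinear profile $\Phi^j$ and its symmetry transplant $\Phi_n^j$, the NLS solution obtained by applying the $\mathcal{G}_n^j$-action on space--time to $\Phi^j$, so that $\Phi_n^j(t_n) \approx \mathcal{G}_n^j \phi^j$.

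The core step is to rule out, by contradiction, both (i) the existence of two nontrivial profiles and (ii) a single profile with $\ell_r(\phi^1) < E_c$. In either alternative, \eqref{eq:pf:Pythagorean} combined with \eqref{asmp:PS1} forces $\ell_r(\phi^j) < E_c$ strictly for every $j$, so by the very definition of $E_c$ each nonlinear profile satisfies $S_\R(\Phi^j) < \infty$; moreover, splitting the profiles into finitely many ``large'' ones and a tail of ``small'' ones handled by small-data scattering gives a uniform bound $\sum_j S_\R(\Phi^j)^\beta < \infty$ for an appropriate exponent $\beta$. Form the approximate solution
\begin{equation*}
\tilde u_n^l(t) := \sum_{j=1}^l \Phi_n^j(t) + e^{i(t-t_n)\Delta} R_n^l.
\end{equation*}
The orthogonality relation \eqref{eq:orthty} forces the cross terms in $|\tilde u_n^l|^{2\alpha}\tilde u_n^l - \sum_j |\Phi_n^j|^{2\alpha}\Phi_n^j$ to vanish in $L^{(d+2)\alpha/(2\alpha+1)}_{t,x}$ as $n \to \infty$, while \eqref{eq:pf:smallness} makes the linear remainder negligible in scattering norm for large $l$; hence $\tilde u_n^l$ is an NLS solution with vanishing error and $l$-uniform bounded scattering norm. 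Proposition~\ref{prop:stability} then yields $\limsup_n S_{\ge t_n}(u_n) < \infty$, contradicting \eqref{asmp:PS2}.

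Therefore exactly one profile is nontrivial with $\ell_r(\phi^1) = E_c$, and $\ell_r(R_n^1) \to 0$, which by Remark~\ref{rmk:quasi-norm} gives $R_n^1 \to 0$ strongly in $\hat{M}^{d\alpha}_{2,r}$. The \emph{two-sided} blow-up \eqref{asmp:PS2} rules out $|s_n^1| \to \infty$: such divergence would make $\Phi^1$ behave like the linear flow on one side of $-s_n^1$, forcing either $S_{\le t_n}(u_n)$ or $S_{\ge t_n}(u_n)$ to stay bounded via Proposition~\ref{prop:stability}. Thus $s_n^1$ converges, is absorbed into $\phi$, and the deformation collapses to $\mathcal{G}_n = D(h_n^1)P(b_n^1)T(a_n^1)$, yielding the announced strong convergence $(\mathcal{G}_n)^{-1}u_n(t_n) \to \phi$. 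The solution $\Phi$ with $\Phi(0) = \phi$ then inherits, by reversing the stability comparison with $u_n$, non-scattering for both time directions and the bound $\sup_t \ell_r(\Phi(t)) \le E_c$, saturated at $t=0$, which is properties (1)--(2) of Theorem~\ref{thm:main3b}. The principal obstacle I anticipate is the orthogonality-driven decoupling of the nonlinear profiles and the $l$-uniform control of $S_\R(\tilde u_n^l)$ in the low-regularity hat-Morrey framework, which must be carried out carefully using the off-diagonal Strichartz estimates from Section~\ref{sec:preliminaries} together with \eqref{eq:orthty}.
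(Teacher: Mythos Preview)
Your argument has a genuine gap at the step ``$\ell_r(\phi^j) < E_c$ strictly for every $j$, so by the very definition of $E_c$ each nonlinear profile satisfies $S_\R(\Phi^j) < \infty$.'' The critical value $E_c$ is defined through $L(E)$, which bounds the scattering norm only for solutions satisfying $\sup_{t\in I}\ell_r(u(t))\le E$, not for solutions whose data at a single instant has size $\le E$. Since $\ell_r$ is \emph{not} conserved (nor controlled) along the NLS flow in $\hat{M}^{d\alpha}_{2,r}$, the bound $\ell_r(\phi^j)<E_c$ obtained from the decoupling \eqref{eq:pf:Pythagorean} at the single time $t_n$ gives no information whatsoever on $\sup_t \ell_r(\Phi^j(t))$, and hence no scattering conclusion. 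This is precisely the $E_1$ versus $E_2$ distinction the paper stresses: your argument is the correct one for the $E_1$ problem (and indeed the paper uses it in Section~\ref{sec:main2}), but it does not apply to $E_c=E_2$.

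The paper circumvents this as follows. First (Lemma~\ref{lem:pf2_1}) it shows, by the approximate-solution argument you describe, that at least one $\Phi^j$ fails to scatter forward; after a pigeonhole selection this profile, say $\Phi^1$, satisfies $\limsup_{m}\limsup_{n}\sup_{t\in K_n^m}\ell_r(v_n^1(t))\ge E_2\ge E_c$ on growing time intervals $K_n^m$, by the very definition of $E_2$. The key new ingredient (Lemma~\ref{lem:pf2_2}) is a \emph{time-dependent} decoupling: at suitably chosen $t_n\in K_n^m$ one proves the almost-orthogonality \eqref{eq:decouple2} between $v_n^{j+1}(t_n)$ and the tail $\widetilde{w}_n^{j+1}(t_n)$ in the $\hat{M}^{d\alpha}_{2,r}$ norm, which combined with $\sup_t\ell_r(u_n(t))\le E_c+\eps$ and $\ell_r(v_n^1(t_n))^r\ge E_c^r-\eps$ forces $\inf_t\ell_r(\Phi^j(t))=0$ for every $j\ge 2$, hence $\phi^j\equiv 0$, and $w_n^1\to 0$. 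Your outline would need this entire mechanism (or an alternative way to propagate the size bound in time) to close.
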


In the rest of this section, we prove this proposition.
Our argument is in the same spirit as in \cite{KV}. 
By the time translation symmetry of \eqref{eq:NLS}, we may let $t_n=0$.
We apply profile decomposition lemma to yield a decomposition
\[
	u_n(0) = \sum_{j=1}^J \mathcal{G}_n^j \phi^j + w_n^J
\] 
up to subsequence, where $\mathcal{G}_n^j$ is parameterized as in \eqref{eq:nrep} with $\theta_n^j \equiv 0$.

Refining the subsequence and changing notations, we may assume that for each $j$,
the sequence $\{s_n^j\}$ converges to some $s^j \in \{0, \pm \I\}$.
Further, if $s^j=0$ then we may let $s^j_n \equiv 0$.
Let $\Phi^j: I^j \times \R^N \to \C$ be a nonlinear profile associated with $(\phi^j,\{s^j_n\}_n)$,
i.e., 
\begin{itemize}
\item If $s^j=0$ then $\Phi^j(t)$ is a solution to \eqref{eq:NLS} with $\Phi^j(0) = \phi^j$.
\item If $s^j=\I$ (resp. $s^j=-\I$) then $\Phi^j(t)$ is a solution to \eqref{eq:NLS} that scatters to $\phi^j$
for positive time direction (resp. negative time direction).
\end{itemize}
Define
\[
	v_n^j(t) := e^{-i(h_n^j)^2|b_n^j|^2 t} D(h_n^j) P(b_n^j) T(a_n^j- 2b_n^j(h_n^j)^2t  )   \Phi^j((h_n^j)^2t + s_n^j)
\]
and
\[
	\widetilde{u}^J_n(t) := \sum_{j=1}^J v_n^j(t) + U(t)w_n^J.
\]
Remark that $v_n^j$ solves \eqref{eq:NLS} and $v_n^j(0)=D(h_n^j) P(b_n^j) T(a_n^j)  \Phi^j(s_n^j)$.
\begin{lemma}\label{lem:pf2_1}
There exists $j$ such that $\Phi^j(t)$ does not scatter for positive time direction.
\end{lemma}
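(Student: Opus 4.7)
The plan is a standard concentration-compactness/perturbation argument. Suppose for contradiction that every $\Phi^j$ scatters for positive time direction. I would then exhibit $\widetilde{u}^J_n$ as a good approximate solution of \eqref{eq:NLS} on $[0,\infty)$ whose scattering norm is bounded uniformly in $n$ and $J$; by Proposition~\ref{prop:stability} this forces $\limsup_n S_{[0,\infty)}(u_n)<\infty$, contradicting \eqref{asmp:PS2}.

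First I would pin down scattering-norm bounds on the individual $v_n^j$. Applying the decoupling inequality \eqref{eq:pf:Pythagorean} at $t=0$ together with \eqref{asmp:PS1} yields $\sum_j \ell_r(\phi^j)^r \le E_c^r$, so in particular $\ell_r(\phi^j) \to 0$ as $j\to\infty$ and $\ell_r(\phi^j) \le E_c$ for every $j$. For indices $j$ with $\ell_r(\phi^j)$ below the small-data threshold, small-data scattering (a consequence of Lemma~\ref{lem:LWP}) gives $S_{\R}(v_n^j) \le C\ell_r(\phi^j)$. For the finitely many remaining indices, the forward-scattering assumption on $\Phi^j$, combined with the explicit form of $v_n^j$ and the limiting value $s^j\in\{0,\pm\infty\}$ of $s_n^j$, yields $\limsup_n S_{[0,\infty)}(v_n^j)<\infty$. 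Since $r<((d+2)\alpha)^*\le (d+2)\alpha$ under \eqref{asmp:alpha}, the embedding $\ell^r\hookrightarrow \ell^{(d+2)\alpha}$ provides the summability bound $\sum_j S_{[0,\infty)}(v_n^j)^{(d+2)\alpha}\le M$ with $M$ independent of $n$ and $J$.

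Next, using the orthogonality \eqref{eq:orthty} of the families $\{\mathcal{G}^j_n\}_n$, I would derive the asymptotic decoupling
\begin{equation*}
\lim_{n\to\infty}\left(\Big\|\sum_{j=1}^J v_n^j\Big\|_{L^{(d+2)\alpha}_{t,x}([0,\infty)\times\R^d)}^{(d+2)\alpha} \;-\; \sum_{j=1}^J S_{[0,\infty)}(v_n^j)^{(d+2)\alpha}\right)=0,
\end{equation*}
which together with the smallness \eqref{eq:pf:smallness} of the linear remainder $U(t)w_n^J$ produces $\limsup_J\limsup_n S_{[0,\infty)}(\widetilde u_n^J) < \infty$. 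Finally I would check that the error
\begin{equation*}
e_n^J := \bigl|\widetilde u_n^J\bigr|^{2\alpha}\widetilde u_n^J - \sum_{j=1}^J |v_n^j|^{2\alpha} v_n^j
\end{equation*}
tends to $0$ in $L^{(d+2)\alpha/(2\alpha+1)}_{t,x}([0,\infty)\times\R^d)$ as $n\to\infty$ and then $J\to\infty$. Expanding the nonlinearity, the mixed products $|v_n^{j_1}|^{a}|v_n^{j_2}|^{b}$ with $j_1\ne j_2$ vanish in this norm by the same orthogonality, while the terms involving $U(t)w_n^J$ are absorbed by \eqref{eq:pf:smallness} via H\"older. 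Combined with $\widetilde u_n^J(0) - u_n(0) \to 0$ in $\hat M^{d\alpha}_{2,r}$ (exact equality when $s^j=0$, and using $\Phi^j(s_n^j) - e^{is_n^j\Delta}\phi^j \to 0$ when $s^j=\pm\infty$, which is the defining property of the nonlinear profile), Proposition~\ref{prop:stability} applies and gives the desired contradiction.

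The main obstacle is the decoupling/orthogonality step: for each pair $j_1\ne j_2$ one must verify via a change of variables and an approximation of $\Phi^{j_1},\Phi^{j_2}$ by compactly supported smooth data that mixed space-time integrals vanish in the limit, performing a case analysis on which of the four parameter differences in \eqref{eq:orthty} diverges (the dilation, the frequency shift, the time shift, or the spatial translation). Once this is established the remaining steps are standard applications of H\"older's inequality and the long-time stability theorem.
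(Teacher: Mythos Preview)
Your proposal is correct and follows essentially the same contradiction-plus-stability argument as the paper: assume every $\Phi^j$ scatters forward, show $\widetilde{u}_n^J$ is a uniformly bounded approximate solution with vanishing error and matching initial data, and invoke Proposition~\ref{prop:stability} to contradict \eqref{asmp:PS2}. The only organizational difference is in the uniform bound on $S_{[0,\infty)}(\widetilde{u}_n^J)$: you decouple all profiles in $L^{(d+2)\alpha}_{t,x}$ and sum via $\ell^r\hookrightarrow \ell^{(d+2)\alpha}$, whereas the paper bundles the tail $\sum_{j\ge J_0}\mathcal{G}_n^j\phi^j$ into a single small datum in $\hat{M}^{d\alpha}_{2,r}$ and applies small-data theory to that bundle; both routes are standard and equivalent here.
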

\begin{proof}[Proof of Lemma \ref{lem:pf2_1}]
Assume for contradiction that $\Phi^j$ scatters for positive time direction for all $j$.
We apply long time stability 
with $\widetilde{u}(t) = \widetilde{u}_n^J(t)$ for large $J$ and $n\ge N(J)$.

We first demonstrate that $\widetilde{u}_n^J(0) - u_{n}(0)\to 0$ in $\hat{M}^{d\alpha}_{2,r}$
as $n\to\I$.
Indeed, we have
\begin{align*}
	\widetilde{u}_n^J(0) - u_{n}(0)
	={}& \sum_{j=1}^J v_n^j(0) - \mathcal{G}_n^j \phi^j \\
	={}& \sum_{j=1}^J 
	D(h_n^j) P(b_n^j) T(a_n^j) U(s_n^j) (U(-s_n^j)\Phi^j(s_n^j)- \phi^j).
\end{align*}
Hence, by definition of nonlinear profile $\Phi^j(t)$ and Lemma \ref{lem:Paction},
\[
	\norm{\widetilde{u}_n^J(0) - u_{n}(0)}_{\hat{M}^{d\alpha}_{2,r}}
	\le 2^d \sum_{j=1}^J \norm{U(-s_n^j)\Phi^j(s_n^j)-\phi^j}_{\hat{M}^{d\alpha}_{2,r}}
	\to 0
\]
as $n\to\I$.

We will show that $\norm{\widetilde{u}_n^J}_{L^{(d+2)\alpha}_{t,x}(\R_+ \times \R)}$ is uniformly bounded
and that the error $e:= i\d_t \widetilde{u}_n^J + \Delta \widetilde{u}_n^J + |\widetilde{u}_n^J|^{2\alpha}\widetilde{u}_n^J$ tends to zero as $n\to \I$ for each $J$.
By the orthogonality, it follows that
\[
	\norm{|v_n^j|^\theta |v_n^k|^{1-\theta}}_{L^{\frac{(d+2)\alpha}2}_{t,x}} \to 0
\]
as $n\to\I$ for any $j\neq k$ and $0<\theta<1$ (see \cite{BG,MV}).
Hence, we see that 
$$\norm{e}_{L^{\frac{(d+2)\alpha}{2\alpha+1}}_{t,x}(\R_+ \times \R)} \to 0$$
as $n\to\I$.
On the other hand, since $\sum_{j=1}^{\I} \norm{\phi^j}_{\hat{M}^{d\alpha}_{2,r}}^r <\I$,
for any $\eps>0$ there exists $J_0=J_0(\eps)$ such that
\[
	\norm{\sum_{j=J_0}^{J} \mathcal{G}_n^j \phi^j}_{\hat{M}^{d\alpha}_{2,r}} \le \eps
\]
for any $J>J_0$ and $n > N(J)$, where we have used Lemma \ref{lem:Paction} and orthogonality of $\mathcal{G}_n^j$. This implies that
\[
	\norm{ \sum_{j=J_0}^{J} v_n^j(0) }_{\hat{M}^{d\alpha}_{2,r}} \le 2\eps
\]
for any $J>J_0(\eps)$ and $n>N(J)$. By small data result, one then sees that
\[
	\norm{ \widetilde{v}_n^J }_{L^\I(\R,\hat{M}^{d\alpha}_{2,r})
	\cap L^{(d+2)\alpha}_{t,x}(\R\times \R^d)} \le C \eps
\]
for such $J$ and $n$, provided $\eps>0$ is sufficiently small,
where $\widetilde{v}_n^J(t)$ is a solution to \eqref{eq:NLS} with $\widetilde{v}_n^J(0)
=\sum_{j=J_0(\eps)}^{J} v_n^j(0)$. As in the long time perturbation, it follows that
\[
	\norm{ \sum_{j=J_0}^{J} v_n^j}_{L^\I(\R,\hat{M}^{d\alpha}_{2,r})
	\cap L^{(d+2)\alpha}_{t,x}(\R\times \R^d)} \le C'\eps
\]
for any $J>J_0$ and $n>N'(J)$. Hence,
\[
	\norm{u_n^J}_{L^{(d+2)\alpha}_{t,x}} \le 
	\sum_{j=1}^{J_0} \norm{\Phi^J}_{L^{(d+2)\alpha}_{t,x}} + 
	\norm{ \sum_{j=J_0}^{J} v_n^j}_{L^{(d+2)\alpha}_{t,x}} + C \norm{w_n^J}_{\hat{M}^{d\alpha}_{2,r}} < \I
\]
for any $J > J_0$ and $n\ge N''(J)$.
This contradicts with the assumption \eqref{asmp:PS2}.
% Thus, $u_n$ scatters for positive time if $n$ is sufficiently large,
% which is a contradiction.
\end{proof}

By the previous result, there exists at least one $\Phi^j$ that blows up for positive time direction.
Renumbering the index $j$ if necessary, we may assume that 
$\Phi^j$ does not scatter for positive time direction if and only if $1 \le j \le J_1$.
Remark that the number $J_1$ is finite because of decoupling inequality and small data scattering.
Also remark that $s^j \neq \I$ for $1 \le j \le J_1$ otherwise it scatters for positive time direction by definition
of $\Phi^j$.

We now prove that $J_1=1$.
For each $m,n \ge 1$ let us define an integer $j(m.n) \in \{1,2,\dots,J_1\}$ and an 
interval $K_n^m$ of the form $[0,\tau]$ by
\[
	\sup_{1\le j \le J_1} S_{K_n^m}(v_n^j) = S_{K_n^m}(v_n^{j(m,n)}) = m.
\]
By the pigeonhole principle, there is a $j_1 \in \{1,2,\dots,J_1\}$ so that for infinitely
many $m$ one has $j(m,n) = j_1$ for infinitely many $n$.
By reordering the indices, we may assume that $j_1=1$.
Then, by definition of $E_2$ and \eqref{eq:EcE2},
\begin{equation}\label{eq:1stprofile}
	\limsup_{m\to\I} \limsup_{n\to\I} \sup_{t\in K_n^m} \ell_r(v_n^1(t))
	\ge E_2 \ge E_c.
\end{equation}

\begin{lemma}\label{lem:pf2_2}
$\psi^j\equiv 0$ for $j \ge 2$. And, $w_n^1 \to 0$ in $\hat{M}^{d\alpha}_{2,r}$ as 
$n\to\I$.
\end{lemma}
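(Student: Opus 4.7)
The plan is to apply the decoupling inequality \eqref{eq:pf:Pythagorean} not at the reference time $t=0$, but at well-chosen times $t_n^m \in K_n^m$ where the first nonlinear profile $v_n^1$ almost saturates $\ell_r = E_c$, and then squeeze against the a priori bound $\limsup_n \sup_{t\in I_n} \ell_r(u_n(t)) \le E_c$ coming from \eqref{asmp:PS1} to annihilate every other profile and the remainder. This is in the spirit of the Keraani--Kenig--Merle minimal blow-up solution construction, adapted to the hat-Morrey framework.

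\textbf{Transport the decomposition and decouple.} By \eqref{eq:1stprofile} and a diagonal extraction, pick $t_n^m \in K_n^m$ with $\lim_{m\to\infty}\lim_{n\to\infty}\ell_r(v_n^1(t_n^m)) = E_c$. Since $j_1=1$ in the choice above \eqref{eq:1stprofile}, one has $S_{K_n^m}(v_n^j)\le m$ for every $1\le j\le J_1$, while the scattering profiles ($j\ge J_1+1$) contribute uniformly bounded scattering norms via the small-data plus orthogonality argument already deployed in Lemma \ref{lem:pf2_1}. Proposition \ref{prop:stability} then yields
\[
\|u_n(t_n^m) - \widetilde u_n^J(t_n^m)\|_{\hat{M}^{d\alpha}_{2,r}}
= o_{J\to\infty}(1) + o_{n\to\infty}(1).
\]
Using the commutation relations of Section \ref{subsec:pd1}, each $v_n^j(t_n^m)$ may be written as $\widetilde{\mathcal G}_n^j \Phi^j(\widetilde s_n^j)$, where the families $\{\widetilde{\mathcal G}_n^j\}_n$ remain pairwise orthogonal in the sense of \eqref{eq:orthty} (composing every $\mathcal G_n^j$ with the common factor $U(t_n^m)$ preserves orthogonality). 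The same argument that establishes \eqref{eq:pf:Pythagorean}, which only uses the orthogonality of the deformation families and applies equally well at any time, then delivers
\[
\limsup_{n\to\infty}\ell_r(u_n(t_n^m))
\;\ge\; \Bigl(\sum_{j=1}^J \ell_r(v_n^j(t_n^m))^r\Bigr)^{1/r}
+ \limsup_{n\to\infty} \ell_r(w_n^J),
\]
where I used that $\ell_r$ is $U(t)$-invariant. Combined with $\limsup_n \ell_r(u_n(t_n^m)) \le E_c$ and $\lim_n \ell_r(v_n^1(t_n^m)) \to E_c$ as $m\to\infty$, this forces $\lim_n \ell_r(v_n^j(t_n^m))=0$ for every $j\ge 2$ and $\limsup_n \ell_r(w_n^J)=0$ for every $J\ge 1$; the case $J=1$ gives $w_n^1\to 0$ in $\hat{M}^{d\alpha}_{2,r}$.

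\textbf{From $\ell_r$-vanishing to $\phi^j=0$, and the main obstacle.} By $G$-invariance of $\ell_r$, $\ell_r(v_n^j(t_n^m))=\ell_r(\Phi^j(s_n^j+(h_n^j)^2 t_n^m))$, so one must propagate vanishing along the evolved times to vanishing of $\phi^j$. For scattering profiles ($s^j=\pm\infty$, automatic when $j\ge J_1+1$), the scattering asymptotics of $\Phi^j$ together with $U$-invariance of $\ell_r$ force $\ell_r(\phi^j)=0$, hence $\phi^j=0$. When $s^j=0$ and the evolved times accumulate at some $t^\star \in I^j$, continuity of $t\mapsto \ell_r(\Phi^j(t))$ and the quasi-norm property give $\Phi^j(t^\star)=0$, so $\Phi^j \equiv 0$ by well-posedness and $\phi^j=0$. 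The main technical obstacle is to exclude the scenario where, for a nonscattering profile $j\in\{2,\dots,J_1\}$, the evolved times $s_n^j+(h_n^j)^2 t_n^m$ escape to the boundary of $I^j$ (where $\ell_r(\Phi^j(\cdot))$ is not necessarily small); this is handled by invoking the constraint $S_{K_n^m}(v_n^j)\le m$, which confines the evolved times to a subinterval of $I^j$ on which $\Phi^j$ has finite scattering norm, ruling out any such boundary escape and closing the argument.
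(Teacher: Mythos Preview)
Your overall strategy matches the paper's: pick times $t_n^m\in K_n^m$ where $v_n^1$ nearly saturates $E_c$, approximate $u_n(t_n^m)$ by $\widetilde u_n^J(t_n^m)$ via stability, decouple, and squeeze against \eqref{asmp:PS1}. The gap is in the sentence ``The same argument that establishes \eqref{eq:pf:Pythagorean}\dots applies equally well at any time.'' This is precisely where all the work is, and the claim as stated is not justified.

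The decoupling behind \eqref{eq:pf:Pythagorean} (Lemma~\ref{lem:decouple}) requires weak-$*$ convergence of $(\mathcal G_n)^{-1}u_n$ to a \emph{fixed} profile $\phi$; mutual orthogonality of the deformation families alone is not enough. At time $t_n^m$ you have $v_n^j(t_n^m)=\widetilde{\mathcal G}_n^{\,j}\,\Phi^j(\widetilde s_n^{\,j})$ with $\widetilde s_n^{\,j}=s_n^j+(h_n^j)^2t_n^m$ depending on $n$, so Lemma~\ref{lem:decouple} does not apply directly. The paper therefore proves only the penalized inequality \eqref{eq:decouple2} (and hence \eqref{eq:decouple1}) with a factor $0<\gamma<1$, and the heart of the proof is the cross-term vanishing \eqref{eq:pf_last}. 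To get \eqref{eq:pf_last} one must first replace each nonlinear evolution $\Phi^j(\widetilde s_n^{\,j})$ by a linear one $U(\widetilde s_n^{\,j})f^j$ with $f^j$ fixed, via a three-case analysis depending on whether $\widetilde s_n^{\,j}$ converges to an interior time of $I_{\max}(\Phi^j)$ or escapes to an endpoint (the latter using scattering of $\Phi^j$ on that side). Only after this replacement do the orthogonality and weak-$*$ arguments apply. Your inequality, with a clean $\sum_j\ell_r(v_n^j(t_n^m))^r$ on the right, is stronger than what the paper actually obtains; the paper instead concludes $c_{j_0}:=\inf_t\ell_r(\Phi^{j_0}(t))=0$ by letting $\gamma\uparrow 1$ and $\varepsilon\downarrow 0$ at the end, and infers $\phi^{j}\equiv0$ from $c_j=0$ (small-data scattering then rules out the nonscattering profiles, and for scattering ones $c_j=0$ forces a zero scattering state).

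Your final paragraph is essentially the correct post-processing, but note that it is already subsumed by the paper's route: once the penalized decoupling and \eqref{eq:pf_last} are in hand, the conclusion $\phi^j\equiv0$ for $j\ge2$ and $w_n^1\to0$ follows without needing a separate ``boundary escape'' discussion.
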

\begin{proof}[Proof of Lemma \ref{lem:pf2_2}]
In light of long time stability,
it holds for each $m$ that
\begin{equation}\label{eq:approximation}
	\limsup_{J\to\I} \limsup_{n\to\I}\sup_{t\in K_n^m}\norm{u_n(t) - \widetilde{u}_n^J(t)}_{\hat{M}^{d\alpha}_{2,r}} = 0.
\end{equation}
% This follows by means of long time stability.
% To see this,
Here we remark that, by definition of $K_n^m$, we have
\[
	\sup_{n} %\norm{v_n^j}_{L^{(d+2)\alpha}_{t,x}(K_n^m \times \R^d)} 
	S_{K_n^m} (v_n^j)
	\le m
\]
for $1 \le j \le J_1$ and so the assumption of long time stability is fulfilled.
For $j>J_1$, we have $S_{\ge0}(v^n_j) \le C_j < \I$.
Define
\[
	c_j := \inf_{t \in I_{\max}(\Phi^j)} \ell_r(\Phi^j(t)).
\]
We shall show that $c_j=0$ for $j\ge2$.
% We claim that for any $\eps>0$, $m\ge1$, $J\ge1$, and $\gamma\in (0,1)$,
% there exists $N$ such that
% \begin{equation}\label{eq:decouple}
% 	\sup_{t \in K_n^m} \ell(\widetilde{u}_n^J)^\sigma
% 	\ge \gamma \sup_{t\in K_n^m}\ell(v_n^1(t))^\sigma +  \sum_{j=2}^J \gamma^j c_j^\sigma 
% 	+\gamma^J \ell(w_n^J)^\sigma -\eps
% \end{equation}
% holds for $n\ge N$.

% Here, we will see that \eqref{eq:1stprofile}, \eqref{eq:approximation}, and \eqref{eq:decouple}
% yield $c_j=0$ for $j\ge2$. 
Assume for contradiction that $c_{j_0} >0$ for some $j_0\ge2$.
By means of \eqref{eq:1stprofile}, for any $\eps>0$ there exists $m=m(\eps)$ such that 
\[
	\sup_{t \in K_n^m} \ell_r(v_n^1(t))^r \ge E_c^r -\eps
\]
for a subsequence of $n$. 
The subsequence depends on $m$ and is denoted again by $n$.

Fix $m$.
Then, for the same $\eps$, we can choose $J=J(\eps,m)=J(\eps)$ so that
\[
	\sup_{t\in K^m_n}\norm{u_n(t)-\widetilde{u}_n^J}_{\hat{M}^{d\alpha}_{2,\sigma}}
	\le \eps
\]
as long as $n\ge N(\eps,m,J)=N(\eps)$ by using \eqref{eq:approximation}.
Without loss of generality, we may assume that $J>j_0$. 

Fix $J$. 
Set
\[
	\widetilde{w}_n^k(t) := \sum_{j=k+1}^J v_n^j(t) + e^{it\Delta} w_n^J.
\]
for $k=0,1,2,\dots,J-1$ and $\widetilde{w}_n^J (t) = e^{it\Delta} w_n^J$.
Remark that $\widetilde{w}_n^0 = \widetilde{u}_n^J$.
For each $ j =0,1,2,\dots,J$, we may show that 
for any given sequence $\{t_n\}_n$ such that $t_n \in K_n^m$
there exists a subsequence of $n$, which is again denoted by $n$, such that
\begin{equation}\label{eq:decouple2}
	\ell_r(\widetilde{w}_n^j(t_n))^r  
	- \gamma \ell_r(v_n^{j+1}(t_n))^r - \gamma \ell_r(\widetilde{w}_n^{j+1}(t_n))^r \ge o_n(1)
\end{equation}
for any $0<\gamma<1$. %In what follows we assume that $1<\gamma <2$.

Before the proof of \eqref{eq:decouple2}, we shall complete the proof of the lemma. 
Once inequality \eqref{eq:decouple2} is proven, we deduce for any $\{t_n\}_n$ with $t_n \in K_n^m$
that
\begin{equation}\label{eq:decouple1}
	\ell_r (\widetilde{u}_n^J(t_n))^r  
	- \sum_{j=1}^J \gamma^{j}\ell_r(v_n^j(t_n))^r - \gamma^J \ell_r(\widetilde{w}_n^{J}(t_n))^r \ge o_n(1)
\end{equation}
holds up to subsequence. 
Now, choose a sequence $\{t_n\}_n$ so that 
\[
	\ell_r(v^1_n(t_n))^r \ge \sup_{t\in K_n^m} \ell_r(v_n^1)^r -\frac{\eps}{2}.
\]
Then, by means of \eqref{eq:decouple1}, extracting subsequence of $n$, one verifies that
\[
	\sup_{t\in K_n^m}\ell_r(\widetilde{u}_n^J)^r 
	\ge \gamma \sup_{t\in K_n^m} \ell_r(v_n^1)^r + \sum_{j=2}^J \gamma^j c_j^r
	+ \gamma^J \ell_r(w_n^J)^r - \frac{\gamma }2\eps + o_n(1),
\]
where we have used 
\[
	\ell_r(\widetilde{u}_n^J(t_n))^r \le \sup_{t\in K_n^m}\ell_r(\widetilde{u}_n^j(t))^r,
\]
$\ell_r(v_n^j(t)) \ge c_j$ for $j\ge2$, and $\ell_r(\widetilde{w}_n^{J}(t_n))=\ell_r({w}_n^{J})$.
Hence, by definition of $m$, for large $n$, 
\[
	\sup_{t\in K_n^m} \ell_r(\widetilde{u}_n^J)^r
	\ge \gamma \sup_{t\in K_n^m}\ell_r(v_n^1)^r +\sum_{j=2}^J \gamma^j c_j^r - \eps
	\ge \gamma E_c^r + \gamma^{j_0} c_{j_0}^r -2\eps.
\]
By assumption \eqref{asmp:PS1}, we also have
\[
	\sup_{t\in K_n^m} \ell_r(u_n(t)) \le E_c + \eps
\]
for large $n$.
Thus, for sufficiently large $n$, we have
\begin{align*}
	E_c+ \eps \ge {}& \sup_{t\in K_n^m} \ell_r(u_n(t)) \\
	\ge {}& \sup_{t\in K_n^m} \ell_r(\widetilde{u}_n^J(t)) - C \sup_{t\in K_n^m} \norm{u_n(t)-\widetilde{u}_n^J(t)}_{\hat{M}^{d\alpha}_{2,r}} \\
	\ge {}& (\gamma E_c^r  + \gamma^{j_0} c_{j_0}^r -2\eps)^{\frac1r}-C \eps,
\end{align*}
that is,
\[
	c_{j_0}^r \le C \eps + \gamma^{-j_0}(1-\gamma) E_c^r,
\]
which is a contradiction when $\eps$ is sufficiently small and $\gamma$ is sufficiently close to one.
Hence, $\phi^j\equiv 0$ for $j\ge2$.
Once we know $\phi^j \equiv 0 $ for $j\ge 2$, we see that $w_n^J = w_n^1$ for all $J$.
Arguing as above, one sees that
\[
	\limsup_{n\to\I}\norm{w_n^1}_{\hat{M}^{d\alpha}_{2,r}} 
	= \limsup_{J\to\I}\limsup_{n\to\I}\norm{w_n^J}_{\hat{M}^{d\alpha}_{2,r}} \le \eps
\] 
for any $\eps>0$. Hence, $\lim_{n\to\I}\norm{w_n^1}_{\hat{M}^{d\alpha}_{2,r}} = 0$.

Thus, it suffices to show \eqref{eq:decouple2} to complete the proof. 
We first note that
$\widetilde{w}_n^j = v_n^j + \widetilde{w}_n^{j+1}$ and so
\begin{align*}
	\norm{\F \widetilde{w}_n^j(t_n)}_{L^2(\tau)}^2
	={}& \norm{\F v_n^{j+1}(t_n)}_{L^2(\tau)}^2 + \norm{\F \widetilde{w}_n^{j+1}(t_n)}_{L^2(\tau)}^2\\
	&{}+2\Re \Jbr{\F v_n^{j+1}(t_n),\F \widetilde{w}_n^{j+1}(t_n)}_{\tau}
\end{align*}
for each dyadic cube $\tau\in \mathcal{D}$, where $\Jbr{f,g}_\tau = \int_{\tau} f(x) \overline{g}(x) dx$.
By an elementary inequality
\[
	(a-b)^{\frac{\sigma}{2}} \ge \(\frac{m}{m+1}\)^{\frac{\sigma-2}{2}} a^{\frac{\sigma}2} - m^{\frac{\sigma-2}{2}} b^{\frac{\sigma}{2}}
\]
for any $a\ge b\ge0$ and $m>0$ and by embedding
$\ell^2_{\mathcal{D}} \hookrightarrow \ell^{r}_{\mathcal{D}}$, it follows that
\begin{align*}
	&	\sum_{\tau \in \mathcal{D}} |\tau|^{r(\frac12-\frac1{d\alpha})} \norm{\widetilde{w}_n^j}_{L^2(\tau)}^r \\
	={}& \sum_{\tau \in \mathcal{D}} |\tau|^{r(\frac12-\frac1{d\alpha})} \( \norm{\F v_n^{j+1}}_{L^2(\tau)}^2 + \norm{\F \widetilde{w}_n^{j+1}}_{L^2(\tau)}^2 + 2\Re
	\Jbr{\F v_n^{j+1}, \F \widetilde{w}_n^{j+1}}_{\tau} \)^{\frac{r}2}  \\
	\ge{}& \sum_{\tau \in\mathcal{D}} |\tau|^{r(\frac12-\frac1{d\alpha})} \( \norm{\F v_n^{j+1}}_{L^2(\tau)}^2 + \norm{\F \widetilde{w}_n^{j+1}}_{L^2(\tau)}^2 - 2
	\abs{\Jbr{\F v_n^{j+1}, \F \widetilde{w}_n^{j+1}}_{\tau}} \)^{\frac{r}{2}} \\
	\ge{}& \(\frac{m}{m+1}\)^{\frac{r-2}{2}} \sum_{\tau \in\mathcal{D}} |\tau|^{r(\frac12-\frac1{d\alpha})} \( \norm{\F v_n^{j+1}}_{L^2(\tau)}^2 + \norm{\F \widetilde{w}_n^{j+1}}_{L^2(\tau)}^2 \)^{\frac{r}{2}}\\
	&{}- 2^{\frac{r}{2}}m^{\frac{r-2}{2}} \sum_{\tau \in\mathcal{D}} |\tau|^{r(\frac12-\frac1{d\alpha})} \abs{\Jbr{\F v_n^{j+1}, \F \widetilde{w}_n^{j+1}}_{\tau}}^{\frac{r}{2}} \\
	\ge{}& \(\frac{m}{m+1}\)^{\frac{r-2}{2}}\( \sum_{\tau \in\mathcal{D}} |\tau|^{r(\frac12-\frac1{d\alpha})} \norm{\F v_n^{j+1}}_{L^2(\tau)}^r + \sum_{\tau \in\mathcal{D}} |\tau|^{r(\frac12-\frac1{d\alpha})} \norm{\F \widetilde{w}_n^{j+1}}_{L^2(\tau)}^{r} \) \\
	&{}- 2^{\frac{r}{2}}m^{\frac{r-2}{2}} \sum_{\tau \in\mathcal{D}} |\tau|^{r(\frac12-\frac1{d\alpha})} \abs{\Jbr{\F v_n^{j+1}, \F \widetilde{w}_n^{j+1}}_{\tau}}^{\frac{r}{2}},
\end{align*}
where we have omitted the time variable $(t_n)$ in the above estimate.
Hence, the equation \eqref{eq:decouple1} follows if we show
\begin{equation}\label{eq:pf_last}
	\sum_{\tau \in\mathcal{D}} |\tau|^{r(\frac12-\frac1{d\alpha})} \abs{\Jbr{\F v_n^{j+1}(t_n), \F \widetilde{w}_n^{j+1}(t_n)}_{\tau}}^{\frac{r}{2}} \to 0
\end{equation}
as $n\to\I$ up to subsequence. %For simplicity, we drop upper index $j+1$.

We now claim that it suffices to show the above convergence with replacing
$v_n^{j+1}(t_n)$ with
\[
	D(h_n^{j+1})T(a_n^{j+1}- 2b_n^{j+1}(h_n^{j+1})^2t_n^{j+1}  )  P(b_n^{j+1})  U((h_n^{j+1})^2t_n^{j+1} + s_n^{j+1}) f
\]
% if $s=\pm \I$, 
for suitable $f$.
For simplicity, we drop upper index $j+1$ for a while.
To this end, we shall recall that
\[
	v_n(t_n) = e^{i\theta_n}D(h_n) P(b_n) T(a_n- 2b_n(h_n)^2t_n  )  \Phi((h_n)^2t_n + s_n),
\]
with suitable $\theta_n \in \R$. In view of \eqref{eq:pf_last}, we may neglect $e^{i\theta_n}$.
By extracting subsequence, we may suppose that $(h_n)^2 t_n + s_n$ converges
to $T \in \overline{I_{\max}(\Phi)} \subset [-\I,\I]$.
We first consider the case $T$ is interior of $\overline{I_{\max}(\Phi)}$.
In this case, $\Phi((h_n)^2t_n + s_n)$ converges strongly to $\Phi(T)$ in $\hat{M}^{d\alpha}_{2,r}$.
Hence, we may replace $\Phi((h_n)^2t_n + s_n)$ by $U((h_n)^2t_n + s_n) (U(-T)\Phi(T))$.
Namely, we take $f=U(-T)\Phi(T)$.
% Further, $s_n \equiv 0$ since $K_n^m$ is of the form $[0,\tau]$.
If $T=\sup I_{\max}(\Phi)$ then $T=\I$ and $\Phi$ must scatters for positive time direction
because $t_n$ is taken from $K_n^m$.
Hence, we may replace $\Phi((h_n)^2t_n + s_n)$ by $U((h_n)^2t_n + s_n) \Phi_+$
for some $\Phi_+ \in \hat{M}^{d\alpha}_{2,r}$.
This implies that the choice $f=\Phi_+$ works.
The case $T= \inf I_{\max}(\Phi)$ is handled similarly.
Since $t_n\ge0$ and $s_n\in I_{\max}(\Phi)$,
this case occurs only if $s_n\to-\I$ as $n\to\I$ and $T=-\I$.
So, we may replace $\Phi((h_n)^2t_n + s_n)$ by $U((h_n)^2t_n + s_n) \Phi_-$
for some $\Phi_- \in \hat{M}^{d\alpha}_{2,r}$. The claim is true.
% By extracting subsequence, we may suppose that $(h_n)^2 t_n + s_n$ converges
% to $T \in \overline{I_{\max}(\Phi)} \subset [-\I,\I]$.
% We first consider the case $T$ is interior of $\overline{I_{\max}(\Phi)}$.
% In tis case, $\Phi((h_n)^2t_n + s_n)$ converges strongly to $\Phi(T)$ in $\hat{M}^{d\alpha}_{2,r}$.
% Hence, we may replace $\Phi((h_n)^2t_n + s_n)$ by $\Phi(T)$.
% Namely, we take $f=\Phi(T)$.
% Further, $s_n \equiv 0$ since $K_n^m$ is of the form $[0,\tau]$.
% If $T=\sup I_{\max}(\Phi)$ then $\Phi$ scatters forward in time (and hence $j \ge J_1$)
% and $s_n \to \I$.
% Hence, we may replace $\Phi((h_n)^2t_n + s_n)$ by $S((h_n)^2t_n + s_n) \Phi_+$
% for some $\Phi_+ \in \hat{M}^{d\alpha}_{2,r}$.
% This implies that the choice $f=\Phi_+$ works.
% The case $T= \inf I_{\max}(\Phi)$ is similar.
% The claim is true.
% In what follows, we consider the case $T=\sup I_{\max}(\Phi)$.
% The other cases are handled in a similar way.
Remark that
\[
	D(h_n)  P(b_n)  T(a_n- 2b_n(h_n)^2t_n  )  U((h_n)^2t_n + s_n) f
	= e^{i\gamma_n}U( t_n) \mathcal{G}_n f
\]
for suitable $\gamma_n \in \R$.

We shall show
\[
	\sum_{\tau \in\mathcal{D}} |\tau|^{r(\frac12-\frac1{d\alpha})} \abs{\Jbr{\F f,
 \F r_n  }_{\tau}}^{\frac{r}{2}} \to 0
\]
as $n\to\I$, where
\[
	r_n := (\mathcal{G}_n^{j+1})^{-1} U(-t_n)\widetilde{w}_n^{j+1}(t_n).
\] 
Since $f \in \hat{M}^{d\alpha}_{2,r}$ and $r_n$ is uniformly bounded in $\hat{M}^{d\alpha}_{2,r}$,
for any $\eps>0$ there exists a finite set of dyadic cubes 
$\Omega \subset \mathcal{D}$ independent of $n$ such that
\begin{align*}
		\sum_{\tau \in\mathcal{D} \setminus \Omega} |\tau|^{r(\frac12-\frac1{d\alpha})} \abs{\Jbr{\F f,
 \F r_n  }_{\tau}}^{\frac{r}{2}}
&{}\le \( \sum_{\tau \in\mathcal{D} \setminus \Omega} |\tau|^{r(\frac12-\frac1{d\alpha})} \norm{\F{f}}_{L^2(\tau)}^r\)^{\frac12}
\norm{r_n}_{\hat{M}^{d\alpha}_{2,r}}^{\frac{r}2} \\
 &{}\le \eps.
\end{align*}
Hence, the proof is reduced to showing that
\[
	\Jbr{\F f, \F r_n}_{\tau} \to 0
\]
as $n\to\I$ for each dyadic cube $\tau$.
A similar argument as in the previous paragraph allows us to replace $\widetilde{w}_n^{j+1}$ with
\[
	 U(t_n) \(\sum_{k=j+2}^J \mathcal{G}_n^k f^k + w_n^J \).
\]
With this replacement, it suffices to show
\[
	\Jbr{\F f, \F \( \sum_{k=j+2}^J (\mathcal{G}_n^{j+1})^{-1}\mathcal{G}_n^k f^k + (\mathcal{G}_n^{j+1})^{-1} w_n^J\)}_{\tau}
	\to 0
\]
as $n\to\I$. The desired convergence now follows 
form mutual orthogonality of families $\{\mathcal{G}_n^{j}\}_n \subset G$ ($j=1,2,3,\dots$) and
from weak-$*$ convergence 
$(\mathcal{G}_n^{j+1})^{-1} w^J_n \rightharpoonup 0$ as $n\to\I$ in $\hat{M}^{d\alpha}_{2,r}$.
\end{proof}

Let us finish the proof of Proposition \ref{prop:key_convergence}. So far, we obtain
\[
	u_n(0) = \mathcal{G}_n^1 \phi^1 + o(1)
\]
in $\hat{M}^{d\alpha}_{2,r}$ as $n\to\I$.
Further, $s_n^1\neq +\I$ and the nonlinear profile $\Phi^1(t)$ %with $\Phi^1(0)=\psi$ 
does not scatter for positive time direction and so
$\sup_{I_{\max}(\Phi^1 )\cap \{t\ge 0\} } \ell_r(\Phi^1(t)) \ge E_c$.
By the assumption \eqref{asmp:PS1}, we see that
\[
	\sup_{I_{\max}(\Phi^1 )\cap \{t\ge 0\} } \ell_r(\Phi^1(t)) = E_c
\]
by the stability (or arguing as in Lemma \ref{lem:pf2_2}).

Remark that we did not yet use the assumption $\lim_{n\to\I} S_{\le 0} (u_n) = \I$.
Arguing as in Lemmas \ref{lem:pf2_1} and \ref{lem:pf2_2},
this assumption implies that $\Phi^1(t)$ have the same property for negative time direction.
Namely, $s^1 \neq -\I$, $\Phi^1(t)$ does not scatter for negative time direction, and
\[
	\sup_{I_{\max}(\Phi^1 )\cap \{t\le 0\} } \ell_r(\Phi^1(t)) = E_c.
\]
We conclude that $\Phi^1(t)$ is the desired solution.

\subsection{Almost periodicity modulo symmetry}
In this section, we complete the proof of Theorem \ref{thm:main3b}.
By definition of $E_c$, we can take a sequence of solutions $u_n: I_n \times \R^d \to \C$ that satisfies the
assumption of Proposition \ref{prop:key_convergence}.
Therefore, thanks to Proposition \ref{prop:key_convergence}, we obtain a maximal-lifespan 
solution $v(t)$ that satisfies
that satisfies the first two properties of Theorem \ref{thm:main3b}.
Then, $E_2=E_c$ follows.
Thus, let us prove that $v(t)$ is almost periodic modulo symmetry as in \eqref{eq:apms}.
To this end, the main step is the following.
\begin{proposition}
There exists $a(t):I_{\max}(v)\to\R^d$, $b(t):I_{\max}(v)\to\R^d$,
$\lambda(t):I_{\max}(v) \to 2^\Z$ such that the set 
\[
	\left\{ (D(\lambda(t)) P(b(t)) T(a(t)) )^{-1} v(t)\ \middle|\ t\in I_{\max}(v) \right\} \subset \hat{M}^{d\alpha}_{2,r}
\]
is totally bounded in $\hat{M}^{d\alpha}_{2,r}$. 
\end{proposition}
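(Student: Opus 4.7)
The plan is to apply Proposition~\ref{prop:key_convergence} to time translates of the solution $v$, obtaining pre-compactness of the orbit modulo the three-parameter subgroup generated by $D(h)$, $P(b)$, $T(a)$, and then to produce the functions $\lambda(t), b(t), a(t)$ via a near-minimizing selection.

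Fix any sequence $\{t_n\} \subset I_{\max}(v)$ and set $u_n(s) := v(s+t_n)$. Property~(2) of Theorem~\ref{thm:main3b} gives $\sup_s \ell_r(u_n(s)) = E_c$ for every $n$, hence $\limsup_n \sup_s \ell_r(u_n(s)) = E_c$. Since $v$ does not scatter in either time direction, the blowup/scattering criterion (Proposition~\ref{prop:criterion}) forces $S_{\ge 0}(u_n) = S_{[t_n,T_{\max}(v))}(v) = \I$ and similarly $S_{\le 0}(u_n) = \I$. The hypotheses of Proposition~\ref{prop:key_convergence} are thus met with reference times $0$, and after passing to a subsequence we obtain deformations $\mathcal{G}_n = D(h_n)P(b_n)T(a_n)$ such that $\mathcal{G}_n^{-1} v(t_n) \to \phi$ strongly in $\hat{M}^{d\alpha}_{2,r}$. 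Moreover $\phi \neq 0$: otherwise $\ell_r(v(t_n)) \to 0$ (by Remark~\ref{rmk:quasi-norm}), which contradicts $S_{\ge t_n}(v) = \I$ via the small-data scattering theorem.

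To produce the global functions, for each $t \in I_{\max}(v)$ I would use the Axiom of Choice to pick a triple $(\lambda(t), b(t), a(t)) \in 2^\Z \times \R^d \times \R^d$ satisfying
\[
\norm{(D(\lambda(t))P(b(t))T(a(t)))^{-1}v(t)}_{\hat{M}^{d\alpha}_{2,r}} \le 2\,\ell_r(v(t));
\]
this is possible because $D$ and $T$ act as isometries and $P$ as a quasi-isometry on $\hat{M}^{d\alpha}_{2,r}$ (Lemma~\ref{lem:Paction}). Set $w(t) := (D(\lambda(t))P(b(t))T(a(t)))^{-1}v(t)$ and, along the subsequence above, define $\mathcal{H}_n := (D(\lambda(t_n))P(b(t_n))T(a(t_n)))^{-1}\mathcal{G}_n \in G$, so that $w(t_n) = \mathcal{H}_n(\mathcal{G}_n^{-1}v(t_n))$. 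If $\{\mathcal{H}_n\}$ were vanishing, Proposition~\ref{prop:vanishing} together with the strong convergence $\mathcal{G}_n^{-1}v(t_n) \to \phi$ and the uniform operator bound on $\mathcal{H}_n$ (Lemma~\ref{lem:Paction}) would force $w(t_n) \rightharpoonup 0$ weakly-$*$, contradicting the uniform lower bound $\norm{w(t_n)}_{\hat{M}^{d\alpha}_{2,r}} \ge \ell_r(v(t_n)) \ge c_0 > 0$ (the latter again from small-data scattering). Lemma~\ref{lem:nonvanishing} then yields a further subsequence along which $\mathcal{H}_n \to \mathcal{H}$ strongly in $\mathcal{L}(\hat{M}^{d\alpha}_{2,r})$, so $w(t_n) \to \mathcal{H}\phi$ strongly and $K := \{w(t) : t \in I_{\max}(v)\}$ is totally bounded.

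The main obstacle I anticipate is the final step: ruling out vanishing of $\{\mathcal{H}_n\}$. This hinges on the uniform-in-$t$ lower bound $\ell_r(v(t)) \ge c_0 > 0$, which I would extract from non-scattering in both time directions via the blowup/scattering criterion and small-data scattering; the remaining structural ingredients (quasi-norm equivalence, the weak-$*$ characterization of vanishing families) were already built in Section~\ref{sec:compactness}.
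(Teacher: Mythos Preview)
Your argument has a genuine gap in the final step. The contradiction you claim between ``$w(t_n)\rightharpoonup 0$ weakly-$*$'' and ``$\norm{w(t_n)}_{\hat{M}^{d\alpha}_{2,r}}\ge c_0>0$'' is not a contradiction at all: a bounded sequence can converge weakly-$*$ to zero while keeping its norm bounded away from zero (this is precisely what happens for any vanishing family applied to a fixed nonzero function). So the vanishing of $\{\mathcal{H}_n\}$ cannot be ruled out this way.

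The gap is rooted in your selection of $(\lambda(t),b(t),a(t))$. Since $D(h)$ and $T(a)$ are isometries on $\hat{M}^{d\alpha}_{2,r}$, the condition $\norm{(D(\lambda(t))P(b(t))T(a(t)))^{-1}v(t)}\le 2\ell_r(v(t))$ imposes \emph{no constraint whatsoever} on $\lambda(t)$ and $a(t)$; it only near-optimizes $b(t)$. You could take $\lambda(t)\equiv 1$ and $a(t)\equiv 0$ and still satisfy your condition, and then total boundedness certainly fails. To make the argument work you need to tie $\lambda(t)$, $b(t)$, $a(t)$ to some \emph{quantitative concentration property} of $v(t)$ that is stable under small perturbations and that only finitely many parameter choices can satisfy. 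The paper does exactly this: for each $t$ it selects a dyadic cube carrying a definite amount of Fourier mass (fixing $\lambda(t),b(t)$) and then a spatial cube carrying a definite amount of the Strichartz norm (fixing $a(t)$); these choices are rigid enough that, once Proposition~\ref{prop:key_convergence} produces \emph{some} converging parameters $(h_n,b_n,a_n)$, one can show $|\log(\lambda(t_n)/h_n)|+|b(t_n)-\cdots|+|a(t_n)-\cdots|$ is bounded by a counting argument. That rigidity is the missing idea in your proposal.
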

Indeed, the property \eqref{eq:apms} for $v(t)$ then follows from the characterization of total boundedness (Theorem \ref{thm:totallybounded})
with $N(t)=\lambda(t)$, $y(t)=a(t)/\lambda(t)$, and $z(t)=\lambda(t)b(t)$.
\begin{remark}
By using Proposition \ref{prop:key_convergence},
we see that
for any sequence $\{\tau_n\}_n \subset I_{\max}(v)$ there exists a sequence of parameters $(\lambda_n,a_n,b_n) \in 2^\Z \times \R^d \times \R^d$
such that $(D(\lambda_n) P(b_n)T(a_n) )^{-1}v(\tau_n)$ possess a convergence subsequence.
However, this statement is weaker.
We have to choose parameters $a(t)$, $b(t)$, and $\lambda(t)$ independently of choice of a sequence $\{\tau_n\}_n$.
\end{remark}

\begin{proof}
{\bf Step 1}. We first construct $\lambda(t)$, $a(t)$, and $b(t)$.
Fix $\sigma \in I_{\max}(v)$.  
For simplicity, we omit time variable $\sigma$ and write $v=v(\sigma)$ in this step.
Since $v(t)$ does not scatter for positive time direction,
it holds that
\begin{equation}\label{eq:aspm_pf0}
% 	\norm{e^{it\Delta} v}_{L^{(d+2)\alpha}_{t,x}(\R\times \R^d)}
	S_\R (e^{it\Delta} v)
	\ge \delta,
\end{equation}
where $\delta$ is the number given in Lemma \ref{lem:LWP}.
Moreover, we have
\[
	\norm{v}_{\hat{M}^{d\alpha}_{2,r}} \le 2^d E_c <\I.
\]

Mimicking the proof of Lemma \ref{lem:pd:step1}, we see that for any $\eps>0$ there exist
a sequence of dyadic cubes $\{ \tau_m \}_{m=1}^M \subset \mathcal{D}$, $M=M(\eps)$, 
and constant $C_j=C_j(\eps)>0$ ($j=1,2$)
such that if we define $f^m(x)$ by
\[
	\F f^m(\xi) := \F v (\xi) \times  {\bf 1}_{\tau_m \setminus (\cup_{k=1}^{m-1} \tau_k)} (\xi)
	\times {\bf 1}_{\{|\F v(\xi)| \le C_1 |\tau_m|^{-1/(d\alpha)'}\}}(\xi)
\]
then it holds that
\[
	|\tau_m|^{\frac1{(d\alpha)'}-\frac12} \norm{\F f^m}_{L^2(\tau_m) } \ge C_2
\]
for each $m=1,2,\dots,M$ and that
\[
	\norm{e^{it\Delta}\(v-\sum_{m=1}^M f^m\)}_{L^{(d+2)\alpha}_{t,x}(\R \times \R^d)} \le \eps.
\]
Choose $\eps = \delta/3$. Then, at least one $f^m$ satisfies
\[
% 	\norm{e^{it\Delta} f^m}_{L^{(d+2)\alpha}_{t,x}(\R \times \R^d)} 
	S_\R(e^{it\Delta} f^m)
	\ge \frac{\delta}{2M(\delta/3)} =:\delta_1.
\]
Indeed, otherwise we obtain a contraction with \eqref{eq:aspm_pf0}.
Pick such $m=m_0$ and define $\lambda(\sigma) \in 2^\Z$ and $b(\sigma) \in \Z^d$ by the relation
\[
	\tau_{m_0} = \lambda(\sigma) ([0,1)^d + b(\sigma)).
\]

Set $g(x)=[P(b(\sigma))^{-1} D(\lambda(\sigma))^{-1} f^{m_0}](x)$. Then,
\[
	|\F g(\xi)| = (\lambda(\sigma))^{d-\frac1{\alpha}} |\F f^{m_0}(\lambda(\sigma)(\xi + b(\sigma)))|
	\le C_1 {\bf 1}_{[0,1)^d} (\xi).
\]
Arguing as in \cite[Lemma 22]{Bo2}, we see that for any $\eps>0$ there exists a sequence of disjoint unit cubes
$\{Q_k \}_{k=1}^{K} \subset \R \times \R^d$, $K=K(\eps)$, such that
\[
	\norm{e^{it\Delta}g}_{L^{(d+2)\alpha}_{t,x} \((\R \times \R^d) \setminus \bigcup_{k=1}^K Q_k \)} \le \eps.
\]
Take $\eps = \delta_1/3$. Then, at least for one $k$, we have
\[
	\norm{e^{it\Delta}g}_{L^{(d+2)\alpha}_{t,x} (Q_k)} \ge \frac{\delta_1}{2K(\delta_1/3)} =:\delta_2
\]
Choose such $k=k_0$ and define $a(\sigma)$ as the $x$-coordinate of the center of $Q_{k_0}$.

{\bf Step 2}. We now prove that the above $\lambda(t)$, $a(t)$, and $b(t)$ give the desired conclusion.
Take any sequence $\{t_n\} \subset I_{\max}(v)$, since the assumption of Proposition \ref{prop:key_convergence}
is satisfied with $u_n\equiv v$ and this $\{t_n\}$, up to subsequence, we have 
\[
	v(t_n) = D(\lambda_n) P(b_n) T(a_n) \phi + o(1)
\]
in $\hat{M}^{d\alpha}_{2,r}$ as $n\to\I$.
Changing parameters and notations, and refining subsequence if necessary, we can rewrite 
the above convergence as
\begin{equation}\label{eq:construction_pf}
	T(a(t_n))^{-1}P(b(t_n))^{-1} D(\lambda(t_n))^{-1} v(t_n) =  T(a_n)^{-1} P(b_n)^{-1}  D(\lambda_n)^{-1} \phi + o(1)
\end{equation}
in $\hat{M}^{d\alpha}_{2,r}$ as $n\to\I$. Without loss of generality, we may assume that $a_n, b_n\in \Z^d$.
Now, it suffices to show that $\abs{\log \lambda_n } + |a_n| + |b_n|$ is bounded (under the new representation).

Set $w(t)=T(a(t))^{-1}P(b(t))^{-1} D(\lambda(t))^{-1} v(t)$.
By definition of $\lambda(t)$ and $b(t)$, we have
\[
	\norm{ {\bf 1}_{\{|\F w(t)| \le C_1\}} (\F w(t))   }_{L^2([0,1)^d)} \ge C_2
\]
for any $t \in I_{\max}(v)$. By \eqref{eq:construction_pf}, there exists $n_0 $ such that
\begin{align*}
	\frac{C_2}2
	&{}\le \norm{ {\bf 1}_{\{|\F w(t_n)| \le C_1\}} \F [ T(a_n)^{-1} P(b_n)^{-1}  D(\lambda_n)^{-1}  \phi ] }_{L^2([0,1)^d)} \\
	&{}\le \norm{\F [ T(a_n)^{-1} P(b_n)^{-1} D(\lambda_n)^{-1}  \phi] }_{L^2([0,1)^d)} \\
	&{}= \lambda_n^{d(\frac12 -\frac1{d\alpha})} \norm{\F \phi }_{L^2(\lambda_n([0,1)^d+b_n))}
\end{align*}
for $n \ge n_0$. Let $N_0$ be the number of the dyadic cubes $\tau \in \mathcal{D}$ such that
\[
	|\tau|^{\frac12-\frac1{d\alpha}} \norm{\F \phi }_{L^2(\tau)} \ge \frac{C_2}{2}.
\]
Then, $N_0$ is bounded because
\[
	N_0 \(\frac{C_2}{2}\)^r \le \norm{\phi}_{\hat{M}^{d\alpha}_{2,r}}^r \le (2^d E_c)^r.
\]
Therefore, $\# \{ \lambda_n ([0,1)^d + b_n ) \in \mathcal{D}\ |\ n \ge 1\} \le n_0 + N_0 <\I$.
Hence, $\log |\lambda_n| + |b_n|$ is bounded.

Refining subsequence and changing notations, we may suppose that $\lambda_n\equiv 1$ and $b_n \equiv 0$.
By definition of $a(t)$, we have
\[
 \norm{e^{it\Delta} \F^{-1} \( {\bf 1}_{A_n} \F w(t_n) \) }_{L^{(d+2)\alpha}_{t,x} (\R \times [-1/2,1/2]^d))} \ge \delta_2,
\]
where $A_n \subset [0,1)^d$ is a suitable set depending only on $v(t_n)$.
By \eqref{eq:construction_pf} and Strichartz' estimate, there exists $n_1$ such that
\[
 \norm{e^{it\Delta} \F^{-1} \( {\bf 1}_{A_n} \F (T(a_n)^{-1} \phi) \) 
}_{L^{(d+2)\alpha}_{t,x} (\R \times [-1/2,1/2]^d)} \ge \frac{\delta_2}2
\]
holds for all $n \ge n_1$.
Let $N_1(n)$ be the number of vectors $a \in \Z^d$ such that
\[
	 \norm{e^{it\Delta} \F^{-1} \( {\bf 1}_{A_n} \F \phi \) }_{L^{(d+2)\alpha}_{t,x} (\R \times ([-1/2,1/2]^d + a))} \ge \frac{\delta_2}2.
\]
Then, $N_1(n)$ is uniformly bounded because
\begin{align*}
	N_1(n) \(\frac{\delta_2}2\)^{(d+2)\alpha}
	&{}\le \norm{e^{it\Delta} \F^{-1} \( {\bf 1}_{A_n} \F \phi \) }_{L^{(d+2)\alpha}_{t,x} (\R \times \R^d)}^{(d+2)\alpha}\\
	&{}\le C \norm{ \F^{-1} \( {\bf 1}_{A_n} \F \phi \) }_{\hat{M}^{d\alpha}_{2,r}}^{(d+2)\alpha}\\
	&{}\le C \norm{ \phi }_{\hat{M}^{d\alpha}_{2,r}}^{(d+2)\alpha} 
	\le C (2^d E_c)^{(d+2)\alpha}.
\end{align*}
Hence, $\# \{a_n\ |\ n\ge1 \} \le n_1 + \sup_{n} N_1(n) <\I$. In particular,
$a_n$ is bounded. %We obtain the result.
\end{proof}

\section{Proof of Theorem \ref{thm:main2}}\label{sec:main2}

By definition of $E_1$, we can take a sequence $u_{0,n} \in \hat{M}^{d\alpha}_{2,r}$
of initial data which satisfies
the following two properties:
\begin{enumerate}
\item a solution $u_n(t)$ such that $u_n(0) = u_{0,n}$ does not scatter for positive time direction.
Or equivalently,
\begin{equation}\label{eq:pf1_1}
% 	\norm{u_n(t)}_{S((0,\sup I_{\max}(u_n)))} =\I.
	S_{\ge 0} (u_n) = \I.
\end{equation}
\item It has a size slightly bigger than $E_1$ i.e.
\begin{equation}\label{eq:pf1_2}
	\ell_r (u_{0,n}) \le E_1 + \frac1n.
\end{equation}
\end{enumerate}
We argue as in the proof of Proposition \ref{prop:key_convergence} to obtain the result.
See \cite{M1,M2,MS2} for details.
We only comment the following two respects.

The first is that the case $s_n^j\to-\I$ may happen.
This difference comes from the fact that we do not necessarily have 
\[
	\sup_n S_{\le 0}(u_n)=\I.
\]
Remark that it is possible to choose a sequence $\{t_n\}_n$ so that a new sequence
$\widetilde{u}_{0,n} := u_n( t_n)$ satisfies this assumption and \eqref{eq:pf1_1}.
% as in the proof of Proposition \ref{prop:key_convergence}. 
However, we then lose the assumption \eqref{eq:pf1_2} in general.
In other words, a time translation argument is forbidden by the assumption \eqref{eq:pf1_2}.
We also remark that if $s_n^1 \equiv 0$ then (2)-(a) of Theorem \ref{thm:main2} occurs and
if $s_n^1\to-\I$ then (2)-(b) of Theorem \ref{thm:main2} takes place.

The second is that the proofs of $\phi^j \equiv0$ for $j\ge2$ and $w_n^1\to0$ in $\hat{M}^{d\alpha}_{2,r}$ as $n\to\I$
are much simpler.
By the linear profile decomposition, we obtain 
\[
	u_{0,n} = \sum_{j=1}^J \mathcal{G}_n^j \phi^j + w_n^J
\] 
Then, the same argument as in the proof of Lemma \ref{lem:pf2_1} shows that
one of $\Phi^j(t)$, a nonlinear profile associated with $(\phi^j,s_n^j)$, does not scatter for positive time direction.
Suppose that $\Phi^{1}(t)$ is the nonlinear profile does not scatter.
Then, by definition of $E_1$, we have $\ell_r(\phi^j) \ge E_1$.
Combining the decoupling inequality, we immediately obtain the conclusion.

\appendix

\section{Proof of Theorem \ref{thm:previous2}}\label{sec:energycrit}

In this section, we prove Theorem \ref{thm:previous2}.
\begin{proof}
By \cite[Theorem 1.7]{KV}, we see that $E_2 = \norm{W}_{\dot{H}^1}$ for $d\ge 5$.
For $d=4$, the same conclusion is obtained by \cite[Theorem 1.16]{KV}
and \cite[Theorem 1.7]{Do2}.

Let us prove that $E_1=\sqrt{2/d} \norm{W}_{\dot{H}^1}$. We first remark that $\frac1d \norm{W}_{\dot{H}^1}^2 = E[W]$.
If a nontrivial $u_0 \in \dot{H}^1$ satisfies
$\norm{u_0}_{\dot{H}^1} \le \sqrt{2/d}\norm{W}_{\dot{H}^1}$ then it holds that
\[
	E[u_0] = \frac12 \norm{u_0}_{\dot{H}^1}^2 -\frac{d-2}{2d} \norm{u_0}_{L^{\frac{2d}{d-2}}}^{\frac{2d}{d-2}}
	< \frac12 \norm{u_0}_{\dot{H}^1}^2 \le \frac1d \norm{W}_{\dot{H}^1}^2 = E[W].
\]
Thus, $\norm{u_0}_{\dot{H}^1}<\norm{W}_{\dot{H}^1}$ and $E[u_0]<E[W]$ hold.
Then, in light of \cite[Corollary 1.9]{KV} and \cite[Theorem 1.5]{Do2},
a corresponding solution $u(t)$ is global and scatters for both time directions.
Hence, we obtain $E_1 \ge \sqrt{2/d} \norm{W}_{\dot{H}^1}$.

On the other hand, \cite[Theorem 1]{DM} and \cite[Theorem 1.3 and Proposition 1.5]{LZ} show that
there exists a radial global solution $W_-(t)$ such that
\begin{enumerate}
	\item $E[W_-] = E[W]$;
	\item $W_-(t)$ scatters for negative time direction;
	\item $W_-(t)$ converges to $W$ exponentially as $t\to\I$, that is,
	there exists positive constants $c$ and $C$ such that
	\[
		\norm{W_-(t) - W}_{\dot{H}^1(\R^d)} \le C e^{-ct}
	\]
	for all $t\ge0$. In particular, $\lim_{t\to\I} \norm{W_-(t)}_{\dot{H}^1}=\norm{W}_{\dot{H}^1}=E_2$.
\end{enumerate}
for $d\ge 3$. 
Since $W_-(t)$ scatters for negative time direction, we have $\norm{W_-(t)}_{L^{\frac{2d}{d-2}}} \to 0$
as $t\to-\I$. Combining with the energy conservation, we deduce that
\[
	\frac12 \norm{W_-(t)}_{\dot{H}^1}^2 = E[W] + \frac{d-2}{2d} \norm{W_-(t)}_{L^{\frac{2d}{d-2}}}^{\frac{2d}{d-2}}
	\to E[W]
\]
as $t\to-\I$. Namely,
\[
	\norm{W_-(t)}_{\dot{H}^1} \to \sqrt{2E[W]} = \sqrt{\frac2d} \norm{W}_{\dot{H}^1}
\]
as $t\to-\I$. Since $\norm{W_-(t)}_{\dot{H}^1}$ is continuous in time, this shows
\[
	E_1 \le \lim_{t\to-\I }\norm{W_-(t)}_{\dot{H}^1} = \sqrt{\frac2d} \norm{W}_{\dot{H}^1},
\]
which completes the proof.
\end{proof}
\begin{remark}
If we consider the problem under the radial symmetry then we obtain the same conclusion for $d=3$ by using a result
by Kenig and Merle \cite{KM}. Without the radial symmetry, we only have the upper bounds $E_1 \le \sqrt{2/d} \norm{W}_{\dot{H}^1}$
and $E_2 \le \norm{W}_{\dot{H}^1}$.
\end{remark}

\section{Proof of Theorem \ref{thm:pd}}\label{sec:pf_profile}

In this section, we prove a linear profile decomposition.
Throughout this section, we assume that $d\ge 1$ and 
\[
 \frac2d \cdot \frac{1}{1+\frac{2}{d(d+3)}}<\alpha<\frac2d, \quad (d\alpha)'<r < ((d+2)\alpha)^*.
\]
The proof consists of two parts.
The first is a decomposition of a bounded sequence of functions in $\hat{M}^{d\alpha}_{2,r}$
with a different notion of smallness of remainders.
The second is a concentration compactness type result,
which assures that the modified notion of smallness is stronger than the original one.
%which is sometimes referred to as a inverse Strihcartz' inequality.

\subsection{Decomposition of a sequence}
Let us first introduce notations.
For a bounded sequence $P=\{P_n\}_n \subset \hat{M}^{d \alpha}_{2,r} $,
we introduce
a \emph{set of weak-$*$ limits modulo deformations}
\[
	\mathcal{M}(P)
	:=\left\{ \phi \in \hat{M}^{d \alpha}_{2,r} \ \Biggl|\  
	\begin{aligned}
	&\phi= \lim_{k\to\I} \mathcal{G}_{n_k}^{-1} P_{n_k} \text{weakly-}* \text{ in }\hat{M}^{d \alpha}_{2,r}, \\
	&\exists \mathcal{G}_n \in G, \,\exists \text{subsequence }n_k
	\end{aligned}
	\right\}.
\]
and define
\[
	\eta (P):= \sup_{\phi \in \mathcal{M}(P)}  \ell_r(\phi),
\]
where $\ell_r(\cdot)$ is the size function introduced in \eqref{eq:M_size}.
The main result of this section is a decomposition with a smallness
of remainders with respect to $\eta$.
\begin{theorem}\label{thm:pd1}
Let $u=\{u_n\}_n$ be a bounded sequence of functions in $\hat{M}^{d\alpha}_{2,r}$.
Then, there exist $\phi^j \in \mathcal{M}(u) $, $R_n^l \in \hat{M}^{d\alpha}_{2,r}$, and 
pairwise orthogonal families of deformations $\{\mathcal{G}_n^j\}_{n} \subset G$ ($j=1,2,\dots$)
 such that, up to subsequence, a decomposition
% \begin{equation}\label{eq:pd:key_decomp}
% 	u_n = \sum_{j=1}^l \mathcal{G}^j_n \phi^j + R_n^l
% \end{equation}
\eqref{eq:pf:decomp} holds for any $l,n \ge 1$.
Moreover, $\{R_n^j\}_{n,j}$ satisfies the convergence \eqref{eq:pf:swlimit} and 
\begin{equation}\label{eq:pd:key_smallness}
	\eta (R^l) \to 0
\end{equation}
as $l\to\I$. Furthermore, a decoupling inequality
\eqref{eq:pf:Pythagorean} holds for any $J\ge1$.
% \begin{equation}\label{eq:pd:key_Pythagorean}
% 	\limsup_{n\to\I} \ell_r(u_n)^r \ge \sum_{j=1}^J \ell_r(\phi^j)^r	
% 	+\limsup_{n\to\I} \ell_r(R_n^J)^r
% \end{equation}
% holds for all $J\ge1$.
\end{theorem}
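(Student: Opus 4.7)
The plan is to perform a Bahouri--G\'erard style iterative extraction driven by the functional $\eta(\cdot)$, adapted to the hat-Morrey framework and the deformation group $G$ of Section \ref{subsec:pd1}. Set $R_n^0:=u_n$ and suppose inductively that pairwise orthogonal families $\{\mathcal{G}_n^j\}_n\subset G$, profiles $\phi^j$, and remainders $R_n^{l-1}$ have been chosen for $1\le j\le l-1$ so that $(\mathcal{G}_n^k)^{-1}R_n^{l-1}\rightharpoonup 0$ weakly-$*$ for every $k<l$. If $\eta(R^{l-1})=0$ the process terminates. Otherwise, by the definition of $\eta$ combined with the weak-$*$ compactness furnished by Theorem \ref{thm:predual} (Banach--Alaoglu applied to the dual of the block predual), I would select, after passing to a subsequence, a family $\{\mathcal{G}_n^l\}_n\subset G$ and $\phi^l\in\hat{M}^{d\alpha}_{2,r}$ satisfying
\[
	(\mathcal{G}_n^l)^{-1}R_n^{l-1}\rightharpoonup\phi^l\quad\text{weakly-}*,\qquad \ell_r(\phi^l)\ge \tfrac12\eta(R^{l-1}).
\]
Setting $R_n^l:=R_n^{l-1}-\mathcal{G}_n^l\phi^l$ yields the decomposition \eqref{eq:pf:decomp} and $(\mathcal{G}_n^l)^{-1}R_n^l\rightharpoonup 0$; the countably many nested subsequence extractions are then collapsed by the standard diagonal argument.

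The pairwise orthogonality of $\{\mathcal{G}_n^l\}_n$ with the previously chosen families is automatic: if $\{(\mathcal{G}_n^j)^{-1}\mathcal{G}_n^l\}_n$ were not vanishing for some $j<l$, Lemma \ref{lem:nonvanishing} would produce $\mathcal{G}\in G$ with $(\mathcal{G}_n^l)^{-1}\mathcal{G}_n^j\to\mathcal{G}^{-1}$ strongly along a subsequence, and composing with the inductive hypothesis $(\mathcal{G}_n^j)^{-1}R_n^{l-1}\rightharpoonup 0$ in the predual pairing would force $\phi^l=0$, contradicting $\ell_r(\phi^l)>0$. Given orthogonality, the extension of the inductive hypothesis to level $l$ (i.e.\ $(\mathcal{G}_n^k)^{-1}R_n^l\rightharpoonup 0$ for $k\le l$) and the full convergence \eqref{eq:pf:swlimit} follow by expanding $R_n^l$ via \eqref{eq:pf:decomp} and applying Proposition \ref{prop:vanishing} termwise. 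In particular, $(\mathcal{G}_n^j)^{-1}u_n\rightharpoonup\phi^j$, which shows $\phi^j\in\mathcal{M}(u)$.

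The main technical obstacle is the decoupling inequality \eqref{eq:pf:Pythagorean}. The plan is to adapt the cube-wise Pythagorean argument already carried out in the proof of Lemma \ref{lem:pf2_2}. First I would reduce to a decoupling for $\|\cdot\|_{\hat{M}^{d\alpha}_{2,r}}$: since any modulation $P(b)$ can be absorbed into $\mathcal{G}_n^1$ without breaking orthogonality and since $\|P(b)\mathcal{G}_n^j\phi^j\|\ge \ell_r(\phi^j)$ and $\|P(b)R_n^J\|\ge \ell_r(R_n^J)$ trivially, it suffices to establish the inequality with $\ell_r(u_n)$ replaced by $\|P(b_n)u_n\|$ for any choice of $b_n$. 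For each dyadic cube $\tau$ one then expands $\|\widehat{u_n}\|_{L^2(\tau)}^2$ as a sum of squared $L^2(\tau)$ norms of the profiles and the remainder plus cross pairings, raises the identity to the power $r/2$ (note that $\alpha<2/d$ forces $(d\alpha)'>2$, hence $r>2$), applies the elementary inequality
\[
	(a-b)^{r/2}\ge\bigl(\tfrac{m}{m+1}\bigr)^{(r-2)/2}a^{r/2}-m^{(r-2)/2}b^{r/2}
\]
iteratively, and sums over $\tau\in\mathcal{D}$. The crux is to prove that each cross pairing
\[
	\sum_{\tau\in\mathcal{D}}|\tau|^{r(1/2-1/(d\alpha))}\bigl|\langle\widehat{\mathcal{G}_n^j\phi^j},\widehat{\mathcal{G}_n^k\phi^k}\rangle_\tau\bigr|^{r/2}\longrightarrow 0\qquad(n\to\infty)
\]
and its analogues involving $R_n^J$ vanish. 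After truncating $\phi^j,\phi^k$ to finitely many dyadic cubes, this convergence is read off from the explicit orthogonality relation \eqref{eq:orthty}: the Fourier supports become asymptotically disjoint on each fixed $\tau$, or the dispersive $|s_n|^{-d/2}$ decay acts. Cross terms with $R_n^J$ vanish thanks to $(\mathcal{G}_n^j)^{-1}R_n^J\rightharpoonup 0$. Letting $n\to\infty$ first and then $m\to\infty$ removes the auxiliary constants and produces \eqref{eq:pf:Pythagorean}.

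The key smallness \eqref{eq:pd:key_smallness} is then an easy consequence of the decoupling. Indeed, \eqref{eq:pf:Pythagorean} gives
\[
	\sum_{j\ge 1}\ell_r(\phi^j)^r\le \limsup_{n\to\infty}\ell_r(u_n)^r\le 2^{dr}\sup_n\|u_n\|_{\hat{M}^{d\alpha}_{2,r}}^r<\infty,
\]
so $\ell_r(\phi^l)\to 0$; combined with the selection inequality $\ell_r(\phi^{l+1})\ge \tfrac12\eta(R^l)$, this forces $\eta(R^l)\to 0$, completing the proof.
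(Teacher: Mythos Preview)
Your proposal is correct and follows essentially the same approach as the paper: iterative extraction of near-maximizers of $\eta$, orthogonality via Proposition~\ref{prop:vanishing}, a cube-wise $L^2$ decoupling boosted to the $\hat{M}^{d\alpha}_{2,r}$ norm through the elementary $(a-b)^{r/2}$ inequality with an auxiliary parameter sent to its limit, and finally deducing $\eta(R^l)\to 0$ from the summability of $\ell_r(\phi^j)^r$ combined with $\ell_r(\phi^{l+1})\ge\tfrac12\eta(R^l)$. The only cosmetic difference is packaging: the paper quotes the one-step decoupling as Lemma~\ref{lem:decouple} (proved in \cite{MS2}) and iterates it with the parameter $\gamma>1$, whereas you unpack that lemma and carry out the cube-wise Pythagorean argument directly, in the spirit of the computation in Lemma~\ref{lem:pf2_2}.
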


We first recall a decoupling inequality in \cite{MS2}.
\begin{lemma}[Decoupling inequality]\label{lem:decouple} 
Let $\{u_n\}_n$ be a bounded sequence in $ \hat{M}^{d\alpha}_{2,r}$.
Let $\{\mathcal{G}_n\}_n \subset G$ be a sequence of deformations.
Suppose that $\mathcal{G}_n^{-1} u_n $ converges to $\phi$ weakly-$*$ in $\hat{M}^{d\alpha}_{2,r}$
as $n\to\I$.
Set $R_n := u_n - \mathcal{G}_n \phi$.
Then, for any $\gamma>1$ and $b_0 \in \R^d$, it holds that
\begin{equation}\label{eq:pd:key_decouple}
	\gamma \norm{P(b_0) u_n}_{\hat{M}^{d\alpha}_{2,r}}^{r}
	\ge \norm{P(b_0) \mathcal{G}_{n} \phi}_{\hat{M}^{d\alpha}_{2,r}}^{r}
	 + \norm{P(b_0) R_n}_{\hat{M}^{d\alpha}_{2,r}}^{r}
	+o_\gamma(1)
\end{equation}
as $n\to\I$.
\end{lemma}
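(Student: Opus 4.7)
The plan is to decouple one dyadic cube at a time using the Hilbert structure of $L^{2}(\tau)$, pass from $L^{2}(\tau)$ to the weighted $\ell^{r}_{\tau}$ norm by an elementary convexity inequality, and absorb the resulting cross--terms by combining a uniform $\tau$--tail bound with the weak-$*$ hypothesis tested on each fixed cube. Write $w_\tau:=|\tau|^{r(1/(d\alpha)'-1/2)}$, $A_n:=\F(P(b_0)\mathcal{G}_n\phi)$, and $B_n:=\F(P(b_0)R_n)$, so that $\norm{P(b_0)u_n}_{\hat{M}^{d\alpha}_{2,r}}^{r}=\sum_{\tau\in\mathcal{D}}w_\tau\norm{A_n+B_n}_{L^{2}(\tau)}^{r}$ and
\begin{equation*}
\norm{A_n+B_n}_{L^{2}(\tau)}^{2}=T_\tau^{(n)}+Z_\tau^{(n)},\quad T_\tau^{(n)}:=\norm{A_n}_{L^{2}(\tau)}^{2}+\norm{B_n}_{L^{2}(\tau)}^{2}\ge 0,\quad Z_\tau^{(n)}:=2\Re\Jbr{A_n,B_n}_\tau,
\end{equation*}
with $T_\tau^{(n)}+Z_\tau^{(n)}\ge 0$ automatic. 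Since $d\alpha<2$ forces $(d\alpha)'>2$, the assumption $r>(d\alpha)'$ gives $s:=r/2>1$.

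I combine two elementary inequalities valid for $s\ge 1$: superadditivity $(X+Y)^{s}\ge X^{s}+Y^{s}$ for $X,Y\ge 0$, and, for each $\gamma>1$, the existence of $C_\gamma>0$ with
\begin{equation*}
\gamma(T+Z)^{s}\ge T^{s}-C_\gamma|Z|^{s}\quad\text{whenever } T\ge 0\text{ and }T+Z\ge 0.
\end{equation*}
The second is a short one--variable exercise: the maximum of $W\mapsto T^{s}-\gamma(T-W)^{s}-C_\gamma W^{s}$ on $[0,T]$ is attained at an interior critical point, and one checks that $C_\gamma=\gamma(\gamma^{1/(s-1)}-1)^{-(s-1)}$ makes it nonpositive. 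Applied cube--by--cube with $T=T_\tau^{(n)}$, $Z=Z_\tau^{(n)}$, weighted by $w_\tau$, and summed over $\tau$, these inequalities yield
\begin{equation*}
\gamma\norm{P(b_0)u_n}_{\hat{M}^{d\alpha}_{2,r}}^{r}\ge\norm{P(b_0)\mathcal{G}_n\phi}_{\hat{M}^{d\alpha}_{2,r}}^{r}+\norm{P(b_0)R_n}_{\hat{M}^{d\alpha}_{2,r}}^{r}-C_\gamma E_n,
\end{equation*}
where $E_n:=\sum_{\tau}w_\tau|Z_\tau^{(n)}|^{r/2}$, so the lemma reduces to $E_n=o_\gamma(1)$ as $n\to\infty$.

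This last step is the main obstacle. A twofold Cauchy--Schwarz (inside each cube, then in $\ell^{2}_\tau$) gives the uniform bound $E_n\le 2^{r/2}\norm{P(b_0)\mathcal{G}_n\phi}_{\hat{M}^{d\alpha}_{2,r}}^{r/2}\norm{P(b_0)R_n}_{\hat{M}^{d\alpha}_{2,r}}^{r/2}$, which is $O(1)$ and localizes: the contribution from cubes outside any finite $\Omega\subset\mathcal{D}$ is controlled by the $w_\tau$--weighted $\ell^{r/2}$--tail of $P(b_0)\mathcal{G}_n\phi$, and Lemma~\ref{lem:Paction} together with the normal representation \eqref{eq:nrep} bounds this uniformly in $n$ by the tail of the fixed vector $\phi$ (through a dyadic relabeling implemented by $h_n,b_n$), hence it is uniformly small. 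The problem therefore reduces to showing $\Jbr{A_n,B_n}_\tau\to 0$ for each fixed cube $\tau\in\Omega$. Since $\mathbf{1}_\tau A_n\in L^{2}$ has compact support, $F_n^\tau:=\F^{-1}(\mathbf{1}_\tau A_n)$ is a smooth $L^{2}\cap L^{\infty}$ function and Plancherel gives
\begin{equation*}
\Jbr{A_n,B_n}_\tau=\Jbr{F_n^\tau,P(b_0)R_n}.
\end{equation*}
Pulling this pairing back through the bounded isomorphism $\mathcal{G}_n$ produces a duality pairing in $\hat{N}^{(d\alpha)'}_{2,r'}\times\hat{M}^{d\alpha}_{2,r}$ whose second factor is $\mathcal{G}_n^{-1}R_n$ (converging weakly-$*$ to $0$ by hypothesis); the truly delicate point, which is the principal difficulty, is to verify that the transformed test family is precompact in the predual. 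As in the parallel one--dimensional Airy argument of \cite{MS2}, this is settled by exploiting the explicit form \eqref{eq:nrep} of $\mathcal{G}_n$: the dilation, modulation and Galilei parts convert the fixed cutoff $\mathbf{1}_\tau$ into a uniformly band--limited perturbation of a fixed Schwartz test function, after which weak-$*$ convergence of $\mathcal{G}_n^{-1}R_n$ closes the estimate.
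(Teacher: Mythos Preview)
Your plan---local $L^2$ decoupling on each cube, a one-parameter convexity inequality to pass to the $r/2$-th power, then summing and killing the weighted cross term $E_n$---is exactly the scheme the paper refers to (it only sketches this and points to \cite{MS2}); the same mechanism appears in full detail later in the proof of Lemma~\ref{lem:pf2_2}.

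There is, however, a genuine gap in your treatment of $E_n$. You claim that for a \emph{fixed} finite $\Omega\subset\mathcal{D}$ the tail $\sum_{\tau\notin\Omega}w_\tau\|A_n\|_{L^2(\tau)}^r$ is uniformly small because, via the dyadic relabeling $\tau\mapsto h_n^{-1}\tau+b_n$, it equals a tail of the fixed $\phi$. But that relabeling carries $\Omega$ to an $n$-dependent set, so the resulting ``tail of $\phi$'' is over $\mathcal{D}\setminus\sigma_n(\Omega)$, not over a fixed complement; when $|\log h_n|+|b_n|\to\infty$ (say $\mathcal{G}_n=D(2^n)$ with $\phi$ band-limited) no fixed $\Omega$ captures $\mathcal{G}_n\phi$ uniformly. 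The same problem recurs in the next step: for a fixed $\tau$ the pulled-back test function $\mathcal{G}_n^{*}F_n^\tau$ is \emph{not} precompact in the predual, because the Fourier cutoff becomes $\mathbf{1}_{h_n^{-1}\tau+b_n}$.

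The repair is to swap the order of operations: relabel \emph{before} localizing. Since $U(s_n),T(a_n)$ are unimodular Fourier multipliers and $D(h_n)$ permutes $\mathcal{D}$ isometrically, while the two translations $P(b_0),P(b_n)$ cost only a factor $2^{O(d)}$ by Lemma~\ref{lem:Paction}, one obtains
\[
E_n \;\le\; C_d\sum_{\tau'\in\mathcal{D}} w_{\tau'}\,\bigl|\bigl\langle \F\phi,\ \F(\mathcal{G}_n^{-1}R_n)\bigr\rangle_{\tau'}\bigr|^{r/2}.
\]
Now $\phi$ is fixed, so a single finite $\Omega'$ makes the $\phi$-tail small uniformly in $n$; on the finitely many $\tau'\in\Omega'$ the functional $g\mapsto\langle\F\phi,\F g\rangle_{\tau'}$ is a fixed element of the predual (indeed $\mathbf{1}_{\tau'}\F\phi$ is a single $((d\alpha)',2)$-block), and $\mathcal{G}_n^{-1}R_n\rightharpoonup 0$ finishes the job. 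This is precisely how the paper handles the analogous cross term in the proof of Lemma~\ref{lem:pf2_2}.
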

The idea of the proof is to sum up the local (in the Fourier side) $L^2$ decoupling
with respect to intervals. We do not repeat details.

\begin{proof}[Proof of Theorem \ref{thm:pd1}]
We may suppose $\eta(u)>0$, otherwise the result holds with $\phi^j \equiv 0$ and
$R_n^j = u_n$ for all $j\ge1$.
Then, we can choose $\phi^1\in \mathcal{M}(u)$ so that $\ell_r(\phi^1) \ge \frac12 \eta(u)$
by definition of $\eta$.
Then, by definition of $\mathcal{M}(u)$, one finds $\mathcal{G}^1_n \in G$ such that
\[
	(\mathcal{G}^1_n)^{-1} u_n \rightharpoonup \phi^1 \swIN \hat{M}^{d\alpha}_{2,r}
\]
as $n\to\I$ up to subsequence.
Define $R^1_n := u_n - \mathcal{G}^1_n \phi^1 $.
Then, \eqref{eq:pf:decomp} holds for $l=1$.
It is obvious that
\begin{equation}\label{eq:pd:key_pf_wlim1}
	(\mathcal{G}_{n}^1)^{-1} R^1_n \rightharpoonup  \phi^1 -\phi^1 =0 \swIN \hat{M}^{d\alpha}_{2,r}
\end{equation}
as $n\to\I$.
By Lemma \ref{lem:decouple},
\begin{equation}\label{eq:pd:key_pf_Pythagoreanp}
	\gamma \norm{P(b_0) u_n}_{\hat{M}^{d\alpha}_{2,r}}^{r}
	\ge \norm{P(b_0)\mathcal{G}^1_{n} \phi^1}_{\hat{M}^{d\alpha}_{2,r}}^{r}
	 + \norm{P(b_0) R_n^1}_{\hat{M}^{d\alpha}_{2,r}}^{r}
	+o_\gamma (1)
\end{equation}
as $n\to\I$ for any constant $\gamma>1$ and $b_0\in\R^d$.
Since $\gamma>1$ and $b_0$ are arbitrary, the decoupling inequality \eqref{eq:pf:Pythagorean} holds for $J=1$.

% Let us go back to the decomposition procedure.
If $\eta(R^1)=0$ then the proof is completed by taking $\phi^j\equiv0$ for $j\ge2$.
Otherwise, we can choose $\phi^2 \in \mathcal{M}(R^1)$ so that $\ell_r(\phi^2) \ge \frac12 \eta(R^1)$.
Then, as in the previous step, one can take
% $\Gamma^2 \in 2^\Z \times \R \times \R \times \R$
$\mathcal{G}_n^2 \in G$ so that
\[
	(\mathcal{G}^2_n)^{-1} R^1_n \rightharpoonup \phi^2 \swIN \hat{M}^{d\alpha}_{2,r}
\]
as $n\to\I$ up to subsequence.
In particular, $\phi^2\not\equiv0$.
Together with \eqref{eq:pd:key_pf_wlim1}, Proposition \ref{prop:vanishing} (3) gives us that
$\{ (\mathcal{G}^2_n)^{-1}\mathcal{G}^1_n \}_n$ is vanishing.
Hence, $\mathcal{G}^1_n$ and $\mathcal{G}^2_n$ are orthogonal. 
Then, Proposition \ref{prop:vanishing} (2) implies that
\[
	(\mathcal{G}^2_n)^{-1} u_n
	= (\mathcal{G}^2_n)^{-1} \mathcal{G}^1_n \phi^1
	+ (\mathcal{G}^2_n)^{-1} R^1_n \rightharpoonup 0+\phi^2
	\swIN \hat{M}^{d\alpha}_{2,r}
\]
as $n\to\I$. Hence, we obtain $\phi^2 \in \mathcal{M}(u)$.
Set $R_n^2:=R_n^1-\mathcal{G}^2_n \phi^2$.
Then, \eqref{eq:pf:decomp} holds for $l=2$.
Further, one deduces from Lemma \ref{lem:decouple} that
\[
	\gamma \norm{P(b_0) R_n^1}_{\hat{M}^{d\alpha}_{2,r}}^r \\
	\ge  \norm{P(b_0) \mathcal{G}^2_n \phi^2}_{\hat{M}^{d\alpha}_{2,r}}^r
+ \norm{P(b_0) R_n^2}_{\hat{M}^{d\alpha}_{2,r}}^r +o_\gamma (1)
\]
as $n\to\I$ for any $\gamma>1$ and $b_0 \in \R^d$. This implies 
\eqref{eq:pf:Pythagorean}
for $J=2$ with the help of \eqref{eq:pd:key_pf_Pythagoreanp}.

Repeat this argument and construct $\phi^j \in \mathcal{M}(R^{j-1})$ and  $\mathcal{G}^j_n\in G$, inductively.
If we have $\eta(R^{j_0})=0$ for some $j_0$ then, for 
$j\ge j_0+1$, we take $\phi^j \equiv 0$ and define suitable $\mathcal{G}_{n}^j$ so that mutual orthogonality holds.
In what follows, we may suppose that $\eta(R^{j})>0$ for all $j\ge1$.
In each step, 
$R_n^j$ is defined by the formula $R_n^j = R_n^{j-1} - \mathcal{G}^j_n \phi^j$.
The decomposition \eqref{eq:pf:decomp} is obvious by construction. 

Let us now prove that pairwise orthogonality.
To this end, we demonstrate by induction that 
$\mathcal{G}^j_n$ is orthogonal to  $\mathcal{G}^k_n$ for $1\le k \le j-1$.
Since $(\mathcal{G}^j_n)^{-1} r_n^j \rightharpoonup \phi^j$
and $(\mathcal{G}^{j-1}_n)^{-1} r_n^j \rightharpoonup 0$ 
weakly-$*$ in $\hat{M}^{d\alpha}_{2,r}$ as $n\to\I$,
Proposition \ref{prop:vanishing} implies that $\mathcal{G}^j_n$ and $\mathcal{G}^{j-1}_n$ are orthogonal.
Suppose that $\mathcal{G}^j_n$ is orthogonal to  $\mathcal{G}^k_n$ for $k_0\le k \le j-1$.
Proposition \ref{prop:vanishing} (2) yields
\[
	(\mathcal{G}^j_n)^{-1} R_n^{k_0-1} = \sum_{k=k_0}^{j-1} (\mathcal{G}^j_n)^{-1}\mathcal{G}^k_n \phi^{k} + (\mathcal{G}^j_n)^{-1} R_n^{j-1}
	\rightharpoonup \phi^j \swIN \hat{M}^{d\alpha}_{2,r}
\]
as $n\to\I$. On the other hand, $(\mathcal{G}^{k_0-1}_n)^{-1} R_n^{k_0-1} \rightharpoonup 0$ as $n\to\I$.
We therefore see that $\mathcal{G}^j_n$ and $\mathcal{G}^{k_0-1}_n$ are orthogonal.
Hence, by induction, $\mathcal{G}^j_n$ is orthogonal to $\mathcal{G}^k_n$ for $1\le k \le j-1$.

The above argument also proves that the convergence \eqref{eq:pf:swlimit}.
and $\phi^j \in \mathcal{M}(u)$ for all $j\ge1$.

To conclude the proof,
we shall show \eqref{eq:pd:key_smallness} and \eqref{eq:pf:Pythagorean}.
Notice that the inductive construction gives us
\begin{equation}\label{eq:pd:key_pf_complete1}
	\ell_r (\phi^{j+1}) \ge \frac12 \eta(R^j)	
\end{equation}
for $j\ge1$ and
\begin{eqnarray}
	\lefteqn{\gamma \norm{P(b_0) R_n^j}_{\hat{M}^{d\alpha}_{2,r}}^r}
	\label{eq:pd:key_pf_complete2}\\
	&\ge& \norm{P(b_0) \mathcal{G}^{j+1}_n \phi^{j+1}}_{\hat{M}^{d\alpha}_{2,r}}^r
	 + \norm{P(b_0) R_n^{j+1}}_{\hat{M}^{d\alpha}_{2,r}}^r
	+o_{\gamma,j} (1).\nonumber
\end{eqnarray}
as $n\to\I$ for (fixed) $j\ge1$ and any $\gamma>1$ and $b_0 \in \R^d$.
Combining \eqref{eq:pd:key_pf_Pythagoreanp} and 
\eqref{eq:pd:key_pf_complete2} for $1 \le j \le J$, we have
\begin{align*}
	\gamma^J 
	\norm{P(b_0) u_n}_{\hat{M}^{d\alpha}_{2,r}}^r
	&{}\ge \sum_{j=1}^J \gamma^{J-j} 
	\norm{P(b_0) \mathcal{G}^j_n \phi^{j}}_{\hat{M}^{d\alpha}_{2,r}}^r + \norm{P(b_0) R_n^{J}}_{\hat{M}^{d\alpha}_{2,r}}^r +o_{\gamma,J} (1) \\
	&{}\ge \sum_{j=1}^J \gamma^{J-j} \ell_r(\phi^j)^r + \ell_r(R_n^J)^r +o_{\gamma,J} (1) .
\end{align*}
Take first infimum with respect to $b_0$ and then limit supremum in $n$ to obtain
\[
	\limsup_{n\to\I}\ell_r(u_n)^r \ge \sum_{j=1}^J \gamma^{-j} \ell_r(\phi^j)^r
	+\gamma^{-J} \limsup_{n\to\I}\ell_r(r_n^J)^r.
\]
Since $\gamma>1$ is arbitrary, we obtain \eqref{eq:pf:Pythagorean}.
Finally, \eqref{eq:pf:Pythagorean} and \eqref{eq:pd:key_pf_complete1} imply \eqref{eq:pd:key_smallness}.
\end{proof}

\subsection{Concentration compactness}
The second part of the proof of Theorem \ref{thm:pd}
is concentration compactness.
Intuitively, the meaning of the concentration compactness is as follows.
Let us consider a bonded sequence $\{ u_n \}_n \subset \hat{M}^{d\alpha}_{2,r}$.
Without any additional assumption, we may not expect to find any \emph{nonzero} weak-$*$ limit of the sequence.
Such a sequence is easily constructed by considering an orbit of \emph{general deformations}\footnote{
Here, general deformation means a set of bounded linear operators on $\hat{M}^{d\alpha}_{2,r}$ that the norms of itself and its inverse are uniformly bounded. The set $G$ given in Definition \ref{def:deformation} is an example.
There exist infinitely many such a set in view of multiplier type operators
$\{ e^{it\phi(i\nabla)}\}_{t\in\R}$, where $\phi$ is a real valued function.
The notion of general deformation is in the same spirit as what is called as \emph{displacement} in \cite{ST}.}.
So, to find a nonzero limit, we make some additional assumption on the sequence.
If the additional assumption is so strong that it removes all possible 
deformations that $\{ u_n \}_n$ may possess with few exceptions, say $G$, then
we can find a nonzero weak-$*$ limit modulo $G$.
In our case, the additional assumption is \eqref{eq:belowm} below.
Recall that $S_I(u) := \norm{u}_{L^{(d+2)\alpha}_{t,x}(I\times \R^d)}$.
\begin{theorem}[Concentration compactness]\label{thm:cc}
Let a bounded sequence $\{u_n\} \subset \hat{M}^{d\alpha}_{2,r}$ satisfy
\begin{equation}\label{eq:upM}
	\norm{u_n}_{\hat{M}^{d\alpha}_{2,r}}
	\le M
\end{equation}
and
\begin{equation}\label{eq:belowm}
% 	\norm{ e^{it\Delta} u_n}_{L^{(d+2)\alpha}_{t,x}(\R \times \R^{d})} 
	S_\R ( e^{it\Delta} u_n )
	\ge m	
\end{equation}
for some positive constants $m,M$. Then, 
$\eta(u) \ge \beta(m,M)$ holds for some positive constant $\beta(m,M)$ depending only on $m,M$.
\end{theorem}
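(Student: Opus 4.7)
The approach is the classical Bourgain--Moyua--Vargas--Vega--B\'egout--Vargas concentration-compactness scheme, adapted to the hat-Morrey setting. The pivotal analytic tool is a bilinear refinement of Strichartz of the form
\begin{equation*}
\norm{e^{it\Delta}f}_{L^{(d+2)\alpha}_{t,x}(\R^{1+d})} \le C\, \norm{f}_{\hat{M}^{d\alpha}_{2,r}}^{1-\theta}\,\norm{f}_{\hat{M}^{d\alpha}_{2,\I}}^{\theta}
\end{equation*}
for some $\theta=\theta(\alpha,d,r)\in(0,1)$, where by the pre-dual description of Morrey spaces the second factor equals $\sup_{\tau\in\mathcal{D}}|\tau|^{\frac{1}{(d\alpha)'}-\frac12}\norm{\F f}_{L^2(\tau)}$. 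To derive this refinement, I would dyadically decompose $f$ in Fourier, expand $e^{it\Delta}f\cdot\overline{e^{it\Delta}f}$ as a sum of products over pairs of frequency cubes, Whitney-decompose the diagonal, apply the bilinear restriction estimate stated in the paper to off-diagonal pairs of separated dyadic cubes of comparable size, use the linear Strichartz inequality (Proposition \ref{prop:Strichartz}) on the near-diagonal, and then interpolate; the exponent assumptions $\alpha > \tfrac{2}{d}\cdot\tfrac{1}{1+2/(d(d+3))}$ and $r<((d+2)\alpha)^{*}$ are exactly the ranges in which both inputs apply.

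Combining the refined estimate with the hypotheses \eqref{eq:upM} and \eqref{eq:belowm} produces, for each $n$, a dyadic cube $\tau_n = 2^{-j_n}([0,1)^{d}+k_n)$ satisfying $|\tau_n|^{1/(d\alpha)'-1/2}\norm{\F u_n}_{L^2(\tau_n)} \ge c_0(m,M)$. Set $h_n := 2^{-j_n}$, $b_n := -2^{-j_n}k_n$, and $v_n := [D(h_n)P(b_n)]^{-1}u_n$; a change of variables together with Lemma \ref{lem:Paction} yields $\norm{v_n}_{\hat{M}^{d\alpha}_{2,r}}\le 2^{d} M$ and $\norm{\F v_n}_{L^{2}([0,1)^{d})}\ge c_0$. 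Let $w_n := \F^{-1}(\mathbf{1}_{[0,1)^{d}}\F v_n)$, so that $\widehat{w_n}$ is supported in the unit cube and $\norm{w_n}_{L^{2}}\ge c_0$. Because $\widehat{w_n}$ sits in a unit cube, $e^{it\Delta}w_n$ is essentially localized at unit spatial and temporal scale; applying the linear Strichartz to $w_n$ and pigeonholing over a unit spacetime tiling of $\R\times\R^{d}$ produces a cube $(s_n+[-\tfrac12,\tfrac12))\times(a_n+[-\tfrac12,\tfrac12)^{d})$ on which $\norm{e^{it\Delta}w_n}_{L^{(d+2)\alpha}}\ge c_1(m,M)$.

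Define $\mathcal{G}_n := D(h_n)P(b_n)U(s_n)T(a_n)\in G$. The sequence $\mathcal{G}_n^{-1}u_n$ is bounded in $\hat{M}^{d\alpha}_{2,r}$, so by the pre-duality of Theorem \ref{thm:predual} and Banach--Alaoglu we extract a subsequence along which $\mathcal{G}_n^{-1}u_n \rightharpoonup \phi \in \mathcal{M}(u)$ weakly-$*$. To establish $\ell_r(\phi)\ge\beta(m,M)>0$, I would pair against a fixed Schwartz function $g$ whose Fourier transform is a smooth bump supported in $[0,1)^{d}$ and whose physical side is essentially supported in $[-\tfrac12,\tfrac12]^{d}$: after commuting the deformations onto $g$ (so that $g$ becomes, after undoing $\mathcal{G}_n$, a fixed pre-dual test element), the pairing $\langle \mathcal{G}_n^{-1}u_n, g\rangle$ reduces--up to a harmless truncation that replaces $v_n$ by $w_n$ since $\widehat g$ already lives on $[0,1)^{d}$--to a quantity controlled from below by $\norm{e^{it\Delta}w_n}_{L^{(d+2)\alpha}}$ on the selected spacetime cube. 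Consequently $|\langle \phi,g\rangle|\ge c_2(m,M)$, which yields $\norm{\phi}_{\hat{M}^{d\alpha}_{2,r}}\ge\beta(m,M)$ and then $\ell_r(\phi)\ge 2^{-d}\beta(m,M)$ via Lemma \ref{lem:Paction}.

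The principal difficulty is establishing the bilinear refinement with the right-hand side governed by the precise hat-Morrey norm $\hat{M}^{d\alpha}_{2,r}$ for the prescribed index range: summing the Whitney pieces cleanly requires an atomic decomposition compatible with the block pre-dual $\hat{N}^{(d\alpha)'}_{2,r'}$, and the interpolation between the bilinear restriction and the linear Strichartz must stay strictly away from both endpoints $r=(d\alpha)'$ and $r=((d+2)\alpha)^{*}$. A secondary delicate point is the quantitative transfer in the final step: one must ensure that frequency-unit-localized functions suffer no $L^{2}$ loss under the Schr\"odinger flow on comparable spacetime scales, so that the test function, after undoing $\mathcal{G}_n$, remains a pre-dual element of controlled norm that dualizes the $L^{(d+2)\alpha}$-concentration into a uniform-in-$n$ lower bound for the weak-$*$ pairing.
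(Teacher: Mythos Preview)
Your scheme has a real gap at the step where you pass from frequency concentration to spacetime concentration. You write that ``because $\widehat{w_n}$ sits in a unit cube, $e^{it\Delta}w_n$ is essentially localized at unit spatial and temporal scale,'' and then pigeonhole to find a unit spacetime cube carrying $L^{(d+2)\alpha}$ mass $\ge c_1$. Frequency support in a unit cube gives smoothness at unit scale, not spatial localization: take $\widehat{w_n}(\xi)=n^{d/2}\widehat{\chi}(n\xi)$ for a fixed bump $\chi$, so that $\supp\widehat{w_n}\subset[0,1)^d$, $\norm{w_n}_{L^2}=\norm{\chi}_{L^2}$, yet $w_n$ is spread over a ball of radius $\sim n$ and no unit spacetime cube carries a uniform amount of $L^{(d+2)\alpha}$ mass. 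The point is that your refined Strichartz only forces $\norm{\widehat{u_n}}_{L^2(\tau_n)}$ to be large; after truncating to $\tau_n$ you have \emph{no} lower bound on $S_\R(e^{it\Delta}w_n)$, so there is nothing to pigeonhole. Your final pairing step inherits the same problem: a lower bound on $\norm{e^{it\Delta}w_n}_{L^{(d+2)\alpha}}$ over a unit cube is a nonlinear quantity and does not convert into a uniform lower bound on a fixed linear functional $\langle\mathcal{G}_n^{-1}u_n,g\rangle$ in the way you suggest.

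The paper's proof closes this gap by doing substantially more work before looking for spacetime concentration. In Step~1 (Lemma~\ref{lem:pd:step1}) it does not merely find one concentration cube; it \emph{iterates}, at each stage also truncating $\{|\widehat{u_n}|>A\}$ so that each extracted piece $f_n^j$ has $|\widehat{f_n^j}|\le C_\eps|\tau_n^j|^{-1/(d\alpha)'}$ pointwise, and continues until the remainder $q_n$ is small in $\hat{M}^{d\alpha}_{\tilde q,\tilde r}$ and hence in $S_\R$. Only then, in Step~2, does it apply the Carles--Keraani profile decomposition to each (now $L^\infty$-Fourier-bounded) piece. Finally in Step~3 the \emph{global} lower bound $S_\R(e^{it\Delta}u_n)\ge m$ is used, together with the smallness of the total remainder $r_n$ and the spacetime orthogonality (Lemma~\ref{lem:realorthty2}), to force at least one profile $\phi^{k_0}$ to satisfy $S_\R(e^{it\Delta}\phi^{k_0})\ge\beta$; this profile is then shown to lie in $\mathcal{M}(u)$. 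The iteration and the $L^\infty$ Fourier truncation are precisely the ingredients your one-shot extraction is missing: without them you cannot transport the Strichartz lower bound from $u_n$ to any single frequency-localized piece.
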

\begin{remark}
The choice of a set $G$ given in Definition \ref{def:deformation} is the best one.
Remark that the assumptions \eqref{eq:upM} and \eqref{eq:belowm} are ``preserved'' under $D(h),T(a),P(b)$, and $U(s)$,
and so we cannot remove any of these actions from $G$.
The main point of the above theorem is that a set $G$ with these four actions is ``enough.''
\end{remark}

Plugging Theorem \ref{thm:cc} to Theorem \ref{thm:pd1},
we obtain desired decomposition result.
Before proceeding to the proof of Theorem \ref{thm:cc},
we complete the proof Theorem \ref{thm:pd}.

\begin{proof}[Proof of Theorem \ref{thm:pd}]
By means of Theorem \ref{thm:pd1},
it suffices to show \eqref{eq:pf:smallness} as $l\to\infty$
Assume for contradiction that a sequence $R_n^l$ given in Theorem \ref{thm:pd1} satisfies
\[
	\limsup_{l\to\I} \limsup_{n\to\I} 
% 	\norm{e^{it\Delta} R_n^l}_{L_{t,x}^{(d+2)\alpha}(\R \times \R^{d})} 
	S_\R (e^{it\Delta} R_n^l)
	>0.
\]
Then, we can choose $m>0$ and a subsequence $l_k$ with $l_k\to\I$ as $k\to\I$ such that
the assumption of Theorem \ref{thm:cc} is fulfilled for each $k$.
Then, Theorem \ref{thm:cc} implies $\eta(R^{l_k})\ge C \beta>0$,
which contradicts to \eqref{eq:pd:key_smallness}.
\end{proof}

The proof of Theorem \ref{thm:cc} consists of three steps.
The argument is very close to that in the mass-critical case $\alpha=2$
such as \cite{MV,CK,BV} or that for generalized KdV equation \cite{Shao,MS2}.
Throughout the proof, we fix $\tilde{q}$ and $\tilde{r}$ so that
\[
	2<\tilde{q} < \(d+\frac2{d+3}\)\alpha, \quad
	\tilde{r} = ((d+2)\alpha)^*.
\]
Remark that this choice gives
\begin{equation}\label{eq:ST3}
	c
% 	\norm{e^{it\Delta}f}_{L^{(d+2)\alpha}_{t,x}(\R \times \R^d)}
	S_\R (e^{it\Delta}f)
	\le  \norm{f}_{\hat{M}^{d\alpha}_{\tilde{q},\tilde{r}}}
	\le  \norm{f}_{\hat{M}^{d\alpha}_{2,r}}
\end{equation}
by Proposition \ref{prop:Strichartz}, where $c$ is a positive constant.

{\bf Step 1 -- Decomposition into a sum of scale pieces. } 
Let us start the proof of Theorem \ref{thm:cc}
with a decomposition of bounded sequence into some pieces 
of which Fourier transforms 
have mutually disjoint compact supports and are bounded.

\begin{lemma}\label{lem:pd:step1}
Suppose that a sequence 
$\{ u_n\}_n \subset \hat{M}^{d\alpha}_{2,r} $ satisfy $\norm{u_n}_{\hat{M}^{d\alpha}_{2,r}}\le M$.
Then, for any $\eps>0$,
there exist 
a subsequence of $n$ which denoted still by $n$, 
a number $J$, a sequence of dyadic cubes
$\{ \tau_n^j=h_n^j([0,1)^d + b_n^j) \}_{n} \subset \mathcal{D}$ ($h_n^j \in 2^\Z$, $b_n^j \in \Z^d$, $1\le j \le J$),
$\{ f_n^j\}_{n} \subset \hat{M}^{d\alpha}_{2,r}$ ($1 \le j \le J$), 
 and $ q_n \in \hat{M}^{d\alpha}_{2,r}$
such that 
\[
	\abs{\log \frac{h_n^j}{h_n^k}}
	+ \abs{ b_n^j - \frac{{h}_n^k}{h_n^j} {b}_n^k }  \to \I
\]
as $n\to \I$ for $1\le j<k\le J$,
and $u_n$ is decomposed into
\begin{eqnarray}
	u_n = \sum_{j=1}^J f_n^j + q_n
	\label{ufq}
\end{eqnarray}
for all $n \ge 1$.
Moreover, it holds that
\[
	\norm{u_n}_{\hat{M}^{d\alpha}_{2,r}}^r
 	\ge \sum_{j=1}^J \norm{f_n^j}_{\hat{M}^{d\alpha}_{2,r}}^r
 	+ \norm{q_n}_{\hat{M}^{d\alpha}_{2,r}}^r
\]
for all $n\ge 1$ and
\begin{eqnarray}\label{eq:qe}
	\limsup_{n\to\I} 
	\norm{q_n}_{\hat{M}^{d\alpha}_{\tilde{q},\tilde{r}}} \le \eps. 
\end{eqnarray}
Further, 
there exists a bounded and compactly supported function $F_j$ such that
 $\widehat{f_n^j}$ satisfies 
\begin{equation}\label{eq:pd:step1_ptwisebdd}
	|\tau_n^j|^{\frac1{\alpha'}} \abs{\F f_n^j ( h_n^j (\xi +b_n^j)) } \le F_j (\xi)
\end{equation}
for any $n\ge1$.
\end{lemma}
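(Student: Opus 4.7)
The plan is to run an iterative dyadic extraction on the Fourier side of $u_n$, stopping as soon as the remainder's $\hat{M}^{d\alpha}_{\tilde{q},\tilde{r}}$ norm falls below $\eps$. I initialize $q_n^{(0)}:=u_n$. At stage $j\ge 1$, if $\limsup_{n\to\I}\norm{q_n^{(j-1)}}_{\hat{M}^{d\alpha}_{\tilde{q},\tilde{r}}}\le\eps$ I stop, set $J:=j-1$ and $q_n:=q_n^{(J)}$; otherwise I pass to a subsequence on which $\norm{q_n^{(j-1)}}_{\hat{M}^{d\alpha}_{\tilde{q},\tilde{r}}}>\eps$ for every $n$, extract a single-cube piece $f_n^j$ whose Fourier transform lives on one dyadic cube $\tau_n^j=h_n^j([0,1)^d+b_n^j)\in\mathcal{D}$ and is pointwise truncated at height $C_1|\tau_n^j|^{-1/(d\alpha)'}$, and continue with $q_n^{(j)}:=q_n^{(j-1)}-f_n^j$.

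The extraction step is the heart of the matter. Writing the dyadic coefficients
\[
a_\tau:=|\tau|^{\frac{1}{(d\alpha)'}-\frac{1}{\tilde{q}'}}\norm{\F q_n^{(j-1)}}_{L^{\tilde{q}'}(\tau)},\qquad b_\tau:=|\tau|^{\frac{1}{(d\alpha)'}-\frac{1}{2}}\norm{\F q_n^{(j-1)}}_{L^2(\tau)},
\]
H\"older on each cube (using $\tilde{q}'<2$) gives $a_\tau\le b_\tau$, hence $\norm{(a_\tau)}_{\ell^r(\mathcal{D})}\le\norm{(b_\tau)}_{\ell^r(\mathcal{D})}\le M$, while $\norm{(a_\tau)}_{\ell^{\tilde{r}}(\mathcal{D})}>\eps$ by assumption. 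The H\"older interpolation
\[
\eps^{\tilde{r}}<\norm{(a_\tau)}_{\ell^{\tilde{r}}}^{\tilde{r}}\le\norm{(a_\tau)}_{\ell^\infty}^{\tilde{r}-r}\norm{(a_\tau)}_{\ell^r}^{r}\le\norm{(a_\tau)}_{\ell^\infty}^{\tilde{r}-r}M^r
\]
(valid since $r<\tilde{r}$) then locates a dyadic cube $\tau_n^j$ with $a_{\tau_n^j}\ge c_0(\eps,M)>0$. To obtain the pointwise bound simultaneously, I truncate $\F q_n^{(j-1)}$ on $\tau_n^j$ at height $C_1|\tau_n^j|^{-1/(d\alpha)'}$: Chebyshev against the $L^2$ bound gives $|E_n^j|\le(M/C_1)^2|\tau_n^j|$ for the exceptional set $E_n^j$, and a further H\"older shows that its contribution to $a_{\tau_n^j}$ is at most $M(M/C_1)^{2/\tilde{q}'-1}<c_0/2$ once $C_1=C_1(\eps,M)$ is large enough. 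Setting $\F f_n^j:=\F q_n^{(j-1)}\mathbf{1}_{\tau_n^j\setminus E_n^j}$ thus simultaneously yields the pointwise estimate (which, after the change of variables $\eta=h_n^j(\xi+b_n^j)$, becomes the bound by a universal $F_j$ supported in $[0,1)^d$) and, through
\[
\norm{f_n^j}_{\hat{M}^{d\alpha}_{2,r}}\ge|\tau_n^j|^{\frac{1}{(d\alpha)'}-\frac{1}{2}}\norm{\F f_n^j}_{L^2(\tau_n^j)}\ge|\tau_n^j|^{\frac{1}{(d\alpha)'}-\frac{1}{\tilde{q}'}}\norm{\F f_n^j}_{L^{\tilde{q}'}(\tau_n^j)}\ge c_0/2,
\]
a uniform lower bound on the extracted piece.

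Decoupling and termination now come for free. Since $\F f_n^j$ and $\F q_n^{(j)}$ have disjoint supports, on every $\sigma\in\mathcal{D}$ one has $\norm{\F q_n^{(j-1)}}_{L^2(\sigma)}^2=\norm{\F f_n^j}_{L^2(\sigma)}^2+\norm{\F q_n^{(j)}}_{L^2(\sigma)}^2$; the running assumption $\alpha<2/d$ forces $(d\alpha)'>2$ and hence $r>2$, so the inequality $(A+B)^{r/2}\ge A^{r/2}+B^{r/2}$ applies and summation against the weight $|\sigma|^{r(1/(d\alpha)'-1/2)}$ yields the clean Pythagorean inequality $\norm{q_n^{(j-1)}}_{\hat{M}^{d\alpha}_{2,r}}^r\ge\norm{f_n^j}_{\hat{M}^{d\alpha}_{2,r}}^r+\norm{q_n^{(j)}}_{\hat{M}^{d\alpha}_{2,r}}^r$. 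Telescoping gives the decoupling stated in the lemma and, combined with $\norm{f_n^j}\ge c_0/2$, forces termination after at most $J\le(2M/c_0)^r$ steps. A standard pigeonhole diagonalization enforces the asymptotic orthogonality $|\log(h_n^j/h_n^k)|+|b_n^j-(h_n^k/h_n^j)b_n^k|\to\I$: should it fail along a subsequence for a pair $j<k$, then after refinement the two cubes are related by a bounded dilation and translation, and $f_n^k$ can be absorbed into $f_n^j$ at the price of enlarging the constant in $F_j$ by a bounded factor.

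The main obstacle is the extraction step itself: producing a cube that carries $L^2$ mass bounded below \emph{in the target Morrey scale} $(2,r)$ while at the same time supporting a pointwise truncation of the Fourier transform at the natural height for that cube. It is precisely the interplay between the weaker norm $\hat{M}^{d\alpha}_{\tilde{q},\tilde{r}}$ (in which smallness is tracked) and the stronger norm $\hat{M}^{d\alpha}_{2,r}$ (in which the decomposition is measured), combined with the strict inequality $r>2$ inherited from mass-subcriticality, that makes the Pythagorean decoupling clean. The remaining bookkeeping (subsequence extractions, merging of non-orthogonal scales) is routine.
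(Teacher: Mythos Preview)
Your argument is correct and follows essentially the same route as the paper: iteratively locate a dyadic cube carrying a definite amount of the $\hat{M}^{d\alpha}_{\tilde q,\tilde r}$ norm via an $\ell^{\tilde r}$--$\ell^r$--$\ell^\infty$ interpolation, truncate the Fourier transform on that cube at the natural height, use disjointness of Fourier supports together with $r>2$ for the Pythagorean decoupling, and finally merge non-orthogonal scales. The only cosmetic differences are that the paper estimates the large-value set directly via $|\hat u|^{\tilde q'}\le A^{\tilde q'-2}|\hat u|^2$ rather than through Chebyshev plus H\"older, and writes the interpolation in Morrey-norm notation rather than in terms of the sequences $(a_\tau),(b_\tau)$; neither changes the substance.
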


\begin{proof} Pick $\eps>0$.
If $\limsup_{n\to\I} \norm{u_n}_{\hat{M}^{d\alpha}_{\tilde{q},\tilde{r}}} \le \eps$ then there is nothing to 
be proved.
Otherwise, we can extract a subsequence so that
$$
	\norm{u_n}_{\hat{M}^{d\alpha}_{\tilde{q},\tilde{r}}} > \eps
$$
for all $n$. 
By H\"older's inequality, the embedding $\hat{M}^{d\alpha}_{2,r} \hookrightarrow \hat{M}^{d\alpha}_{q,r}$ ($q>2$), and assumption, one sees that
\[
	\eps \le 
	\norm{u_n}_{\hat{M}^{d\alpha}_{\tilde{q},r}}^{1-\theta}
	\norm{u_n}_{\hat{M}^{d\alpha}_{\tilde{q},\I}}^{\theta}
	\le  
	\norm{u_n}_{\hat{M}^{d\alpha}_{2,r}}^{1-\theta}
	\norm{u_n}_{\hat{M}^{d\alpha}_{\tilde{q},\I}}^{\theta} 
	\le  M^{1-\theta} \norm{u_n}_{\hat{M}^{d\alpha}_{\tilde{q},\I}}^{\theta},
% 	\left[ \sup_{I \in \mathcal{D}}  |I|^{-\frac1{r}} \norm{\hat{u_n}}_{L^{\frac{r}{r-2}}(I)} \right]^{1-\theta}.
\]
where $\theta = \theta(r,\tilde{r})\in (0,1)$.
Hence, %by definition of $\dot{M}^{\alpha}_{3\alpha/2,\I}$ norm,
there exists a dyadic cube $\tau_n^1 :=h_n^1([0,1)^d+b_n^1) \in \mathcal{D}$ with $h_n^1 \in 2^\Z$ and $b_n^1 \in \Z^d$
such that
\begin{equation}\label{eq:up1}
\begin{aligned}
	\int_{\tau_n^1} |\hat{u}_{n}|^{\tilde{q}'} \,d\xi 
	&{}\ge 
	C_1  \eps^{\frac{\tilde{q}'}{\theta}} |\tau_n^1|^{(\frac{1}{\tilde{q}'} - \frac1{(d\alpha)'})\tilde{q}'},
\end{aligned}
\end{equation}
where $C_1=C_1(r,\tilde{r},M)$ is a positive constant.
On the other hand,
it holds for any $A>0$ that
\begin{equation}\label{eq:up2}
\begin{aligned}
	\int_{\tau_n^1 \cap \{ |\hat{u}_{n}| \ge A\}} |\hat{u}_{n}|^{\tilde{q}'} d\xi
	&\le A^{\tilde{q}'-2} \norm{\hat{u}_{n}}_{L^2(\tau_n^1)}^{2} 
	\\
	&\le A^{\tilde{q}'-2}  |\tau_n^1|^{2(\frac1{d\alpha}-\frac1{2})}
	\norm{u_n}_{ \hat{M}^{d\alpha}_{2,r}}^2 \\
	&\le M^2 A^{\tilde{q}'-2}
	|\tau_n^1|^{\frac{2}{d\alpha}-1}.
\end{aligned}
\end{equation}
since $\tilde{q}'<2$.
We choose $A$ so that
\[
	M^2 A^{\tilde{q}'-2}
	|\tau_n^1|^{\frac{2}{d\alpha}-1} = \frac{C_1}2 \eps^{\frac{\tilde{q}'}{\theta}} |\tau_n^1|^{(\frac1{\tilde{q}'}-\frac{1}{(d\alpha)'})\tilde{q}'}.
\]
or more explicitly,
\[
	A = \(\frac{2}{C_1M^2}\)^{\frac1{2-\tilde{q}'}} \eps^{\frac1\theta \(\frac{\tilde{q}'}{2-\tilde{q}'}\)} |\tau_n^1|^{-\frac1{(d\alpha)'}}=:C_\eps |\tau_n^1|^{-\frac1{(d\alpha)'}}.
\]
From \eqref{eq:up1} and \eqref{eq:up2}, we have
\begin{equation}\label{eq:do1}
	\int_{\tau_n^1 \cap \{ |\hat{u}_{n} | \le
	C_\eps |\tau_n^1|^{-1/(d\alpha)'} \}} |\hat{u}_{n}|^{\tilde{q}'} d\xi \\
	\ge \frac{C_1}2 \eps^{\frac{\tilde{q}'}{\theta}} |\tau_n^1|^{(\frac{1}{\tilde{q}'} - \frac1{(d\alpha)'})\tilde{q}'}.
\end{equation}
H\"older's inequality implies that
\begin{multline}\label{eq:do2}
	\lefteqn{\int_{\tau_n^1 \cap \{ |\hat{u}_{n} | \le C_\eps |\tau_n^1|^{-1/(d\alpha)'} \} } |\hat{u}_{n}|^{\tilde{q}'} d\xi}
	\\
	\le \(\int_{\tau_n^1 \cap \{ |\hat{u}_{n} | \le C_\eps |\tau_n^1|^{-1/(d\alpha)'} \} } |\hat{u}_{n}|^2 d\xi \)^{\frac{\tilde{q}'}{2}} |\tau_n^1|^{1 -\frac{\tilde{q}'}2}.
\end{multline}
Combining \eqref{eq:do1} and \eqref{eq:do2}, 
we reach to the estimate
\begin{equation}\label{eq:d3}
\begin{aligned}
	|\tau_n^1|^{\frac1{(d\alpha)'}-\frac12} \(\int_{\tau_n^1 \cap \{ |\hat{u}_{n} | \le C_\eps |\tau_n^1|^{-1/(d\alpha)'} \} } |\hat{u}_{n}|^2 d\xi\)^{\frac12}
	\ge \(\frac{C_1}2\)^{\frac{1}{\tilde{q}`}}  \eps^{\frac{1}{\theta}}.
\end{aligned}
\end{equation}
We define $v_n^1$ by $\widehat{v_n^1}:= \hat{u}_{n} {\bf 1}_{\tau_n^1\cap \{  |
\hat{u}_{n} | \le C_\eps |\tau_n^1|^{-1/(d\alpha)'}\}}$ and $q_n^1:=u_n-v_n^1$.
Then, \eqref{eq:d3} implies that 
$\norm{v_n^1}_{\hat{M}^{d\alpha}_{2,r}}
	\ge C \eps^{\frac{1}{\theta}}$.
Further, we have 
\[
	|\tau_n^1|^{\frac{1}{(d\alpha)'}} \abs{ \widehat{v_n^1} (h_n^1 (\xi + b_n^1))} \le C_\eps
	{\bf 1}_{[0,1]^d}(\xi).
\]

If
$\limsup_{n\to\I}
\norm{q_n^1}_{\hat{M}^{d\alpha}_{\tilde{q},\tilde{r}}}
\le \eps$
then we have done.
Otherwise, the same argument with $u_n$ being replaced by $q_n^1$
enables us to define $\tau_n^2:=h_n^2([0,1)^d+b_n^2)$,
$v_n^2$, and $q_n^2$ (up to subsequence).
We repeat this argument and define $\tau_n^j:=h_n^j([0,1)^d+b_n^j)$, $v_n^j$, and $q_n^j$ inductively.
It is easy to see that
\[
	\norm{u_n}_{\hat{M}^{d\alpha}_{2,r}}^r \ge
	\sum_{j=1}^N \norm{v_n^j}_{\hat{M}^{d\alpha}_{2,r}}^r
	+ \norm{q_n^N}_{\hat{M}^{d\alpha}_{2,r}}^r
\]
since supports of $\{v_n^j\}_{1\le j \le N}$ and $q_n^N$  are disjoint in the Fourier side and since $r>2$.
Since $\|v_n^j\|_{\hat{M}^{d\alpha}_{2,r}}\ge C \eps^{\frac{1}{\theta}}$
for each $j$, together with \eqref{eq:ST3},
we see that
\[
	\limsup_{n\to\I} \norm{q_n^J}_{\hat{M}^{d\alpha}_{\tilde{q},\tilde{r}}}
	\le \eps
\]
holds in at most $J=J(\eps)$ steps. Set $q_n:=q_n^J$.

We reorganize $v_n^j$ to obtain
mutual asymptotic orthogonality.
It is done as follows;
We collect all $k\ge2$ such that $|\log \frac{h_n^1}{h_n^k}| + \abs{b_n^1-\frac{h_n^k}{h_n^1}b_n^k}$ is bounded, and define $f_n^1:=v_n^1+\sum_{k} v_n^k$.
Since 
\begin{align*}
	&|\tau_n^1|^{\frac{1}{(d\alpha)'}} \abs{ \widehat{v_n^k} (h_n^1 (\xi + b_n^1))} \\
	&{} =  \(\frac{h_n^1}{h_n^k}\)^{\frac{1}{(d\alpha)'}} |\tau_n^k|^{\frac{1}{(d\alpha)'}} \abs{ \widehat{v_n^k} \(  h_n^k \left[ \frac{h_n^1}{h_n^k} \left\{ \xi  + \( b_n^1 - \frac{h_n^k}{h_n^1} b_n^k \) \right\}   + b_n^k \right] \)} \\
	&{} \le  C_\eps \(\frac{h_n^1}{h_n^k}\)^{\frac{1}{(d\alpha)'}} {\bf 1}_{[0,1]^d} \( \frac{h_n^1}{h_n^k} \left\{ \xi  + \( b_n^1 - \frac{h_n^k}{h_n^1} b_n^k \) \right\} \) ,
\end{align*}
we see that $|\tau_n^1|^{\frac{1}{(d\alpha)'}} \F{f_n^1} (h_n^1 (\xi + b_n^1)) \le F_1(\xi)$
for some bounded and compactly supported function $F_1$.
Similarly, we define $f_n^j$ inductively.
It is easy to see that $f_n^j$ possesses all properties we want. 
This completes the proof of Lemma \ref{lem:pd:step1}.
\end{proof}

{\bf Step 2 -- Decomposition of each scale pieces.} 
We next decompose functions obtained in the previous decomposition.

\begin{lemma}\label{lem:pd:step2}
Let $F(\xi)$ be a nonnegative bounded function with compact support.
Suppose that a sequence $R_n \in \hat{M}^{d\alpha}_{2,r}$ satisfy 
\begin{equation}\label{eq:pd:step2_support_assumption}
	|\widehat{R_n} (\xi) | \le F (\xi).
\end{equation}
Then, up to subsequence, there exist $\{\phi^l\}_l \subset \hat{M}^{d\alpha}_{2,r} $ with $ |\widehat{\phi^l} (\xi)| \le F(\xi) $, 
$(a_n^l,s_n^l) \in \R^d \times \R^d$ with
\[
	\lim_{n\to\I} (|s_n^l - s_n^{\tilde{l}} |+ 
	|a_n^l -a_n^{\tilde{l}}|) = \I
\]
for any $\tilde{l}\neq l$, and $\{r_n^l\}_{n,l}\subset \hat{M}^{d\alpha}_{2,r}$ with $|\widehat{r_n^l} (\xi)| \le F(\xi)$ such that
\[
	R_n(x) = \sum_{l=1}^L U(s_n^l) T({a_n^l}) \phi^l(x) + r_n^L(x)
\]
for any $L\ge1$. Moreover, it holds that
\begin{equation}\label{eq:pd:step2_Pythagorean}
	\sum_{l=1}^L 
	\norm{\phi^l}_{\hat{M}^{q}_{2,r}}^r
	+\limsup_{n\to\I} 
	\norm{r_n^L}_{\hat{M}^{q}_{2,r}}^r
	\le \limsup_{n\to\I} 
	\norm{R_n}_{\hat{M}^{q}_{2,r}}^r
	<\I
\end{equation}
for any $2<q'<r<\I$ and $L\ge1$. 
Furthermore, we have
\begin{equation}\label{eq:pd:step2_smallness}
	\limsup_{n\to\I} 
% 	\norm{ e^{it\Delta} r_n^L}_{L^{(d+2)\alpha}_{t,x}(\R \times \R^{d})}
	S_\R (e^{it\Delta} r_n^L)
	\to 0
\end{equation}
as $L\to\I$.
\end{lemma}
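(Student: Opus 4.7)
The plan is to extract physical-space and time profiles by iteration, using the band-limited structure provided by the pointwise bound $|\widehat{R_n}|\le F$. The key point is that because $F$ is bounded and compactly supported, one has
\[
\sup_{t,x} |e^{it\Delta} R_n(x)| \le \norm{F}_{L^1} < \I
\]
uniformly in $n$. Combined with the fact that $e^{it\Delta} R_n$ is Fourier-supported in a fixed compact set of $\R^{1+d}$ (as a function of $(t,x)$, modulo the paraboloid constraint), this means $e^{it\Delta} R_n$ is band-limited and all space-time $L^p$ norms are comparable. Hence, if the concentration-compactness loop has not yet terminated, i.e.\ if $\eta_1 := \limsup_n S_\R(e^{it\Delta} R_n) > 0$, then an interpolation between $L^{(d+2)\alpha}_{t,x}$ and $L^\I_{t,x}$ gives a quantitative lower bound $\norm{e^{it\Delta} R_n}_{L^\I_{t,x}} \ge c(\eta_1, F) > 0$, and thus there exist $(s_n^1, a_n^1) \in \R \times \R^d$ such that
\[
|(U(s_n^1) R_n)(a_n^1)| \ge \tfrac{1}{2} c(\eta_1, F).
\]

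The sequence $\psi_n^1 := T(-a_n^1) U(-s_n^1) R_n$ is uniformly bounded in $\hat{M}^{d\alpha}_{2,r}$ and satisfies $|\widehat{\psi_n^1}| \le F$. By Theorem \ref{thm:predual} (Banach--Alaoglu), passing to a subsequence, $\psi_n^1 \rightharpoonup \phi^1$ weakly-$*$ in $\hat{M}^{d\alpha}_{2,r}$; dominated convergence (using $F\in L^1$) applied to $\widehat{\psi_n^1}\to \widehat{\phi^1}$ then yields $\psi_n^1(0) \to \phi^1(0)$, so $|\phi^1(0)| \ge \tfrac{1}{2} c(\eta_1,F)$ and $|\widehat{\phi^1}| \le F$. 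I set $r_n^1 := R_n - U(s_n^1)T(a_n^1)\phi^1$ and iterate the construction on $r_n^1$. The orthogonality $|s_n^l - s_n^{\tilde l}| + |a_n^l - a_n^{\tilde l}| \to \I$ for $\tilde l < l$ comes from the standard bootstrap: by construction $T(-a_n^{\tilde l}) U(-s_n^{\tilde l}) r_n^{l-1} \rightharpoonup 0$, so if the two shifts remained at bounded distance along a subsequence, an application of the weak-limit computation from the extraction step would force $\phi^l = 0$, contradicting $|\phi^l(0)|>0$.

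The decoupling inequality \eqref{eq:pd:step2_Pythagorean} follows from a direct adaptation of Lemma \ref{lem:decouple} to the present setting, with $h_n \equiv 1$ and $b_n\equiv 0$ and varying $(s_n, a_n)$; the asymptotic orthogonality of the shifts is exactly what makes the cross-terms in the expansion of $\norm{\sum_l U(s_n^l) T(a_n^l)\phi^l + r_n^L}^r$ negligible in the limit. Finally, the smallness \eqref{eq:pd:step2_smallness} is obtained as follows: the Pythagorean inequality gives $\sum_l \norm{\phi^l}_{\hat{M}^{d\alpha}_{2,r}}^r < \I$, hence $\norm{\phi^l}_{\hat{M}^{d\alpha}_{2,r}} \to 0$; since $\widehat{\phi^l}$ is supported in $\supp F$, only finitely many dyadic cubes contribute to the Morrey norm, and an elementary estimate gives
\[
|\phi^l(0)|^2 \le |\supp F|\,\norm{\widehat{\phi^l}}_{L^2}^2 \lesssim_F \norm{\phi^l}_{\hat{M}^{d\alpha}_{2,r}}^2,
\]
so combining with $|\phi^l(0)| \gtrsim \eta_l$ forces $\eta_l\to 0$.

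The main obstacle is the first step: promoting an $L^{(d+2)\alpha}_{t,x}$ lower bound to pointwise concentration of $e^{it\Delta} R_n$, and then passing through a weak-$*$ limit in $\hat{M}^{d\alpha}_{2,r}$ to a pointwise value of $\phi^1$ at the origin. Both rely crucially on the uniform domination $|\widehat{R_n}|\le F$ and have no analogue for a generic bounded sequence in $\hat{M}^{d\alpha}_{2,r}$; this is exactly why the first step of the profile decomposition (Lemma \ref{lem:pd:step1}) was needed to reduce to this frequency-localized situation before extracting physical-space profiles.
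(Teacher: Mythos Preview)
Your sketch follows the standard iterative scheme (as in Carles--Keraani and B\'egout--Vargas), and the paper itself does not give a proof here but simply refers to \cite[Proposition~3.4]{CK} together with Proposition~\ref{prop:Strichartz}, so there is no detailed argument to compare against. The overall architecture---find a point of space-time concentration, pass to a weak-$*$ limit, iterate, and control the remainder via decoupling---is correct.

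There is, however, a genuine error in your justification of the concentration step. The claim that ``$e^{it\Delta} R_n$ is band-limited and all space-time $L^p$ norms are comparable'' is false. The space-time Fourier transform of $e^{it\Delta} R_n$ is a measure supported on the compact paraboloid piece $\{(-|\xi|^2,\xi):\xi\in\supp F\}$, so Bernstein gives $\|e^{it\Delta}R_n\|_{L^q_{t,x}}\lesssim_F \|e^{it\Delta}R_n\|_{L^p_{t,x}}$ only for $p\le q$, never the reverse; indeed $\|e^{it\Delta}R_n\|_{L^2_{t,x}}=\infty$ whenever $R_n\neq 0$. Interpolating between your $L^{(d+2)\alpha}$ \emph{lower} bound and your $L^\infty$ \emph{upper} bound yields nothing about $L^\infty$ from below.

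The fix is to supply an upper bound at some exponent $p<(d+2)\alpha$. Since $|\widehat{R_n}|\le F\in L^{s'}$ for every $s'\in[1,\infty]$, one has $R_n\in \hat L^{s}$ uniformly for all $s$, and the diagonal hat-Lebesgue Strichartz estimate (Proposition~\ref{prop:StrichartzhL}) gives $\|e^{it\Delta}R_n\|_{L^p_{t,x}}\le C\|F\|_{L^{(dp/(d+2))'}}$ for every $p>\frac{2(d+3)}{d+1}$; the lower bound on $\alpha$ in \eqref{asmp:alpha} is exactly what makes this range contain exponents strictly below $(d+2)\alpha$. Then
\[
\eta_1^{(d+2)\alpha}\le \|e^{it\Delta}R_n\|_{L^{(d+2)\alpha}_{t,x}}^{(d+2)\alpha}\le \|e^{it\Delta}R_n\|_{L^\infty_{t,x}}^{(d+2)\alpha-p}\,\|e^{it\Delta}R_n\|_{L^p_{t,x}}^{p}
\]
yields the desired lower bound $\|e^{it\Delta}R_n\|_{L^\infty_{t,x}}\ge c(\eta_1,F)$. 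Alternatively one may invoke Bourgain's localization lemma \cite[Lemma~22]{Bo2}, as in Section~\ref{sec:main3}, to see that the $L^{(d+2)\alpha}$ mass concentrates on finitely many unit space-time cubes, whence H\"older on a single cube gives the $L^\infty$ bound. With this correction the remaining steps (weak-$*$ extraction plus dominated convergence, orthogonality, decoupling via Lemma~\ref{lem:decouple}, and $\eta_L\to0$) are fine. A minor side remark: your claim that ``only finitely many dyadic cubes contribute to the Morrey norm'' of $\phi^l$ is also false as stated (arbitrarily small cubes contribute), but the inequality $|\phi^l(0)|\lesssim_F \|\phi^l\|_{\hat M^{d\alpha}_{2,r}}$ you actually use is correct, since only finitely many \emph{unit} cubes meet $\supp F$.
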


For the proof, see \cite[Proposition 3.4]{CK}.
A key restriction estimate in our case is established in Proposition \ref{prop:Strichartz}.

{\bf Step 3 -- Completion of the proof of Theorem \ref{thm:cc}.} 
We are now ready to prove Theorem \ref{thm:cc}.
For the proof,
we recall the following space-time nonresonant property.
\begin{lemma}\label{lem:realorthty2}
Let $\phi^j \in \hat{M}^{d\alpha}_{2,r}$ ($1\le j \le J$).
Let $\{\mathcal{G}^j_n \}_n \subset G$ ($1\le j \le J$)
be mutually orthogonal families.
Then,
\[
	\norm{\sum_{j=1}^J e^{it\Delta} \mathcal{G}^j_{n} \phi^j}_{L_{t,x}^{(d+2)\alpha}(\R\times\R^d)}^{(d+2)\alpha}
	\le \sum_{j=1}^J \norm{e^{it\Delta} \mathcal{G}^j_{n} \phi^j}_{L_{t,x}^{(d+2)\alpha}(\R\times \R^d)}^{(d+2)\alpha}
	+o(1)
\]
as $n\to\I$.
\end{lemma}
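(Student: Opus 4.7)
The plan is to follow the classical asymptotic orthogonality argument of Bahouri--G\'erard \cite{BG} and Merle--Vega \cite{MV}, adapted to the hat-Morrey framework as in \cite{MS2}. Set $f_j^n := e^{it\Delta}\mathcal{G}^j_n \phi^j$ and $p := (d+2)\alpha$; under \eqref{asmp:alpha} one has $p > \frac{2(d+3)}{d+1} > 2$. First I would apply the elementary pointwise inequality
\[
	\left| \left|\sum_{j=1}^J a_j\right|^p - \sum_{j=1}^J |a_j|^p \right| \le C_{p,J} \sum_{j\ne k} |a_j|^{p-1}|a_k|, \quad p \ge 1,
\]
at each $(t,x)$ with $a_j = f_j^n(t,x)$, and then integrate in space-time to reduce the lemma to showing
\[
	\norm{|f_j^n|^{p-1}|f_k^n|}_{L^1_{t,x}(\R^{1+d})} \to 0 \quad (n \to \I)
\]
for each fixed pair $j\ne k$. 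Since $p>2$, H\"older's inequality gives $\norm{|f_j^n|^{p-1}|f_k^n|}_{L^1} \le \norm{f_j^n}_{L^p}^{p-2}\norm{f_j^n f_k^n}_{L^{p/2}}$, and $\sup_n\norm{f_j^n}_{L^p_{t,x}}<\I$ by Proposition \ref{prop:Strichartz} together with Lemma \ref{lem:Paction}. The matter is thereby reduced to the bilinear convergence $\norm{f_j^n f_k^n}_{L^{p/2}_{t,x}} \to 0$.

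Second, I would exploit that $p=(d+2)\alpha$ is the space-time scaling-critical exponent for the linear Schr\"odinger flow. Using the commutation relations of Section \ref{subsec:pd1}, one verifies directly that for every $\mathcal{G}\in G$ and every $\phi,\psi\in\hat{M}^{d\alpha}_{2,r}$,
\[
	\norm{(U(t)\mathcal{G}\phi)(U(t)\mathcal{G}\psi)}_{L^{p/2}_{t,x}(\R^{1+d})} = \norm{(U(t)\phi)(U(t)\psi)}_{L^{p/2}_{t,x}(\R^{1+d})};
\]
indeed, the Jacobian factor $h^{d+2}$ arising from the space-time change of variables associated to $D(h)$ exactly balances the amplitude factor $h^{p/\alpha}$ precisely when $p=(d+2)\alpha$, while the remaining generators $T(a), P(b), U(s)$ act by isometries on $L^{p/2}_{t,x}$ after rearranging phases. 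Writing $\mathcal{G}^k_n = \mathcal{G}^j_n \tilde{\mathcal{G}}_n$ with $\tilde{\mathcal{G}}_n := (\mathcal{G}^j_n)^{-1}\mathcal{G}^k_n$ and applying this identity with $\mathcal{G}=\mathcal{G}^j_n$, $\phi=\phi^j$, $\psi=\tilde{\mathcal{G}}_n\phi^k$, the bilinear quantity becomes
\[
	\norm{(e^{it\Delta}\phi^j)(e^{it\Delta}\tilde{\mathcal{G}}_n\phi^k)}_{L^{p/2}_{t,x}},
\]
and the orthogonality of $\{\mathcal{G}^j_n\}_n$ and $\{\mathcal{G}^k_n\}_n$ is exactly the statement that $\{\tilde{\mathcal{G}}_n\}_n$ is a vanishing family.

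The main obstacle, and the actual content of the proof, is therefore the following bilinear vanishing claim: for any vanishing family $\{\tilde{\mathcal{G}}_n\}_n$ and any $\phi^j,\phi^k \in \hat{M}^{d\alpha}_{2,r}$, one has $\norm{(e^{it\Delta}\phi^j)(e^{it\Delta}\tilde{\mathcal{G}}_n\phi^k)}_{L^{p/2}_{t,x}} \to 0$. I would handle this by the standard route: a density and Strichartz approximation reduces to $\phi^j,\phi^k$ whose Fourier transforms are smooth and compactly supported (the error being controlled uniformly in $n$), after which the vanishing hypothesis, written in normal form $\tilde{\mathcal{G}}_n = e^{i\theta_n}D(h_n)P(b_n)U(s_n)T(a_n)$, splits along subsequences into the four mutually exclusive sub-cases already used in the proof of Proposition \ref{prop:vanishing}: $|\log h_n|\to\I$; $|\log h_n|$ bounded and $|b_n|\to\I$; $|\log h_n|+|b_n|$ bounded and $|s_n|\to\I$; or $|\log h_n|+|b_n|+|s_n|$ bounded and $|a_n|\to\I$. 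In the first two sub-cases the Fourier supports of the two free evolutions live at asymptotically disjoint scales or at asymptotically disjoint frequency translates, so the integrand vanishes identically for large $n$. In the third sub-case dispersive decay of $U(s_n)$ combined with the Strichartz split of $e^{it\Delta}\phi^j \in L^p_{t,x}$ handles the product. In the last sub-case the physical-space profiles of the two wave packets centered at drifting points become asymptotically disjoint. These case analyses are routine modifications of the arguments in \cite{BG,MV}, essentially identical to those carried out in the $\hat{L}^{d\alpha}$-framework for gKdV in \cite{MS2}.
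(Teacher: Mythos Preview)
Your overall strategy is exactly the standard one the paper invokes by citing \cite[Lemma~5.5]{BV}: reduce via the elementary pointwise inequality and H\"older to the bilinear vanishing $\norm{f_j^n f_k^n}_{L^{p/2}_{t,x}}\to0$, use the space--time scaling invariance at $p=(d+2)\alpha$ to replace $(\mathcal{G}^j_n,\mathcal{G}^k_n)$ by $(\mathrm{Id},\tilde{\mathcal{G}}_n)$ with $\{\tilde{\mathcal{G}}_n\}$ vanishing, and then run a density plus case analysis. All of that is fine.

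The gap is in your treatment of the first two sub-cases. Disjointness of Fourier supports does \emph{not} make the pointwise product $|(e^{it\Delta}\phi)(e^{it\Delta}\tilde{\mathcal{G}}_n\psi)|$ vanish; the Fourier transform of a product is a convolution, and two functions with disjoint spectra can have a product that is nowhere zero. So the sentence ``the integrand vanishes identically for large $n$'' is false as written. For the dilation sub-case $|\log h_n|\to\infty$, the correct mechanism is H\"older with \emph{off-diagonal} exponents: for $\phi,\psi$ Schwartz one has $e^{it\Delta}\phi\in L^{p_1}_{t,x}$ and $e^{it\Delta}\psi\in L^{p_2}_{t,x}$ for a range of $p_1,p_2$, and since $\norm{e^{it\Delta}D(h_n)\psi}_{L^{p_2}_{t,x}}=h_n^{1/\alpha-(d+2)/p_2}\norm{e^{it\Delta}\psi}_{L^{p_2}_{t,x}}$, choosing $p_2>p$ (if $h_n\to0$) or $p_2<p$ (if $h_n\to\infty$) yields decay. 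For the frequency-translation sub-case $|b_n|\to\infty$ (with $h_n$ bounded), one instead uses the Galilean identity $|e^{it\Delta}P(b_n)\chi(x)|=|e^{it\Delta}\chi(x+2tb_n)|$ to convert the problem into a space--time separation argument: after approximating $e^{it\Delta}\phi$ and $e^{it\Delta}\psi$ by functions with compact space--time support, the translated factor escapes any fixed compact set except on a time slab of width $O(|b_n|^{-1})$, whose contribution is controlled by the $L^p_{t,x}$ bound. With these corrections your sketch goes through and coincides with the argument the paper points to.
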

The proof is standard. For instance, see \cite[Lemma 5.5]{BV}. Note that $(d+2)\alpha > \frac{2(d+3)}{d+1}>2$.

\begin{proof}[Proof of Theorem \ref{thm:cc}]
Let $\{u_{n}\}\subset\hat{M}^{d\alpha}_{2,r}$ be a bounded 
sequence satisfying \eqref{eq:upM} and \eqref{eq:belowm}. 
Let $\eps=\eps(m,M)>0$ to be chosen later.
Let $J=J(\eps)\ge1$,
$\{ I_n^j=h_n^j([0,1)^d + b_n^j) \}_n \subset \mathcal{D}$ ($1\le j\le J$),
 $\{ f_n^j \}_n \subset \hat{M}^{d\alpha}_{2,r}$ ($1\le j\le J$), and $q_n$ be sequences 
given in Lemma \ref{lem:pd:step1}. 
Set
\[
	\widehat{R_n^j}(\xi):=|h_n^j|^{\frac{d}{(d\alpha)'}} \widehat{f_n^j} (h_n^j (\xi + b_n^j)).
\]
Namely, $R_n^j = P({b_n^j})^{-1} {D} (h_n^j)^{-1}  {f}_n^j$.
Then, 
by means of \eqref{eq:pd:step1_ptwisebdd}, $\{R_n^j\}_n$ satisfies assumption of
Lemma \ref{lem:pd:step2} for each $j$.
Then, thanks to Lemma \ref{lem:pd:step2}, 
for every $1\le j \le J$, there exists a family $\{ \phi^{j,l} \}_{l} \subset \hat{M}^{d\alpha}_{2,r}$,
and a family $\{ (a^{j,l}_n,s^{j,l}_n) \}_{n,l} \in \R^d \times \R^d$
such that
\[
	R_n^j = \sum_{l=1}^L U(s_n^{j,l}) T(a_n^{j,l}) \phi^{j,l} +  r_n^{j,L}
\]
with
\[
	\limsup_{n\to\I} 
% 	\norm{e^{it\Delta} r_n^{j,L} }_{L_{t,x}^{(d+2)\alpha}(\R \times \R^d)}
	S_\R (e^{it\Delta} r_n^{j,L})
	\to 0
\]
as $L\to\I$
and that 
\[
	\lim_{n\to\I} (|s_n^{j,l} - s_n^{j,\tilde{l}} |
	+|a_n^{j,l} -a_n^{j,\tilde{l}}|) = \I
\]
for any $l\neq\tilde{l}$.
Remark that 
\begin{eqnarray}
	f_n^j &=& D(h_n^j) P(b_n^{j})  R_n^j
	\label{fff}\\
	&=& \sum_{l=1}^L  D(h_n^j)  P(b_n^j) U(s_n^{j,l}) T(a_n^{j,l})\phi^{j,l} +  D(h_n^j) P(b_n^{j}) r_n^{j,L}.
	\nonumber
\end{eqnarray}
We choose $L=L(\eps)$ so that 
\begin{equation}\label{eq:ej}
	\limsup_{n\to\I} 
% 	\norm{e^{it\Delta} D(h_n^j) P(b^j_n) r_n^{j,L}}_{L_{t,x}^{(d+2)\alpha}(\R\times\R^d)}
	S_\R ( e^{it\Delta} D(h_n^j) P(b^j_n) r_n^{j,L} )
	\le \frac{\eps}{J}
\end{equation}
holds for any $1 \le j \le J$.
Notice that this is possible by means of the scale invariance and Galilean transform
% \[
% 	\norm{e^{it\Delta} D(h_n^j)P(b^j_n) r_n^{j,L}}_{L^{(d+2)\alpha}_{t,x}(\R\times \R^d)}
% 	=\norm{ e^{it\Delta} r_n^{j,L}}_{L^{(d+2)\alpha}_{t,x}(\R\times \R^d)}.
% \]
\[
	S_\R( e^{it\Delta} D(h_n^j)P(b^j_n) r_n^{j,L} )
	=S_\R ( e^{it\Delta} r_n^{j,L} ) .
\]
Let $r_n:=\sum_{j=1}^J  D(h_n^j) P(b_n^{j}) r_n^{j,L}+ q_n$.
By Lemma \ref{lem:pd:step1} (\ref{ufq}) and (\ref{fff}), we have
\begin{equation}\label{eq:ufqr1}
	u_n= \sum_{j=1}^J f_n^j + q_n = \sum_{j=1}^J \sum_{l=1}^{L}  \mathcal{G}_n^{j,l} \phi^{j,l} + r_n,
\end{equation}
where $\mathcal{G}_n^{j,l}:=D(h_n^j) P(b_n^j) U(s_n^{j,l}) T(a_n^{j,l}) $.
% It is easy to see that $\{\mathcal{G}_n^{j,a}\}_n \subset G$ 
% are pairwise orthogonal families.
We renumber $(j,l) \in \{ 1\le j \le J,\, 1\le l \le L\}$ as $k=1,2,\dots,K$ and rewrite
\eqref{eq:ufqr1} as
\begin{equation}\label{eq:ufqr}
	u_n= \sum_{k=1}^{K}  \mathcal{G}_n^{k} \phi^{k} + r_n,
\end{equation}
Note that $\{\mathcal{G}_n^{k}\}_n$ ($k=1,2,\dots,K$) are mutually orthogonal families of 
deformations. Indeed, if $k_1 \neq k_2$ we have either $j(k_1)\neq j(k_2)$, or 
$j(k_1)=j(k_2)$ and $l(k_1)\neq l(k_2)$, where $j(k)$ and $l(k)$ are numbers given 
by the above renumbering procedure, $k=(j,l)$.
In the first case, the orthogonality follows from Lemma \ref{lem:pd:step1}.
In the second case, Lemma \ref{lem:pd:step2} gives the orthogonality, since
$h_n^{k_1}\equiv h_n^{k_2}$ and $b_n^{k_1} \equiv b_n^{k_2}$ in this case.

By \eqref{eq:ej}, \eqref{eq:qe}, and Proposition \ref{prop:Strichartz}, we have
\[
% 	\norm{ e^{it\Delta} u_n}_{L^{(d+2)\alpha}_{t,x}(\R\times \R^d)}
	S_\R ( e^{it\Delta} u_n )
	\le 
% 	\norm{e^{it\Delta} (u_n - r_n)}_{L^{(d+2)\alpha}_{t,x}(\R\times \R^d)}
	S_\R ( e^{it\Delta} (u_n - r_n) )
	+C\eps.
\]
By assumption and Proposition \ref{prop:Strichartz},
\[
% 	\norm{e^{it\Delta} (u_n - r_n)}_{L^{(d+2)\alpha}_{t,x}(\R\times \R^d)} 
	S_\R ( e^{it\Delta} (u_n - r_n) )
	\le CM.
\] 
Combining the above inequality and Lemma \ref{lem:realorthty2}, 
one can verify that
\begin{align*}
% 	\norm{ e^{it\Delta} u_n}_{L^{(d+2)\alpha}_{t,x}(\R\times \R^d)}^{(d+2)\alpha}
	S_\R ( e^{it\Delta} u_n )^{(d+2)\alpha}
	\le{}&  \sum_{k=1}^{K} 
% \norm{e^{it\Delta} \mathcal{G}^{k}_{n} \phi^{k}}_{L^{(d+2)\alpha}_{t,x}(\R\times \R^d)}^{(d+2)\alpha}
	S_\R ( e^{it\Delta} \mathcal{G}^{k}_{n} \phi^{k} )^{(d+2)\alpha}
	 +  C\eps +o(1)
\end{align*}
as $n\to\I$. 
Notice that
% \[
% 	\norm{e^{it\Delta} \mathcal{G}^{k}_{n} \phi^{k}}_{L^{(d+2)\alpha}_{t,x}(\R\times \R^d)}
% 	= \norm{e^{it\Delta} \phi^{k}}_{L^{(d+2)\alpha}_{t,x}(\R\times \R^d)}.
% \]
$S_\R ( e^{it\Delta} \mathcal{G}^{k}_{n} \phi^{k} )
	= S_\R ( e^{it\Delta} \phi^{k} )$.
By \eqref{eq:belowm}, we can take $\eps=\eps(m,M)$ small
and $n$ large enough to get
\[
	C_\alpha m^{(d+2)\alpha} \le \sum_{k=1}^K 
% 	\norm{ e^{it\Delta} \phi^{k} }_{L^{(d+2)\alpha}_{t,x}(\R \times \R^d)}^{(d+2)\alpha} .
	S_\R ( e^{it\Delta} \phi^{k} )^{(d+2)\alpha} .
\]
On the other hand, by Proposition \ref{prop:Strichartz},
\[
% 	\norm{ e^{it\Delta} \phi^{k} }_{L_{t,x}^{(d+2)\alpha}(\R \times \R^d)} 
	S_\R ( e^{it\Delta} \phi^{k} )
	\le C 
	\norm{ \phi^{k}  }_{\hat{M}^{d\alpha}_{2,r}}.
\]
Since $(d+2)\alpha \ge \tilde{r} > r $, we have
\begin{align*}
	C_\alpha m^{(d+2)\alpha} &{}\le C\( \sup_{1\le k\le K} 
% 	\norm{ e^{it\Delta} \phi^{k} }_{L^{(d+2)\alpha}_{t,x}(\R \times \R^d)}
	S_\R ( e^{it\Delta} \phi^{k} )
	\)^{(d+2)\alpha-r}  \sum_{k=1}^K \norm{ \phi^{k} }_{\hat{M}^{d\alpha}_{2,r}}^r \\
	&{}\le C\( \sup_{1\le k\le K} 
% 	\norm{ e^{it\Delta} \phi^{k} }_{L^{(d+2)\alpha}_{t,x}(\R \times \R^d)} 
	S_\R ( e^{it\Delta} \phi^{k} )
	\)^{(d+2)\alpha-r}  M^r.
\end{align*}
Thus, there exists $k_0$ such that
\begin{equation}\label{eq:pf_lbound}
% 	\norm{ e^{it\Delta} \phi^{k_0} }_{L^{(d+2)\alpha}_{t,x}(\R \times \R^d)} 
	S_\R ( e^{it\Delta} \phi^{k_0} )
	\ge C_\alpha\(\frac{m^{(d+2)\alpha}}{M^r}\)^{\frac1{(d+2)\alpha-r}}.
\end{equation}

Now, up to subsequence, we have
\[
	(\mathcal{G}^{k_0}_{n})^{-1} u_n \rightharpoonup \phi^{k_0} + q_0=:\psi \swIN \hat{M}^{d\alpha}_{2,r}
\]
as $n\to\I$, where $q_0$ is a weak-$*$ limit of $(\mathcal{G}^{k_0}_{n})^{-1} q_n$.
Indeed, by Lemma \ref{lem:pd:step1} (\ref{ufq}), we have
\[
	u_n = \sum_{1\le j \le J ,\,j\neq j_0} f^j_n + f^{j_0}_n + q_n ,
\]
where $(j_0,l_0)$ is a pair corresponds to $k_0$.
By orthogonality obtained in Lemma \ref{lem:pd:step1},
one has
$(\mathcal{G}^{j_0,l_0}_{n})^{-1} f^j_n \rightharpoonup 0$ weakly-$*$ in $\hat{M}^{d\alpha}_{2,r}$
as $n\to\I$ for $j\neq j_0$.
Further, $(\mathcal{G}^{j_0,l_0}_{n})^{-1} f^{j_0}_n \rightharpoonup \phi^{j_0,l_0}$ weakly-$*$ in $\hat{M}^{d\alpha}_{2,r}$ as $n\to\I$.
Therefore, we have the above limit and so $\psi \in \mathcal{M}(u)$.

Let us estimate $\norm{\psi}_{\hat{M}^{d\alpha}_{2,r}}$ from below.
Since $(\mathcal{G}^{k_0}_{n})^{-1} q_n$ is also bounded in $\hat{M}_{\tilde{q},\tilde{r}}^{d\alpha}$
in light of \eqref{eq:ST3},
it converges to $q_0$ also weakly-$*$ in $\hat{M}_{\tilde{q},\tilde{r}}^{d\alpha}$ as $n\to\I$.
One then sees from lower semi-continuity that
\[
	\norm{q_0}_{\hat{M}^{d\alpha}_{\tilde{q},\tilde{r}}} \le \limsup_{n\to\I} 
	\norm{q_n}_{\hat{M}^{d\alpha}_{\tilde{q},\tilde{r}}} \le C\eps.
\]
Thanks to \eqref{eq:ST3}, we have
\[
% 	\norm{e^{it\Delta}q_0}_{L_{t,x}^{(d+2)\alpha}(\R \times \R^d )} 
	S_\R (e^{it\Delta}q_0 )
	\le \norm{q_0}_{\hat{M}^{d\alpha}_{\tilde{q},\tilde{r}}}
	\le C \eps.
\]
Finally, using Proposition \ref{prop:Strichartz} and \eqref{eq:pf_lbound}, and
choosing $\eps=\eps(m,M)>0$ even smaller if necessary, we reach to the estimate
\begin{align*}
	\norm{\psi}_{\hat{M}^{d\alpha}_{2,r}}
	&{}\ge C
% 	\norm{ e^{it\Delta} \mathcal{G}^{k_0}_{n} \psi}_{L_{t,x}^{(d+2)\alpha}(\R \times \R^d )} \\
	(S_\R ( e^{it\Delta} \mathcal{G}^{k_0}_{n} \phi^{k_0} ) - S_\R ( e^{it\Delta} \mathcal{G}^{k_0}_{n} q_0 )) \\
	&{}\ge C
% 	\norm{ e^{it\Delta} \mathcal{G}^{k_0}_{n} \phi^{k_0}}_{L_{t,x}^{(d+2)\alpha}(\R \times \R^d )}
	S_\R ( e^{it\Delta}  \phi^{k_0} )
	-C\eps \\
	&{}\ge \frac{C}2\(\frac{m^{(d+2)\alpha}}{M^r}\)^{\frac1{(d+2)\alpha-r}} =: \beta(m,M),
\end{align*}
which completes the proof of Theorem \ref{thm:cc}.
\end{proof}

\subsection*{Acknowledgments}
The author expresses his deep gratitude
to Professors Rowan Killip and Monica Visan for fruitful discussions and valuable comments on 
the preliminary version of the manuscript. 
In particular, the evaluation of $E_1$ in the energy critical case (Theorem \ref{thm:previous2}) is due to them.
The part of this work was done while the author was 
visiting at Department of Mathematics at the University of
California, Los Angeles.  The author gratefully acknowledges their hospitality. 
The stay is supported by JSPS KAKENHI, Scientific Research (S) 23224003.
The author is supported by JSPS KAKENHI, Grant-in-Aid for Young Scientists (B) 24740108.

% \bibliographystyle{amsplain}
% \bibliography{subcrit3}

\providecommand{\bysame}{\leavevmode\hbox to3em{\hrulefill}\thinspace}
\providecommand{\MR}{\relax\ifhmode\unskip\space\fi MR }
% \MRhref is called by the amsart/book/proc definition of \MR.
\providecommand{\MRhref}[2]{%
  \href{http://www.ams.org/mathscinet-getitem?mr=#1}{#2}
}
\providecommand{\href}[2]{#2}

\end{document}